\documentclass[amssymb,amsfonts,refcheck,11pt,verbatim,righttag]{amsart}

\usepackage{amsfonts}
\usepackage{amssymb,amscd}

\setcounter{section}{0}

\setlength{\textwidth}{15.0cm}
 \setlength{\textheight}{22.0cm}
 \hoffset=-1cm
 \errorcontextlines=0
 \numberwithin{equation}{section}
\pagestyle{plain}
\parskip 1.0ex
\theoremstyle{plain}

\newtheorem{thm}{Theorem}[section]
\newtheorem{lem}[thm]{Lemma}
\newtheorem{pro}[thm]{Proposition}
\newtheorem{cor}[thm]{Corollary}

\newtheorem{de}[thm]{Definition}
\newtheorem{rem}[thm]{Remark}

\def\R {{\Bbb R}}
\def\N {{\Bbb N}}
\def\Z {{\Bbb Z}}
\def\M {{\mathcal M}}
\def\P {{\mathcal P}}
\def\Q {{\mathcal Q}}
\def\J {{\mathcal J}}
\def\F {{\mathcal F}}
\def\I {{\mathcal I}}
\def\A {{\mathcal A}}
\def\B {{\mathcal B}}
\def\C {{\mathcal C}}
\def\D {{\mathcal D}}
\def\W {{\mathcal W}}

\def\E{{\bf E}}

\def\sm{[\!]}

\def\disp{\displaystyle}

\begin{document}
\baselineskip 15pt

\title{Dimension  theory of iterated function systems}

\date{}

\author{ De-Jun FENG}
\address{
Department of Mathematics,
The Chinese University of Hong Kong,
Shatin,  Hong Kong}
\email{djfeng@math.cuhk.edu.hk} \keywords{
 Iterated function systems, invariant measures,  entropies, conditional measures,  Hausdorff dimension, local dimensions}
 \thanks {
2000 {\it Mathematics Subject Classification}: Primary 28A78, Secondary 37C45, 37A45, 28A80, 11Z05}
\author{Huyi Hu}
\address{Department of Mathematics, Michigan State University, East Lansing, MI 48824, USA
} \email{hu@math.msu.edu}

\maketitle

\begin{abstract}

Let $\{S_i\}_{i=1}^\ell$ be an iterated function system (IFS)  on $\R^d$ with attractor $K$.
Let $(\Sigma,\sigma)$ denote the one-sided full shift over the alphabet $\{1,\ldots, \ell\}$.  We define the projection entropy function $h_\pi$ on the space of invariant measures on $\Sigma$ associated with the coding map $\pi:\; \Sigma\to K$, and develop some basic ergodic properties about it.  This concept turns out to be crucial in the study of dimensional properties of  invariant measures on $K$. We show that  for any conformal IFS (resp., the direct product of finitely many conformal IFS), without any separation  condition, the projection of an ergodic measure under $\pi$ is always exactly dimensional and, its Hausdorff dimension can be represented as the ratio of its projection entropy to its Lyapunov exponent (resp., the linear combination of projection entropies associated with several coding maps). Furthermore, for any conformal IFS and certain affine IFS,  we prove a variational principle between the Hausdorff  dimension of the attractors and that of projections  of ergodic measures.

\end{abstract}

\maketitle



 \tableofcontents



\section{Introduction}

Let $\{S_i: X\to X\}_{i=1}^\ell$ be a family of contractive maps
on a nonempty closed set $X\subset \R^d$.
Following Barnsley \cite{Bar-book}, we say that $\Phi=\{S_i\}_{i=1}^\ell$
is an {\it iterated function system} (IFS) on $X$.
Hutchinson \cite{Hut81} showed that there is a unique nonempty compact
set $K\subset X$,  called the {\it attractor} of $\{S_i\}_{i=1}^\ell$,
such that $K=\bigcup_{i=1}^\ell S_i(K)$.
A probability measure $\mu$ on $\R^d$ is said to be {\it exactly dimensional}
if there is a constant $C$ such that the {\it local dimension}
$$d(\mu, x)=\lim_{r\to 0}\frac{\log \mu (B(x,r))}{\log r}$$
exists and equals $C$ for $\mu$-a.e. $x\in \R^d$, where $B(x,r)$ denotes the closed ball of radius $r$ centered at $x$. It was shown by Young \cite{You82} that in such case,  the
Hausdorff dimension of $\mu$ is equal to $C$.
(See also \cite{Fan94,Mat95,Pes-book}.)

The motivation of the paper is to study the Hausdorff dimension of
an invariant measure $\mu$ (see Section~\ref{S1} for precise meaning) for conformal and affine IFS with
overlaps. To deal with overlaps, we regard such a system as the
image of a natural projection $\pi$ from the one-sided full shift space over $\ell$ symbols. Hence we obtain a dynamical system. We
introduce a notion {\it projection entropy}, which plays the similar
role as the classical entropy for IFS satisfying the open set
condition, and it becomes the classical entropy if the projection is
finite to one. The concept of projection entropy turns out to be
crucial in the study of dimensional properties of invariant measures
on attractors of either conformal IFS with overlaps or affine IFS.

We develop some basic properties about projection entropy
(Theorem~\ref{thm-1.0}, \ref{thm-1.0'}).
We prove that for conformal IFS with overlaps, every ergodic measure
$\mu$ is exactly dimensional and $\disp d(\mu, x)$ is equal to the
projection entropy divided by the Lyapunov exponent
(Theorem~\ref{thm-1.2}). Furthermore, if $\Phi$ is a direct product of
conformal IFS (see Definition~\ref{de-1.6} for precise meaning),
then for every ergodic measure on $K$ the local dimension can be
expressed by a Ledrappier-Young type formula in terms of projection
entropies and Lyapunov exponents (Theorem~\ref{thm-1.3}). We also
prove variational results about Hausdorff dimension for conformal
IFS and certain affine IFS (Theorem~\ref{thm-1.5} and
~\ref{thm-1.6}), which says that the Hausdorff dimension of the
attractor $K$ is equal to the supremum of Hausdorff dimension of
$\mu$ taking over all ergodic measures. The results we obtain cover
some interesting cases such as $S_i(x)=\mbox{diag}(\rho_1,\ldots,
\rho_d)x+a_i$, where $i=1,\ldots,\ell$ and $\rho_i^{-1}$ are {\it
Pisot  or Salem numbers} and $a_i\in \Z^d$.

The problem whether a given measure is exactly dimensional,
and whether the Hausdorff dimension of an attractor
can be assumed or approximated by that of an invariant measure
have been well studied in the literature for
$C^{1+\alpha}$ conformal IFS which satisfy the open set condition
(cf. \cite{Bed91, GeHa89, Pat97}). It is well known that in such
case, any ergodic measure $\mu$ is exactly dimensional with the
Hausdorff dimension given by the classic entropy divided by the Lyapunov exponent.
 Furthermore there is
a unique invariant measure $\mu$ with $\dim_H(\mu)=\dim_H(K)$, the
Hausdorff dimension of $K$. However the problems become much
complicated and  intractable without the assumption of the open set
condition. Partial results have only been obtained for conformal
IFS that satisfy the {\it finite type condition} (see \cite{NgWa01}
for the definition). In that case, a Bernoulli measure is exactly
dimensional and its Hausdorff dimension may be expressed as the
upper Lyapunov exponent of certain random matrices (see e.g.
\cite{Fen03, Fen05, Lal98, LePo94, LaNg99}), and furthermore the
Hausdorff dimension of $K$ can be computed (see e.g.
\cite{Lal97,RaWe98,NgWa01}).

There are  some results  for certain special non-overlapping affine
IFS. McMullen \cite{McM84} and Bedford \cite{Bed84} independently
computed the Hausdorff dimension and the box dimension of the
attractor of the following planar affine IFS
\begin{equation*}\label{1.2}
    S_i(x) = \begin{bmatrix}n^{-1} &0\\0&k^{-1}\end{bmatrix} x
            +\begin{bmatrix} {a_i}/{n}\\ {b_i}/{k},  \end{bmatrix},\qquad i=1,\ldots, \ell,
\end{equation*}
where all $a_i,b_i$ are integers, $0 \leq a_i <n$ and $0 \leq b_i
<k$. Furthermore they showed that there is a Bernoulli measure of
full Hausdorff dimension. This result was extended by Kenyon and
Peres \cite{KePe96} to higher dimensional self-affine Sierpinski
sponges, for which ergodic measures are proved to be exactly
dimensional with Hausdorff dimension given by  a Ledrappier-Young
type formula. Another  extension of McMullen and Bedford's result to
a boarder class of planar affine IFS $\{S_i\}_{i=1}^\ell$ was given
by Gatzouras and Lalley \cite{GaLa92}, in which $S_i$ map the unit
square $(0,1)^2$ into disjoint rectangles with sides parallel to the
axes (where the longer sides are parallel to the $x$-axis,
furthermore once projected onto the $x$-axis these rectangles are
either identical, or disjoint). Further extensions were given
recently by Bara\'{n}ski \cite{Bar07}, Feng and Wang \cite{FeWa05}, Luzia \cite{Luz06} and
Olivier \cite{Oli08}.  For other related results,
see e.g. \cite{PrUr89, Led92,KePe96a, GaPe97, Hu96,HuLa95, Fen05,
 Shm06, BaMe07,KaSh08}.

Along another direction, in \cite{Fal88} Falconer gave a variational
formula for the Hausdorff and box
dimensions for ``almost all'' self-affine sets under some
assumptions. This formula remains true under some weaker conditions
\cite{Sol98,JPS07}. K\"{a}enm\"{a}ki \cite{Kae04} proved that for
``almost all'' self-affine sets there exists an ergodic measure $m$
so that $m\circ \pi^{-1}$ is of  full Hausdorff dimension.

Our arguments use ergodic theory and Rohlin's theory about conditional
measures.  The proofs of Theorem \ref{thm-1.1} and Theorem \ref{thm-1.3}
are based on some ideas from the work of Ledrappier and Young \cite{LeYo85}
and techniques in analyzing the densities of conditional
measures associated with overlapping IFS.

So far we have restricted ourselves on the study of finite contractive IFS. However we point out that part of our results remain valid for certain non-contractive infinite IFS (see Section~\ref{S-10} for details).

The paper is organized  as follows.
The main results are given in Section~\ref{S1}.
In Section~\ref{S2}, we  prove some density results
about conditional measures.
In Section~\ref{S3}, we investigate the properties of
projection entropy and prove Theorem \ref{thm-1.0} and \ref{thm-1.0'}.
In Section~\ref{S4}, we give some local geometric
properties of a $C^1$ IFS. In Section~\ref{S5},
we prove a  generalized  version of  Theorem \ref{thm-1.1},
which is based on a key proposition (Proposition \ref{pro-5.1})
about the densities of conditional measures. In Section~\ref{S6},
we prove Theorem~\ref{thm-1.3}
and \ref{thm-1.4}. In Section~\ref{S7}, we prove Theorem \ref{thm-1.5}
and in Section~\ref{S8}, we prove Theorem \ref{thm-1.6}. In Section ~\ref{S-10} we give a remark regarding certain non-contractive infinite IFS.

\section{Statement of main results}\label{S1}

Let $\{S_i\}_{i=1}^\ell$ be an IFS on a closed set $X\subset \R^d$. Denote by $K$ its attractor.
Let $\Sigma=\{1,\ldots,\ell\}^\N$  associated with the left shift $\sigma$ (cf. \cite{Bow75}). Let $\M_\sigma(\Sigma)$ denote the space of $\sigma$-invariant measures on $\Sigma$, endowed with the weak-star topology.  Let $\pi: \Sigma\to K$ be the canonical projection defined by
\begin{equation}
\label{e-1.1} \{\pi(x)\}=\bigcap_{n=1}^\infty S_{x_1}\circ S_{x_2}\circ\cdots
\circ S_{x_n}(K),\qquad\mbox{where } x=(x_i)_{i=1}^\infty.
\end{equation}
A measure $\mu$ on $K$ is called {\it invariant} (resp., {\it ergodic})
for the IFS if there is an invariant (resp. ergodic) measure $\nu$ on $\Sigma$
such that $\mu=\nu\circ \pi^{-1}$.

Let $(\Omega,\F,\nu)$ be a probability space.
For a sub-$\sigma$-algebra $\A$ of $\F$ and $f\in L^1(\Omega,\F,\nu)$,
we denote by ${\bf E}_\nu(f|\A)$ the the {\it conditional expectation of
$f$ given $\A$}.
For countable $\F$-measurable partition $\xi$ of $\Omega$,
we denote by ${\bf  I}_\nu(\xi|\A)$ the {\it conditional
information of $\xi$ given $\A$}, which is given by the formula
\begin{equation}
\label{e-1.2}
 {\bf I}_\nu(\xi|\A)=-\sum_{A\in \xi}\chi_A\log \E_\nu(\chi_A |\A),
\end{equation}
where $\chi_A$ denotes the characteristic function on $A$.
The {\it conditional entropy of $\xi$ given $\A$}, written
$H_\nu(\xi|\A)$, is defined by the formula
\begin{equation*}
 H_\nu(\xi|\A)=\int {\bf I}_\nu(\xi|\A)\; d\nu.
\end{equation*}
(See e.g. \cite{Par-book} for more details.)
The above information and entropy are unconditional when
$\A={{\mathcal N}}$, the trivial $\sigma$-algebra consisting of sets
of measure zero and one, and in this case we write
\begin{equation*}
 {\bf I}_\nu(\xi|{{\mathcal
N}})=:{\bf I}_\nu(\xi)\quad\mbox{and}\quad H_\nu(\xi|{{\mathcal
N}})=:H_\nu(\xi).
\end{equation*}

Now we consider the space $(\Sigma, \B(\Sigma),m)$, where $\B(\Sigma)$
is the Borel $\sigma$-algebra on $\Sigma$ and $m\in \M_\sigma(\Sigma)$.
Let $\P$ denote the Borel partition
\begin{equation}
\label{e-1P}
\P=\{[j]: 1\leq j\leq \ell\}
\end{equation}
 of $\Sigma$, where
$[j]=\{(x_i)_{i=1}^\infty\in \Sigma:\; x_1=j\}$.
Let  ${\mathcal I}$ denote the $\sigma$-algebra
\begin{equation*}
\I=\{B\in \B(\Sigma):\; \sigma^{-1}B=B\}.
\end{equation*}
For convenience, we  use
$\gamma$ to denote the Borel $\sigma$-algebra $\B(\R^d)$ on $\R^d$.

\begin{de} \label{de-1.1}  {\rm For any $m\in \M_\sigma(\Sigma)$, we call
\begin{equation*}
h_\pi(\sigma,m):=H_m(\P|\sigma^{-1}\pi^{-1}\gamma)-H_m(\P|\pi^{-1}\gamma)
\end{equation*}
the {\it projection entropy of $m$ under $\pi$
w.r.t.  $\{S_i\}_{i=1}^\ell$}, and we call
$$
h_\pi(\sigma,m,x):={\bf E}_m\left(f\big|{\mathcal I}\right)(x)
$$
the {\it local projection entropy of $m$ at $x$ under $\pi$
w.r.t.  $\{S_i\}_{i=1}^\ell$}, where $f$ denotes the function ${\bf I}_m(\P|\sigma^{-1}\pi^{-1}\gamma)-{\bf I}_m(\P|\pi^{-1}\gamma)$.}
\end{de}

It is clear that $h_\pi(\sigma,m)=\int h_\pi(\sigma,m,x)\;dm(x)$.
Our first result is the following theorem.

\begin{thm}\label{thm-1.0} Let $\{S_i\}_{i=1}^\ell$ be an IFS. Then
\begin{itemize}
 \item[(i)] For any $m\in \M_\sigma(\Sigma)$, we have $0\leq h_\pi(\sigma,m)\leq h(\sigma,m)$, where $h(\sigma,m)$ denotes the classical measure-theoretic entropy of $m$ associated with $\sigma$.
\item[(ii)] The map $m\mapsto h_\pi(\sigma,m)$ is affine on  $\M_\sigma(\Sigma)$.  Furthermore if
$m=\int \nu \;d{\Bbb P}(\nu)$ is the ergodic decomposition of $m$, we have
$$
h_\pi(\sigma,m)=\int h_\pi(\sigma,\nu) \;d {\Bbb P} (\nu).
$$
\item[(iii)] For any $m\in \M_\sigma(\Sigma)$, we have
$$
\lim_{n\to \infty}\frac{1}{n} {\bf I}_m(\P_0^{n-1}|\pi^{-1}\gamma)(x)=h(\sigma,m,x)-h_\pi(\sigma,m,x)
$$
for $m$-a.e. $x\in \Sigma$, where $h(\sigma,m,x)$ denotes the local entropy of $m$ at $x$, that is,
$h(\sigma,m,x)={\bf I}_m(\P|\sigma^{-1}\B(\Sigma))(x)$.
\end{itemize}
\end{thm}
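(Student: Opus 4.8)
Everything rests on the semiconjugacy relation $\pi(x)=S_{x_1}(\pi(\sigma x))$, which is immediate from \eqref{e-1.1}, and on its measurable shadow: since each $S_i$ is continuous, $\pi^{-1}(B)\cap[i]=\sigma^{-1}\pi^{-1}(S_i^{-1}B)\cap[i]$ for every Borel $B\subseteq\R^d$, so that
$$
\pi^{-1}\gamma\ \subseteq\ \P\vee\sigma^{-1}\pi^{-1}\gamma \pmod{m},\qquad\text{and, on iterating,}\qquad \pi^{-1}\gamma\ \subseteq\ \P_0^{n-1}\vee\sigma^{-n}\pi^{-1}\gamma\quad(n\ge1).
$$
In particular $H_m(\P\,|\,\sigma^{-1}\pi^{-1}\gamma)=H_m(\P\vee\pi^{-1}\gamma\,|\,\sigma^{-1}\pi^{-1}\gamma)$. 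This is essentially the only structural input, as no separation hypothesis is assumed; the rest is general ergodic theory of the noninvertible shift $\sigma$ together with Rohlin's theory of conditional measures.

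I would prove (iii) first, as it is the technical heart and the other two parts follow from it. It is a Shannon--McMillan--Breiman type theorem for the conditional information ${\bf I}_m(\P_0^{n-1}|\pi^{-1}\gamma)(x)$: the plan is to use the chain rule for ${\bf I}$ and the invariance identity ${\bf I}_m(\sigma^{-1}\eta\,|\,\sigma^{-1}\C)(x)={\bf I}_m(\eta\,|\,\C)(\sigma x)$ to express ${\bf I}_m(\P_0^{n-1}|\pi^{-1}\gamma)(x)$ as a Birkhoff-type sum along the orbit of $x$ plus correction terms measuring the discrepancy between conditioning on $\pi^{-1}\gamma$ and on $\sigma^{-k}\pi^{-1}\gamma$, and then to control these corrections using the behaviour of the densities of the conditional measures together with the inclusion above. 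The Birkhoff ergodic theorem handles the main term, and a martingale convergence argument together with a Chung--Neveu maximal inequality for conditional information (so that the relevant suprema lie in $L^1(m)$) handles the corrections; integrating the resulting a.e. limit, its defect from $h(\sigma,m)$ is identified with $h_\pi(\sigma,m)$. This step is where I expect the real difficulty to lie: $\pi^{-1}\gamma$ is neither $\sigma$-invariant nor generated by a finite partition, so the classical proof of the Shannon--McMillan--Breiman theorem does not transcribe directly, and every estimate must be rebuilt from the single inclusion above.

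Granting (iii), part (i) is painless. Integrating the a.e. convergence in (iii) — legitimate since $0\le{\bf I}_m(\P_0^{n-1}|\pi^{-1}\gamma)\le{\bf I}_m(\P_0^{n-1})$ and $\sup_n\frac1n{\bf I}_m(\P_0^{n-1})\in L^1(m)$ by Chung's lemma — gives $\frac1n H_m(\P_0^{n-1}|\pi^{-1}\gamma)\to h(\sigma,m)-h_\pi(\sigma,m)$, and combining with $0\le\frac1n H_m(\P_0^{n-1}|\pi^{-1}\gamma)\le\frac1n H_m(\P_0^{n-1})\to h(\sigma,m)$ yields $0\le h_\pi(\sigma,m)\le h(\sigma,m)$. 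Non-negativity also has a direct proof: approximating $\pi^{-1}\gamma$ from below by finite partitions $\gamma_k$, one has by $\sigma$-invariance of $m$ that $H_m(\P\vee\pi^{-1}\gamma_k\,|\,\sigma^{-1}\pi^{-1}\gamma_k)-H_m(\P\vee\pi^{-1}\gamma_k\,|\,\pi^{-1}\gamma_k)=H_m(\sigma^{-1}\pi^{-1}\gamma_k\,|\,\P\vee\pi^{-1}\gamma_k)\ge0$, and on letting $k\to\infty$ the left side tends to $h_\pi(\sigma,m)$, the only subtle point being $H_m(\pi^{-1}\gamma_k\,|\,\P\vee\sigma^{-1}\pi^{-1}\gamma_k)\to0$, where the inclusion is used once more.

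For (ii) I would disintegrate $m$ over the $\sigma$-algebra $\I$ of $\sigma$-invariant sets. If $m=\int\nu\,d{\Bbb P}(\nu)$ is the ergodic decomposition, the conditional measure of $m$ over $\I$ at $y$ is the ergodic component $\nu_y$ through $y$, and since $\I$ is $\nu_y$-trivial one has ${\bf E}_m(g\,|\,\A\vee\I)(y)={\bf E}_{\nu_y}(g\,|\,\A)(y)$ for $m$-a.e. $y$, for every $g\in L^1(m)$ and every sub-$\sigma$-algebra $\A$. Applying this with $\A=\sigma^{-1}\pi^{-1}\gamma$ and with $\A=\pi^{-1}\gamma$ gives ${\bf I}_m(\P\,|\,\A\vee\I)(y)={\bf I}_{\nu_y}(\P\,|\,\A)(y)$, hence $H_m(\P\,|\,\sigma^{-1}\pi^{-1}\gamma\vee\I)-H_m(\P\,|\,\pi^{-1}\gamma\vee\I)=\int h_\pi(\sigma,\nu)\,d{\Bbb P}(\nu)$. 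The remaining, delicate point is that adjoining $\I$ to both conditioning $\sigma$-algebras does not change this difference, i.e. that it already equals $h_\pi(\sigma,m)$; I would expect this to use the inclusion $\pi^{-1}\gamma\subseteq\P_0^{n-1}\vee\sigma^{-n}\pi^{-1}\gamma$ and $\sigma$-invariance of $\I$ (so that modulo the finite partition $\P$ the $\sigma$-algebra $\I$ is asymptotically subordinate to the conditioning algebras), and it can alternatively be obtained from (iii) by comparing $\frac1n{\bf I}_m(\P_0^{n-1}|\pi^{-1}\gamma)$ with $\frac1n{\bf I}_{\nu_y}(\P_0^{n-1}|\pi^{-1}\gamma)$ and showing the difference tends to $0$ $m$-a.e. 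Finally, affineness of $m\mapsto h_\pi(\sigma,m)$ follows at once from the ergodic-decomposition formula, since the ergodic decomposition of $tm_1+(1-t)m_2$ is $t$ times that of $m_1$ plus $(1-t)$ times that of $m_2$.
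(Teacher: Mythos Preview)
Your plan for (iii) is essentially the paper's: write ${\bf I}_m(\P_0^{k-1}|\pi^{-1}\gamma)(x)$ as a telescoping sum whose general term involves a correction coming from the identity $\widehat{\P}\vee\sigma^{-1}\pi^{-1}\gamma=\widehat{\P}\vee\pi^{-1}\gamma$ (Lemma~\ref{lem-3.6}), show the terms converge a.e.\ and in $L^1$ by martingale convergence, and finish with the averaged Birkhoff lemma (Lemma~\ref{lem-3.15}). Deriving the inequalities in (i) by integrating (iii) is also fine.

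The gaps are in your ``direct'' non-negativity argument and in (ii), and they are the same gap. In the direct argument you need $H_m(\pi^{-1}\gamma_k\,|\,\P\vee\sigma^{-1}\pi^{-1}\gamma_k)\to 0$; the inclusion $\pi^{-1}\gamma\subseteq\widehat{\P}\vee\sigma^{-1}\pi^{-1}\gamma$ only gives $H_m(\pi^{-1}\gamma_k\,|\,\P\vee\sigma^{-1}\pi^{-1}\gamma)\to 0$, and the diagonal limit you want does not follow. What the paper actually proves (Lemma~\ref{lem-3.8}) is that this quantity is \emph{bounded} by $d\log(\sqrt d+1)$, using the geometric fact that each $S_j$, being a contraction on $\R^d$, sends a dyadic cube into a set meeting only boundedly many dyadic cubes of the same scale. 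This by itself only gives $h_\pi(\sigma,m)\ge -d\log(\sqrt d+1)$. The missing idea is the ``$k$-fold'' identity (Lemma~\ref{lem-3.7})
\[
H_m\bigl(\P_0^{k-1}\,\big|\,\sigma^{-k}\pi^{-1}\gamma\bigr)-H_m\bigl(\P_0^{k-1}\,\big|\,\pi^{-1}\gamma\bigr)=k\,h_\pi(\sigma,m),
\]
obtained by iterating Lemma~\ref{lem-3.6}; applying the bounded-error estimate to the iterated IFS $\{S_{i_1}\circ\cdots\circ S_{i_k}\}$ and dividing by $k$ then yields $h_\pi(\sigma,m)\ge 0$.

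The same device is what resolves your ``delicate point'' in (ii). One inequality does come, as you suggest, from (iii): applying your Proposition to $\A=\pi^{-1}\gamma\vee\I$ (which still satisfies $\widehat{\P}\vee\sigma^{-1}\A=\widehat{\P}\vee\A$) and comparing with $\A=\pi^{-1}\gamma$ gives $\int(h(\sigma,\nu)-h_\pi(\sigma,\nu))\,d{\Bbb P}(\nu)\le h(\sigma,m)-h_\pi(\sigma,m)$, hence $h_\pi(\sigma,m)\le\int h_\pi(\sigma,\nu)\,d{\Bbb P}(\nu)$ after using the classical decomposition for $h$. But the reverse inequality needs a separate argument: the paper observes that $B(m):=H_m(\sigma^{-1}\pi^{-1}\D_n\,|\,\P\vee\pi^{-1}\widehat{\D_n})$ is concave in $m$, differs from $H_m(\P|\sigma^{-1}\pi^{-1}\widehat{\D_n})-H_m(\P|\pi^{-1}\widehat{\D_n})$ by at most the geometric constant $d\log(\sqrt d+1)$, applies Jensen to the decomposition, and then uses the $k$-fold identity and Proposition~\ref{pro-3.2} to kill the constant. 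Your proposal to show ``adjoining $\I$ does not change the difference'' in one stroke is not supported by the inclusion alone. Similarly, your derivation of affineness from the ergodic-decomposition formula is circular in the paper's logic, since Proposition~\ref{pro-3.2} (and hence the proof of the decomposition formula) already uses affineness; the paper proves affineness directly, again via the $k$-fold identity together with the elementary $\log 2$ bound~\eqref{e-3.5}.
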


 Part (iii) of the theorem is an analogue of  the classical relativized  Shannon-McMillan-Breiman theorem (see, e.g. \cite[Lemma 4.1]{Bog92}).
However, we should notice that the sub $\sigma$-algebra $\pi^{-1}\gamma$ in our consideration is not $\sigma$-invariant in general
(see Remark \ref{rem-3.14}).

Part (iii) also implies that if the map $\pi\colon \Sigma \to K$ is finite-to-one, then $$h_\pi(\sigma,m)=h(\sigma,m)$$ for any $m\in \M_\sigma(\Sigma)$. In Section~\ref{S3},  we will present a sufficient and necessary condition for the equality (see Corollary \ref{cor-3.17}). However for general
overlapping IFS, the projection entropy can be strictly less than the classical entropy.

In our next theorem, we  give a geometric characterization of the projection entropy for certain affine IFS, which
will be used later in the proof of our variational results about the Hausdorff and box dimensions of self-affine sets.

\begin{thm}
\label{thm-1.0'}
Assume that $\Phi=\{S_i\}_{i=1}^\ell$ is an IFS on $\R^d$ of the form
$$S_i(x)=Ax+c_i \qquad (i=1,\ldots, \ell),$$
where $A$ is a $d\times d$ non-singular contractive real matrix and $c_i\in \R^d$. Let $K$ denote the attractor of $\Phi$. Let $\Q$ denote the partition $\{[0,1)^d+\alpha:\; \alpha\in \Z^d\}$ of $\R^d$. For $n=0, 1,\ldots$, and $x\in \R^d$,  we set $
\Q_n=\{A^nQ:\; Q\in \Q\}$. Then
\begin{itemize}
\item[(i)] For any  $m\in \M_\sigma(\Sigma)$, we have
\begin{equation*}
h_\pi(\sigma,m)=\lim_{n\to\infty}\frac{H_m(\pi^{-1}\Q_n)}{n}.
\end{equation*}
\item[(ii)] Moreover,
\begin{equation*}
\lim_{n\to \infty}\frac{\log \#\{Q\in \Q:\; A^nQ\cap K\neq \emptyset\}}{n}
=\sup\{h_\pi(\sigma,m):\; m\in \M_\sigma(\Sigma)\}.
\end{equation*}
\end{itemize}
\end{thm}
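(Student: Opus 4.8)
\medskip
The plan is to prove (i) first and then deduce (ii) from it. Write $\mu=m\circ\pi^{-1}$, and note that $\pi^{-1}\Q_n$ has exactly $N_n:=\#\{Q\in\Q:A^nQ\cap K\neq\emptyset\}$ nonempty atoms (the sets $\pi^{-1}(Q)$ with $Q\cap A^{-n}K\neq\emptyset$, all nonempty since $\pi$ maps onto $K$), so $H_m(\pi^{-1}\Q_n)\le\log N_n$ for every $m\in\M_\sigma(\Sigma)$. For (i), start from $\pi(x)=c_{x_1}+A\pi(\sigma x)$, which iterated gives $A^{-(j+r)}\pi(x)=A^{-(j+r)}w_r(x)+A^{-j}\pi(\sigma^rx)$, where $w_r(x)=\sum_{i=0}^{r-1}A^ic_{x_{i+1}}$ is $\P_0^{r-1}$-measurable and $\pi(\sigma^rx)\in K$. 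Taking integer parts and using $\lfloor v+t\rfloor-\lfloor v\rfloor\in\{0,1\}^d$ for $t\in[0,1)^d$, one checks that, \emph{given} the atom of $\P_0^{r-1}$ containing $x$, the atoms of $\pi^{-1}\Q_{j+r}$ and of $\sigma^{-r}\pi^{-1}\Q_j$ determine each other up to at most $2^d$ choices; hence $\P_0^{r-1}\vee\pi^{-1}\Q_{j+r}$ and $\P_0^{r-1}\vee\sigma^{-r}\pi^{-1}\Q_j$ have $H_m$ differing by at most $d\log2$, and using $\sigma$-invariance of $m$ this rearranges into
$$\Bigl|H_m(\pi^{-1}\Q_{j+r})-H_m(\pi^{-1}\Q_j)-\bigl(H_m(\P_0^{r-1}\mid\sigma^{-r}\pi^{-1}\Q_j)-H_m(\P_0^{r-1}\mid\pi^{-1}\Q_{j+r})\bigr)\Bigr|\le d\log2 .$$

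Now fix $r$ and telescope this along $j=0,r,\dots,(L-1)r$ with $n\approx Lr$; the $L\approx n/r$ error terms contribute $O(n/r)$. Since $\pi^{-1}\gamma$ is the join of the $\pi^{-1}\Q_{ir}$, a martingale argument gives $H_m(\P_0^{r-1}\mid\pi^{-1}\Q_{ir})\to H_m(\P_0^{r-1}\mid\pi^{-1}\gamma)$ and $H_m(\P_0^{r-1}\mid\sigma^{-r}\pi^{-1}\Q_{ir})\to H_m(\P_0^{r-1}\mid\sigma^{-r}\pi^{-1}\gamma)$ as $i\to\infty$, so a Cesàro average yields, for every $r$,
$$\lim_{n\to\infty}\frac1n H_m(\pi^{-1}\Q_n)=\frac1r\bigl(H_m(\P_0^{r-1}\mid\sigma^{-r}\pi^{-1}\gamma)-H_m(\P_0^{r-1}\mid\pi^{-1}\gamma)\bigr)+O(1/r)$$
(the left side being a genuine limit because the increments $H_m(\pi^{-1}\Q_{n+1})-H_m(\pi^{-1}\Q_n)$ are bounded). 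Letting $r\to\infty$: by Theorem~\ref{thm-1.0}(iii), $\frac1rH_m(\P_0^{r-1}\mid\pi^{-1}\gamma)\to h(\sigma,m)-h_\pi(\sigma,m)$, while $H_m(\P_0^{r-1}\mid\sigma^{-r}\B(\Sigma))\le H_m(\P_0^{r-1}\mid\sigma^{-r}\pi^{-1}\gamma)\le H_m(\P_0^{r-1})$ together with the relativized Shannon--McMillan--Breiman theorem squeeze $\frac1rH_m(\P_0^{r-1}\mid\sigma^{-r}\pi^{-1}\gamma)\to h(\sigma,m)$. Subtracting proves $\lim_n\frac1nH_m(\pi^{-1}\Q_n)=h_\pi(\sigma,m)$, which is (i).

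For (ii), the inequality ``$\ge$'' is immediate from (i): $h_\pi(\sigma,m)=\lim_n\frac1nH_m(\pi^{-1}\Q_n)\le\liminf_n\frac1n\log N_n$, hence $\sup_m h_\pi(\sigma,m)\le\liminf_n\frac1n\log N_n$. For the reverse I would run a Misiurewicz-type argument: choose $n_j\to\infty$ with $\frac1{n_j}\log N_{n_j}\to c:=\limsup_n\frac1n\log N_n$, pick for each occupied cell a point of $\Sigma$ projecting into it, let $\nu_{n_j}$ be normalized counting measure on these $N_{n_j}$ points (so $H_{\nu_{n_j}}(\pi^{-1}\Q_{n_j})=\log N_{n_j}$), put $m_{n_j}=\frac1{n_j}\sum_{k=0}^{n_j-1}\nu_{n_j}\circ\sigma^{-k}$, and pass to a weak-$*$ limit $m_*\in\M_\sigma(\Sigma)$ (invariant, and the map $m\mapsto h_\pi(\sigma,m)$ is affine by Theorem~\ref{thm-1.0}(ii)). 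Feeding the block comparison above---keeping the $\Q_q$-cell data, not merely the symbols $\P_0^{q-1}$---into the standard Misiurewicz estimate, and using concavity of $m\mapsto H_m(\cdot)$ together with the fact that all partitions of $\Sigma$ occurring are unions of cylinders (hence $m_*$-continuity sets), one bounds $\frac1{n_j}\log N_{n_j}$ above by $\frac1qH_{m_{n_j}}(\pi^{-1}\Q_q)+O(1/q)+o_j(1)$; letting $j\to\infty$, then $q\to\infty$, and applying (i) to $m_*$ gives $c\le h_\pi(\sigma,m_*)\le\sup_m h_\pi(\sigma,m)$. Together with ``$\ge$'', the limit in (ii) exists and equals $\sup_m h_\pi(\sigma,m)$.

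The principal obstacle is the non-lattice case of (i): when $A^{-1}\Z^d\not\subseteq\Z^d$ the clean identity $\P\vee\pi^{-1}\Q_n=\P\vee\sigma^{-1}\pi^{-1}\Q_{n-1}$ fails and is replaced only by the ``$2^d$-ambiguity'' above, so a one-step telescoping leaks a fixed amount per step; the cure---block-telescoping with block length $r\to\infty$, forcing the double limit---is the heart of (i). A secondary point is the convergence $H_m(\xi\mid\pi^{-1}\Q_N)\to H_m(\xi\mid\pi^{-1}\gamma)$ for fixed finite $\xi$ despite the $\pi^{-1}\Q_N$ not being nested; this follows by comparing with the increasing filtration $\bigvee_{N'\le N}\pi^{-1}\Q_{N'}$, using the uniform bound (needed repeatedly) that a cell of $\Q_{N'}$ meets boundedly many cells of $\Q_N$ whenever $N'\ge N$, since $A^{N'-N}[0,1)^d$ has diameter $\le\sqrt d$ when $\|A\|\le1$. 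Finally, in (ii) the delicate step is that the Misiurewicz estimate must recover $h_\pi$ and not the larger $h(\sigma,\cdot)$, which is exactly why part (i)---and not merely the symbolic block structure---is indispensable there.
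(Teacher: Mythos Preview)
Your argument for part~(i) is correct and follows essentially the same route as the paper: both use the $2^d$-comparison between $\P_0^{r-1}\vee\pi^{-1}\Q_{j+r}$ and $\P_0^{r-1}\vee\sigma^{-r}\pi^{-1}\Q_j$, then telescope. The only difference is cosmetic: the paper has already established (Lemma~\ref{lem-3.7}) the exact identity
\[
H_m(\P_0^{r-1}\mid\sigma^{-r}\pi^{-1}\gamma)-H_m(\P_0^{r-1}\mid\pi^{-1}\gamma)=r\,h_\pi(\sigma,m),
\]
so it never needs to send $r\to\infty$, invoke Theorem~\ref{thm-1.0}(iii), or run your squeeze; you could cite this identity and drop the last paragraph of the argument for~(i) entirely.

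Part~(ii) has a genuine gap. The paper does \emph{not} use a Misiurewicz argument. Instead, for each large $n$ it selects a subfamily $\Gamma$ of at least $7^{-d}N_n$ occupied cells that are mutually ``$2Q$-separated''; for each cell it picks a word $u\in\Sigma_n$ with $S_uK$ meeting that cell; the separation forces the images $S_u(K)$ to be pairwise disjoint, so the uniform Bernoulli measure $\nu$ on these words satisfies $h_\pi(\sigma^n,\nu)=h(\sigma^n,\nu)=\log\#\Gamma$ (Lemma~\ref{lem-3.20}), and Proposition~\ref{pro-3.2} turns this into $h_\pi(\sigma,\mu)\ge\frac{1}{n}\log(7^{-d}N_n)$ for a $\sigma$-invariant $\mu$.

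Your Misiurewicz route hits two obstacles. First, the atoms of $\pi^{-1}\Q_q$ are \emph{not} unions of cylinders ($\pi$ is not locally constant), so the weak-$*$ continuity of $m\mapsto H_m(\pi^{-1}\Q_q)$ at $m_*$ is not automatic; one would need the boundaries $\partial(A^qQ)$ to be $\mu_*$-null, which requires a separate argument (e.g.\ translating $\Q$). Second, and more seriously, the block subadditivity you need is
\[
H_\nu(\pi^{-1}\Q_{Lq})\le\sum_{i=0}^{L-1}H_{(\sigma^{iq})_*\nu}(\pi^{-1}\Q_q)+O(L),
\]
and this fails. From $A^{-(j+q)}\pi(x)=A^{-(j+q)}w_q(x)+A^{-j}\pi(\sigma^qx)$, knowing the $\pi^{-1}\Q_q$-atom of $x$ only pins down $A^{-q}w_q(x)$ up to an error in a set of diameter $O(1)$, and applying $A^{-j}$ amplifies that error by $\|A^{-j}\|\to\infty$. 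The only block inequality your comparison yields is $H_\nu(\pi^{-1}\Q_{Lq})\le\sum_i H_{(\sigma^{iq})_*\nu}(\P_0^{q-1})+O(L)$, which after passing to the limit delivers $h(\sigma,m_*)$, not $h_\pi(\sigma,m_*)$---exactly the pitfall you flagged in your final sentence but did not actually avoid.
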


\medskip

To give the applications of  projection entropy in dimension theory of IFS,  we need some more notation and definitions.
\begin{de}
\label{e-j7.7}
{\rm $\{S_i: X\to X\}_{i=1}^\ell $
is called a {\it $C^1$ IFS} on a compact set
$X\subset \R^d$ if each $S_i$ extends to a contracting
$C^1$-diffeomorphism $S_i: U\to S_i(U)\subset U$ on an open set
$U\supset X$.
}
\end{de}

For any $d\times d$ real  matrix $M$, we use $\|M\|$ to denote the usual norm of $M$, and  $\sm M\sm$ the smallest singular value of $M$, i.e.,
\begin{equation}
\label{e-M1}
\begin{split}
\|M\| &=\max\{|Mv|: \;
v\in \R^d, |v|=1\} \quad \mbox{and}\\
 \sm M\sm &=\min\{|Mv|: \;
v\in \R^d, |v|=1\}.
\end{split}
\end{equation}

\begin{de}
\label{de-1.2}
{\rm
Let $\{S_i\}_{i=1}^\ell$ be a $C^1$ IFS. For $x=(x_j)_{j=1}^\infty\in \Sigma$, the {\it upper and lower Lyapunov exponents of $\{S_i\}_{i=1}^\ell$ at $x$} are defined respectively by
\begin{align*}
&\overline{\lambda}(x)=-\liminf_{n\to \infty} \frac{1}{n}\log \sm S^\prime_{x_1\ldots x_n}(\pi\sigma^n x)\sm,\quad \\
&\underline{\lambda}(x)=-\limsup_{n\to \infty} \frac{1}{n}\log \| S^\prime_{x_1\ldots x_n}(\pi\sigma^n x)\|,
\end{align*}
where $S^\prime_{x_1\ldots x_n}(\pi\sigma^n x)$ denotes the differential of $S_{x_1\ldots x_n}:=
S_{x_1}\circ S_{x_2}\circ \ldots \circ S_{x_n}$ at $\pi\sigma^n x$. When $\overline{\lambda}(x)=\underline{\lambda}(x)$, the common value, denoted as  $\lambda(x)$,  is called the {\it Lyapunov exponent of $\{S_i\}_{i=1}^\ell$ at $x$}.
}
\end{de}

It is easy to check that both $\overline{\lambda}$ and $\underline{\lambda}$ are positive-valued  $\sigma$-invariant functions on $\Sigma$ (i.e. $\overline{\lambda}=\overline{\lambda}\circ \sigma$ and $\underline{\lambda}=\underline{\lambda}\circ \sigma$). Recall that for a probability measure $\mu$ on $\R^d$,  the {\it local upper and lower dimensions} are defined respectively by
$$\overline{d}(\mu, x)=\limsup_{r\to 0}\frac{\log \mu (B(x,r))}{\log r},\quad \underline{d}(\mu, x)=\liminf_{r\to 0}\frac{\log \mu (B(x,r))}{\log r},$$
 where $B(x,r)$ denotes the closed ball of radius $r$ centered at $x$. If $\overline{d}(\mu, x)=\underline{d}(\mu, x)$, the common value is denoted as $d(\mu,x)$ and is called the {\it local dimension} of $m$ at $x$.

  The following theorem  gives an estimate of local dimensions of invariant measures on the attractor of  an arbitrary $C^1$ IFS, without any separation condition.
\begin{thm}
\label{thm-1.1}Let $\{S_i\}_{i=1}^\ell$ be a $C^1$ IFS with attractor $K$.  Then for $\mu=m\circ
\pi^{-1}$, where $m\in \M_\sigma(\Sigma)$, we have the following estimates:
\begin{equation*}
    \overline{d}(\mu,\pi x)\leq \frac{h_{\pi}(\sigma, m, x)}
  {\underline{\lambda}(x)} \quad\mbox{  and }\quad
  \underline{d}(\mu,\pi x)\geq \frac{h_\pi(\sigma,m,x)}
  {\overline{\lambda}(x)} \quad\mbox {for $m$-a.e. $x\in \Sigma$,}\end{equation*}
where $h_\pi(\sigma,m,x)$ denotes the local projection entropy of $m$ at $x$ under $\pi$ (see Definition \ref{de-1.1}). In particular, if $m$ is ergodic, we have
\begin{equation*}
    \frac{h_\pi(\sigma,m)}{\int \overline{\lambda} \;dm}\leq  \underline{d}(\mu,z)\leq \overline{d}(\mu,z)\leq \frac{h_\pi(\sigma,m)}{\int\underline{\lambda} \;dm} \quad\mbox {for $\mu$-a.e. $z\in K$}.
\end{equation*}
\end{thm}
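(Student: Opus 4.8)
\medskip
\noindent{\bf Plan of proof.}\
The plan is to obtain, for $m$-a.e.\ $x\in\Sigma$, a sharp two-sided estimate of $\mu(B(\pi x,r))$ by comparing it with the measure of the cylinder $C_n(x):=[x_1\cdots x_n]$ and with the conditional measure of $C_n(x)$ given $\pi^{-1}\gamma$, and then to pass to local dimensions by converting the scale $r$ into the cylinder index $n$ via the Lyapunov exponents. Write $m_x^{\pi^{-1}\gamma}$ for the conditional measure of $m$ on the atom of $\pi^{-1}\gamma$ through $x$ furnished by Rohlin's disintegration, so that ${\bf I}_m(\P_0^{n-1})(x)=-\log m(C_n(x))$ and ${\bf I}_m(\P_0^{n-1}\,|\,\pi^{-1}\gamma)(x)=-\log m_x^{\pi^{-1}\gamma}(C_n(x))$. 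The classical Shannon--McMillan--Breiman theorem gives $\frac1n\log m(C_n(x))\to-h(\sigma,m,x)$, while Theorem~\ref{thm-1.0}(iii) gives $\frac1n\log m_x^{\pi^{-1}\gamma}(C_n(x))\to-\bigl(h(\sigma,m,x)-h_\pi(\sigma,m,x)\bigr)$. Subtracting, the local entropy $h(\sigma,m,x)$ cancels, and for $m$-a.e.\ $x$,
\[
\frac1n\log\frac{m(C_n(x))}{m_x^{\pi^{-1}\gamma}(C_n(x))}\;\longrightarrow\;-\,h_\pi(\sigma,m,x).
\]

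The geometric core --- which I expect to be the main obstacle --- is to show that this same ratio controls the $\mu$-mass of a ball at the natural scale $r_n(x):=\|S'_{x_1\cdots x_n}(\pi\sigma^n x)\|$ of an $n$-cylinder image: for $m$-a.e.\ $x$ and every fixed $c>0$,
\[
\frac1n\log\mu\bigl(B(\pi x,\,c\,r_n(x))\bigr)\;=\;\frac1n\log\frac{m(C_n(x))}{m_x^{\pi^{-1}\gamma}(C_n(x))}+o(1),
\]
so that, with the previous display, $\frac1n\log\mu\bigl(B(\pi x,\,c\,r_n(x))\bigr)\to-h_\pi(\sigma,m,x)$. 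The heuristic: $B(\pi x,c\,r_n(x))$ contains the image $S_{x_1\cdots x_n}(K)$ together with every other level-$n$ cylinder image near $\pi x$, and the exponential multiplicity with which such cylinders pile up over $S_{x_1\cdots x_n}(K)$ is exactly measured by how ``thin'' the fibre $\pi^{-1}(\pi x)$ is through $x$, i.e.\ by $m_x^{\pi^{-1}\gamma}(C_n(x))$; the displayed ratio is the bookkeeping of these two competing effects. Making this rigorous is where the density results for conditional measures of Section~\ref{S2} enter (a Rohlin-disintegration and martingale/maximal-inequality argument, needed precisely because $\pi$ is badly non-injective so its fibres are complicated, and one must show the conditional ``fibre'' measures vary only sub-exponentially in $n$), together with the local geometric estimates for $C^1$ maps of Section~\ref{S4} (which make $\mbox{diam}\,S_{x_1\cdots x_n}(K)$ sub-exponentially comparable to $r_n(x)$, despite the lack of conformality). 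I would package this comparison into a proposition on densities of conditional measures (this is Proposition~\ref{pro-5.1}) and invoke it as a black box here.

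Granting this, the dimension estimates follow by bookkeeping with the Lyapunov exponents. Fix $\epsilon>0$. From Definition~\ref{de-1.2}, for all large $n$ one has $r_n(x)\le e^{-n(\underline\lambda(x)-\epsilon)}$ and $\rho_n(x):=\sm S'_{x_1\cdots x_n}(\pi\sigma^n x)\sm\ge e^{-n(\overline\lambda(x)+\epsilon)}$; moreover, since the $S_i$ are contractions, $r_n(x)$ and $\rho_n(x)$ decrease to $0$ and $\rho_n(x)\le r_n(x)$. For the upper estimate: given $r$ small, choose $n=n(x,r)$ with $r_{n+1}(x)\le r<r_n(x)$; then $\mu(B(\pi x,r))\ge\mu(B(\pi x,r_{n+1}(x)))\ge e^{-(n+1)(h_\pi(\sigma,m,x)+\epsilon)}$ and $|\log r|\ge n(\underline\lambda(x)-\epsilon)$, so $\limsup_{r\to0}\frac{\log\mu(B(\pi x,r))}{\log r}\le\frac{h_\pi(\sigma,m,x)+\epsilon}{\underline\lambda(x)-\epsilon}$; letting $\epsilon\to0$ gives $\overline{d}(\mu,\pi x)\le h_\pi(\sigma,m,x)/\underline\lambda(x)$. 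For the lower estimate: given $r$ small, choose $n=n(x,r)$ with $\rho_{n+1}(x)<r\le\rho_n(x)$; then $\mu(B(\pi x,r))\le\mu(B(\pi x,r_n(x)))\le e^{-n(h_\pi(\sigma,m,x)-\epsilon)}$ and $|\log r|\le(n+1)(\overline\lambda(x)+\epsilon)$, so $\liminf_{r\to0}\frac{\log\mu(B(\pi x,r))}{\log r}\ge\frac{h_\pi(\sigma,m,x)-\epsilon}{\overline\lambda(x)+\epsilon}$, and $\epsilon\to0$ gives $\underline{d}(\mu,\pi x)\ge h_\pi(\sigma,m,x)/\overline\lambda(x)$. (When $h_\pi(\sigma,m,x)=0$ both estimates are trivial, since $0\le\underline{d}(\mu,\pi x)\le\overline{d}(\mu,\pi x)$ and the upper bound reads $\overline{d}(\mu,\pi x)\le0$.)

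Finally, the ergodic case. If $m$ is ergodic then $h_\pi(\sigma,m,\cdot)=\E_m(f\,|\,\I)$ is $\I$-measurable by Definition~\ref{de-1.1}, hence $m$-a.e.\ equal to its integral $h_\pi(\sigma,m)$; likewise $\overline\lambda$ and $\underline\lambda$ are $\sigma$-invariant, hence $m$-a.e.\ equal to $\int\overline\lambda\,dm$ and $\int\underline\lambda\,dm$ respectively. Since $\mu=m\circ\pi^{-1}$, a property holding for $m$-a.e.\ $x$ holds at $z=\pi x$ for $\mu$-a.e.\ $z$; substituting these constants into the per-point estimates, and using $\underline{d}\le\overline{d}$, gives the displayed chain of inequalities for $\mu$-a.e.\ $z\in K$.
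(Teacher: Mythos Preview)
Your outline has the right target and the right bookkeeping at the end, but your description of the ``geometric core'' and of how Proposition~\ref{pro-5.1} is used does not match the paper, and the mismatch hides where the work actually lies.

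You frame the core step as a direct $n$-step comparison $\mu\bigl(B(\pi x,\,c\,r_n(x))\bigr)\asymp m(C_n(x))/m_x^{\pi^{-1}\gamma}(C_n(x))$ and propose to invoke Proposition~\ref{pro-5.1} as a black box for it. But Proposition~\ref{pro-5.1} is not that statement: it is a \emph{one-step} recursion relating $m_x^\eta\bigl(B^\pi(x,c\overline{\rho}(x)r)\cap\P(x)\bigr)$ to $m_{\sigma x}^\eta\bigl(B^\pi(\sigma x,r)\bigr)$, with multiplicative correction terms $f(x)$ and a ratio of $\xi$-conditional measures. In the special case relevant to Theorem~\ref{thm-1.1} (take $T_i\equiv x/2$, so $\eta$ is trivial and $m_x^\eta=m$), the paper \emph{iterates} this: it writes $\log m\bigl(B^\pi(x,c^n\overline{\rho}_n(x)r_0)\bigr)$ as a telescoping sum $\sum_{j=0}^{n-1}H_{n-j}(\sigma^jx)$ plus a boundary term, bounds each summand via Proposition~\ref{pro-5.1}, and then passes to the limit using Lemma~\ref{lem-3.15} (a Birkhoff-type lemma for sequences converging in $L^1$) together with Proposition~\ref{pro-2.9} (subexponential decay of the boundary term). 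None of this is visible in your sketch; you have essentially assumed the conclusion of the iteration.

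There is a second point you miss. The iteration naturally produces the scale $c^n\overline{\rho}_n(x)r_0$ with $\overline{\rho}_n(x)=\prod_{j=0}^{n-1}\|S'_{x_{j+1}}(\pi\sigma^{j+1}x)\|$, whose logarithmic rate is $\E_m(\log\overline{\rho}\mid\I)$ by Birkhoff, \emph{not} $-\underline{\lambda}(x)$ in general. The paper closes this gap by replacing $\{S_i\}$ with its $q$-th iterate $\{S_{i_1\cdots i_q}\}$, rerunning the argument, and letting $q\to\infty$ via Kingman's subadditive ergodic theorem (see \eqref{e-5.9} and the lines following). Your direct use of $r_n(x)=\|S'_{x_1\cdots x_n}(\pi\sigma^n x)\|$ in the final bookkeeping is legitimate only if the core asymptotic has already been established at that scale, which is precisely what the $q$-iterate trick achieves.

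Finally, note that the paper actually proves the more general Theorem~\ref{thm-5.2} (with an auxiliary IFS $\{T_i\}$ and conditional measures $m_x^\eta$) and derives Theorem~\ref{thm-1.1} by trivializing $\eta$. Your heuristic about cylinder multiplicity being measured by $m_x^{\pi^{-1}\gamma}(C_n(x))$ is a correct intuition for why $h_\pi$ appears, and your use of Theorem~\ref{thm-1.0}(iii) to see the ratio $m(C_n)/m_x^{\pi^{-1}\gamma}(C_n)$ has the right exponential rate is a nice sanity check---but it is an \emph{a posteriori} verification, not a proof mechanism. The actual mechanism is the recursive density estimate plus ergodic averaging.
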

\medskip

\begin{de}
\label{de-1.3'} {\rm Let $\{S_i\}_{i=1}^\ell$ be a $C^1$ IFS and  $m\in \M_\sigma(\Sigma)$. We say that  $\{S_i\}_{i=1}^\ell$ is  {\it $m$-conformal}  if
$\lambda(x)$ exists (i.e.,
$\overline{\lambda}(x)=\underline{\lambda}(x)$)  for $m$-a.e. $x\in
\Sigma$.}
\end{de}

As a direct application of Theorem \ref{thm-1.1}, we have
\begin{thm}
\label{thm-1.2}Assume that  $\{S_i\}_{i=1}^\ell$ is  $m$-conformal
for some $m\in \M_\sigma(\Sigma)$. Let $\mu=m\circ\pi^{-1}$. Then we
have
\begin{equation}
\label{e-c1}
    d(\mu,\pi x)= \frac{h_\pi(\sigma,m,x)}{\lambda(x)}\quad\mbox {for $m$-a.e. $x\in \Sigma$}.
  \end{equation}
In particular, if $m$ is ergodic, we have
\begin{equation}
\label{e-c2}
   d(\mu,z)=\frac{h_\pi(\sigma,m)}{\int {\lambda} \;dm}\quad\mbox {for $\mu$-a.e. $z\in K$}.
\end{equation}
\end{thm}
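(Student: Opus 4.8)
The plan is to deduce the statement directly from Theorem~\ref{thm-1.1} by a squeezing argument, together with two elementary ergodic-theoretic simplifications.

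First I would prove \eqref{e-c1}. Fix $m\in\M_\sigma(\Sigma)$ for which $\{S_i\}_{i=1}^\ell$ is $m$-conformal, so that $\overline\lambda(x)=\underline\lambda(x)=\lambda(x)\in(0,\infty)$ for $m$-a.e.\ $x$ (recall $\overline\lambda,\underline\lambda$ are positive-valued). Since $f={\bf I}_m(\P|\sigma^{-1}\pi^{-1}\gamma)-{\bf I}_m(\P|\pi^{-1}\gamma)\in L^1(m)$, the local projection entropy $h_\pi(\sigma,m,x)={\bf E}_m(f|\I)(x)$ is finite for $m$-a.e.\ $x$, so the quotient below makes sense. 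Substituting $\overline\lambda(x)=\underline\lambda(x)=\lambda(x)$ into the two inequalities of Theorem~\ref{thm-1.1} gives, for $m$-a.e.\ $x$,
$$
\frac{h_\pi(\sigma,m,x)}{\lambda(x)}\ \le\ \underline d(\mu,\pi x)\ \le\ \overline d(\mu,\pi x)\ \le\ \frac{h_\pi(\sigma,m,x)}{\lambda(x)},
$$
whence $d(\mu,\pi x)$ exists and equals $h_\pi(\sigma,m,x)/\lambda(x)$; this is \eqref{e-c1}.

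Next I would treat the ergodic case \eqref{e-c2}. If $m$ is ergodic, then $\I$ is trivial modulo $m$, so ${\bf E}_m(f|\I)=\int f\,dm$ $m$-a.e., which together with the identity $h_\pi(\sigma,m)=\int h_\pi(\sigma,m,x)\,dm(x)$ yields $h_\pi(\sigma,m,x)=h_\pi(\sigma,m)$ for $m$-a.e.\ $x$. Likewise $\lambda$ is $\sigma$-invariant, hence $\I$-measurable, hence $m$-a.e.\ equal to the constant $\int\lambda\,dm$. Feeding these into \eqref{e-c1} shows $d(\mu,\pi x)=h_\pi(\sigma,m)/\int\lambda\,dm=:c$ for $m$-a.e.\ $x\in\Sigma$. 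To transport this to $K$, note that $x\mapsto\mu(B(x,r))$ is Borel on $\R^d$ for each fixed $r>0$, so $\overline d(\mu,\cdot)$ and $\underline d(\mu,\cdot)$ are Borel functions on $K$ and $B:=\{z\in K:\underline d(\mu,z)=\overline d(\mu,z)=c\}$ is Borel; since $\pi^{-1}(B)=\{x\in\Sigma:\underline d(\mu,\pi x)=\overline d(\mu,\pi x)=c\}$ has full $m$-measure, we get $\mu(B)=m(\pi^{-1}(B))=1$, which is \eqref{e-c2}.

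I do not expect a genuine obstacle here, since all the analytic work is already carried out in Theorem~\ref{thm-1.1}; the only points requiring a little care are verifying that $\lambda(x)>0$ and $h_\pi(\sigma,m,x)<\infty$ $m$-a.e.\ so that the ratios are well defined, and the routine measurability facts needed to push the full-measure statement forward from $(\Sigma,m)$ to $(K,\mu)$.
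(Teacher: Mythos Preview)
Your proposal is correct and follows exactly the approach intended by the paper, which presents Theorem~\ref{thm-1.2} as a direct application of Theorem~\ref{thm-1.1} without a separate written proof. One small simplification: for \eqref{e-c2} you could invoke the ergodic clause of Theorem~\ref{thm-1.1} directly (which already gives a $\mu$-a.e.\ statement on $K$ with $\int\overline\lambda\,dm=\int\underline\lambda\,dm=\int\lambda\,dm$), bypassing the push-forward argument.
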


Recall that $S: U\to S(U)$ is a conformal map
if $S^\prime (x): \R^d\to \R^d$ satisfies $\|S^\prime(x)\|\neq 0$
and $ |S^\prime(x) y|=\|S^\prime (x)\| |y|$ for all $x\in U$ and
$y\in \R^d$.

\begin{de}
\label{de-1.3} {\rm A $C^1$ IFS $\{S_i\}_{i=1}^\ell$ is said to be
{\it weakly conformal} if
 $$\frac{1}{n}(\log \sm S^\prime_{x_1\ldots x_n}(\pi\sigma^n x)\sm- \log \| S^\prime_{x_1\ldots x_n}(\pi\sigma^n x)\|)
 $$
 converges to $0$ uniformly on $\Sigma$ as $n$ tends to $\infty$. We say that
$\{S_i\}_{i=1}^\ell$ is {\it conformal} if each $S_i$
extends to a conformal map $S_i: U\to S_i(U)\subset U$ on an open
set $U\supset K$, where $K$ is the attractor of
$\{S_i\}_{i=1}^\ell$. }
\end{de}

By  definition,  a conformal IFS is always weakly conformal. Furthermore, a weakly conformal IFS is
$m$-conformal for each $m\in \M_\sigma(\Sigma)$ (see Proposition \ref{pro-4.6}(ii)). There are
some natural examples of  weakly conformal IFS which are not
conformal. For instance, let $S_i(x)= A_i x+a_i$ ($i=1,\ldots,\ell$)
such that, for each $i$, $A_i$ is a contracting linear map with
eigenvalues equal to each other in modulus, and $A_i A_j=A_jA_i$ for
different $i,j$. Then such an IFS is always weakly conformal but not
necessary to be conformal. The first conclusion follows from the
asymptotic behavior
$$
\lim_{n\to \infty}
\sm A_i^n \sm ^{1/n}=\lim_{n\to \infty}
\| A_i^n \| ^{1/n}=\rho(A_i)\qquad  (i=1,\ldots, \ell),
$$
where $\rho(A_i)$ denotes the spectral radius of $A_i$ (cf. \cite{Yam67}).

Theorem \ref{thm-1.2} verifies the existence of local dimensions for
invariant measures on the attractor of an arbitrary weakly
conformal IFS attractors, without any separation assumption.   We point out that the exact dimensionality for
overlapping  self-similar measures was first claimed  by
Ledrappier,   nevertheless  no proof has been
written out (cf. \cite[p. 1619]{PeSo00}). We remark that this property was also conjectured later
by Fan, Lau and Rao in \cite{FLR02}.

We can extend the above result to  a  class of  non-conformal IFS.

\begin{de}\label{de-1.6}{\rm
Assume for $j=1,\ldots, k$, $\Phi_j:=\{S_{i,j}\}_{i=1}^\ell$ is a $C^1$ IFS defined on a compact set $X_j\subset \R^{q_j}$. Let $\Phi:=\{S_i\}_{i=1}^\ell$ be the IFS on  $X_1\times\cdots \times X_k\subset \R^{q_1}\times \cdots \times \R^{q_k}$ given by
$$
S_i(z_1,\ldots, z_k)=\left(S_{i,1}(z_1),\ldots, S_{i,k}(z_k)\right) \ \  (i=1,\ldots, \ell,\; j=1,\ldots,k,\; z_j\in X_j).
$$
We say that  $\Phi$ is the {\it direct product} of $\Phi_1,\ldots, \Phi_k$,  and write
$\Phi=\Phi_1\times\cdots\times \Phi_k$.}
\end{de}

\begin{thm}
\label{thm-1.3} Let $\Phi=\{S_i\}_{i=1}^\ell$ be  the direct product
of $k$ $C^1$ IFS $\Phi_1,\ldots, \Phi_k$. Let $\mu=m\circ
\pi^{-1}$, where $m\in \M_\sigma(\Sigma)$. Assume that $\Phi_1,\ldots,\Phi_k$ are $m$-conformal. Then \begin{itemize}
 \item[(i)] $d(\mu,z)$ exists for $\mu$-a.e. $z$. \item[(ii)]
   Assume furthermore that $m$ is ergodic. Then $\mu$ is exactly dimensional. Let $\tau$ be a permutation on $\{1,\ldots,k\}$ such that
$$\lambda_{\tau(1)}\leq \lambda_{\tau(2)}\leq \cdots\leq \lambda_{\tau(k)},$$ where
$\lambda_j=\int \lambda_j(x) \;dm(x)$, and $\lambda_j(x)$ denotes the Lyapunov exponent of $\Phi_j$ at $x\in \Sigma$.
Then   we have
\begin{equation}
\label{e-affine}
 d(\mu,
z)=\frac{h_{\pi_1}(\sigma,m)}{\lambda_{\tau(1)}}+\sum_{j=2}^k\frac{h_{\pi_j}(\sigma,m)-
h_{\pi_{j-1}}(\sigma,m)}{\lambda_{\tau(j)}} \quad \mbox{ for $\mu$-a.e. z},
\end{equation}
where  $\pi_j$ denotes the canonical projection w.r.t. the IFS
$\Phi_{\tau(1)}\times\cdots \times \Phi_{\tau(j)}$, and
$h_{\pi_j}(\sigma,m)$
 denotes the projection entropy of $m$ under
$\pi_j$.
\end{itemize}
\end{thm}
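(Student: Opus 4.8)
\section*{Proof proposal for Theorem~\ref{thm-1.3}}

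The plan is to follow the scheme of Ledrappier and Young \cite{LeYo85}, exploiting the Cartesian product structure of $\Phi$ and the conditional-measure machinery of Section~\ref{S2}. After relabelling the factors we may assume $\tau=\mathrm{id}$, so $\lambda_1\le\lambda_2\le\cdots\le\lambda_k$; then $\Phi_j$ contracts faster as $j$ grows, and since every $S_i$ is a product map we have an honest product $K=K_1\times\cdots\times K_k$, with $\pi_j\colon\Sigma\to K_1\times\cdots\times K_j$ the coding maps, $\gamma_j$ the Borel $\sigma$-algebra of the $j$-th partial product, and $\mu_j:=m\circ\pi_j^{-1}$, so that $\pi_k=\pi$, $\mu_k=\mu$. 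I would first record the standing ingredients: (a) from Section~\ref{S4} and Proposition~\ref{pro-4.6}, bounded-distortion estimates on each factor, so that for $m$-a.e.\ $x$ and $r\to0$ the ball $B_j(p_jx,r)$ in the $j$-th coordinate (where $p_j\colon\Sigma\to K_j$ is the projection onto factor $j$) is, up to a bounded number of cylinders, comparable to $S_{x_1\ldots x_n,j}(K_j)$ with $-\log r\approx n\lambda_j$, and likewise the generation-$n$ cylinder $[x_1\ldots x_n]$ projects under $p_j$ into $B_j(p_jx,Cr)$; and (b) from Theorem~\ref{thm-1.0} and its relativized form, the nonnegativity and additivity of the conditional projection-entropy increments $h_{\pi_j}(\sigma,m)-h_{\pi_{j-1}}(\sigma,m)$ (with $h_{\pi_0}:=0$), together with the relativized Shannon--McMillan--Breiman statement of Theorem~\ref{thm-1.0}(iii) applied over the sub-$\sigma$-algebras $\pi_j^{-1}\gamma_j$.

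The core is a \emph{conditional} version of Theorem~\ref{thm-1.2} along the fibration $K_1\times\cdots\times K_j\to K_1\times\cdots\times K_{j-1}$. Put $\mathcal F_{j-1}:=\pi_{j-1}^{-1}\gamma_{j-1}\subseteq\pi^{-1}\gamma$, and let $\{\kappa_{j,y}\}$ be the system of conditional measures, in the sense of Rohlin (Section~\ref{S2}), of $m\circ p_j^{-1}$ on $K_j$ given $\mathcal F_{j-1}$; equivalently $\kappa_{j,y}=(p_j)_*\,m_y^{\mathcal F_{j-1}}$. I claim that for $m$-a.e.\ $x$ the measure $\kappa_{j,x}$ is exactly dimensional with
\[
d(\kappa_{j,x},\,p_jx)=\frac{h_{\pi_j}(\sigma,m)-h_{\pi_{j-1}}(\sigma,m)}{\lambda_j}.
\]
This is proved by adapting the proof of Theorem~\ref{thm-1.1}: covering $B_j(p_jx,r)$ by $O(1)$ generation-$n$ cylinders and using the relativized entropy identities, the conditional $m$-measure of their union is, to exponential order, $\mathbf E_m\!\bigl(\mathbf I_m(\P_0^{n-1}\mid\pi_{j}^{-1}\gamma_j)-\mathbf I_m(\P_0^{n-1}\mid\mathcal F_{j-1})\,\big|\,\I\bigr)(x)=n\bigl(h_{\pi_j}(\sigma,m,x)-h_{\pi_{j-1}}(\sigma,m,x)\bigr)+o(n)$, which yields the upper bound for $\overline d(\kappa_{j,x},p_jx)$; the lower bound for $\underline d$ is obtained from the density estimates of Section~\ref{S2} (the analogue of Proposition~\ref{pro-5.1}), which bound from below the $m$-measure of the union, over the fiber, of all generation-$n$ symbolic cylinders whose $j$-th factor image meets $B_j(p_jx,r)$ --- precisely the overlap-induced over-counting that projection entropy measures.

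Granting this, I would assemble the formula by iterated conditioning, i.e.\ induction on $k$, the base case $k=1$ being Theorem~\ref{thm-1.2}. Writing $z=(z',z_k)$ with $z'$ in the first $k-1$ factors, for $m$-a.e.\ $x$ and $r\to0$ one has, up to bounded distortion and using that $\pi^{-1}B(z,r)$ is sandwiched between $\pi_{k-1}^{-1}(B'(z',r/C))\cap p_k^{-1}(B_k(z_k,r/C))$ and its $C$-dilate,
\[
\mu\bigl(B(z,r)\bigr)\asymp\int_{\pi_{k-1}^{-1}(B'(z',r))}\kappa_{k,y}\bigl(B_k(z_k,r)\bigr)\,dm(y)\asymp\mu_{k-1}\bigl(B'(z',r)\bigr)\cdot r^{\,(h_{\pi_k}(\sigma,m)-h_{\pi_{k-1}}(\sigma,m))/\lambda_k},
\]
where the second step uses the conditional exact-dimensionality above and a martingale/Vitali argument showing the conditional ball-measure concentrates at its typical value on $\pi_{k-1}^{-1}(B'(z',r))$. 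Feeding in the inductive value of $d(\mu_{k-1},z')$ gives $d(\mu,z)=\sum_{j=1}^{k}\bigl(h_{\pi_j}(\sigma,m)-h_{\pi_{j-1}}(\sigma,m)\bigr)/\lambda_j$, i.e.\ \eqref{e-affine}, and in particular the limit exists $\mu$-a.e., proving (ii). For (i) I would drop ergodicity via the ergodic decomposition $m=\int\nu\,d\mathbb P(\nu)$ of Theorem~\ref{thm-1.0}(ii): each $\nu$ is ergodic, so $d(\nu\circ\pi^{-1},\cdot)$ exists a.e.\ by (ii), and the standard fact that the local dimension of a mixture $\int\nu\circ\pi^{-1}\,d\mathbb P(\nu)$ agrees at $\mu$-a.e.\ point with that of the pertinent component yields the existence of $d(\mu,z)$.

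I expect the decisive obstacle to be the lower bound in the conditional exact-dimensionality claim, namely $\kappa_{k,y}(B_k(z_k,r))\le r^{(h_{\pi_k}-h_{\pi_{k-1}})/\lambda_k-\varepsilon}$: one must show that, despite arbitrary overlaps, the conditional measure on a fiber $\{z'\}\times K_k$ does not concentrate too much, equivalently that the $m$-mass of the generation-$n$ symbolic cylinders collapsing onto a single $r$-ball in the last factor, conditioned on the first $k-1$ factors, grows only like $e^{n(h_{\pi_k}-h_{\pi_{k-1}})}$. This is exactly where the density theory for conditional measures of overlapping IFS (Section~\ref{S2}, and the analogue of Proposition~\ref{pro-5.1}) is indispensable, and where the non-$\sigma$-invariance of $\pi^{-1}\gamma$ noted after Theorem~\ref{thm-1.0} forces genuine care in controlling the relativized information sums $\mathbf I_m(\P_0^{n-1}\mid\pi_j^{-1}\gamma_j)$.
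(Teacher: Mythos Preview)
Your overall strategy --- first establish exact dimensionality of the conditional measures $\kappa_{j,x}$ on the $j$-th factor given the first $j-1$ factors, then assemble the dimension of $\mu$ --- matches the paper's. The conditional exact-dimensionality you claim is precisely Theorem~\ref{thm-5.2} (the generalized Theorem~\ref{thm-1.1}): taking $\phi=\pi_{j-1}$ and the second IFS to be $\Phi_j$ yields
\[
\lim_{r\to 0}\frac{\log m_x^{\eta_{j-1}}\bigl(B^{\Gamma_j}(x,r)\bigr)}{\log r}=\frac{h_{\pi_j}(\sigma,m,x)-h_{\pi_{j-1}}(\sigma,m,x)}{\lambda_j(x)},
\]
which is exactly your $d(\kappa_{j,x},p_jx)$. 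So that ingredient is right, although your description ``covering $B_j$ by $O(1)$ generation-$n$ cylinders'' does not reflect the actual mechanism, which is the density estimate of Proposition~\ref{pro-5.1}.

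The genuine gap is in your assembly step. The asserted relation $\mu(B(z,r))\asymp\mu_{k-1}(B'(z',r))\cdot r^{\vartheta_k}$ does not follow from pointwise exact dimensionality of $\kappa_{k,y}$, and in one direction the argument is circular. For the \emph{upper} bound on $\overline d(\mu,z)$ you need a \emph{lower} bound on $\mu(B(z,r))$. Disintegrating gives $\mu(B)=\int_{\pi_{k-1}^{-1}(B')}\kappa_{k,y}(B_k(z_k,r))\,dm(y)$, and to invoke $\kappa_{k,y}(B_k(p_ky,r))\ge r^{\vartheta_k+\varepsilon}$ you must restrict to those $y$ with $p_ky$ close to $z_k$, i.e.\ to $y\in\pi^{-1}\bigl(B'(z',r)\times B_k(z_k,r/2)\bigr)$ --- but the $m$-mass of that set is essentially $\mu(B(z,r/2))$, which is what you are trying to bound from below. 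The paper avoids this circularity by \emph{not} inducting on $k$: it introduces, for every $i$, the auxiliary quantities $\overline\delta_i(x),\underline\delta_i(x)$ equal to the upper and lower local dimensions of $r\mapsto m_x^{\eta_i}(B^\pi(x,r))$ (in the \emph{full} $\pi$-direction, not just the $(i{+}1)$-st factor), and proves two chains of inequalities. Your Fubini/density idea is close in spirit to claim (C3), which gives $\underline\delta_i\ge\underline\delta_{i+1}+\vartheta_i$. But claim (C2), the bound $\overline\delta_i-\overline\delta_{i+1}\le(h_i-h_{i+1})/\lambda_{i+1}$, is obtained by a Ledrappier--Young \emph{counting} argument: one counts $\P_0^n$-atoms meeting a slice $\eta_{i+1}(x)\cap B^\pi(x,e^{-n(\lambda_{i+1}-\varepsilon)})$, using the relativized SMB theorem (Proposition~\ref{pro-3.16}) to control atom sizes in both $m_x^i$ and $m_x^{i+1}$, together with the geometric containment $\eta_i(x)\cap\P_0^n(x)\subset B^\pi(x,e^{-n(\lambda_{i+1}-\varepsilon)})$ coming from Proposition~\ref{pro-4.6}. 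This counting step is absent from your proposal and is not replaceable by a martingale/Vitali density argument alone.

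Two minor remarks. First, $K$ need \emph{not} equal $K_1\times\cdots\times K_k$ (take $\ell=2$ with only the two ``diagonal'' product maps); nothing in the proof actually uses this, so it is harmless. Second, the paper proves (i) for non-ergodic $m$ directly, by running the whole argument with the local quantities $h_{\pi_j}(\sigma,m,x)$ and $\lambda_j(x)$ on each set $\Lambda_\tau$; your route via ergodic decomposition is also valid but requires justifying the ``standard fact'' that local dimensions of a mixture agree $\mu$-a.e.\ with those of the relevant ergodic component, which needs a separate Besicovitch-type density lemma.
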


We mention that fractals satisfy the conditions of the theorem
include many interesting examples such as those studied in
\cite{McM84, Bed84, GaLa92, KePe96}, etc.

As an application of Theorem \ref{thm-1.3}, we have

\begin{thm}
\label{thm-1.4} Let $\{S_i\}_{i=1}^\ell$ be an IFS on $\R^d$ of the form
$$S_i(x)=A_ix+a_i, \qquad i=1,\ldots, \ell,$$ such that each $A_i$ is a
nonsingular contracting linear map on $\R^d$, and $A_i A_j=A_jA_i$
for any $1\leq i,j\leq \ell$. Then for any ergodic measure $m$ on
$\Sigma$, $\mu=m\circ \pi^{-1}$ is exactly dimensional.
\end{thm}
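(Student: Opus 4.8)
The plan is to reduce Theorem~\ref{thm-1.4} to Theorem~\ref{thm-1.3} by exhibiting $\{S_i\}_{i=1}^\ell$, after a suitable linear change of coordinates, as a direct product of $m$-conformal $C^1$ IFS. Since the $A_i$ are commuting nonsingular contractions on $\R^d$, they generate a commutative algebra of operators, and one can simultaneously block-triangularize them: there is a decomposition $\R^d=V_1\oplus\cdots\oplus V_k$ into common invariant subspaces on which each $A_i$ acts, up to the off-block parts, as a map whose eigenvalues all have the same modulus. More precisely, first pass to the complexification and use the simultaneous generalized eigenspace decomposition for the commuting family; then regroup the generalized eigenspaces of $A_1$ (say) according to the modulus of the eigenvalue, and check — using commutativity — that each such group is invariant under every $A_j$ and that the eigenvalues of $A_j$ restricted to that group again share a common modulus. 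Descending back to $\R^d$ and choosing a real adapted basis, we obtain coordinates $z=(z_1,\ldots,z_k)$ in which $A_i=\mathrm{diag}(A_{i,1},\ldots,A_{i,k})$ up to strictly upper-triangular (in the block sense) terms; the genuinely block-diagonal truncation $\widetilde S_i(z)=(A_{i,1}z_1+a_{i,1},\ldots,A_{i,k}z_k+a_{i,k})$ then is literally a direct product $\Phi=\Phi_1\times\cdots\times\Phi_k$ in the sense of Definition~\ref{de-1.6}.

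The second step is to verify that each factor $\Phi_j=\{S_{i,j}\}_{i=1}^\ell$ is $m$-conformal for every $m\in\M_\sigma(\Sigma)$. Here I would invoke the mechanism already used in the excerpt after Definition~\ref{de-1.3}: because the $A_{i,j}$ commute and, for each fixed $i$, all eigenvalues of $A_{i,j}$ have the same modulus $\rho(A_{i,j})$, Yamamoto's theorem (cf.~\cite{Yam67}) gives $\lim_{n\to\infty}\sm A_{i,j}^n\sm^{1/n}=\lim_{n\to\infty}\|A_{i,j}^n\|^{1/n}=\rho(A_{i,j})$, and more generally $\lim_{n\to\infty}\tfrac1n\bigl(\log\|A_{x_1,j}\cdots A_{x_n,j}\|-\log\sm A_{x_1,j}\cdots A_{x_n,j}\sm\bigr)=0$ uniformly in $x\in\Sigma$, using commutativity to reorganize the product by symbol. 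Hence each $\Phi_j$ is weakly conformal, and by Proposition~\ref{pro-4.6}(ii) it is $m$-conformal; in particular $\lambda_j(x)$ exists $m$-a.e. Theorem~\ref{thm-1.3}(ii) then applies to $\widetilde\mu:=m\circ\widetilde\pi^{-1}$ and shows $\widetilde\mu$ is exactly dimensional, with local dimension given by the Ledrappier--Young formula~\eqref{e-affine}.

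The remaining, and I expect the most delicate, step is to transfer exact dimensionality from the block-diagonal model $\widetilde S_i$ to the original maps $S_i$, which differ by the block-upper-triangular corrections. The clean way is to avoid truncation altogether: observe that Theorem~\ref{thm-1.1} already gives the two-sided bound $\underline d(\mu,\pi x)\ge h_\pi(\sigma,m,x)/\overline\lambda(x)$ and $\overline d(\mu,\pi x)\le h_\pi(\sigma,m,x)/\underline\lambda(x)$ for an arbitrary $C^1$ IFS, so it suffices to show the full IFS $\{S_i\}$ is $m$-conformal, i.e.\ $\overline\lambda(x)=\underline\lambda(x)$ $m$-a.e., and then Theorem~\ref{thm-1.2} finishes the argument directly. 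But the differential of the affine map $S_{x_1\ldots x_n}$ is exactly the linear operator $A_{x_1}\cdots A_{x_n}$ (constant in the base point, since the $S_i$ are affine), and the commuting family $\{A_i\}$ together with the common-modulus structure within each block yields, again by Yamamoto's theorem applied to the full commuting family, that $\tfrac1n\log\|A_{x_1}\cdots A_{x_n}\|$ and $\tfrac1n\log\sm A_{x_1}\cdots A_{x_n}\sm$ have the same limit uniformly on $\Sigma$ — the block-triangular part does not affect the exponential growth rate because it lies in a nilpotent ideal commuting with the diagonalizable part (multiplicative Jordan decomposition in the commutative algebra generated by the $A_i$). Thus $\{S_i\}_{i=1}^\ell$ is weakly conformal, hence $m$-conformal for every $m$, and Theorem~\ref{thm-1.2} gives exact dimensionality of $\mu=m\circ\pi^{-1}$ for every ergodic $m$. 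The main obstacle is the linear-algebra lemma on commuting contractions — making precise that one can split $\R^d$ into blocks on which each $A_i$ has eigenvalues of a single modulus and simultaneously controlling the nilpotent parts — but this is a standard consequence of the structure theory of commutative matrix algebras and Yamamoto's spectral-radius formula, so I would isolate it as a preliminary lemma and then the theorem follows formally.
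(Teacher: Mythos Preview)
Your proposed ``clean way'' via Theorem~\ref{thm-1.2} does not work: the full IFS $\{S_i\}$ is in general \emph{not} weakly conformal (nor $m$-conformal). Take $d=2$ and $A_i=\mathrm{diag}(1/2,1/3)$ for all $i$; then $\|A_{x_1}\cdots A_{x_n}\|^{1/n}\to 1/2$ while $\sm A_{x_1}\cdots A_{x_n}\sm^{1/n}\to 1/3$, so $\underline\lambda(x)=\log 2\neq\log 3=\overline\lambda(x)$ everywhere. The nilpotent part is irrelevant here; what kills weak conformality is precisely that different blocks have different contraction rates, which is the generic situation. So Theorem~\ref{thm-1.2} cannot close the argument, and you genuinely need Theorem~\ref{thm-1.3}.

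The good news is that the ``delicate transfer step'' you worry about is not needed. The simultaneous decomposition you want is not merely block-upper-triangular: one can get a direct sum $\R^d=V_1\oplus\cdots\oplus V_k$ into subspaces that are \emph{invariant} under every $A_j$ and on which each $A_j$ has all eigenvalues of a single modulus (this is the paper's Lemma~\ref{lem-6.2}, proved by iteratively refining via the irreducible factors of the real minimal polynomials of $A_1,A_2,\ldots$). In adapted coordinates the $A_i$ are then \emph{exactly} block-diagonal, so after conjugating by the change-of-basis map $Q$ the IFS $\{QS_iQ^{-1}\}$ is literally a direct product $\Phi_1\times\cdots\times\Phi_k$; no truncation, no off-diagonal corrections. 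Each factor $\Phi_j$ is weakly conformal by the Yamamoto argument you cite (since the $A_{i,j}$ commute and each has eigenvalues of a single modulus), hence $m$-conformal, and Theorem~\ref{thm-1.3} applies directly. Note also that your one-shot decomposition---grouping the generalized eigenspaces of $A_1$ by modulus---does not by itself force the restrictions of the other $A_j$ to have constant-modulus spectrum (e.g.\ $A_1=\tfrac12 I$, $A_2=\mathrm{diag}(1/2,1/3)$); you must iterate the refinement over all the $A_j$, exactly as in the proof of Lemma~\ref{lem-6.2}.
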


Indeed, under the assumption of Theorem \ref{thm-1.4}, we can show that
 there is a nonsingular linear transformation  $T$ on $\R^d$  such that the IFS
 $\{T\circ S_i\circ T^{-1}\}_{i=1}^\ell$ is the direct product of some weakly conformal IFS.
Hence we can apply Theorem \ref{thm-1.3} in this situation.

We remark that  formula (\ref{e-affine}) provides an analogue of that for the Hausdorff dimension of $C^{1+\alpha}$ hyperbolic measures along the unstable (resp. stable) manifold established by
Ledrappier and Young \cite{LeYo85}.

The problem of the existence of local dimensions has also a long
history in smooth dynamical systems. In \cite{You82}, Young proved
that an ergodic hyperbolic measure invariant under a $C^{1+\alpha}$
surface diffeomorphism is always  exact dimensional. For a measures
$\mu$ in high-dimensional $C^{1+\alpha}$ systems,
Ledrappier and Young \cite{LeYo85} proved the existence of $\delta^u$ and
$\delta^s$, the local dimensions along stable and unstable local
manifolds, respectively, and the upper local dimension of $\mu$ is
bounded by the sum of $\delta^u$, $\delta^s$, and the multiplicity of
$0$ as an exponent. Eckmann and Ruelle \cite{EcRu85} indicated that
it is unknown whether the local dimension of $\mu$ is the sum of
$\delta^u$ and $\delta^s$ if $\mu$ is a hyperbolic measure. Then the
problem was referred as Eckmann-Ruelle conjecture, and affirmatively
answered by Barreira, Pesin and Schmeling in \cite{BPS99} seventeen
years later. Some partial dimensional results were obtained for
measures invariant under hyperbolic endomorphism \cite{Sch98a, Sch98b}. Recently, Qian and Xie \cite{QiXi08}
proved the exact dimensionality of ergodic measures invariant under $C^2$ expanding endomorphism on smooth Riemannian manifolds.

In the remaining part of this section, we present  some  variational results about the Hausdorff dimension and  the box dimension of
attractors of IFS and that of invariant measures. First we
consider conformal IFS.

\begin{thm}
\label{thm-1.5}
Let $K$ be the attractor of a  weakly conformal IFS $\{S_i\}_{i=1}^\ell$.
Then we have
\begin{eqnarray}  \mbox{}\qquad \dim_H K&=&\dim_BK\label{e-1.11}\\
&=&\sup
\left\{\dim_H \mu:\; \mu=m\circ \pi^{-1}, \;m\in
\M_\sigma(\Sigma), \; m \mbox{ is ergodic}\right\}\label{e-1.12}\\
&=&\max
\left\{\dim_H \mu:\; \mu=m\circ \pi^{-1}, \;m\in
\M_\sigma(\Sigma)\right\}\nonumber\\
&=&\sup\left\{\frac{h_\pi(\sigma,m)}{\int \lambda \;dm}: \;m\in
\M_\sigma(\Sigma)\right\},\label{e-1.14} \end{eqnarray}
where $\dim_BK$ denotes the box dimension of $K$.
\end{thm}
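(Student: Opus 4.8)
The plan is to establish the chain of (in)equalities by combining the local-dimension formula of Theorem \ref{thm-1.2} with a sequence of soft inequalities, reserving the genuine work for the single implication $\dim_B K \le \sup\{h_\pi(\sigma,m)/\int\lambda\,dm\}$. First, recall that a weakly conformal IFS is $m$-conformal for every $m\in\M_\sigma(\Sigma)$ (Proposition \ref{pro-4.6}(ii)), so for an ergodic $m$ with $\mu=m\circ\pi^{-1}$ Theorem \ref{thm-1.2} gives $d(\mu,z)=h_\pi(\sigma,m)/\int\lambda\,dm$ for $\mu$-a.e. $z$, and by Young's theorem \cite{You82} this common value equals $\dim_H\mu$. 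Hence the quantity in \eqref{e-1.12} equals $\sup\{h_\pi(\sigma,m)/\int\lambda\,dm : m\text{ ergodic}\}$. Using the ergodic decomposition together with the affinity of $h_\pi$ in $m$ (Theorem \ref{thm-1.0}(ii)) and the fact that $\int\lambda\,dm$ is also affine, one sees that taking the supremum over all invariant $m$ (not just ergodic ones) does not increase the value, so \eqref{e-1.14} agrees with \eqref{e-1.12}; the same decomposition, applied to $\dim_H\mu$ via the formula $\dim_H(m\circ\pi^{-1})=\mathrm{ess\,sup}$ of the a.e.\ local dimension, shows the ``$\sup$'' over ergodic measures equals the ``$\max$'' over all invariant measures. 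Finally $\dim_H K \ge \dim_H\mu$ for any $\mu$ supported on $K$ is automatic, and $\dim_H K \le \dim_B K$ always; so everything reduces to the single inequality
\begin{equation*}
\dim_B K \le \sup\left\{\frac{h_\pi(\sigma,m)}{\int\lambda\,dm}:\; m\in\M_\sigma(\Sigma)\right\}.
\end{equation*}

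To prove this I would argue as follows. Fix $s > \sup\{h_\pi(\sigma,m)/\int\lambda\,dm\}$; it suffices to produce, for every $\varepsilon>0$, a cover of $K$ by boxes of a fixed small scale $\delta$ whose cardinality is at most $\delta^{-(s+\varepsilon)}$ up to constants, which bounds the upper box dimension by $s+\varepsilon$. The natural covers come from the symbolic dynamics: for each $n$, the sets $S_{x_1\cdots x_n}(K)$ over words $(x_1,\dots,x_n)\in\{1,\dots,\ell\}^n$ cover $K$, and by weak conformality each has diameter comparable to $\|S'_{x_1\cdots x_n}\|$, which decays roughly like $e^{-n\lambda}$ along typical trajectories. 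The subtle point — and where projection entropy rather than ordinary entropy is forced on us — is that many of these symbolic cylinders map to the \emph{same} region of $K$ because of overlaps, so the effective number of distinct pieces at scale $\delta$ is governed not by $e^{nh(\sigma)}$ but by a quantity tied to $h_\pi$. I would make this precise by fixing a measure $m$ (to be chosen nearly optimal), appealing to Theorem \ref{thm-1.0}(iii) — the relativized Shannon–McMillan–Breiman statement — to control $\mathbf I_m(\P_0^{n-1}\mid\pi^{-1}\gamma)$, i.e.\ the log of the number of symbolic cylinders lumped into a single $\pi$-fiber atom, and combining it with a Besicovitch/Vitali covering argument on $K$ together with the a.e.\ local-dimension bound from Theorem \ref{thm-1.1}. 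Passing from a fixed $m$ to the supremum, and from ``$m$-typical'' behavior to a cover of all of $K$, requires a compactness/maximizing-measure step: one chooses a sequence $m_n$ with $h_\pi(\sigma,m_n)/\int\lambda\,dm_n \to \sup$, extracts a weak-$*$ limit, and uses upper semicontinuity of $h_\pi$ (which follows from its definition via conditional entropies and the structure in Theorem \ref{thm-1.0}) plus continuity of $m\mapsto\int\lambda\,dm$ for weakly conformal IFS to conclude the supremum is attained, giving the ``$\max$'' in \eqref{e-1.12} as a byproduct. The equality $\dim_H K = \dim_B K$ then falls out because the lower bound $\dim_H K \ge h_\pi(\sigma,m^*)/\int\lambda\,dm^*$ (from Theorem \ref{thm-1.2} applied to an ergodic component of the maximizer $m^*$, using affinity to see some ergodic component achieves the ratio) matches the upper box-dimension bound just established.

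The main obstacle I anticipate is the box-dimension upper bound: turning the measure-theoretic statement ``$\mu$-a.e.\ point has local dimension $\le s$'' into a \emph{uniform} covering statement for \emph{every} point of $K$. In the open-set-condition case this is standard, but with overlaps one cannot simply count cylinders, and the cover must be built so that its multiplicity (how many boxes contain a given point) stays bounded independently of $n$; controlling this is exactly what the density results for conditional measures in Section \ref{S2} and the geometric estimates in Section \ref{S4} are there to supply, and orchestrating them — in particular getting a single scale $\delta$ to work simultaneously, which forces a stopping-time/Markov-construction over the symbolic space rather than a fixed generation $n$ — is the technical heart of the argument. A secondary difficulty is verifying upper semicontinuity of $m\mapsto h_\pi(\sigma,m)$, since $\pi^{-1}\gamma$ is not $\sigma$-invariant (Remark \ref{rem-3.14}) and so the usual entropy semicontinuity arguments need adaptation; I would handle this through the characterization of $h_\pi$ as a limit of partition entropies (as in Theorem \ref{thm-1.0'}(i) for the affine case, with the analogous limit in the conformal setting) together with a careful choice of refining partitions of $K$.
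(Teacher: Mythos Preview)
Your overall strategy is inverted relative to the paper's, and this inversion creates a genuine gap. You reduce everything to the single inequality $\dim_B K \le \sup\{h_\pi(\sigma,m)/\int\lambda\,dm\}$ and propose to prove it by a covering/counting argument, but you correctly identify (and do not resolve) the obstacle: measure-theoretic statements about $m$-typical points cannot be upgraded to covers of \emph{all} of $K$ without extra input, and nothing in Sections~\ref{S2}--\ref{S4} supplies that input. The paper sidesteps this entirely by proving the \emph{opposite} implication: for any $t<\overline{\dim}_B K$ it \emph{constructs} an ergodic $m$ with $\dim_H(m\circ\pi^{-1})\ge t$. The construction (Theorem~\ref{thm-7.1}) starts from $\approx r^{-t}$ disjoint $r$-balls centered on $K$, uses Corollary~\ref{cor-4.5} to place inside each ball an image $S_{w_i}(K)$ with quantitative lower contraction bound, pigeon-holes to equal word lengths, and then runs a Falconer-type mass-distribution argument on the resulting separated sub-IFS to produce a Bernoulli measure of dimension $\ge t$. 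Since $\dim_H\mu\le\dim_H K\le\underline{\dim}_B K\le\overline{\dim}_B K$ is automatic, this yields all the equalities at once. You never invoke Corollary~\ref{cor-4.5}, which is the key geometric lemma driving the argument.

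A second, independent error: you assert upper semicontinuity of $m\mapsto h_\pi(\sigma,m)$ as following ``from its definition via conditional entropies.'' It does not; Proposition~\ref{pro-3.9} establishes this only under the AWSC, and the paper explicitly notes (just after the statement of Theorem~\ref{thm-1.5}) that whether the supremum in \eqref{e-1.12} and \eqref{e-1.14} is attained at an \emph{ergodic} measure is open in general. The ``$\max$'' in the third displayed equality is over \emph{all} invariant measures, and the paper obtains it not via semicontinuity but by the soft trick of taking $m=\sum_i a_i m_i$ for a sequence of ergodic $m_i$ with $\dim_H(m_i\circ\pi^{-1})\to\dim_H K$, so that $\dim_H(m\circ\pi^{-1})=\sup_i\dim_H(m_i\circ\pi^{-1})=\dim_H K$.
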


 Equality (\ref{e-1.11}) was first proved by Falconer
\cite{Fal89} for  $C^{1+\alpha}$ conformal IFS.
It is not known whether the supremum in (\ref{e-1.12}) and (\ref{e-1.14}) can be attained in  the general setting of Theorem \ref{thm-1.5}. However, this is true if the IFS $\{S_i\}_{i=1}^\ell$  satisfies an additional separation condition defined as follows.
\begin{de}
\label{de-1.5}
{\rm An IFS $\{S_i\}_{i=1}^\ell$ on a compact set $X\subset \R^d$ is said to satisfy the
{\it asymptotically weak separation condition} (AWSC), if
\begin{equation*}
\lim_{n\to \infty}\frac{1}{n}\log t_n=0,
\end{equation*}
where $t_n$ is given by
\begin{equation}
\label{e-1.16}
t_n=\sup_{x\in \R^d} \#\{S_u:\; u\in \{1,\ldots,\ell\}^n,\; x\in S_u(K)\},
\end{equation}
here  $K$ is the attractor of $\{S_i\}_{i=1}^\ell$.}
\end{de}

The above definition was first introduced in \cite{Fen07} under a
slightly different setting. For example, if $1/\rho$ is a Pisot or
Salem number, then the IFS
$\{\rho x+a_i\}_{i=1}^\ell$ on $\R$, with $a_i\in \Z$, satisfies the
AWSC (see  Proposition 5.3 and Remark 5.5 in \cite{Fen07}).
Recall that  a real number $\beta>1$ is said to be a {\it Salem number} if it is an
algebraic integer whose algebraic conjugates all have modulus not greater
than $1$, with at least one of which on the unit circle. Whilst $\beta>1$
is called a {\it Pisot number} if it is an
algebraic integer whose algebraic conjugates all have modulus less than $1$. For instance, the largest root ($\approx 1.72208$) of $x^4-x^3-x^2-x+1$ is a Salem number, and the golden ratio $(\sqrt{5}+1)/2$ is a Pisot number.
One is referred to \cite{Sal63} for more examples and  properties about Pisot and Salem numbers.
Under
the AWSC assumption, we can show that  the projection entropy map
$m\mapsto h_\pi(\sigma,m)$ is upper semi-continuous on
$\M_\sigma(\Sigma)$ (see Proposition \ref{pro-3.9}) and, as a
consequence, the supremum (\ref{e-1.12}) and (\ref{e-1.14}) can be
attained at  ergodic measures (see Remark \ref{rem-4.4}).

Next we consider some affine IFS.
\begin{thm}
\label{thm-1.6}
Let $\Phi=\{S_i\}_{i=1}^\ell$ be an affine  IFS on $\R^d$ given by
$$S_i(x_1,\ldots,x_d)=(\rho_1 x_1,\cdots, \rho_dx_d)+(a_{i,1},\ldots, a_{i,d}),$$
where $\rho_1>\rho_2>\cdots >\rho_d>0$ and $a_{i,j}\in \R$.
Let $K$ denote the attractor of $\Phi$, and write $\lambda_j=\log(1/\rho_j)$ for $j=1,\ldots, d$
 and $\lambda_{d+1}=\infty$. View $\Phi$ as the direct product of $\Phi_1,\ldots, \Phi_d$,
 where $\Phi_j=\{S_{i,j}(x_j)=\rho_jx_j+ a_{i,j}\}_{i=1}^\ell$.
 Assume that $\Phi_1\times\cdots\times \Phi_j$ satisfies the AWSC for  $j=1,\ldots,d$.  Then we have
\begin{eqnarray*}
\dim_HK&=&\max\left\{\dim_H \mu: \;\mu=m\circ \pi^{-1}, \; m \mbox{ is ergodic}\right\}\\
&=&\max\left\{\sum_{j=1}^{d}\left(\frac{1}{\lambda_j}-\frac{1}{\lambda_{j+1}}\right)h_{\pi_j}(\sigma,m)
:\; m \mbox{ is ergodic}\right\},
\end{eqnarray*}
 where  $\pi_j$ is the canonical projection w.r.t. the IFS $\Phi_1\times\cdots \times \Phi_j$. Furthermore
\begin{eqnarray*}
\dim_BK=\sum_{j=1}^{d}\left(\frac{1}{\lambda_j}-\frac{1}{\lambda_{j+1}}\right)H_j,
\end{eqnarray*}
where $H_j:=\max\{h_{\pi_j}(\sigma,m):\; m \mbox{ is ergodic}\}$.

\end{thm}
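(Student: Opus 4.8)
The plan is to derive the dimension formula for ergodic measures from Theorem~\ref{thm-1.3}, to translate projection entropies into grid-cell counting rates via Theorem~\ref{thm-1.0'}(ii), and to use the AWSC hypotheses both to make the projection-entropy functionals upper semicontinuous (Proposition~\ref{pro-3.9}) and to keep all overlap multiplicities subexponential. First, each factor $\Phi_j=\{x_j\mapsto\rho_jx_j+a_{i,j}\}_{i=1}^\ell$ is a similarity IFS on $\R$ with ratio $\rho_j$, hence conformal with constant Lyapunov exponent $\lambda_j=\log(1/\rho_j)$; since $\rho_1>\cdots>\rho_d$ we have $\lambda_1<\cdots<\lambda_d$, so the reordering permutation in Theorem~\ref{thm-1.3} is the identity. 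Thus for ergodic $m\in\M_\sigma(\Sigma)$, Theorem~\ref{thm-1.3} gives that $\mu=m\circ\pi^{-1}$ is exactly dimensional with
$$
\dim_H\mu=\frac{h_{\pi_1}(\sigma,m)}{\lambda_1}+\sum_{j=2}^{d}\frac{h_{\pi_j}(\sigma,m)-h_{\pi_{j-1}}(\sigma,m)}{\lambda_j},
$$
and a one-line summation by parts (with the convention $1/\lambda_{d+1}=0$) rewrites the right-hand side as $\sum_{j=1}^{d}\bigl(\tfrac1{\lambda_j}-\tfrac1{\lambda_{j+1}}\bigr)h_{\pi_j}(\sigma,m)$.

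Next, since $\Phi_1\times\cdots\times\Phi_j$ satisfies the AWSC for each $j$, Proposition~\ref{pro-3.9} makes $m\mapsto h_{\pi_j}(\sigma,m)$ upper semicontinuous on the compact space $\M_\sigma(\Sigma)$; being affine as well (Theorem~\ref{thm-1.0}(ii)), this functional, and hence also $m\mapsto\sum_{j=1}^{d}\bigl(\tfrac1{\lambda_j}-\tfrac1{\lambda_{j+1}}\bigr)h_{\pi_j}(\sigma,m)$, attains its maximum at an ergodic measure (see Remark~\ref{rem-4.4}). Consequently $H_j=\sup\{h_{\pi_j}(\sigma,m'):m'\in\M_\sigma(\Sigma)\}$, the two ``$\max$'' expressions in the first chain of the theorem agree, and since each $\mu=m\circ\pi^{-1}$ is carried by $K$ we obtain at once
$$
\max_{m}\dim_H\mu=\max_{m}\sum_{j=1}^{d}\Bigl(\tfrac1{\lambda_j}-\tfrac1{\lambda_{j+1}}\Bigr)h_{\pi_j}(\sigma,m)\ \le\ \dim_H K\ \le\ \dim_B K.
$$
It therefore remains to prove $\dim_B K\le\sum_{j}\bigl(\tfrac1{\lambda_j}-\tfrac1{\lambda_{j+1}}\bigr)H_j$ and $\dim_H K\le\max_m\dim_H\mu$; the latter is not implied by the former, since here $\dim_B K$ may strictly exceed $\dim_H K$.

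For the box dimension I would fix $n$, set $\delta=\rho_d^n$ and $\ell_j=\lceil n\lambda_d/\lambda_j\rceil$, so that $\ell_1\ge\cdots\ge\ell_d=n$ and $\rho_j^{\ell_j}\asymp\delta$, and cover $K$ by ``approximate squares'': for a fixed tuple $(C_1,\dots,C_d)$ with $C_j\in A_j^{\ell_j}\Q_j$, let the associated approximate square be $\{z\in K:\ P_j(z)\in C_j\ \text{for}\ j=1,\dots,d\}$, where $P_j\colon\R^d\to\R^j$ is the projection onto the first $j$ coordinates and $A_j=\mathrm{diag}(\rho_1,\dots,\rho_j)$, so that $\Phi_1\times\cdots\times\Phi_j$, whose attractor is $K_j=P_j(K)$, has the single-matrix form $x\mapsto A_jx+c_i$. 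Since $\rho_j^{\ell_j}\asymp\delta$ and $\ell_1\ge\cdots\ge\ell_d$, the $j$-th constraint pins the $j$-th coordinate to scale $\asymp\delta$, so each approximate square has diameter $\asymp\delta$ and $N(K,\delta)$ is comparable to the number of nonempty ones. The consistency constraints between consecutive levels force the exponential growth rate of this number to equal that of $\prod_{j=1}^{d}M_j(\ell_j)/M_{j-1}(\ell_j)$, where $M_0\equiv1$ and $M_j(m)=\#\{Q\in\Q_j:A_j^mQ\cap K_j\neq\emptyset\}$. By Theorem~\ref{thm-1.0'}(ii) applied to each $\Phi_1\times\cdots\times\Phi_j$, $\tfrac1m\log M_j(m)\to H_j$ as $m\to\infty$; substituting and dividing by $-\log\delta=n\lambda_d$, a further summation by parts gives $\tfrac1{n\lambda_d}\log N(K,\delta)\to\sum_{j=1}^{d}\bigl(\tfrac1{\lambda_j}-\tfrac1{\lambda_{j+1}}\bigr)H_j$, the asserted value of $\dim_B K$.

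For the remaining inequality $\dim_H K\le\max_m\dim_H\mu$ I would construct an efficient, scale-adapted (Moran-type) cover of $K$ in which coordinate $j$ is refined in symbolic blocks of length proportional to $\lambda_j^{-1}$, so that the $s$-dimensional cost of the cover is a pressure-type sum over finite symbolic words whose overlap multiplicities are subexponential by the AWSC; a variational argument, using the affineness of the $h_{\pi_j}$ and the ergodic decomposition, then identifies the exponential rate of the least such cost with $\max_m\sum_{j=1}^{d}\bigl(\tfrac1{\lambda_j}-\tfrac1{\lambda_{j+1}}\bigr)h_{\pi_j}(\sigma,m)=\max_m\dim_H\mu$, giving $\dim_H K\le\max_m\dim_H\mu$ and closing all the inequalities. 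I expect the principal difficulty to lie exactly in this step, together with the lower bound hidden in the box-dimension count: in both one must match a purely symbolic counting rate (over cylinder blocks) with a geometric one (over grid cells, i.e.\ projection entropies), and it is the AWSC that makes the number of distinct symbolic blocks landing in any single cell grow subexponentially, which is precisely what forces the two rates to coincide.
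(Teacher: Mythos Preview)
Your setup is correct and matches the paper: the dimension formula for ergodic projections via Theorem~\ref{thm-1.3}, the summation by parts, the upper semicontinuity from Proposition~\ref{pro-3.9} under AWSC, and the resulting inequality chain $\max_m\dim_H\mu\le\dim_H K\le\dim_B K$ are all exactly as the paper uses them.

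The genuine gap is in the Hausdorff upper bound $\dim_HK\le\max_m\dim_H\mu$. Your ``Moran-type cover plus variational argument'' sketch does not produce a measure, and it is a measure that is needed. The paper does \emph{not} bound $\dim_HK$ by an efficient cover and then identify the cost with a variational quantity; instead, for each $n$ it \emph{constructs} an explicit Bernoulli measure $\nu$ on $(\Omega_d)^{\N}$, where $\Omega_d$ indexes the distinct $n$-th level maps $S_u$, using the Kenyon--Peres/McMullen weighted-product formula (the $Z_j$ recursion in the proof). The McMullen upper bound then gives $\dim_HK\le (\log Z_0)/(-n\log\rho_1)$, and a direct computation shows this equals $n^{-1}\sum_j(\tfrac{1}{\lambda_j}-\tfrac{1}{\lambda_{j+1}})h(T_j,\nu_j)$ for the \emph{classical} entropies of the pushforward measures $\nu_j$ on $\Omega_j^{\N}$. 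This is where AWSC does its real work, via Corollary~\ref{cor-3.11}: it forces $h_{\pi_j}(\sigma^n,\nu)\ge h(T_j,\nu_j)-\log t_n$, so that by Theorem~\ref{thm-1.3} the measure $\nu\circ\pi^{-1}$ itself satisfies $\dim_H\nu\circ\pi^{-1}\ge\dim_HK-O((\log t_n)/n)$. Letting $n\to\infty$ closes the inequality. This explicit near-optimal measure construction is the heart of Step~1 and is absent from your plan; a generic ``variational argument'' does not substitute for it, because the upper bound for $\dim_HK$ is expressed in terms of classical entropies of a very particular Bernoulli measure, not in terms of projection entropies over all of $\M_\sigma(\Sigma)$.

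For the box dimension, the paper's route (Lemma~\ref{lem-8.2}) also differs from yours. It bounds $\overline{\dim}_BK$ above by $\sum_j(\tfrac{1}{\lambda_j}-\tfrac{1}{\lambda_{j+1}})\widetilde H_j$, where $\widetilde H_j=\lim_n n^{-1}\log\#\{S_u^{(j)}:u\in\Sigma_n\}$ counts \emph{distinct compositions} rather than grid cells, via an explicit cover by products $\prod_j S_{w_d\cdots w_j,j}([0,1])$ with $w_j$ ranging over $\Sigma_{q_j(n)}$; separately it bounds $\underline{\dim}_BK$ below by $\sum_j(\cdots)H_j$ using disjoint rectangles built from well-separated grid cells. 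AWSC then enters through Lemma~\ref{lem-8.1}(ii) to give $H_j\ge\widetilde H_j$, hence equality. Your assertion that the approximate-square count has growth rate $\prod_j M_j(\ell_j)/M_{j-1}(\ell_j)$ is not justified as stated: the ratio $M_j(\ell_j)/M_{j-1}(\ell_j)$ is an \emph{average} count of cells over a base cell, not a uniform bound, so it does not directly control the number of consistent tuples $(C_1,\ldots,C_d)$.
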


It is direct to check that if $\Phi_j$ satisfies the AWSC for each $1\leq j\leq d$, then so does $\Phi_1\times\cdots\times \Phi_j$. Hence for instance,  the condition of Theorem \ref{thm-1.6} fulfills when  $1/\rho_j$ are Pisot numbers or Salem numbers and $(a_{i,1},\ldots, a_{i,d})\in \Z^d$.
Different from  the earlier works  on the Hausdorff  dimension of deterministic self-affine sets and self-affine measures (see e.g. \cite{McM84, Bed84, KePe96, GaLa92, HuLa95, Bar07, Oli08}), our model in Theorem \ref{thm-1.6} admits certain overlaps. The two variational results in Theorem \ref{thm-1.6} provide some new  insights in the study of overlapping self-affine IFS.  An interesting question is whether  the results of Theorem \ref{thm-1.6} remain true without the AWSC assumption. It is related to the open problem whether a non-conformal repeller carries an ergodic measure of  full dimension (see \cite{GaPe96} for a survey). We remark that in the general case, we do have the following inequality(see Lemma \ref{lem-8.2}): $$\underline{\dim}_B K \geq \sum_{j=1}^{d}\left(\frac{1}{\lambda_j}-\frac{1}{\lambda_{j+1}}\right)\sup\{h_{\pi_j}(\sigma,m):\; m \mbox{ is ergodic}\}.$$
Furthermore Theorem \ref{thm-1.6} can be extended somewhat (see Remark \ref{rem-9.3} and Theorem \ref{thm-9.4}).

\section{Density results about conditional measures}\label{S2}

We prove some density results about conditional measures in this section.
To begin with, we give a brief introduction to Rohlin's theory of Lebesgue
spaces, measurable partitions and conditional measures.
The reader is referred to \cite{Roh49,Par-book1} for more details.

 A probability space $(X, \B, m)$ is called a {\it Lebesgue space} if it is isomorphic to a probability space which is the union of  $[0,s]$ ($0\leq s\leq 1$)  with Lebesgue measure and a countable number of atoms.
Now let $(X, \B,m)$ be a Lebesgue space. A {\it measurable partition} $\eta$ of $X$ is a partition of $X$ such that, up to a set of measure zero, the quotient
space $X/\eta$ is separated by a countable number of measurable sets $\{B_i\}$. The quotient space $X/\eta$ with its inherit probability space structure, written as $(X_\eta, \B_\eta, m_\eta)$,  is again a Lebesgue space.
Also, any measurable partition $\eta$ determine a sub-$\sigma$-algebra of $\B$,
 denoted by $\widehat{\eta}$, whose elements are unions of elements of $\eta$.
Conversely, any sub-$\sigma$-algebra of $\B'$ of $\B$ is also countably generated, say by $\{B_i'\}$, and therefore all the sets of the form $\cap A_i$,
where $A_i=B_i^\prime$ or its complement, form a measurable partition.
In particular, $\B$ itself is corresponding to a partition into single points.
An important property of Lebesgue space and measurable partitions is the following.

\begin{thm}[Rohlin \cite{Roh49}]
\label{thm-2.1}
 Let $\eta$ be a measurable partition of a Lebesgue space $(X, \B, m)$. Then, for every $x$ in a set of full $m$-measure, there is a probability measure $m^\eta_x$ defined on $\eta(x)$, the element of $\eta$ containing $x$. These measures are uniquely characterized (up to sets of $m$-measure $0$) by the following properties: if $A\subset X$ is a measurable set, then
$x\mapsto m^\eta_x(A)$ is $\widehat{\eta}$-measurable and $m(A)=\int
m^\eta_x(A)d m(x)$. These properties imply that for any $f\in
L^1(X,\B, m)$, $m_x^\eta(f)=\E_m(f|\widehat{\eta})(x)$ for $m$-a.e.\! $x$,
and $m(f)=\int \E_m(f|\widehat{\eta})dm$.
\end{thm}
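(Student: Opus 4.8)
\emph{Overview.} The plan is to reduce the statement to a convenient topological model and then manufacture the fibre measures by disintegrating conditional expectations, using the Riesz representation theorem to promote finitely additive data to genuine countably additive measures. First I would reduce to the case in which $X$ is a compact metric space, $\B=\B(X)$ its Borel $\sigma$-algebra, and $m$ a Borel probability measure: since $(X,\B,m)$ is a Lebesgue space it is isomorphic, after discarding a set of measure zero, to such a space (say a closed subset of $[0,1]^\N$), and the partition $\eta$ together with the $\sigma$-algebra $\widehat\eta$ transports along the isomorphism. On this model fix a countable family $\F=\{f_k\}_{k\ge 1}\subset C(X)$ that is dense in the sup norm, contains the constants, and is closed under rational linear combinations; for each $k$ pick once and for all a $\widehat\eta$-measurable version $g_k$ of $\E_m(f_k\mid\widehat\eta)$.

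\emph{Construction of the fibre measures.} For each of the countably many algebraic relations among the $f_k$ --- rational linearity, positivity ($f_k\ge 0\Rightarrow g_k\ge 0$), normalization ($g_k\equiv 1$ when $f_k\equiv 1$), and the bound $|g_k|\le\|f_k\|_\infty$ --- the corresponding identity or inequality between the $g_k$ holds for $m$-a.e.\ $x$; intersecting these countably many full-measure sets yields a set $X_0$ with $m(X_0)=1$ on which, for each $x$, the assignment $f_k\mapsto g_k(x)$ is a bounded, normalized, positive $\Bbb Q$-linear functional on $\F$. By density it extends uniquely to a positive normalized linear functional on $C(X)$, and the Riesz representation theorem provides a Borel probability measure $m^\eta_x$ with $m^\eta_x(f_k)=g_k(x)$ for all $k$ (set $m^\eta_x=m$ for $x\notin X_0$). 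For fixed $f\in C(X)$ the map $x\mapsto m^\eta_x(f)$ is a pointwise limit of $\widehat\eta$-measurable functions, hence $\widehat\eta$-measurable, and $\int m^\eta_x(f)\,dm(x)=\lim_k\int g_k\,dm=\lim_k\int f_k\,dm=\int f\,dm$; a functional monotone-class argument then extends this identity from $C(X)$ to all bounded Borel functions, in particular to indicators, giving the asserted $\widehat\eta$-measurability of $x\mapsto m^\eta_x(A)$ and the identity $m(A)=\int m^\eta_x(A)\,dm(x)$ for all $A\in\B$, and the $L^1$ statement follows by approximating $f$ with simple functions.

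\emph{The measures sit on the atoms; uniqueness.} That $m^\eta_x$ lives on $\eta(x)$ comes from the measurable-partition hypothesis: $\widehat\eta$ is generated, modulo null sets, by a countable family $\{B_i\}$ that separates the atoms of $\eta$, and since each $\chi_{B_i}$ is $\widehat\eta$-measurable we get $m^\eta_x(B_i)=\E_m(\chi_{B_i}\mid\widehat\eta)(x)=\chi_{B_i}(x)\in\{0,1\}$ for $m$-a.e.\ $x$; intersecting over $i$ forces $m^\eta_x$ to be concentrated on $\bigcap_{x\in B_i}B_i\cap\bigcap_{x\notin B_i}B_i^c=\eta(x)$ for $m$-a.e.\ $x$, so $m^\eta_x$ may be regarded as a probability measure on $\eta(x)$. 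Uniqueness is then immediate: any other system $\widetilde m^\eta_x$ with the characterizing properties must satisfy $m^\eta_x(A)=\E_m(\chi_A\mid\widehat\eta)(x)=\widetilde m^\eta_x(A)$ for $m$-a.e.\ $x$ for every $A$ in the countable generating algebra, whence $m^\eta_x=\widetilde m^\eta_x$ for $m$-a.e.\ $x$ by uniqueness of measure extension.

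\emph{Main obstacle.} The one genuinely substantive point --- and the step I expect to be the main obstacle --- is precisely the passage from the a.e.-defined conditional expectations, which by themselves furnish only a finitely additive, a.e.-defined assignment of masses, to honest countably additive probability measures on the fibres. This is where the Lebesgue-space hypothesis is essential: it is used exactly to pass to the compact metric model and invoke the Riesz theorem (equivalently, to run a Carath\'eodory extension on an algebra of sets whose boundaries are $m^\eta_x$-null). The remaining ingredients --- ensuring countably many a.e.\ identities hold simultaneously, and the monotone-class bookkeeping --- are routine.
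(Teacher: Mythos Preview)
The paper does not prove Theorem~\ref{thm-2.1}; it is quoted as Rohlin's disintegration theorem with a citation to \cite{Roh49} (and to \cite{Par-book1}) and is used as a black box throughout Sections~\ref{S2}--\ref{S3}. So there is no ``paper's own proof'' to compare against.

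That said, your outline is a correct proof and is essentially the standard modern argument (as in, e.g., Einsiedler--Ward or Simmons): pass to a compact metric model of the Lebesgue space, build the fibre measures via Riesz from a countable dense family in $C(X)$ on which the conditional-expectation versions are simultaneously well behaved, push from $C(X)$ to $\B(X)$ by a functional monotone-class argument, and locate the measures on the atoms using a countable separating family for $\widehat\eta$. The identification $m^\eta_x(A)=\E_m(\chi_A\mid\widehat\eta)(x)$ a.e.\ and the uniqueness clause then drop out. Your diagnosis of the crucial step is also right: the only place the Lebesgue-space hypothesis is genuinely used is the passage from the a.e.\ finitely additive data to countably additive measures, and the compact metric model plus Riesz is exactly how one discharges it. One small point worth making explicit in a write-up: when you transport $\eta$ along the mod-$0$ isomorphism to the compact model, you should note that measurability of the partition is a mod-$0$ notion, so it survives the transport; otherwise the argument is clean.
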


The family of measures $\{m^\eta_x\}$ in the above theorem is called the {\it canonical system of  conditional measures associated with $\eta$}.

Throughout the remaining part of this section,  we  assume that $(X,\B,m)$ is
a Lebesgue space.  Let $\eta$ be a measurable partition of $X$, and
let $\{m^\eta_x\}$ denote the corresponding canonical system of
conditional measures. Suppose that $\pi: X\to  \R^d$ is a
$\B$-measurable map. Denote $\gamma:=\B(\R^d)$, the
Borel-$\sigma$-algebra on $\R^d$. For $y\in \R^d$, we use $B(y,r)$
to denote the closed ball in $\R^d$ of radius $r$ centered at $y$. Also,  we denote for $x\in X$,
\begin{equation}
\label{e-ball}
B^\pi(x,r)=\pi^{-1} B(\pi x,r).
\end{equation}

\begin{lem}
\label{lem-2.2} Let $A\in \B$.  \begin{itemize}
\item[(i)]
 The map
$x\mapsto m_x^\eta(B^\pi(x,r)\cap A)$ is $\hat{\eta} \vee
\pi^{-1}\gamma$-measurable for each $r>0$, where $\hat{\eta} \vee
\pi^{-1}\gamma$ denotes the smallest  sub-$\sigma$-algebra of $\B$
containing $\hat{\eta}$ and $\pi^{-1}\gamma$.
\item[(ii)]
The following  functions
$$
\liminf_{r\to 0}\frac{ m_x^\eta(B^\pi(x,r)\cap A)}{
m_x^\eta(B^\pi(x,r))},\quad \limsup_{r\to 0}\frac{
m_x^\eta(B^\pi(x,r)\cap A)}{ m_x^\eta(B^\pi(x,r))}
$$
and
$$
\inf_{r>0}\frac{ m_x^\eta(B^\pi(x,r)\cap A)}{
m_x^\eta(B^\pi(x,r))}
$$
are $\hat{\eta} \vee \pi^{-1}\gamma$-measurable, where we interpret
${0}/ {0}=0$.
\end{itemize}
\end{lem}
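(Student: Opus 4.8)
The plan is to reduce everything to the measurability of a single two-variable function and then apply standard limiting arguments. The key object is the function
$$
F(x,r) = m_x^\eta\bigl(B^\pi(x,r)\cap A\bigr) = m_x^\eta\bigl(\pi^{-1}B(\pi x,r)\cap A\bigr),
$$
viewed as a function of $x$ for each fixed $r>0$. First I would observe that since $\pi^{-1}\gamma$ is countably generated and $\R^d$ is separable, it suffices to prove $\widehat\eta\vee\pi^{-1}\gamma$-measurability of $x\mapsto m_x^\eta(\pi^{-1}E\cap A)$ for each fixed Borel set $E\subset\R^d$, and then specialize to $E=B(\pi x,r)$ by a ``diagonal'' argument: partition $\R^d$ into small Borel pieces, write $B(\pi x,r)$ approximately as a countable union of such pieces whose selection depends only on the value of $\pi x$ (hence is $\pi^{-1}\gamma$-measurable in $x$), and pass to a limit. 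More precisely, for fixed $E$ the map $x\mapsto m_x^\eta(\pi^{-1}E\cap A)$ is $\widehat\eta$-measurable by Rohlin's theorem (Theorem \ref{thm-2.1}), since $\pi^{-1}E\cap A\in\B$. To get the join with $\pi^{-1}\gamma$ and handle the dependence of the set $B(\pi x,r)$ on $x$, I would approximate: for $\delta>0$ let $\{Q_k\}$ be a countable Borel partition of $\R^d$ into sets of diameter $<\delta$, and note
$$
m_x^\eta\bigl(\pi^{-1}B(\pi x,r)\cap A\bigr) = \lim_{\delta\to 0}\ \sum_{k:\,Q_k\subset B(\pi x,r)} m_x^\eta\bigl(\pi^{-1}Q_k\cap A\bigr),
$$
where for each $k$ the summand $x\mapsto m_x^\eta(\pi^{-1}Q_k\cap A)$ is $\widehat\eta$-measurable and the indicator $\mathbf 1\{Q_k\subset B(\pi x,r)\}$ is a Borel function of $\pi x$, hence $\pi^{-1}\gamma$-measurable in $x$; products and countable sums and limits of such functions are $\widehat\eta\vee\pi^{-1}\gamma$-measurable. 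This gives (i). Taking $A=X$ gives that $x\mapsto m_x^\eta(B^\pi(x,r))$ is also $\widehat\eta\vee\pi^{-1}\gamma$-measurable.

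For part (ii), I would first note that, again by separability of $\R^d$, the ratio $F(x,r)/m_x^\eta(B^\pi(x,r))$ has the same $\liminf$, $\limsup$, and $\inf$ over $r>0$ as over $r$ in a fixed countable dense set $D\subset(0,\infty)$ — more carefully, since $r\mapsto B(\pi x,r)$ is monotone and left/right continuous in the appropriate sense, the relevant suprema and infima over rationals agree with those over all $r$, so it suffices to take sup/inf/limits along $r\in D$. For each fixed $r\in D$ the ratio is a quotient of two $\widehat\eta\vee\pi^{-1}\gamma$-measurable functions (with the convention $0/0=0$), hence $\widehat\eta\vee\pi^{-1}\gamma$-measurable; and a countable infimum, countable supremum, or $\liminf$/$\limsup$ along $D$ of $\widehat\eta\vee\pi^{-1}\gamma$-measurable functions is again $\widehat\eta\vee\pi^{-1}\gamma$-measurable. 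This yields all three assertions in (ii).

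The main obstacle I anticipate is the rigorous justification of the ``diagonal'' passage in (i) — replacing the $x$-dependent ball $B(\pi x,r)$ by a genuinely $\pi^{-1}\gamma$-measurable gadget and controlling the error of the approximation by $\bigcup\{Q_k: Q_k\subset B(\pi x,r)\}$ uniformly enough that the limit over $\delta\to 0$ is legitimate for a.e.\ $x$. The clean way around this is to work directly with the function $(x,r)\mapsto m_x^\eta(B^\pi(x,r)\cap A)$ on $X\times(0,\infty)$ and show it is measurable with respect to $(\widehat\eta\vee\pi^{-1}\gamma)\otimes\B((0,\infty))$: one writes $\mathbf 1_{B^\pi(x,r)}(z)=\mathbf 1\{|\pi z-\pi x|\le r\}$, which is jointly measurable in $(x,z,r)$, integrates against the conditional measures using Rohlin's theorem and Fubini for the product of $m$ with Lebesgue-type structure, and then restricts to fixed $r$. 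Once joint measurability is in hand, fixing $r$ gives (i) and the dense-set reduction gives (ii) with no further difficulty. I would also remark that the standing hypothesis that $(X,\B,m)$ is a Lebesgue space is what makes the canonical system $\{m_x^\eta\}$ available and is used implicitly throughout.
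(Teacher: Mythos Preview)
Your approach is essentially the same as the paper's: approximate the ball $B(\pi x,r)$ by unions of dyadic-type cubes, use that for each fixed cube $Q$ the map $x\mapsto m_x^\eta(\pi^{-1}Q\cap A)$ is $\widehat\eta$-measurable (Rohlin), and use that the selection of cubes depends only on $\pi x$ and hence is $\pi^{-1}\gamma$-measurable in $x$. For part (ii) you both reduce to rational $r$ via right-continuity of $r\mapsto B(y,r)$.

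The one genuine technical wrinkle is that your \emph{inner} approximation $\bigcup\{Q_k: Q_k\subset B(\pi x,r)\}$ increases (along a refining sequence of partitions) to the \emph{open} ball $U(\pi x,r)$, not the closed ball $B(\pi x,r)$; your displayed limit equality is therefore false in general, since the sphere $\partial B(\pi x,r)$ may carry positive $m_x^\eta\circ\pi^{-1}$-mass. The paper avoids this by approximating from \emph{outside}: it sets $W_n(y)=\bigcup\{Q\in\D_n: Q\cap B(y,r)\neq\emptyset\}$, so that $W_n(y)\downarrow B(y,r)$ exactly, and observes that the range $\{W_n(y): y\in\R^d\}$ is countable, giving a clean countable decomposition $m_x^\eta(\pi^{-1}W_n(\pi x)\cap A)=\sum_{F}\chi_{\{W_n(\pi\cdot)=F\}}(x)\,m_x^\eta(\pi^{-1}F\cap A)$. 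Your inner approximation can be repaired the same way (the closed ball is a countable intersection of open balls), and your fallback via joint measurability would also work, but the outer dyadic cover is the most direct fix.
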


\begin{proof} We first prove (i). Let $A\in \B$ and $r>0$. For $n\in
\N$, let $\D_n$ denote the collection
$$
\D_n=\{[0,2^{-n})^d+\alpha:\; \alpha\in 2^{-n}\Z^d\}.
$$
 For $y\in \R^d$, denote
$$W_n(y)=\bigcup_{Q\in \D_n:\; Q\cap B(y,r)\neq \emptyset} Q.
$$
 Write $\W_n:=\{W_n(y):\; y\in \R^d\}$. It is clear that $\W_n$ is countable for each $n\in \N$. Furthermore, we have
$W_n(y)\downarrow B(y,r)$ for each $y\in \R^d$ as $n\to \infty$,
that is, $W_{n+1}(y)\subset W_n(y)$ and $\bigcap_{n=1}^\infty
W_n(y)=B(y,r)$. As a consequence, we have $\pi^{-1}W_n(\pi
x)\downarrow B^\pi(x,r)$ and hence
$$
m_x^\eta(B^\pi(x,r)\cap A)=\lim_{n\to \infty}
m^\eta_x(\pi^{-1}W_n(\pi x)\cap A)\qquad (x\in X).$$ Therefore to show that
$x\mapsto m_x^\eta(B^\pi(x,r)\cap A)$ is $\hat{\eta} \vee
\pi^{-1}\gamma$-measurable, it suffices to show that $x\mapsto
m_x^\eta(\pi^{-1}W_n(\pi x)\cap A)$ is $\hat{\eta} \vee
\pi^{-1}\gamma$-measurable for each $n\in \N$.

Fix $n\in \N$. For $F\in \W_n$, let $\Gamma_n(F)=\{x\in X:\; W_n(\pi
x) =F\}$. Then $\Gamma_n(F)\in \pi^{-1}\gamma$. By Theorem
\ref{thm-2.1},  $m_x^\eta(\pi^{-1}F\cap A)$ is an
$\hat{\eta}$-measurable function of $x$ for each $F\in \W_n$. However
$$
m_x^\eta(\pi^{-1}W_n(\pi x)\cap A)=\sum_{F\in
\W_n}\chi_{\Gamma_n(F)} (x) m_x^\eta(\pi^{-1}F\cap A).$$
Hence $m_x^\eta(\pi^{-1}W_n(\pi x) \cap A)$ is $\hat{\eta}
\vee \pi^{-1}\gamma$-measurable, so is $m_x^\eta(B^\pi(x,r)\cap A)$.

To see (ii), note that for $x\in \Sigma$ and $r>0$ satisfying
$m_x^\eta(B^\pi(x,r))>0$, we have
$$
\frac{ m_x^\eta(B^\pi(x,r)\cap A)}{ m_x^\eta(B^\pi(x,r))}=\lim_{q\downarrow r:\; q\in {\Bbb Q}^+}
\frac{ m_x^\eta(B^\pi(x,q)\cap A)}{ m_x^\eta(B^\pi(x,q))}.
$$
Hence for the three limits in (ii), we can restrict $r$ to be
positive rationals. It together with (i) yields the desired
measurability.
\end{proof}

\begin{lem}
\label{lem-2.4} Let $A\in \B$. Then for $m$-a.e.\! $x\in X$,
\begin{equation}
\label{e-2.1} \lim_{r\to 0}\frac{ m_x^\eta(B^\pi(x,r)\cap
A)}{ m_x^\eta(B^\pi(x,r))}=\E_m(\chi_A|\hat{\eta}\vee
\pi^{-1}\gamma)(x).
\end{equation}
\end{lem}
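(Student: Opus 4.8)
The plan is to deduce \eqref{e-2.1} from a martingale-type differentiation argument, using Lemma~\ref{lem-2.2} to secure measurability and a suitable density basis in $\R^d$ to reduce the limit over balls to a limit over a countable filtration. First I would fix $A\in\B$ and work on the Lebesgue space $(X,\B,m)$. Set $g:=\E_m(\chi_A\mid\hat\eta\vee\pi^{-1}\gamma)$; by Lemma~\ref{lem-2.2}(ii) the $\limsup$ and $\liminf$ of the ratio on the left of \eqref{e-2.1} are $\hat\eta\vee\pi^{-1}\gamma$-measurable, so it is legitimate to compare them with $g$ by integrating against sets in $\hat\eta\vee\pi^{-1}\gamma$. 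The key point is that, after disintegrating $m$ over $\eta$, the conditional measure $m_x^\eta$ lives on $\eta(x)$ and the $\sigma$-algebra $\hat\eta\vee\pi^{-1}\gamma$ restricted to $\eta(x)$ is generated by $\pi^{-1}\gamma$; hence $g(x)$ is essentially the $\pi$-pushed-forward conditional density of $\chi_A$ with respect to $m_x^\eta$, evaluated at $\pi x$. So the claimed identity is, fibrewise, a Lebesgue differentiation statement for the Borel measure $(\pi)_*\big(m_x^\eta\!\restriction_A\big)$ against $(\pi)_*m_x^\eta$ on $\R^d$ at the point $\pi x$.

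The main technical device I would use is the dyadic-cube approximation already introduced in the proof of Lemma~\ref{lem-2.2}: the sets $W_n(y)$ with $W_n(\pi x)\downarrow B(\pi x,r)$, and, letting $r\to0$ through a countable set simultaneously with $n\to\infty$, a genuine net of neighbourhoods of $\pi x$ shrinking nicely. Concretely, I would first prove the analogue of \eqref{e-2.1} with $B^\pi(x,r)$ replaced by $\pi^{-1}D_n(\pi x)$, where $D_n(y)$ is the unique cube of $\D_n$ containing $y$: there the left side is a reversed martingale along the filtration $\pi^{-1}\widehat{\D_n}\vee\hat\eta$, which increases to $\pi^{-1}\gamma\vee\hat\eta$, so by the martingale convergence theorem (applied on the Lebesgue space, using Theorem~\ref{thm-2.1} to identify conditional expectations with integrals against $m_x^\eta$) the ratio converges $m$-a.e. to $g(x)$. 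Then I would pass from cubes to balls: since each ball $B(\pi x,r)$ is sandwiched between $D_n(\pi x)$ and $W_n(\pi x)$, and $W_n(\pi x)$ is a bounded union of boundedly many cubes of $\D_n$, a Vitali/Besicovitch-type covering estimate in $\R^d$ (the doubling property of Lebesgue-style dyadic neighbourhoods) controls $m_x^\eta(B^\pi(x,r)\triangle\pi^{-1}D_{n(r)}(\pi x))$ relative to $m_x^\eta(B^\pi(x,r))$, so the ball-ratio has the same $\limsup$ and $\liminf$ as the cube-ratio, hence converges to $g(x)$ as well.

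To make the a.e. statement uniform in $A$ where needed, I would fix one $A$ (the lemma is stated for a single $A\in\B$), so no uniformity over a $\sigma$-algebra is required; the null set is allowed to depend on $A$. I expect the main obstacle to be the passage from cubes to balls cleanly: one must bound, for $m$-a.e.\ $x$, the conditional mass $m_x^\eta\big(\pi^{-1}(W_n(\pi x)\setminus B(\pi x,r))\big)$ in terms of $m_x^\eta\big(\pi^{-1}B(\pi x,r)\big)$ uniformly as $r\to0$, which is exactly where a density/Lebesgue differentiation theorem for the pushforward measure $(\pi)_*m_x^\eta$ on $\R^d$ is invoked — and that requires knowing this differentiation theorem holds for an arbitrary finite Borel measure on $\R^d$ along the standard basis of balls, which is classical (Besicovitch covering theorem) but must be quoted carefully. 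Everything else — measurability from Lemma~\ref{lem-2.2}, identification of conditional expectations with $m_x^\eta$-integrals from Theorem~\ref{thm-2.1}, and the reversed-martingale convergence along $\pi^{-1}\widehat{\D_n}$ — is routine.
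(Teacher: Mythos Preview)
Your high-level picture is right --- fibrewise along $\eta$, the statement reduces to differentiation of the measure $E\mapsto m_x^\eta(\pi^{-1}E\cap A)$ against $E\mapsto m_x^\eta(\pi^{-1}E)$ on $\R^d$ --- and this is exactly the paper's line of attack. But your proposed route through dyadic cubes is a detour with a real soft spot. Martingale convergence along $\pi^{-1}\widehat{\D_n}\vee\hat\eta$ does give convergence of the cube ratios, but the passage from cubes to balls is \emph{not} controlled by any ``doubling property'': the pushforward $m_x^\eta\circ\pi^{-1}$ is an arbitrary finite Borel measure on $\R^d$, and without doubling the symmetric-difference bound you want between $B(\pi x,r)$ and $D_{n(r)}(\pi x)$ simply fails in general. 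You concede at the end that Besicovitch differentiation for arbitrary finite Borel measures is what is actually needed --- but once you invoke that, the ball ratios converge directly and the entire martingale/cube scaffolding becomes redundant.

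The paper proceeds more directly. For each $C\in\eta$ it sets $\mu_C(E)=m_C(\pi^{-1}E\cap A)$ and $\nu_C(E)=m_C(\pi^{-1}E)$ on $\gamma$, applies the Besicovitch differentiation theorem (quoted as \cite[Theorem~2.12]{Mat95}) to get that the $\limsup$ and $\liminf$ of the ball ratio both equal $\frac{d\mu_C}{d\nu_C}(\pi x)$ for $m_C$-a.e.\ $x$, and then identifies this fibrewise derivative with $g=\E_m(\chi_A|\hat\eta\vee\pi^{-1}\gamma)$ by checking that $\int_{B\cap\pi^{-1}D}\overline f\,dm=\int_{B\cap\pi^{-1}D}g\,dm$ for all $B\in\hat\eta$ and $D\in\gamma$. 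Since such intersections form an algebra generating $\hat\eta\vee\pi^{-1}\gamma$, and both $\overline f$ and $g$ are measurable with respect to that $\sigma$-algebra (Lemma~\ref{lem-2.2}), equality $m$-a.e.\ follows. You gloss this identification step as ``$g(x)$ is essentially the $\pi$-pushed-forward conditional density''; it is routine, but it is where the remaining content lives once differentiation is in hand.
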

\begin{proof}
Let $\overline{f}(x)$ and $\underline{f}(x)$ be  the  values
obtained by taking the upper and lower limits in the left hand side
of (\ref{e-2.1}). By Lemma \ref{lem-2.2}, both $\overline{f}$ and
$\underline{f}$ are $\hat{\eta} \vee \pi^{-1}\gamma$-measurable. In
the following we only show that
$\overline{f}(x)=\E_m(\chi_A|\hat{\eta}\vee \pi^{-1}\gamma)(x)$ for
$m$-a.e.\! $x$. The proof for
$\underline{f}(x)=\E_m(\chi_A|\hat{\eta}\vee \pi^{-1}\gamma)(x)$ is
similar.

 We first prove that
\begin{equation}
\label{e-2.2} \int_{B\cap \pi^{-1}D} \overline{f} \;dm =\int_{B\cap
\pi^{-1}D} \E_m(\chi_A|\hat{\eta}\vee \pi^{-1}\gamma)\;dm \qquad
(B\in \hat{\eta},\; D\in \gamma).
\end{equation}
By Theorem \ref{thm-2.1}, for any given $C\in \eta$, $m_x^\eta$
($x\in C$) represent the same measure supported on $C$, which we
rewrite as $m_C$. Fix $C\in \eta$. We define measures $\mu_C$ and
$\nu_C$ on $\R^d$ by $\mu_C(E)=m_C(\pi^{-1}E\cap A)$ and
$\nu_C(E)=m_C(\pi^{-1}E)$ for all $E\in \gamma$. It is clear that
$\mu_C\ll\nu_C$.  Define
$$g_C(z)=\limsup_{r\to 0} \frac{\mu_C(B(z,r))}{\nu_C(B(z,r))}\qquad (z\in \R^d).
$$
Then $\overline{f}(x)=g_{\eta(x)}(\pi x)$ for all $x\in \Sigma$.
According to the differentiation theory of measures on $\R^d$ (see,
e.g., \cite[Theorem 2.12]{Mat95}), $g_C=\frac{d\mu_C}{d\nu_C}$
$\nu_C$-a.e. Hence for each $D\in \gamma$, we have $\int_D g_C(z)\
d\nu_C(z)=\mu_C(D)$, i.e., $\int_{\pi^{-1}D } g_C(\pi y)
\;dm_C(y)=\mu_C(D)=m_C(\pi^{-1}D\cap A)$. That is,
\begin{equation}
\label{e-2.3} \int _{\pi^{-1}D} \overline{f} \;
dm_x^\eta=m_x^\eta(\pi^{-1}D\cap A)\qquad (x\in X).
\end{equation}
To see (\ref{e-2.2}), let $B\in \hat{\eta}$. Then
\begin{eqnarray*}
\int_{B\cap \pi^{-1}D} \overline{f} \;dm &=& \int \chi_B\chi_{\pi^{-1}D} \overline{f} \; dm
=\int \E_m\left(\chi_B\chi_{\pi^{-1}D} \overline{f}|\hat{\eta}\right)\; dm\\
&=&\int \chi_B \E_m\left( \chi_{\pi^{-1}D} \overline{f}|\hat{\eta}\right)\; dm\\
&=&\int_B\left(\int_{\pi^{-1}D} \overline{f}\; dm_x^\eta\right) dm(x)\qquad (\mbox{by Theorem \ref{thm-2.1}})\\
&=&\int_Bm_x^\eta(\pi^{-1}D\cap A) dm(x)\qquad (\mbox{by (\ref{e-2.3})})\\
&=&\int \chi_B(x) \E_m\left(\chi_{\pi^{-1}D\cap A}|\hat{\eta}\right)(x) \;dm(x)\qquad (\mbox{by Theorem \ref{thm-2.1}}).
\end{eqnarray*}
Thus we have
\begin{eqnarray*}
\int_{B\cap \pi^{-1}D} \overline{f} \;dm&=&\int \E_m\left(\chi_B\chi_{\pi^{-1}D\cap A}|\hat{\eta}\right)(x) \;dm(x)\\
&=&\int \chi_B\chi_{\pi^{-1}D\cap A}dm=m(B\cap \pi^{-1}D\cap A)\\
&=&\int \E_m(\chi_{B\cap \pi^{-1}D}\chi_A|\hat{\eta}\vee \pi^{-1}\gamma) \;dm\\
&=&\int \chi_{B\cap \pi^{-1}D} \E_m(\chi_A|\hat{\eta}\vee \pi^{-1}\gamma) \;dm\\
&=&\int_{B\cap \pi^{-1}D} \E_m(\chi_A|\hat{\eta}\vee \pi^{-1}\gamma) \;dm.
\end{eqnarray*}
This establishes (\ref{e-2.2}).

Let $R=\overline{f}-{\bf E}_m(\chi_A|\hat{\eta}\vee \pi^{-1}\gamma)$.
Then $R$ is $\hat{\eta}\vee \pi^{-1}\gamma$-measurable and
$$\int_{B\cap \pi^{-1}(D)}R \;dm=0 \qquad (B\in \hat{\eta},\; D\in \pi^{-1}\gamma).$$
Denote $\F=\{B\cap \pi^{-1}(D):\; B\in \hat{\eta},\; D\in \pi^{-1}\gamma\}$ and
let $$\F'=\left\{\bigcup_{i=1}^k F_i: \; k\in \N,\;  F_1,\ldots, F_k \in \F \mbox{ are disjoint}\right\}.$$
It is clear that $\int_FR\; dm=0$ for all $F\in \F'$.
Moreover it is  a routine to check that $\F'$ is an algebra which contains  $\hat{\eta}$ and $\pi^{-1}\gamma$, and hence  $\F'$ generates the  $\sigma$-algebra $\hat{\eta}\vee \pi^{-1}\gamma$.

We claim that
$R=0$ $m$-a.e.  Assume this is not true. Then  there exists $\epsilon>0$ such that the set $\{R>\epsilon\}$, or
$\{R<-\epsilon\}$, has positive $m$-measure. Without loss of generality, we assume that $m\{R>\epsilon\}>0$.
Since $\F'$ is an algebra which generates $\hat{\eta}\vee \pi^{-1}\gamma$, there exists a sequence $F_i\in \F'$ such that $m(F_i\triangle \{R>\epsilon\})$ tends to $0$ as $i\to \infty$ (cf. \cite[Theorem 0.7]{Wal-book}).  We conclude that $\int_{F_i}R\; dm$ tends to $\int_{\{R>\epsilon\}} R\; dm>0$ as $i\to \infty$, which contradicts the fact $\int_{F_i}R\; dm=0$.
\end{proof}

\begin{rem}
\label{rem-2.5}
{\rm
\begin{itemize}
\item[(i)] Letting $\eta={\mathcal N}$ be the trivial partition of $X$ in the above lemma, we obtain
$\displaystyle\lim_{r\to 0}\frac{ m(B^\pi(x,r)\cap
A)}{ m(B^\pi(x,r))}=\E_m(\chi_A|\pi^{-1}\gamma)(x)$  $m$-a.e.
\item[(ii)] In general, ${\bf E}_{m^\eta_x}(\chi_A|\pi^{-1}\gamma)(x)=\E_m(\chi_A|\hat{\eta}\vee
\pi^{-1}\gamma)(x)$ $m$-a.e.,   both of them equal  $\displaystyle\lim_{r\to 0}\frac{ m_x^\eta(B^\pi(x,r)\cap
A)}{ m_x^\eta(B^\pi(x,r))}$ $m$-a.e.   by (i).
\end{itemize}
}
\end{rem}

\begin{pro}\label{pro-2.5}
Let $\xi$ be a countable measurable partition of $X$.
Then for $m$-a.e.\! $x\in X$,
\begin{equation}\label{e-2.4}
\lim_{r\to 0}\log \frac
{m^\eta_x\left(B^\pi(x,r)\cap \xi(x)\right)}
{m^\eta_x\left(B^\pi(x,r)\right)}
=-{\bf I}_m
\left(\xi|\hat{\eta}\vee\pi^{-1}\gamma\right)(x),
\end{equation}
where ${\bf I}_m(\cdot|\cdot)$ denotes the conditional information (see (\ref{e-1.2}) for the definition).
Furthermore, set
\begin{equation}\label{e-2.5}
g(x)=-\inf_{r>0}\log\frac
{m^\eta_x\left(B^\pi(x,r)\cap \xi(x)\right)}
{m^\eta_x\left(B^\pi(x,r)\right)}
\end{equation}
and assume $H_m(\xi)<\infty$. Then $g\geq 0$ and $g\in L^1(X,{\B},m)$.
\end{pro}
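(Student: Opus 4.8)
The plan is to derive the limit formula \eqref{e-2.4} as a pointwise consequence of Lemma~\ref{lem-2.4}, and then deduce integrability of $g$ from the finiteness of $H_m(\xi)$ via a maximal-type estimate. First I would fix the countable partition $\xi=\{A_k\}_k$ and apply Lemma~\ref{lem-2.4} to each atom $A=A_k$: for $m$-a.e.\ $x$,
$$\lim_{r\to 0}\frac{m_x^\eta(B^\pi(x,r)\cap A_k)}{m_x^\eta(B^\pi(x,r))}=\E_m(\chi_{A_k}|\hat\eta\vee\pi^{-1}\gamma)(x).$$
Since $\xi$ is countable, outside an $m$-null set this holds simultaneously for every $k$. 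For $m$-a.e.\ $x$ there is a unique $k=k(x)$ with $x\in A_k$, namely $\xi(x)=A_{k(x)}$, so taking the logarithm of the displayed identity with $k=k(x)$ gives
$$\lim_{r\to 0}\log\frac{m_x^\eta(B^\pi(x,r)\cap\xi(x))}{m_x^\eta(B^\pi(x,r))}=\log\E_m(\chi_{\xi(x)}|\hat\eta\vee\pi^{-1}\gamma)(x)=-{\bf I}_m(\xi|\hat\eta\vee\pi^{-1}\gamma)(x),$$
the last equality being the definition \eqref{e-1.2} of conditional information (note $\E_m(\chi_{A_k}|\hat\eta\vee\pi^{-1}\gamma)(x)>0$ for $m$-a.e.\ $x\in A_k$, so the logarithm is finite a.e.). This proves \eqref{e-2.4}; that $g\geq 0$ is immediate since each ratio $m_x^\eta(B^\pi(x,r)\cap\xi(x))/m_x^\eta(B^\pi(x,r))$ is at most $1$, so its logarithm is $\leq 0$ and $g$, being the supremum of their negatives, is $\geq 0$. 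Measurability of $g$ is already supplied by Lemma~\ref{lem-2.2}(ii).

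The substantive part is $g\in L^1(X,\B,m)$. The strategy is the classical Wiener-type argument bounding the distribution function of $g$. I would decompose $g=\sum_k \chi_{A_k}\, g_k$ where, on $A_k$,
$$g_k(x)=-\inf_{r>0}\log\frac{m_x^\eta(B^\pi(x,r)\cap A_k)}{m_x^\eta(B^\pi(x,r))}=\sup_{r>0}\log\frac{m_x^\eta(B^\pi(x,r))}{m_x^\eta(B^\pi(x,r)\cap A_k)},$$
and estimate, for each $k$ and each $t>0$,
$$m\bigl(\{x\in A_k:\ g_k(x)>t\}\bigr)=m\Bigl(\Bigl\{x\in A_k:\ \inf_{r>0}\frac{m_x^\eta(B^\pi(x,r)\cap A_k)}{m_x^\eta(B^\pi(x,r))}<e^{-t}\Bigr\}\Bigr).$$
Working inside a fixed atom $C\in\eta$, with the pushed-forward measures $\mu_C(E)=m_C(\pi^{-1}E\cap A_k)$ and $\nu_C(E)=m_C(\pi^{-1}E)$ on $\R^d$ as in the proof of Lemma~\ref{lem-2.4}, the set in question is the $\nu_C$-measure of $\{z:\ \inf_r \mu_C(B(z,r))/\nu_C(B(z,r))<e^{-t}\}$; by the Besicovitch covering lemma applied to the measures $\mu_C$ and $\nu_C$ on $\R^d$, this is bounded by $c_d\,e^{-t}\,\mu_C(\R^d)=c_d\,e^{-t}\,m_C(\pi^{-1}\R^d\cap A_k)$ for a dimensional constant $c_d$. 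Integrating over $C\in\eta$ against $m$ (using Theorem~\ref{thm-2.1}) yields $m(\{x\in A_k:\ g_k(x)>t\})\leq c_d\,e^{-t}\,m(A_k)$, and also trivially $\leq m(A_k)$. Hence
$$\int_{A_k} g\,dm=\int_0^\infty m(\{x\in A_k: g_k(x)>t\})\,dt\leq\int_0^\infty \min\{m(A_k),\ c_d e^{-t}m(A_k)\}\,dt = m(A_k)\bigl(\log^+(c_d)+1\bigr),$$
more precisely $\int_{A_k}g\,dm\le m(A_k)\log(c_d)+m(A_k)$ when $c_d\ge 1$ (splitting the $t$-integral at $t=\log c_d$). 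Summing over $k$ and invoking $H_m(\xi)=-\sum_k m(A_k)\log m(A_k)<\infty$ together with $\sum_k m(A_k)=1$, I would get
$$\int_X g\,dm=\sum_k\int_{A_k}g\,dm\ \leq\ \sum_k m(A_k)\bigl(-\log m(A_k)\bigr)+(\log c_d+1)\sum_k m(A_k)=H_m(\xi)+\log c_d+1<\infty,$$
where the bound $\int_{A_k}g\,dm\le m(A_k)(-\log m(A_k))+(\text{const})\,m(A_k)$ comes from using the trivial bound $m(A_k)$ for $t\le -\log m(A_k)$ and the Besicovitch bound for larger $t$. This gives $g\in L^1(X,\B,m)$.

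The main obstacle I anticipate is the uniform Besicovitch-type estimate: one must be careful that the covering constant $c_d$ depends only on the ambient dimension $d$ and not on the atom $C$ or on $k$, and that the measures $\mu_C,\nu_C$ are genuine (finite Borel) measures on $\R^d$ to which the Besicovitch covering theorem applies — both are fine since $m_C$ is a probability measure and $\pi$ is $\B$-measurable. A secondary technical point is the measurability in $x$ of all the conditional objects, so that the integrations over $\eta$ via Theorem~\ref{thm-2.1} are legitimate; this is exactly what Lemma~\ref{lem-2.2} was set up to guarantee. Everything else — the layer-cake representation of $\int g\,dm$, the split of the $t$-integral, and the summation against the entropy — is routine.
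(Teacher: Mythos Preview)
Your approach is exactly the paper's: deduce \eqref{e-2.4} atom-by-atom from Lemma~\ref{lem-2.4}, then bound $\int g\,dm$ via a weak-type covering inequality and the layer-cake formula, splitting the $t$-integral at $-\log m(A_k)$ to make $H_m(\xi)$ appear.

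There is, however, a swap error in your maximal step. Inside a fixed $C\in\eta$, the quantity $m_C\bigl(\{x\in A_k:\ g_k(x)>t\}\bigr)$ equals the \emph{$\mu_C$-measure} (not the $\nu_C$-measure) of the set $E_\lambda=\{z:\inf_r \mu_C(B(z,r))/\nu_C(B(z,r))<\lambda\}$ with $\lambda=e^{-t}$, because you are restricting to $x\in A_k$ and $\mu_C(\cdot)=m_C(\pi^{-1}(\cdot)\cap A_k)$. The Besicovitch covering argument then gives $\mu_C(E_\lambda)\le c_d\,\lambda\,\nu_C(\R^d)=c_d\,\lambda$, with $\nu_C(\R^d)=1$ --- \emph{not} $c_d\,\lambda\,\mu_C(\R^d)$. (Your claimed bound $c_d e^{-t}\mu_C(\R^d)$ is false in general: take $\mu_C$ a small point mass and $\nu_C$ Lebesgue on $[0,1]$.) Integrating the correct bound over $C$ yields the paper's inequality $m(A_k\cap\{g_k>t\})\le c_d\,e^{-t}$, with no factor $m(A_k)$. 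It is precisely this version that, combined with the trivial bound $m(A_k)$ and split at $t=-\log m(A_k)$, produces
\[
\int_{A_k} g\,dm\le -m(A_k)\log m(A_k)+m(A_k)(1+\log c_d),
\]
and hence $\int g\,dm\le H_m(\xi)+1+\log c_d$. Your final paragraph already uses this correct split, so once you fix the $\mu_C/\nu_C$ swap the argument is complete and matches the paper (which cites Rudin's covering theorem for the constant $3^d$ in place of your $c_d$).
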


\begin{proof}
(\ref{e-2.4}) follows directly from Lemma \ref{lem-2.4} and the following equality
$$
\lim_{r\to 0}\log \frac
{m^\eta_x\left(B^\pi(x,r)\cap \xi(x)\right)}
{m^\eta_x\left(B^\pi(x,r)\right)}=\sum_{A\in \xi}\chi_A(x)\lim_{r\to 0}\log \frac
{m^\eta_x\left(B^\pi(x,r)\cap A\right)}
{m^\eta_x\left(B^\pi(x,r)\right)}.
$$
Now we turn to the proof of (\ref{e-2.5}).
It is clear that $g$ is non-negative.  By Lemma \ref{lem-2.2}, $g$ is measurable. In the following we show that
 $g\in L^1(X,\B,m)$.

Let  $C\in \eta$ and $A\in \xi$ be given. As in the proof of Lemma \ref{lem-2.4}, we define measures $\mu_C$ and $\nu_C$ on $\R^d$ by $\mu_C(E)=m_C(\pi^{-1}E\cap A)$ and $\nu_C(E)=m_C(\pi^{-1}E)$ for all $E\in \gamma$.
By  Theorem 7.4 in \cite{Rud-book}, we have
$$
\mu_C\left\{z\in \R^d:\;
\inf_{r>0}\frac{\mu_C(B(z,r))}{\nu_C(B(z,r))}<\lambda\right\}\leq 3^d \lambda\qquad (\lambda>0).
$$
Hence for any $\lambda>0$,
$$
m_C\left(\
\left\{
x\in X:\;
\inf_{r>0} \frac{m_C\left(B^\pi(x,r)\cap A\right)}
{m_C\left(B^\pi(x,r)\right)}<\lambda
\right\}
\cap A \right)
\leq 3^d \lambda.
$$
Integrating $C$ with respect to $m_\eta$, we obtain
$$
m\left(\
\left\{
x\in X:\;
\inf_{r>0} \frac{m^\eta_x\left(B^\pi(x,r)\cap A\right)}
{m^\eta_x\left(B^\pi(x,r)\right)}<\lambda \right\}
\cap A \right)
\leq 3^d \lambda.
$$
Denote $\displaystyle g^A(x)=\inf_{r>0}\frac
 {m^\eta_x\left(B^\pi(x,r)\cap A\right)}
 {m^\eta_x\left(B^\pi(x,r) \right)}$.
Then the above inequality can be rewritten as
$$m(A\cap \{g^A<\lambda\})\leq 3^d \lambda.$$
Note that by (\ref{e-2.5}),  $g(x)=-\sum_{A\in \xi}\chi_A(x)\log g^A(x)$. Since $g$ is non-negative, we have
\begin{eqnarray*}
\int g\; dm &=&\int_0^\infty m\{g>t\}\;dt
 = \int_0^\infty\sum_{A\in \xi} m(A\cap \{g^A<e^{-t}\})\; dt \\
&\leq& \sum_{A\in \xi} \int_0^\infty  \min \{m(A),
3^d e^{-t}\}\;dt\\
&\leq& \sum_{A\in \xi} \left(-m(A)\log m(A)+ m(A)+ m(A)\log 3^d \right)\\
&=&H_m(\xi)+1+\log 3^d.
\end{eqnarray*}
This finishes the proof of the proposition.
\end{proof}

\begin{rem}
\label{rem-2.6}
{\rm
  Consider the case $X=\Sigma$ and $\xi=\P$, where $\P$ is defined as in (\ref{e-1P}). Suppose that $\{S_i\}_{i=1}^\ell$ is a family of mappings such that $S_i\colon \pi(\Sigma)\to S_i(\pi(\Sigma))\subset \R^d$ is  homeomorphic for each $i$. Then in (\ref{e-2.4}) and (\ref{e-2.5}), we can change the terms $B^\pi(x,r)$ to $\pi^{-1}R_{r,x}(\pi x)$, where
$R_{r,x}(z):=S_{x_1}^{-1} B(S_{x_1} (z),r)$. To see it, fix $i$ and define $\pi'=S_i\circ \pi$. Then we have
\begin{eqnarray*}
\lim_{r\to 0} \frac
{m^\eta_x\left(\pi^{-1}R_{r,x}(\pi x)\cap [i]\right)}
{m^\eta_x\left(\pi^{-1}R_{r,x}(\pi x)\right)}&=&
\lim_{r\to 0} \frac
{m^\eta_x\left(B^{\pi'}(x,r) \cap [i]\right)}
{m^\eta_x\left(B^{\pi'}(x,r)\right)}\\
&=&\E_m(\chi_{[i]}|\hat{\eta}\vee (\pi')^{-1}\gamma)(x).
\end{eqnarray*}
However, $(\pi')^{-1}\gamma=\pi^{-1}\gamma$ due to the assumption on $S_i$. Hence the last term in the above formula  equals
$\E_m(\chi_{[i]}|\hat{\eta}\vee \pi^{-1}\gamma)(x)$. Thus we can replace the terms $B^\pi(x,r)$ by $\pi^{-1}R_{r,x}(\pi x)$ in (\ref{e-2.4}). For the change in (\ref{e-2.5}), we may use a similar argument.
}
\end{rem}

\begin{lem}
\label{lem-2.7}
Let $\pi\colon X\to\R^d$ and $\phi\colon X\to \R^k$ be two $\B$-measurable maps. Let $\eta$ be the partition of $X$ given by
$\eta=\{\pi^{-1}(z)\colon z\in \R^d\}$. Let $A\in \B$ and $t>0$. Then for  $m$-a.e.\! $x\in X$, we have
\begin{equation}
\label{e-2.6}
m^\eta_x(B^\phi(x,t)\cap A)\geq \limsup_{r\to 0}\frac{ m\left(B^\phi(x,t)\cap A \cap B^\pi(x,r)\right)}{m\left(B^\pi(x,r)\right)}
\end{equation}
and
\begin{equation}
\label{e-2.7}
m^\eta_x(U^\phi(x,t)\cap A)\leq \liminf_{r\to 0}\frac{ m\left(U^\phi(x,t)\cap A \cap B^\pi(x,r)\right)}{m\left(B^\pi(x,r)\right)},
\end{equation}
where $B^\phi(x,t):=\phi^{-1}B(\phi x, t)$,  $U^\phi(x,t):=\phi^{-1}U(\phi x, t)$, here $U(z,t)$ denotes the open ball in $\R^d$ centered at $z$ of radius $t$.
\end{lem}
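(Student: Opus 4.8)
The plan is to deduce both inequalities from the differentiation statement of Lemma~\ref{lem-2.4} (in the trivial‑partition form of Remark~\ref{rem-2.5}(i)); the one obstruction is that the sets $B^\phi(x,t)$ and $U^\phi(x,t)$ vary with $x$, and I would remove this by sandwiching them between members of a fixed \emph{countable} family of sets. Throughout I use that for the level‑set partition $\eta=\{\pi^{-1}(z):z\in\R^d\}$ one has $\widehat{\eta}=\pi^{-1}\gamma$ up to $m$-null sets, so that by Theorem~\ref{thm-2.1}, $m^\eta_x(E)=\E_m(\chi_E\,|\,\pi^{-1}\gamma)(x)$ for $m$-a.e.\ $x$ and every $E\in\B$.

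First I would set up dyadic approximations in the target $\R^k$ of $\phi$. For $n\in\N$, let $\D_n=\{[0,2^{-n})^k+\alpha:\ \alpha\in 2^{-n}\Z^k\}$, and for $z\in\R^k$ put
$$
W_n(z)=\bigcup_{Q\in\D_n:\ Q\cap B(z,t)\ne\emptyset}Q,\qquad
V_n(z)=\bigcup_{Q\in\D_n:\ Q\subset U(z,t)}Q .
$$
Arguing as in the proof of Lemma~\ref{lem-2.2}, the families $\W_n:=\{W_n(z):z\in\R^k\}$ and $\mathcal{V}_n:=\{V_n(z):z\in\R^k\}$ are countable, and each $W_n(z),V_n(z)$ as well as each level set $\{z:W_n(z)=F\}$, $\{z:V_n(z)=F\}$ is Borel; moreover $W_n(z)\downarrow B(z,t)$ (using that $B(z,t)$ is closed) and $V_n(z)\uparrow U(z,t)$ (using that $U(z,t)$ is open) as $n\to\infty$. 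Hence, for every $x$, $\phi^{-1}W_n(\phi x)\downarrow B^\phi(x,t)$ and $\phi^{-1}V_n(\phi x)\uparrow U^\phi(x,t)$, so by continuity of the probability measure $m^\eta_x$,
$$
m^\eta_x(B^\phi(x,t)\cap A)=\lim_{n\to\infty}m^\eta_x(\phi^{-1}W_n(\phi x)\cap A),\qquad
m^\eta_x(U^\phi(x,t)\cap A)=\lim_{n\to\infty}m^\eta_x(\phi^{-1}V_n(\phi x)\cap A).
$$

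The key step is to fix $n$ and observe that $\phi^{-1}W_n(\phi x)$ depends on $x$ only through which member of $\W_n$ equals $W_n(\phi x)$. For $F\in\W_n$ set $\Lambda_n(F)=\{x\in X:\ W_n(\phi x)=F\}=\phi^{-1}\{z:W_n(z)=F\}$, a Borel set; the $\Lambda_n(F)$, $F\in\W_n$, form a countable partition of $X$. On $\Lambda_n(F)$ one has $\phi^{-1}W_n(\phi x)\cap A=\phi^{-1}F\cap A\in\B$, and Remark~\ref{rem-2.5}(i) applied to this \emph{fixed} set gives, for $m$-a.e.\ $x\in\Lambda_n(F)$,
$$
\lim_{r\to0}\frac{m(\phi^{-1}F\cap A\cap B^\pi(x,r))}{m(B^\pi(x,r))}
=\E_m\big(\chi_{\phi^{-1}F\cap A}\,\big|\,\pi^{-1}\gamma\big)(x)
=m^\eta_x(\phi^{-1}W_n(\phi x)\cap A).
$$
As $\W_n$ and $\N$ are countable, after discarding one $m$-null set this identity holds for all $n$, with $F=W_n(\phi x)$. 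Since $B^\phi(x,t)\subset\phi^{-1}W_n(\phi x)$ we then have, for every $n$,
$$
\limsup_{r\to0}\frac{m(B^\phi(x,t)\cap A\cap B^\pi(x,r))}{m(B^\pi(x,r))}
\le\limsup_{r\to0}\frac{m(\phi^{-1}W_n(\phi x)\cap A\cap B^\pi(x,r))}{m(B^\pi(x,r))}
=m^\eta_x(\phi^{-1}W_n(\phi x)\cap A),
$$
and letting $n\to\infty$ together with the previous display yields (\ref{e-2.6}). For (\ref{e-2.7}) I would run the mirror argument with $V_n,\mathcal{V}_n$ and $\Lambda'_n(F)=\{x:V_n(\phi x)=F\}$ in place of $W_n,\W_n,\Lambda_n(F)$: from $\phi^{-1}V_n(\phi x)\subset U^\phi(x,t)$ one obtains $\liminf_{r\to0}\frac{m(U^\phi(x,t)\cap A\cap B^\pi(x,r))}{m(B^\pi(x,r))}\ge m^\eta_x(\phi^{-1}V_n(\phi x)\cap A)$ for each $n$, and $n\to\infty$ gives the claim; intersecting the countably many full‑measure sets used above produces the single exceptional null set.

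The analytic core — that $\frac{m(E\cap B^\pi(x,r))}{m(B^\pi(x,r))}\to\E_m(\chi_E|\pi^{-1}\gamma)(x)$ for a \emph{fixed} set $E$ — is already in hand from Lemma~\ref{lem-2.4}, so I expect no real difficulty there; the only genuinely new ingredient is the countable dyadic sandwich that replaces the moving targets $B^\phi(x,t)$, $U^\phi(x,t)$ by countably many fixed sets. The step needing the most care is the associated bookkeeping: the countability of $\W_n,\mathcal{V}_n$ and the Borel measurability of the level sets $\{z:W_n(z)=F\}$ (handled exactly as in Lemma~\ref{lem-2.2}), and the exact monotone convergences $W_n(z)\downarrow B(z,t)$, $V_n(z)\uparrow U(z,t)$ — here it matters that $B^\phi$ is built from \emph{closed} balls and $U^\phi$ from \emph{open} balls, which is precisely why the two halves of the lemma use different ball types.
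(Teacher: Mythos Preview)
Your proposal is correct and follows essentially the same route as the paper: approximate $B^\phi(x,t)$ and $U^\phi(x,t)$ from outside and inside by the countable dyadic families $\{W_n(\phi x)\}$ and $\{V_n(\phi x)\}$, partition $X$ according to which member of $\W_n$ (resp.\ $\mathcal{V}_n$) is hit, apply the differentiation identity from Lemma~\ref{lem-2.4}/Remark~\ref{rem-2.5}(i) to each fixed set $\phi^{-1}F\cap A$, and pass to the limit in $n$ using monotonicity. Your notation $V_n,\Lambda_n(F)$ corresponds to the paper's $\widehat{W}_n,\Gamma_n(F)$, and your explicit remark on why closed versus open balls are needed for the two monotone convergences is a nice clarification.
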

\begin{proof}
Fix $A\in \B$ and $t>0$.
Similar to the proof of Lemma \ref{lem-2.2}, for $n\in
\N$, let $\D_n$ denote the collection
$$
\D_n=\{[0,2^{-n})^k+\alpha:\; \alpha\in 2^{-n}\Z^k\}.
$$
 For $y\in \R^k$, denote
$$W_n(y)=\bigcup_{Q\in \D_n:\; Q\cap B(y,t)\neq \emptyset} Q,\qquad \widehat{W}_n(y)=\bigcup_{Q\in \D_n:\; Q\subset U(y,t)} Q.
$$
 Write $\W_n:=\{W_n(y):\; y\in \R^k\}$ and $\widehat{\W}_n:=\{\widehat{W}_n(y):\; y\in \R^k\}$. It is clear that both $\W_n$ and
  $\widehat{\W}_n$ are  countable for each $n\in \N$. Furthermore, we have
$W_n(y)\downarrow B(y,t)$ and $\widehat{W}_n(y)\uparrow U(y,t)$ for each $y\in \R^k$ as $n\to \infty$. As a consequence, we have $\phi^{-1}W_n(\phi
x)\downarrow B^\phi(x,t)$ and $\phi^{-1}\widehat{W}_n(\phi
x)\uparrow U^\phi(x,t)$ for $x\in X$. Therefore
$$
m_x^\eta(B^\phi(x,t)\cap A)=\lim_{n\to \infty}
m^\eta_x(\phi^{-1}W_n(\phi x)\cap A)$$
and
$$
m_x^\eta(U^\phi(x,t)\cap A)=\lim_{n\to \infty}
m^\eta_x(\phi^{-1}\widehat{W}_n(\phi x)\cap A)$$
for each $x\in X$.

In the following we only prove (\ref{e-2.6}). The proof of (\ref{e-2.7}) is essentially identical.
 For $n\in \N$ and $F\in \W_n$, let
$\Gamma_n(F)=\{x\in X:\; W_n(\phi x)=F\}$.  Then for $m$-a.e.\! $x$ and all $n\in \N$, we have
\begin{eqnarray*}
m_x^\eta(\pi^{-1}W_n(\phi x)\cap A)&=&\sum_{F\in \W_n}\chi_{\Gamma_n(F)} (x) m_x^\eta(\phi^{-1}F \cap A)\\
&=&\sum_{F\in \W_n}\chi_{\Gamma_n(F)} (x) \E_m(\chi_{\phi^{-1}F\cap A}|\hat{\eta})(x)\\
&=&\sum_{F\in \W_n}\chi_{\Gamma_n(F)} (x) \E_m(\chi_{\phi^{-1}F\cap A}|\pi^{-1}\gamma)(x)\\
&=&\sum_{F\in \W_n}\chi_{\Gamma_n(F)} (x)\lim_{r\to 0}
\frac{ m\left(\phi^{-1}F\cap A \cap B^\pi(x,r)\right)}{m\left(B^\pi(x,r)\right)} \\
&&\qquad
\qquad \qquad (\mbox{ by Lemma \ref{lem-2.4}})\\
&=&\lim_{r\to 0}\frac{ m\left(\phi^{-1}W_n(\phi x)\cap A \cap B^\pi(x,r)\right)}{m\left(B^\pi(x,r)\right)}\\
&\geq &\limsup_{r\to 0}\frac{ m\left(B^\phi(x,t) \cap A \cap B^\pi(x,r)\right)}{m\left(B^\pi(x,r)\right)}.
\end{eqnarray*}
Letting $n\to \infty$, we obtain (\ref{e-2.6}).
\end{proof}

\begin{rem}
\label{rem-2.8}
{\rm
Under the condition of Lemma \ref{lem-2.7}, assume that $$g\colon \pi(X)\to g(\pi(X))\subset \R^d$$ is a homeomorphism. Then
 we may replace the terms $B^\pi(x,r)$ in (\ref{e-2.6}) and (\ref{e-2.7}) by $B^{g\pi}(x,r)$. To see it, let
$\pi'=g\circ\pi$. It is easy to see the partition $\eta$ is just the same as $\{(\pi')^{-1}(z)\colon z\in \R^d\}$.
}
\end{rem}

\begin{pro}
\label{pro-2.9}
Let $T\colon X\to X$ be a measure-preserving transformation on $(X,\B,m)$, and let $\eta$ be a measurable partition of $X$. Suppose that $\pi\colon X\to \R^d$ is a bounded $\B$-measurable function. Then for any $r>0$,
$$
\lim_{n\to \infty}\frac{1}{n}\log m^\eta_{T^nx}\left(B^\pi(T^nx,r)\right)=0 \quad \mbox{ for $m$-a.e.\! $x\in
X$}.
$$
\end{pro}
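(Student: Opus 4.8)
The plan is to reduce the statement to a routine Borel--Cantelli argument. Set
$$\phi(x)=-\log m^\eta_x\bigl(B^\pi(x,r)\bigr),$$
which, by Lemma~\ref{lem-2.2}(i) applied with $A=X$, is a well-defined $\widehat\eta\vee\pi^{-1}\gamma$-measurable function for $m$-a.e.\ $x$; since each $m^\eta_x$ is a probability measure we have $m^\eta_x(B^\pi(x,r))\le 1$, hence $\phi\ge 0$. The proposition asserts precisely that $\phi(T^nx)/n\to 0$ for $m$-a.e.\ $x$, and the crux will be to show $\phi\in L^1(X,\B,m)$. Granting that, for every $\varepsilon>0$ one has $\sum_{n\ge 1}m(\{\phi>\varepsilon n\})\le\varepsilon^{-1}\int\phi\,dm<\infty$, and since $T$ preserves $m$ this gives $\sum_{n\ge 1}m(\{x:\phi(T^nx)>\varepsilon n\})<\infty$; Borel--Cantelli then yields $\limsup_n\phi(T^nx)/n\le\varepsilon$ for $m$-a.e.\ $x$, and intersecting the resulting full-measure sets over $\varepsilon=1/k$, $k\in\N$, gives $\phi(T^nx)/n\to 0$ a.e., which is the assertion.

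It remains to bound $\int\phi\,dm$, and this is the one place the hypothesis that $\pi$ is bounded is used. Since $\pi(X)$ is a bounded subset of $\R^d$, choose a finite Borel partition $\Q=\{Q_1,\dots,Q_N\}$ of $\pi(X)$ with $\mathrm{diam}\,Q_j<r$ for each $j$, and put $\xi=\pi^{-1}\Q=\{\pi^{-1}Q_1,\dots,\pi^{-1}Q_N\}$, a finite $\B$-measurable partition of $X$. If $\xi(x)=\pi^{-1}Q_j$ with $\pi x\in Q_j$, then every $y\in\xi(x)$ satisfies $|\pi y-\pi x|\le\mathrm{diam}\,Q_j<r$, so $\xi(x)\subset B^\pi(x,r)$, and therefore $m^\eta_x(B^\pi(x,r))\ge m^\eta_x(\xi(x))$. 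By Theorem~\ref{thm-2.1}, $m^\eta_x(\xi(x))=\E_m(\chi_{\xi(x)}|\widehat\eta)(x)$ for $m$-a.e.\ $x$, and for each $A\in\xi$ the set $A\cap\{x:\E_m(\chi_A|\widehat\eta)(x)=0\}$ is $m$-null (its measure equals $\int_{\{\E_m(\chi_A|\widehat\eta)=0\}}\E_m(\chi_A|\widehat\eta)\,dm=0$); summing over the finitely many $A\in\xi$ shows $m^\eta_x(\xi(x))>0$ for $m$-a.e.\ $x$, so $\phi(x)<\infty$ a.e. Moreover, for $m$-a.e.\ $x$,
$$\phi(x)\le-\log m^\eta_x(\xi(x))={\bf I}_m(\xi|\widehat\eta)(x),$$
whence $\int\phi\,dm\le H_m(\xi|\widehat\eta)\le H_m(\xi)\le\log N<\infty$. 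This establishes the $L^1$ bound and, together with the Borel--Cantelli step above, completes the proof.

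The only mildly delicate ingredient is the construction of the finite partition $\Q$ whose cells have diameter smaller than the prescribed radius $r$; this is exactly where the boundedness of $\pi$ enters, and everything else is standard measure theory plus the elementary fact that a nonnegative integrable function grows sublinearly along orbits of a measure-preserving transformation. I would also take care to record explicitly the $m$-a.e.\ positivity of $m^\eta_x(B^\pi(x,r))$ (equivalently, the a.e.\ finiteness of $\phi$), since the conclusion would be vacuous on any set where this fails; it is handled by the conditional-expectation computation above. Note that no uniformity in $x$ is sought and no property of $T$ beyond measure preservation is used.
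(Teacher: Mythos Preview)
Your proof is correct and complete. Both your argument and the paper's establish the same key fact---that $\phi(x)=-\log m^\eta_x(B^\pi(x,r))$ lies in $L^1(m)$---and then deduce $\phi(T^nx)/n\to 0$ a.e.; but the two proofs differ in both steps.

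For the $L^1$ bound, the paper covers $\pi(X)$ by finitely many balls $B(\pi x_i,r/2)$ and, via a direct maximal-type estimate, shows $m\{x:m^\eta_x(B^\pi(x,r))\le e^{-nt}\}\le \ell e^{-nt}$; integrability follows. You instead take a finite Borel partition $\xi=\pi^{-1}\Q$ of $X$ with cells of $\pi$-diameter below $r$, observe $\xi(x)\subset B^\pi(x,r)$, and bound $\phi\le{\bf I}_m(\xi|\widehat\eta)$, giving $\int\phi\,dm\le H_m(\xi)\le\log N$. Your route is arguably cleaner and yields an explicit bound in terms of the conditional entropy of a finite partition, which fits nicely with the paper's entropy-theoretic point of view.

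For the conclusion, the paper writes $g(T^nx)=\sum_{i=1}^n g(T^ix)-\sum_{i=1}^{n-1}g(T^ix)$ and invokes Birkhoff's ergodic theorem on both averages. You use the more elementary observation that $\sum_n m\{\phi>\varepsilon n\}\le\varepsilon^{-1}\int\phi\,dm<\infty$, combine it with $T$-invariance of $m$, and apply Borel--Cantelli. Your argument avoids the ergodic theorem entirely and makes transparent that nothing beyond measure preservation and $\phi\in L^1$ is used; the paper's approach, while equally valid, is slightly heavier machinery for the same conclusion.
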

\begin{proof}
Fix $r>0$ and $t>0$. Since $\pi(X)$ is a bounded subset of $\R^d$,
we can cover it by $\ell$ balls $B(\pi x_i,r/2)$ of radius $r/2$,
where $x_i\in X$ and $i=1,\dots, \ell$.
Define $$
A_n=\{ x\in X\colon \ m_x^\eta(B^\pi(x,r))\leq
e^{-nt}\}, \qquad n\in \N.$$
If a ball $B^\pi(x_i,r/2)$ intersects $A_n$,
then for any $y\in A_n\cap B^\pi(x_i,r/2)$, we have
$B^\pi(x_i,r/2)\subset B^\pi(y,r)$
because $B(\pi x_i,r/2)\subset B(\pi y,r)$ by the triangle inequality.
So the definition of $A_n$ gives
$m_y^\eta(A_n\cap B^\pi( x_i,r/2))
\leq m_y^\eta(B^\pi(y ,r))\leq e^{-nt}$.
Hence
$$
m(A_n\cap B^\pi( x_i,r/2))=\int m_y^\eta(A_n\cap B^\pi( x_i,r/2))\; dm(y)
\leq e^{-nt}
$$
and $m(A_n)\leq \ell e^{-nt}$.

This estimate gives directly that
$g(x):=\log m_x^\eta(B^\pi(x,r))\in L^1(X, \B, m)$.
Note that $g(T^nx)=\sum_{i=1}^n g(T^ix)-\sum_{i=1}^{n-1} g(T^ix)$.
By the Birkhoff ergodic theorem we can get
$\lim_{n\to \infty} \frac 1n g(T^nx)=0$ for $m$-a.e.\! $x\in X$,
which is the desired result.
 \end{proof}

\begin{lem}
\label{lem-2.10}
Let  $\A$ be a sub-$\sigma$-algebra of $\B$. Let $A\in \B$ with $m(A)>0$. Then
$$
\E_m(\chi_A|\A)(x)>0
$$
for $m$-a.e.\! $x\in A$.
\end{lem}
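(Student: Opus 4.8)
The plan is to argue by contradiction using the defining property of conditional expectation together with the fact that $\chi_A$ is nonnegative. Suppose the conclusion fails, so that the set $B:=\{x\in A:\; \E_m(\chi_A|\A)(x)=0\}$ has positive $m$-measure (note $\E_m(\chi_A|\A)\geq 0$ $m$-a.e., so ``not positive'' means ``zero''). The key observation is that $\{x:\; \E_m(\chi_A|\A)(x)=0\}$ is an $\A$-measurable set, since $\E_m(\chi_A|\A)$ is (a version of) an $\A$-measurable function. Call this $\A$-measurable set $E$; then $B=A\cap E$ and $m(B)>0$.

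First I would compute $\int_E \chi_A\,dm$ in two ways. On one hand, by the definition of conditional expectation and the $\A$-measurability of $E$,
\[
\int_E \chi_A\,dm = \int_E \E_m(\chi_A|\A)\,dm = \int_E 0\,dm = 0,
\]
because $\E_m(\chi_A|\A)$ vanishes identically on $E$ by the very definition of $E$. On the other hand, $\int_E \chi_A\,dm = m(A\cap E)=m(B)>0$. This is the desired contradiction, so $m(B)=0$, which is exactly the assertion that $\E_m(\chi_A|\A)(x)>0$ for $m$-a.e.\ $x\in A$.

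There is essentially no obstacle here; the only point requiring a little care is the measurability of $E=\{\E_m(\chi_A|\A)=0\}$ with respect to $\A$, which is immediate once one fixes an $\A$-measurable version of the conditional expectation (any two versions agree $m$-a.e., so the statement is unaffected by the choice). One should also note at the outset that $\E_m(\chi_A|\A)\geq 0$ $m$-a.e., which follows from $\chi_A\geq 0$ and the positivity of conditional expectation; this is what allows ``$\E_m(\chi_A|\A)(x)$ is not positive'' on a set of positive measure to be upgraded to ``$\E_m(\chi_A|\A)(x)=0$'' on an $\A$-measurable set of positive measure.
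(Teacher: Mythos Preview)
Your proof is correct and follows essentially the same approach as the paper: define the $\A$-measurable set where the conditional expectation vanishes (the paper uses $W=\{\E_m(\chi_A|\A)\leq 0\}$ instead of your $E=\{\E_m(\chi_A|\A)=0\}$, an immaterial difference), and integrate $\chi_A$ over it using the defining property of conditional expectation to get $m(A\cap W)=0$. The paper phrases this as a direct computation rather than a contradiction, but the argument is the same.
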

\begin{proof}
Let $W:=\{\E_m(\chi_A|\A)\leq 0\}$. Then $W\in \A$.
  Hence
  $$
  0\geq \int_W \E_m(\chi_A|\A)\; dm=\int_W \chi_A \; dm(x)=m(A\cap W),
  $$
  which implies $m(A\cap W)=0$.
 This finishes the proof.
\end{proof}

\section{Projection measure-theoretic entropies associated with IFS}\label{S3}

Throughout this section,  let $\{S_i\}_{i=1}^\ell$ be an IFS on a closed set $X\subset\R^d$, and $(\Sigma,\sigma)$ the one-sided full shift over $\{1,\ldots,\ell\}$. Let $\M_\sigma(\Sigma)$ denote the collection of all $\sigma$-invariant
Borel probability measures on $\Sigma$.  Let $\pi\colon\Sigma\to \R^d$ be defined as in (\ref{e-1.1}), and $h_\pi(\sigma,\cdot)$ as in Definition \ref{de-1.1}.
\subsection{Some basic properties}
In this subsection, we present some basic properties of
projection measure-theoretic entropy.  Our first result is the following.

\begin{pro}
\label{pro-3.1} \begin{itemize} \item[(i)] $0\leq h_\pi(\sigma,m)\leq
h(\sigma,m)$ for every $m\in \M_\sigma(\Sigma)$, where $h(\sigma,m)$
denotes the classical measure-theoretic entropy of $m$.
\item[(ii)] The projection entropy function is affine on $\M_\sigma(\Sigma)$, i.e., for any $m_1,m_2\in \M_\sigma(\Sigma)$ and any
$0\leq p\leq 1$, we have
\begin{equation}
\label{e-3.1}
h_\pi(\sigma,pm_1+(1-p)m_2)=ph_\pi(\sigma,m_1)+(1-p)h_\pi(\sigma,m_2).
\end{equation}
\end{itemize}
\end{pro}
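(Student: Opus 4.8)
The statement to prove is Proposition~\ref{pro-3.1}: nonnegativity, the upper bound $h_\pi(\sigma,m)\le h(\sigma,m)$, and affinity of $m\mapsto h_\pi(\sigma,m)$. The plan is to work directly from the defining formula
$$h_\pi(\sigma,m)=H_m(\P\,|\,\sigma^{-1}\pi^{-1}\gamma)-H_m(\P\,|\,\pi^{-1}\gamma),$$
exploiting the monotonicity and additivity properties of conditional entropy together with one structural fact about the projection $\pi$, namely the commutation relation $\pi\circ\sigma=S_i^{-1}\circ\pi$ on the cylinder $[i]$, which implies $\sigma^{-1}\pi^{-1}\gamma\subset\pi^{-1}\gamma$ is \emph{not} automatic, but that $\pi^{-1}\gamma\subset\sigma^{-1}\pi^{-1}\gamma\vee\widehat{\P}$. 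First I would record the elementary inequality $H_m(\P\,|\,\B)\le H_m(\P\,|\,\B')$ whenever $\B'\subset\B$; since $\sigma^{-1}\pi^{-1}\gamma$ and $\pi^{-1}\gamma$ are generally not nested, nonnegativity does not follow trivially, and this is the first point requiring care.

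For nonnegativity, the key observation is that $\sigma^{-1}\pi^{-1}\gamma\vee\widehat{\P}\supset\pi^{-1}\gamma$: knowing the first symbol $x_1$ and knowing $\pi(\sigma x)$ determines $\pi(x)=S_{x_1}(\pi\sigma x)$, hence determines the $\pi^{-1}\gamma$-class of $x$. Therefore $H_m(\P\,|\,\sigma^{-1}\pi^{-1}\gamma)=H_m(\P\,|\,\sigma^{-1}\pi^{-1}\gamma\vee\widehat{\P}\vee\text{(anything between)})$ is not quite what I want; instead I use the identity $H_m(\P\vee\alpha\,|\,\B)=H_m(\alpha\,|\,\B)+H_m(\P\,|\,\B\vee\widehat\alpha)$ applied with $\alpha$ generating $\pi^{-1}\gamma$ (approximated by finite partitions) and $\B=\sigma^{-1}\pi^{-1}\gamma$. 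Combined with $\P\vee\alpha$ having the same join as... — more cleanly: since $\pi^{-1}\gamma\subset\widehat\P\vee\sigma^{-1}\pi^{-1}\gamma$, we get $H_m(\P\,|\,\sigma^{-1}\pi^{-1}\gamma)=H_m(\P\,|\,\sigma^{-1}\pi^{-1}\gamma\vee\pi^{-1}\gamma)+H_m(\pi^{-1}\gamma\text{-part})\ge H_m(\P\,|\,\pi^{-1}\gamma)$ after checking the cross term is nonnegative; this gives $h_\pi\ge0$. For the upper bound $h_\pi(\sigma,m)\le h(\sigma,m)$, I would bound $H_m(\P\,|\,\sigma^{-1}\pi^{-1}\gamma)\le H_m(\P\,|\,\sigma^{-1}\B(\Sigma)\cap\pi^{-1}\gamma)\le h(\sigma,m)$ using $\sigma^{-1}\pi^{-1}\gamma\subset\sigma^{-1}\B(\Sigma)$ and the standard fact $H_m(\P\,|\,\sigma^{-1}\B(\Sigma))=h(\sigma,m)$ (as $\P$ generates), while $H_m(\P\,|\,\pi^{-1}\gamma)\ge0$; since everything is finite ($H_m(\P)\le\log\ell$), subtraction is legitimate.

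For affinity, write $m=pm_1+(1-p)m_2$. The function $p\mapsto H_m(\P\,|\,\A)$ for a \emph{fixed} $\sigma$-algebra $\A$ is concave in $m$, not affine, so I cannot treat the two terms independently. The right tool is the known decomposition of conditional entropy under convex combination: there is an exact identity (see e.g. the information-theoretic identity used for $h(\sigma,\cdot)$) expressing $H_{pm_1+(1-p)m_2}(\P\,|\,\A)$ as $pH_{m_1}(\P\,|\,\A)+(1-p)H_{m_2}(\P\,|\,\A)$ plus a correction term involving the entropy of the partition $\{$supp$m_1$-part, supp$m_2$-part$\}$ relative to $\A$. Crucially this correction term depends only on $\A$ through whether $\A$ separates the two measures, and it is the \emph{same} correction for $\A=\sigma^{-1}\pi^{-1}\gamma$ and $\A=\pi^{-1}\gamma$ — because $m_1\perp m_2$ realized via a $\B(\Sigma)$-set pulls back the same way, and one checks the defect term cancels in the difference $h_\pi=H_m(\P\,|\,\sigma^{-1}\pi^{-1}\gamma)-H_m(\P\,|\,\pi^{-1}\gamma)$. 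I expect \textbf{this cancellation of the concavity defect in the difference of the two conditional entropies} to be the main obstacle: it requires a careful bookkeeping of the correction terms (likely via Rohlin's conditional measures $m^\eta_x$ from Section~\ref{S2} and the fact that conditioning is linear at the level of the disintegrated measures) rather than any deep new idea, but it is the place where the naive argument breaks. A clean alternative I would fall back on: decompose $m$ and use part (iii) of Theorem~\ref{thm-1.0} (the relativized Shannon–McMillan–Breiman statement) together with additivity of ordinary entropy and $\int$-linearity of the local quantities — but since that theorem is stated later, I would instead prove affinity self-containedly via the disintegration identity above.
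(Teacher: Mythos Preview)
Your proposal has genuine gaps in all three parts, and they stem from a common missing idea.

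\textbf{Upper bound.} Your monotonicity runs the wrong way. From $\sigma^{-1}\pi^{-1}\gamma\subset\sigma^{-1}\B(\Sigma)$ one gets $H_m(\P\mid\sigma^{-1}\pi^{-1}\gamma)\ge H_m(\P\mid\sigma^{-1}\B(\Sigma))=h(\sigma,m)$, not $\le$. So subtracting the nonnegative term $H_m(\P\mid\pi^{-1}\gamma)$ gives no upper bound on $h_\pi$. What you can get directly is only $h_\pi(\sigma,m)\le H_m(\P)\le\log\ell$, which is weaker than $h(\sigma,m)$.

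\textbf{Nonnegativity.} You correctly observe $\pi^{-1}\gamma\subset\widehat{\P}\vee\sigma^{-1}\pi^{-1}\gamma$ (indeed the paper proves equality, Lemma~\ref{lem-3.6}). But this does not yield $H_m(\P\mid\sigma^{-1}\pi^{-1}\gamma)\ge H_m(\P\mid\pi^{-1}\gamma)$ by any chain-rule manipulation: neither $\sigma$-algebra contains the other, and the ``cross term'' you allude to is \emph{not} obviously nonnegative. In fact the paper only manages to show, via a dyadic approximation (Lemma~\ref{lem-3.8}), that the difference is bounded below by the \emph{negative} constant $-d\log(\sqrt d+1)$.

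\textbf{Affinity.} The hoped-for cancellation of concavity defects between $H_m(\P\mid\sigma^{-1}\pi^{-1}\gamma)$ and $H_m(\P\mid\pi^{-1}\gamma)$ is not justified; the defect in $H_m(\xi\mid\A)$ under convex combination depends on $\A$ through the conditional measures, and there is no reason these match for the two $\sigma$-algebras (and $m_1,m_2$ need not be mutually singular).

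\textbf{The missing idea.} All three issues are resolved by one device you do not mention: the \emph{exact} identity
\[
k\,h_\pi(\sigma,m)=H_m\!\left(\P_0^{k-1}\,\Big|\,\sigma^{-k}\pi^{-1}\gamma\right)-H_m\!\left(\P_0^{k-1}\,\Big|\,\pi^{-1}\gamma\right)\qquad(k\in\N),
\]
which the paper proves by telescoping, using precisely $\widehat{\P}\vee\sigma^{-1}\pi^{-1}\gamma=\widehat{\P}\vee\pi^{-1}\gamma$ (Lemma~\ref{lem-3.7}). Once you have this, the dimension-dependent constant $-d\log(\sqrt d+1)$ becomes $-d\log(\sqrt d+1)/k\to0$, giving $h_\pi\ge0$; the crude bound $\le H_m(\P_0^{k-1})$ becomes $h_\pi\le\frac1kH_m(\P_0^{k-1})\to h(\sigma,m)$; and the bounded concavity defect ($\le2\log2$, via $|H_m(\xi)-\sum p_jH_{m_j}(\xi)|\le\log2$ applied to unconditional entropies after the dyadic reduction) becomes $O(1/k)\to0$, giving exact affinity. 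Your structural observation about $\widehat{\P}\vee\sigma^{-1}\pi^{-1}\gamma$ is exactly the right ingredient, but it feeds the telescoping lemma rather than a one-step monotonicity argument.
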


The proof of the above proposition will be given later. Now let us recall some notation. If $\xi$ is a partition of $\Sigma$, then $\widehat{\xi}$ denotes the $\sigma$-algebra generated by $\xi$. If $\xi_1,\ldots, \xi_n$ are countable partitions of $\Sigma$, then $\bigvee_{i=1}^n\xi_i$ denotes the partition consisting of sets $A_1\cap\cdots\cap A_n$ with $A_i\in \xi_i$. Similarly for $\sigma$-algebras $\A_1,\A_2,\ldots,$ $\bigvee_{n}\A_n$ denotes the $\sigma$-algebra generated by $\bigcup_n\A_n$.

Let $\P$ be the partition of $\Sigma$ defined as in (\ref{e-1P}). Write $\P_{0}^n=\bigvee_{i=0}^n \sigma^{-i}\P$ for $n\geq 0$.  Let $\gamma$  denote the Borel $\sigma$-algebra $\B(\R^d)$ on $\R^d$. Similar to Definition \ref{de-1.1}, we give the following definition.

\begin{de}
\label{de-3.1}
{\rm
Let $k\in \N$ and $\nu\in \M_{\sigma^k}(\Sigma)$. Define
$$
h_\pi(\sigma^k,\nu):=H_\nu\left(\P_0^{k-1}\big|\sigma^{-k}\pi^{-1}\gamma\right)-
H_\nu\left(\P_{0}^{k-1}\big|\pi^{-1}\gamma\right).
$$
}
\end{de}
 The term $h_\pi(\sigma^k,\nu)$ can be viewed as the projection measure-theoretic entropy of $\nu$ w.r.t. the IFS
 $\{S_{i_1}\circ \cdots\circ S_{i_k}:
 \;1\leq i_j\leq \ell \mbox { for $1\leq j\leq k$}\}$. The following
 proposition
 exploits the connection between $h_\pi(\sigma^k,\nu)$ and
 $h_\pi(\sigma,m)$, where $m=\frac{1}{k}\sum_{i=0}^{k-1}\nu \circ
 \sigma^{-i}$.

 \begin{pro}
 \label{pro-3.2}
Let $k\in \N$ and $\nu\in \M_{\sigma^k}(\Sigma)$. Set
$m=\frac{1}{k}\sum_{i=0}^{k-1}\nu \circ \sigma^{-i}$. Then $m$ is
$\sigma$-invariant, and $h_{\pi}(\sigma,m)=\frac{1}{k}h_\pi(\sigma^k,\nu)$.
 \end{pro}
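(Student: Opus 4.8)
The plan is to first check the easy $\sigma$-invariance of $m$ and then prove the entropy identity by relating the conditional entropies appearing in $h_\pi(\sigma^k,\nu)$ to those defining $h_\pi(\sigma,m)$ through a telescoping computation. The $\sigma$-invariance of $m$ is immediate: $m\circ\sigma^{-1}=\frac1k\sum_{i=0}^{k-1}\nu\circ\sigma^{-(i+1)}=\frac1k\sum_{i=1}^{k}\nu\circ\sigma^{-i}=m$, using $\nu\circ\sigma^{-k}=\nu$ since $\nu\in\M_{\sigma^k}(\Sigma)$. For the main identity, I would start from the fundamental cocycle-type decomposition for conditional information: for any partition $\xi$ and sub-$\sigma$-algebra $\A$,
\[
\mathbf{I}_\nu\big(\textstyle\bigvee_{i=0}^{k-1}\sigma^{-i}\P\,\big|\,\A\big)
=\sum_{i=0}^{k-1}\mathbf{I}_\nu\big(\sigma^{-i}\P\,\big|\,\sigma^{-i-1}\P\vee\cdots\vee\sigma^{-(k-1)}\P\vee\A\big),
\]
and integrate. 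Applying this with $\A=\sigma^{-k}\pi^{-1}\gamma$ and with $\A=\pi^{-1}\gamma$ and subtracting, the entropy $h_\pi(\sigma^k,\nu)$ becomes a sum over $i=0,\dots,k-1$ of differences $H_\nu(\sigma^{-i}\P\,|\,\P_{i+1}^{k-1}\vee\sigma^{-k}\pi^{-1}\gamma)-H_\nu(\sigma^{-i}\P\,|\,\P_{i+1}^{k-1}\vee\pi^{-1}\gamma)$, where $\P_{i+1}^{k-1}=\bigvee_{j=i+1}^{k-1}\sigma^{-j}\P$.

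The key step is to simplify each summand. Using $\sigma$-invariance of $\nu$ under $\sigma^k$ to push forward by $\sigma^{i}$ (more precisely, the identity $H_\nu(\sigma^{-i}\alpha\,|\,\sigma^{-i}\B_1)=H_{\nu\circ\sigma^{-i}}(\alpha\,|\,\B_1)$ together with how $\sigma^{-k}\pi^{-1}\gamma$ and $\pi^{-1}\gamma$ transform), I would rewrite the $i$-th summand as
\[
H_{\nu\circ\sigma^{-i}}\big(\P\,\big|\,\P_0^{k-1-i}\cdots\,? \big)
\]
— the precise bookkeeping here is the crux — with the goal of recognizing it, after telescoping, as $H_{\nu\circ\sigma^{-i}}(\P\,|\,\sigma^{-1}\pi^{-1}\gamma)-H_{\nu\circ\sigma^{-i}}(\P\,|\,\pi^{-1}\gamma)=h_\pi(\sigma,\nu\circ\sigma^{-i})$. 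The identity $\sigma^{-1}\pi^{-1}\gamma\supseteq$ or its relation to $\pi$ via $\pi\circ\sigma=S_{x_1}^{-1}\circ\pi$ (equivalently $\pi^{-1}\gamma\vee\sigma^{-1}\pi^{-1}\gamma$ structure) is what makes the cross terms cancel; I expect to need the observation that $\P\vee\sigma^{-1}\pi^{-1}\gamma$ and similar refinements interact cleanly because knowing $x_1$ and $\pi(\sigma x)$ determines $\pi(x)$. Once each summand equals $h_\pi(\sigma,\nu\circ\sigma^{-i})$, summing over $i$ and dividing by $k$ gives $\frac1k h_\pi(\sigma^k,\nu)=\frac1k\sum_{i=0}^{k-1}h_\pi(\sigma,\nu\circ\sigma^{-i})=h_\pi(\sigma,m)$, where the last equality uses that $h_\pi(\sigma,\cdot)$ is affine (Proposition~\ref{pro-3.1}(ii)) — or, if one prefers to avoid invoking that, one argues directly that each $\nu\circ\sigma^{-i}$ is $\sigma$-invariant isn't true, so the affineness of $h_\pi(\sigma,\cdot)$ applied to the average $m=\frac1k\sum\nu\circ\sigma^{-i}$ (which \emph{is} $\sigma$-invariant) is the clean way to finish.

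The main obstacle I anticipate is the middle step: carefully tracking how the two sub-$\sigma$-algebras $\sigma^{-k}\pi^{-1}\gamma$ and $\pi^{-1}\gamma$ behave under the decomposition and under the shift, since $\pi^{-1}\gamma$ is not $\sigma$-invariant (as the authors explicitly warn in Remark~\ref{rem-3.14}). Getting the telescoping to close requires precisely the right grouping of the $\sigma^{-j}\P$ factors with the right $\pi^{-1}\gamma$-term at each stage, and verifying that the ``leftover'' boundary terms at $i=0$ and $i=k-1$ assemble correctly rather than producing an anomalous term. I would handle this by introducing the intermediate $\sigma$-algebras $\A_i:=\P_{i+1}^{k-1}\vee\pi^{-1}\gamma$ and $\A_i':=\P_{i+1}^{k-1}\vee\sigma^{-k}\pi^{-1}\gamma$ and checking the single algebraic fact $\P_{i}^{k-1}\vee\pi^{-1}\gamma = \sigma^{-i}(\P_0^{k-1-i}\vee\pi^{-1}\gamma)$ up to the relation induced by $\pi\sigma = S_{x_1}^{-1}\pi$, which should be a short lemma-free verification.
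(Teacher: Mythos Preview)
Your overall strategy---decompose $h_\pi(\sigma^k,\nu)$ as a sum indexed by $i=0,\dots,k-1$, identify the $i$-th term with a one-step quantity involving $\nu\circ\sigma^{-i}$, then average---is indeed what the paper does. But your proposed decomposition is the wrong one, and this is a genuine gap, not just sloppy bookkeeping.

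You propose the chain rule on the partition side: write $H_\nu(\P_0^{k-1}\mid\A)=\sum_i H_\nu(\sigma^{-i}\P\mid \P_{i+1}^{k-1}\vee\A)$ and subtract. The resulting summands carry the extra factor $\P_{i+1}^{k-1}=\sigma^{-(i+1)}\P_0^{k-i-2}$ in the conditioning, and these do \emph{not} drop out. The ``single algebraic fact'' you plan to verify, $\P_i^{k-1}\vee\pi^{-1}\gamma=\sigma^{-i}(\P_0^{k-1-i}\vee\pi^{-1}\gamma)$, is false: the right side equals $\P_i^{k-1}\vee\sigma^{-i}\pi^{-1}\gamma$, and Lemma~\ref{lem-3.6} only gives $\widehat{\P_0^{i-1}}\vee\pi^{-1}\gamma=\widehat{\P_0^{i-1}}\vee\sigma^{-i}\pi^{-1}\gamma$ (you need the \emph{first} $i$ coordinates, not coordinates $i+1,\dots,k$, to pass between $\pi^{-1}\gamma$ and $\sigma^{-i}\pi^{-1}\gamma$, since $\pi(x)=S_{x_1\cdots x_i}(\pi\sigma^i x)$). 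So the telescoping does not close.

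The paper instead telescopes on the $\sigma$-algebra side (Lemma~\ref{lem-3.7}): for each $j$ it computes $H_m(\P_0^{k-1}\mid\sigma^{-j}\pi^{-1}\gamma)-H_m(\P_0^{k-1}\mid\sigma^{-(j+1)}\pi^{-1}\gamma)$ by splitting off the single factor $\sigma^{-j}\P$ and using Lemma~\ref{lem-3.6} (shifted by $\sigma^{-j}$) to swap $\sigma^{-j}\pi^{-1}\gamma$ for $\sigma^{-(j+1)}\pi^{-1}\gamma$ in the remaining term, which then recombines cleanly. Summing over $j$ gives $h_\pi(\sigma^k,\nu)=\sum_{j=0}^{k-1}\bigl[H_{\nu\circ\sigma^{-j}}(\P\mid\sigma^{-1}\pi^{-1}\gamma)-H_{\nu\circ\sigma^{-j}}(\P\mid\pi^{-1}\gamma)\bigr]$; note these summands are just conditional-entropy differences, well defined even though $\nu\circ\sigma^{-j}\notin\M_\sigma(\Sigma)$.

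Your closing step is also off: you cannot invoke affineness of $h_\pi(\sigma,\cdot)$ on the average $m=\frac1k\sum\nu\circ\sigma^{-i}$ because the summands are not $\sigma$-invariant, exactly as you flag. The paper's fix is to work at the $\sigma^k$ level: Lemma~\ref{lem-3.7} applied to $\nu\circ\sigma^{-1}$ yields the same sum cyclically shifted, so $h_\pi(\sigma^k,\nu\circ\sigma^{-j})=h_\pi(\sigma^k,\nu)$ for all $j$; since each $\nu\circ\sigma^{-j}\in\M_{\sigma^k}(\Sigma)$, affineness of $h_\pi(\sigma^k,\cdot)$ on $\M_{\sigma^k}(\Sigma)$ gives $h_\pi(\sigma^k,m)=h_\pi(\sigma^k,\nu)$, and then Lemma~\ref{lem-3.7} for the $\sigma$-invariant $m$ gives $h_\pi(\sigma^k,m)=k\,h_\pi(\sigma,m)$.
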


To prove Propositions \ref{pro-3.1} and \ref{pro-3.2}, we first give some lemmas about the (conditional) information and entropy (see \S2 for the definitions).
\begin{lem}[cf. \cite{Par-book}]
\label{lem-3.3} Let $m$ be a Borel probability measure on $\Sigma$. Let $\xi,\eta$ be two countable Borel partitions of $\Sigma$ with $H_m(\xi)<\infty$, $H_m(\eta)<\infty$, and $\A$  a sub-$\sigma$-algebra of $\B(\Sigma)$. Then we have
 \begin{itemize}
\item[(i)] ${\bf I}_{m\circ \sigma^{-1}}(\xi|\A)\circ \sigma={\bf I}_m(\sigma^{-1}\xi|\sigma^{-1}\A)$.
\item[(ii)] ${\bf I}_m(\xi\vee \eta|\A)={\bf I}_m(\xi|\A)+{\bf I}_m(\eta|\widehat{\xi}\vee \A)$.
\item[(iii)] $H_m(\xi\vee \eta|\A)=H_m(\xi|\A)+H(\eta|\widehat{\xi}\vee \A)$.
\item[(iv)] If $\A_1\subset \A_2\subset\cdots $ is an increasing sequence of sub-$\sigma$-algebras with $\A_n\uparrow \A$, then ${\bf I}_m(\xi|\A_n)$ converges almost everywhere and in $L^1$ to ${\bf I}_m(\xi|\A)$. In particular, $\lim_{n\to \infty} H_m(\xi|\A_n)=H_m(\xi|\A)$.
\end{itemize}
\end{lem}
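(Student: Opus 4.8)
The plan is to deduce all four parts from the defining formula ${\bf I}_m(\xi|\A)=-\sum_{A\in\xi}\chi_A\log\E_m(\chi_A|\A)$ together with two standard tools: the change-of-variables rule for conditional expectations and the increasing martingale convergence theorem. For (i), the key point is the identity $\bigl(\E_{m\circ\sigma^{-1}}(f|\A)\bigr)\circ\sigma=\E_m(f\circ\sigma\,|\,\sigma^{-1}\A)$ for $f\in L^1(\Sigma,\A,m\circ\sigma^{-1})$; I would verify it directly, observing that the left-hand side is $\sigma^{-1}\A$-measurable and that, for every $B\in\A$, $\int_{\sigma^{-1}B}\bigl(\E_{m\circ\sigma^{-1}}(f|\A)\bigr)\circ\sigma\,dm=\int_B\E_{m\circ\sigma^{-1}}(f|\A)\,d(m\circ\sigma^{-1})=\int_B f\,d(m\circ\sigma^{-1})=\int_{\sigma^{-1}B}f\circ\sigma\,dm$. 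Applying this with $f=\chi_A$ for each $A\in\xi$ (so that $f\circ\sigma=\chi_{\sigma^{-1}A}$ and $\sigma^{-1}\xi=\{\sigma^{-1}A:A\in\xi\}$) and substituting into the defining formula gives (i) termwise.

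The heart of the lemma is (ii). I would first establish the quotient formula
$$\E_m(\chi_B|\widehat{\xi}\vee\A)=\sum_{A\in\xi}\chi_A\,\frac{\E_m(\chi_{A\cap B}|\A)}{\E_m(\chi_A|\A)}\qquad m\text{-a.e.},$$
with the convention $0/0=0$. The right-hand side is manifestly $\widehat{\xi}\vee\A$-measurable (and bounded by $1$), so by uniqueness of conditional expectations it suffices to check $\int_{A'\cap D}(\cdots)\,dm=m(B\cap A'\cap D)$ on the generating $\pi$-system $\{A'\cap D:A'\in\xi,\ D\in\A\}$: only the $A=A'$ summand contributes, and pulling the $\A$-measurable factor $\chi_D\,\E_m(\chi_{A'\cap B}|\A)/\E_m(\chi_{A'}|\A)$ out of a conditional expectation given $\A$ reduces the integral to $\int_D\E_m(\chi_{A'\cap B}|\A)\,dm=m(B\cap A'\cap D)$, while on $\{\E_m(\chi_{A'}|\A)=0\}$ one invokes Lemma~\ref{lem-2.10} (so that $\chi_{A'}=0$ a.e.\ there, and $\E_m(\chi_{A'\cap B}|\A)=0$ there as well) to see the $0/0$ convention is harmless. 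Granting the quotient formula, for $x$ lying in an atom $A\cap B$ of $\xi\vee\eta$ with $\E_m(\chi_A|\A)(x)>0$ (which, by Lemma~\ref{lem-2.10}, is the case for $m$-a.e.\ such $x$) one has $\E_m(\chi_{A\cap B}|\A)(x)=\E_m(\chi_A|\A)(x)\,\E_m(\chi_B|\widehat{\xi}\vee\A)(x)$, and taking $-\log$ yields ${\bf I}_m(\xi\vee\eta|\A)(x)={\bf I}_m(\xi|\A)(x)+{\bf I}_m(\eta|\widehat{\xi}\vee\A)(x)$, which is (ii). Part (iii) then follows by integrating (ii): since $0\le{\bf I}_m(\xi|\A)$ with $\int{\bf I}_m(\xi|\A)\,dm=H_m(\xi|\A)\le H_m(\xi)<\infty$ (conditioning cannot increase entropy, by concavity of $t\mapsto -t\log t$ and Jensen's inequality), and likewise $H_m(\eta|\widehat{\xi}\vee\A)\le H_m(\eta)<\infty$, all three terms are finite and the integral is additive.

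For (iv), fix $A\in\xi$; since $\A_1\subset\A_2\subset\cdots$ with $\A_n\uparrow\A$, the martingale convergence theorem gives $\E_m(\chi_A|\A_n)\to\E_m(\chi_A|\A)$ both $m$-a.e.\ and in $L^1$, and by Lemma~\ref{lem-2.10} the limit is positive $m$-a.e.\ on $A$, so $-\log$ passes to the limit; summing over $A\in\xi$ gives ${\bf I}_m(\xi|\A_n)\to{\bf I}_m(\xi|\A)$ $m$-a.e. To upgrade this to $L^1$ convergence I would exhibit an integrable majorant: set $g^{*}:=\sup_n{\bf I}_m(\xi|\A_n)=-\sum_{A\in\xi}\chi_A\log\bigl(\inf_n\E_m(\chi_A|\A_n)\bigr)$; an optional-stopping argument (stop the nonnegative martingale $\bigl(\E_m(\chi_A|\A_n)\bigr)_n$ at the first $n$ with $\E_m(\chi_A|\A_n)<\lambda$) gives $m\bigl(A\cap\{\inf_n\E_m(\chi_A|\A_n)<\lambda\}\bigr)\le\min\{m(A),\lambda\}$, and integrating this bound over $\lambda>0$ on each atom and summing yields $\int g^{*}\,dm\le\sum_{A\in\xi}\bigl(-m(A)\log m(A)+m(A)\bigr)=H_m(\xi)+1<\infty$ (this is the maximal-function estimate already appearing in Proposition~\ref{pro-2.5}). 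Dominated convergence then gives the $L^1$ convergence, and integrating yields $H_m(\xi|\A_n)\to H_m(\xi|\A)$. The one genuinely delicate step is the quotient formula behind (ii), together with the bookkeeping on the null set where $\E_m(\chi_A|\A)$ vanishes so that the $0/0$ convention does no harm; parts (i), (iii) and (iv) are then routine manipulations.
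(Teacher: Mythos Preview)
Your proof is correct; the paper does not give its own proof of this lemma but simply cites Parry's book, so there is nothing to compare against except the standard literature argument, which is exactly what you have reproduced. In particular, your quotient formula for (ii), the integration step for (iii), and the Chung-type maximal inequality $m\bigl(A\cap\{\inf_n\E_m(\chi_A|\A_n)<\lambda\}\bigr)\le\min\{m(A),\lambda\}$ giving the $L^1$ dominant in (iv) are the classical approach and all the details are in order.
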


\begin{lem}
\label{lem-3.4} Denote  $g(x)=-x\log x$ for $x\geq 0$. For any
integer $k\geq 2$ and $x_1,\ldots, x_k\geq 0$, we have
$\frac{1}{k}\sum_{i=1}^k g(x_i)\leq g\left(\frac{1}{k}\sum_{i=1}^k
x_i\right)\leq \sum_{i=1}^k g(x_i/k)$ and
\begin{equation}
\label{e-3.2} \sum_{i=1}^k g(x_i)-(x_1+\ldots +x_k)\log k\leq
g(x_1+\ldots+x_k)\leq \sum_{i=1}^k g(x_i).
\end{equation}
 Moreover
for any $p_1,p_2\geq 0$ with $p_1+p_2=1$,
\begin{equation} \label{e-3.3}
\sum_{j=1}^2 p_jg(x_j)\leq g\left(\sum_{j=1}^2 p_jx_j\right)\leq
\sum_{j=1}^2 p_jg(x_j)+g(p_j) x_j.
\end{equation}
\end{lem}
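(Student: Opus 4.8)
The plan is to reduce the entire lemma to two elementary properties of $g(x)=-x\log x$: concavity on $[0,\infty)$ and subadditivity. First I would record that $g$ is continuous on $[0,\infty)$ with $g(0)=0$ (using the convention $0\log 0=0$) and that $g''(x)=-1/x<0$ on $(0,\infty)$, so $g$ is concave on $[0,\infty)$. Jensen's inequality with the uniform weights $1/k$ then gives the left-hand inequality of the first displayed chain, and with the weights $p_1,p_2$ it gives the left-hand inequality of (\ref{e-3.3}).

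Next I would establish the subadditivity $g\big(\sum_{i=1}^k x_i\big)\le\sum_{i=1}^k g(x_i)$. Writing $S=\sum_i x_i$, the monotonicity of $-\log$ gives $-x_i\log S\le -x_i\log x_i$ for each $i$ with $x_i>0$ (both sides vanish when $x_i=0$), and summing over $i$ yields $g(S)=-S\log S=\sum_i(-x_i\log S)\le\sum_i(-x_i\log x_i)=\sum_i g(x_i)$, the case $S=0$ being trivial. This is the right-hand inequality of (\ref{e-3.2}). Applying it to the numbers $x_i/k$ gives
\begin{equation*}
g\Big(\frac1k\sum_{i=1}^k x_i\Big)=g\Big(\sum_{i=1}^k \frac{x_i}{k}\Big)\le\sum_{i=1}^k g(x_i/k),
\end{equation*}
which is the right-hand inequality of the first chain.

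For the lower bound in (\ref{e-3.2}), I would compute directly: with $S=\sum_i x_i>0$, and discarding the indices with $x_i=0$,
\begin{equation*}
g(S)-\sum_{i=1}^k g(x_i)=\sum_{i=1}^k x_i\log\frac{x_i}{S}=-S\sum_{i=1}^k \frac{x_i}{S}\log\frac{S}{x_i}\ge -S\log k,
\end{equation*}
the last step being the entropy bound $\sum_i p_i\log(1/p_i)\le\log k$ for a probability vector on at most $k$ atoms, applied to $p_i=x_i/S$; the case $S=0$ is again trivial. This is the left-hand inequality of (\ref{e-3.2}).

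Finally, for the right-hand inequality of (\ref{e-3.3}) I would use the algebraic identity $g(px)=p\,g(x)+x\,g(p)$ for $p,x\ge0$ (immediate from expanding $-px\log(px)=p(-x\log x)+x(-p\log p)$, and valid at $p=0$ or $x=0$) together with subadditivity applied to $a=p_1x_1$, $b=p_2x_2$:
\begin{equation*}
g\Big(\sum_{j=1}^2 p_j x_j\Big)\le g(p_1x_1)+g(p_2x_2)=\sum_{j=1}^2\big(p_j g(x_j)+g(p_j)x_j\big).
\end{equation*}
There is no substantial obstacle here; the only thing requiring care throughout is the convention $0\log 0=0$ and the systematic handling of vanishing $x_i$'s, so the argument is essentially a bookkeeping exercise built from the concavity and subadditivity of $g$.
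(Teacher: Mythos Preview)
Your proof is correct and complete. The paper's own proof consists of the single word ``Standard.'', so you have simply supplied the details the authors omitted; your use of concavity (Jensen), subadditivity, and the identity $g(px)=p\,g(x)+x\,g(p)$ is exactly the standard argument they had in mind.
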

\begin{proof} Standard. \end{proof}

\begin{lem}
\label{lem-3.5} Let $m$ be a Borel probability measure on $\Sigma$. Assume $\xi$ and $\eta$ are
two countable Borel partitions of $\Sigma$ such that each member in
$\xi$ intersects at most $k$ members of $\eta$. Then $H_m(\xi)\geq
H_m(\xi\vee \eta)-\log k$.
\end{lem}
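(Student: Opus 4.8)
The plan is to compare the two entropies by estimating the conditional entropy $H_m(\eta\mid\widehat\xi)$ and invoking the additivity formula from Lemma \ref{lem-3.3}(iii). First I would write, taking $\A$ to be the trivial $\sigma$-algebra $\mathcal N$ in Lemma \ref{lem-3.3}(iii),
\begin{equation*}
H_m(\xi\vee\eta)=H_m(\xi)+H_m(\eta\mid\widehat\xi),
\end{equation*}
so that the claim $H_m(\xi)\geq H_m(\xi\vee\eta)-\log k$ is equivalent to the bound $H_m(\eta\mid\widehat\xi)\leq\log k$. This reduces the lemma to a purely finite/convexity estimate on the conditional entropy of $\eta$ given $\xi$.

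Next I would unwind $H_m(\eta\mid\widehat\xi)$ elementwise. Since $\widehat\xi$ is generated by the countable partition $\xi$, the conditional entropy is the $\xi$-weighted average of the entropies of $\eta$ restricted to the atoms of $\xi$: writing $m_A$ for the normalized restriction of $m$ to an atom $A\in\xi$ with $m(A)>0$,
\begin{equation*}
H_m(\eta\mid\widehat\xi)=\sum_{A\in\xi,\ m(A)>0} m(A)\, H_{m_A}(\eta).
\end{equation*}
Now for each fixed $A\in\xi$, by hypothesis $A$ meets at most $k$ members of $\eta$, so the partition $\eta$ induces on $A$ a partition into at most $k$ pieces; hence $H_{m_A}(\eta)\leq\log k$ by the standard fact that the Shannon entropy of a probability vector with at most $k$ nonzero entries is at most $\log k$ (maximized by the uniform distribution, via concavity of $g(x)=-x\log x$ as recorded in Lemma \ref{lem-3.4}). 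Substituting this into the displayed average and using $\sum_{A\in\xi}m(A)=1$ gives $H_m(\eta\mid\widehat\xi)\leq\log k$, which combined with the additivity identity yields the lemma.

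I do not expect any real obstacle here: the only mild point of care is the bookkeeping that $H_m(\cdot\mid\widehat\xi)$ genuinely decomposes as the claimed average over atoms of the countable partition $\xi$ (and that atoms of measure zero contribute nothing), together with invoking the elementary maximal-entropy bound $H\leq\log k$ for a distribution supported on $k$ points. Both are standard facts about entropy of partitions of a Lebesgue space, and the concavity input is exactly the left-hand inequality $\frac1k\sum g(x_i)\leq g\big(\frac1k\sum x_i\big)$ of Lemma \ref{lem-3.4}. Alternatively, one could bypass conditional entropy entirely by noting that the map $A\cap B\mapsto A$ (for $A\in\xi$, $B\in\eta$) exhibits $\xi$ as a coarsening of $\xi\vee\eta$ under which each fiber has at most $k$ elements, and then apply the general inequality $H_m(\zeta)\geq H_m(\zeta')-\log k$ whenever $\zeta'$ refines $\zeta$ with fibers of size $\leq k$; but the conditional-entropy route above is the cleanest given what is already available in the excerpt.
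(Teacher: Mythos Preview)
Your proof is correct and is essentially the same argument as the paper's, just organized through the conditional-entropy identity rather than by direct manipulation: the paper expands $H_m(\xi)=\sum_{A\in\xi}g\big(\sum_{B\cap A\neq\emptyset}m(A\cap B)\big)$ and applies the inequality $g(x_1+\cdots+x_k)\geq\sum g(x_i)-(x_1+\cdots+x_k)\log k$ from Lemma~\ref{lem-3.4} atom by atom, which is exactly your bound $H_{m_A}(\eta)\leq\log k$ after unpacking. The two presentations differ only in whether one names the quantity $H_m(\eta\mid\widehat\xi)$ explicitly.
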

\begin{proof} Although the result is standard, we give a short proof for the convenience of the reader. Denote $g(x)=-x\log x$ for $x\in [0,1]$. Then
\begin{eqnarray*}
H_m(\xi)&=&\sum_{A\in \xi} g(m(A))=\sum_{A\in \xi}g \left(\sum_{B\in
\eta,\; B\cap A\neq \emptyset} m(A\cap B)\right)\\
&\geq & \sum_{A\in \xi} \left[\left(\sum_{B\in \eta,\; B\cap A\neq
\emptyset}g (m(A\cap B))\right)-m(A)\log k\right]
\quad \mbox{(by (\ref{e-3.2}))}\\
&\geq & \left(\sum_{A\in \xi} \sum_{B\in \eta} g (m(A\cap B))\right)-\log k\\
&=& H_m(\xi\vee \eta)-\log k.
\end{eqnarray*}
This finishes the proof. \end{proof}

The following simple lemma plays an important role in our analysis.
\begin{lem}
\label{lem-3.6} $\widehat{\P}\vee \sigma^{-1}\pi^{-1}\gamma=\widehat{\P}\vee
\pi^{-1}\gamma$.
\end{lem}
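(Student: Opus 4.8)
The claim is the identity $\widehat{\P}\vee \sigma^{-1}\pi^{-1}\gamma=\widehat{\P}\vee \pi^{-1}\gamma$ of sub-$\sigma$-algebras of $\B(\Sigma)$. The plan is to prove the two inclusions separately, and the only inclusion that requires an argument is $\widehat{\P}\vee \sigma^{-1}\pi^{-1}\gamma \supseteq \widehat{\P}\vee \pi^{-1}\gamma$; the reverse inclusion will follow once we observe that on each cylinder $[i]$ the projection $\pi$ factors through $\sigma$ via the contraction $S_i$. Concretely, I would start from the defining relation of the coding map: from \eqref{e-1.1} one has $\pi(x)=S_{x_1}(\pi(\sigma x))$ for every $x=(x_j)_{j=1}^\infty\in\Sigma$. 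Hence on the cylinder $[i]=\{x_1=i\}$ we have the pointwise identity $\pi = S_i\circ\pi\circ\sigma$.

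The key step is therefore to rewrite both $\pi^{-1}\gamma$ and $\sigma^{-1}\pi^{-1}\gamma$ ``modulo $\widehat{\P}$''. For any Borel set $E\subseteq\R^d$ and any $i\in\{1,\dots,\ell\}$, using $\pi=S_i\circ\pi\circ\sigma$ on $[i]$ we get
\begin{equation*}
\pi^{-1}E\cap [i] = \{x\in[i]:\ S_i(\pi(\sigma x))\in E\} = \sigma^{-1}\bigl(\pi^{-1}(S_i^{-1}E)\bigr)\cap[i].
\end{equation*}
Since $S_i$ is a homeomorphism onto its image (indeed a contractive injection), $S_i^{-1}E$ ranges over all Borel subsets of $S_i^{-1}(\R^d)$ as $E$ ranges over Borel subsets of $\R^d$, so the right-hand side shows that $\pi^{-1}E\cap[i]$ lies in $\widehat{\P}\vee\sigma^{-1}\pi^{-1}\gamma$. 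Taking the (finite, disjoint) union over $i=1,\dots,\ell$ gives $\pi^{-1}E\in\widehat{\P}\vee\sigma^{-1}\pi^{-1}\gamma$; since such sets generate $\pi^{-1}\gamma$ and $\widehat{\P}\subseteq\widehat{\P}\vee\sigma^{-1}\pi^{-1}\gamma$ trivially, we conclude $\widehat{\P}\vee\pi^{-1}\gamma\subseteq\widehat{\P}\vee\sigma^{-1}\pi^{-1}\gamma$. For the reverse inclusion, the same computation read backwards works: for a Borel set $E'\subseteq\R^d$, the identity $\pi=S_i\circ\pi\circ\sigma$ on $[i]$ gives $\sigma^{-1}(\pi^{-1}E')\cap[i] = \{x\in[i]:\ \pi(\sigma x)\in E'\} = \pi^{-1}(S_iE')\cap[i]$, because $\pi(\sigma x)\in E'$ iff $S_i(\pi(\sigma x))\in S_iE'$ iff $\pi(x)\in S_iE'$ (here $S_iE'$ is Borel since $S_i$ is a homeomorphism of $U$); again unioning over $i$ shows $\sigma^{-1}\pi^{-1}E'\in\widehat{\P}\vee\pi^{-1}\gamma$, whence $\widehat{\P}\vee\sigma^{-1}\pi^{-1}\gamma\subseteq\widehat{\P}\vee\pi^{-1}\gamma$.

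The argument is short and essentially a bookkeeping exercise; the one point that deserves care — and is the only genuine ``obstacle'' — is the measurability claim that $S_iE'$ and $S_i^{-1}E$ are Borel, which is why the injectivity/homeomorphism property of the maps $S_i$ on the ambient set is used (each $S_i$ is a contraction on $X\subseteq\R^d$, in particular injective and continuous, hence a Borel isomorphism onto its image). I would also remark that the identity holds at the level of $\sigma$-algebras exactly, with no null-set qualifications, since every step is a genuine set identity on the partition blocks $[i]$. This lemma is what lets one interchange $\pi^{-1}\gamma$ with $\sigma^{-1}\pi^{-1}\gamma$ after conditioning on $\P$, which is precisely the manipulation needed to relate $H_m(\P\mid\sigma^{-1}\pi^{-1}\gamma)$ to quantities involving $\pi^{-1}\gamma$ in the sequel.
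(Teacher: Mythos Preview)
Your proof is correct and follows essentially the same approach as the paper: both arguments rest on the set identity $[i]\cap\sigma^{-1}\pi^{-1}A=[i]\cap\pi^{-1}(S_iA)$ (equivalently, $\pi=S_i\circ\pi\circ\sigma$ on $[i]$), together with the fact that the injective continuous maps $S_i$ carry Borel sets to Borel sets. The paper presents only one inclusion and declares the other ``essentially identical'', whereas you spell out both directions and are slightly more explicit about why $S_iE'$ is Borel; these are stylistic rather than substantive differences.
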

\begin{proof} We only  prove  $\widehat{\P}\vee
\sigma^{-1}\pi^{-1}\gamma\subseteq\widehat{\P}\vee \pi^{-1}\gamma$. The other
direction can be proved by an essentially identical argument. Note
that each member in $\widehat{\P}\vee \sigma^{-1}\pi^{-1}\gamma$ can be
written as
$$\bigcup_{j=1}^\ell [j]\cap \sigma^{-1}\pi^{-1}A_j$$
with $A_j\in \gamma$. However, it is direct to check
that  $$[j]\cap \sigma^{-1}\pi^{-1}A_j=[j]\cap \pi^{-1}(S_j (A_j)).$$
Since $S_j$ is injective and contractive (thus continuous),
we have $S_j(A_j)\in\gamma$. Therefore $\bigcup_{j=1}^\ell [j]\cap
\sigma^{-1}\pi^{-1}A_j\in \widehat{\P}\vee \pi^{-1}\gamma$. \end{proof}

\begin{lem}
\label{lem-3.7} Let $m$ be a Borel probability measure on $\Sigma$ and  $k\in \N$. We have
\begin{eqnarray*}
&\mbox{}& H_m\left(\P_{0}^{k-1}\big|\sigma^{-k}\pi^{-1}\gamma\right)-
H_m\left(\P_{0}^{k-1}\big|\pi^{-1}\gamma\right)\\
&\mbox{}& \quad=\sum_{j=0}^{k-1} H_{m\circ \sigma^{-j}}(\P|\sigma^{-1}\pi^{-1}\gamma)-H_{m\circ \sigma^{-j}}(\P|\pi^{-1}\gamma).
\end{eqnarray*}
Moreover if $m\in \M_\sigma(\Sigma)$, then $$H_m\left(\P_{0}^{k-1}\big|\sigma^{-k}\pi^{-1}\gamma\right)-
H_m\left(\P_{0}^{k-1}\big|\pi^{-1}\gamma\right)=kh_\pi(\sigma,
m).$$
\end{lem}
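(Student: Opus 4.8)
The plan is to establish the two displayed identities in turn, deriving the second from the first. For the first identity, I would use the cocycle/telescoping structure of conditional information under $\sigma$ together with the key commutation Lemma \ref{lem-3.6}. The natural starting point is to write $\P_0^{k-1} = \P \vee \sigma^{-1}\P_0^{k-2}$ and apply Lemma \ref{lem-3.3}(iii) repeatedly, so that both $H_m(\P_0^{k-1}\,|\,\sigma^{-k}\pi^{-1}\gamma)$ and $H_m(\P_0^{k-1}\,|\,\pi^{-1}\gamma)$ get decomposed into $k$ pieces. More efficiently, I expect the cleanest route is to prove the pointwise (information) version
\begin{equation*}
{\bf I}_m\!\left(\P_0^{k-1}\big|\sigma^{-k}\pi^{-1}\gamma\right)-{\bf I}_m\!\left(\P_0^{k-1}\big|\pi^{-1}\gamma\right)
=\sum_{j=0}^{k-1}\Big({\bf I}_{m\circ\sigma^{-j}}(\P|\sigma^{-1}\pi^{-1}\gamma)\circ\sigma^j-{\bf I}_{m\circ\sigma^{-j}}(\P|\pi^{-1}\gamma)\circ\sigma^j\Big)
\end{equation*}
and then integrate, using $\int f\,dm=\int f\circ\sigma^j\,d(m\circ\sigma^{-j})$ — wait, more precisely $\int g\,d(m\circ\sigma^{-j})=\int g\circ\sigma^j\,dm$, so integrating the right-hand side termwise gives exactly $\sum_{j=0}^{k-1}\big(H_{m\circ\sigma^{-j}}(\P|\sigma^{-1}\pi^{-1}\gamma)-H_{m\circ\sigma^{-j}}(\P|\pi^{-1}\gamma)\big)$, which is the claim.

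To get the pointwise identity, I would argue by induction on $k$, or directly telescope. Apply Lemma \ref{lem-3.3}(ii) to split ${\bf I}_m(\P\vee\sigma^{-1}\P_0^{k-2}\,|\,\A) = {\bf I}_m(\P\,|\,\A) + {\bf I}_m(\sigma^{-1}\P_0^{k-2}\,|\,\widehat\P\vee\A)$, once with $\A=\sigma^{-k}\pi^{-1}\gamma$ and once with $\A=\pi^{-1}\gamma$. For the first term in each: when $\A=\pi^{-1}\gamma$ we get ${\bf I}_m(\P\,|\,\pi^{-1}\gamma)$ outright; when $\A=\sigma^{-k}\pi^{-1}\gamma$, I need to replace $\widehat\P\vee\sigma^{-k}\pi^{-1}\gamma$-type conditioning — here is where Lemma \ref{lem-3.6} (and its iterate $\widehat\P\vee\sigma^{-k}\pi^{-1}\gamma$ reduces appropriately once combined with $\widehat\P$, or rather the relevant statement after conditioning) enters, letting me match the "$\sigma^{-1}\pi^{-1}\gamma$" version. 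For the second terms, use Lemma \ref{lem-3.3}(i) to convert ${\bf I}_m(\sigma^{-1}(\cdot)\,|\,\sigma^{-1}(\cdot))$ into ${\bf I}_{m\circ\sigma^{-1}}(\cdot\,|\,\cdot)\circ\sigma$, after checking that $\widehat\P\vee\sigma^{-k}\pi^{-1}\gamma = \sigma^{-1}(\text{something})$ up to the part handled by Lemma \ref{lem-3.6}; then apply the inductive hypothesis at level $k-1$ to $m\circ\sigma^{-1}$. Collecting terms yields the telescoped sum.

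For the second (moreover) statement: if $m\in\M_\sigma(\Sigma)$ then $m\circ\sigma^{-j}=m$ for all $j$, so each of the $k$ summands equals $H_m(\P|\sigma^{-1}\pi^{-1}\gamma)-H_m(\P|\pi^{-1}\gamma)=h_\pi(\sigma,m)$ by Definition \ref{de-1.1}, giving $kh_\pi(\sigma,m)$ immediately.

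The main obstacle I anticipate is the careful bookkeeping of $\sigma$-preimages in the conditioning $\sigma$-algebras: ensuring that at each induction step the algebra $\widehat\P\vee\sigma^{-k}\pi^{-1}\gamma$ is correctly rewritten — using Lemma \ref{lem-3.6} to trade one factor of $\sigma^{-1}$ against $\P$ — so that Lemma \ref{lem-3.3}(i) can be invoked with matching $\sigma^{-1}$ on both the partition and the algebra. One must also be slightly careful that all the entropies involved are finite (so that Lemma \ref{lem-3.3}(ii)–(iii) apply): here $H_m(\P)\le\log\ell<\infty$ and $H_m(\P_0^{k-1})\le k\log\ell<\infty$, which suffices. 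Everything else is routine manipulation with the information-function identities of Lemma \ref{lem-3.3}.
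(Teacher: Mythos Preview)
Your overall strategy is right and matches the paper: establish the pointwise information identity (this is exactly the paper's equation (\ref{e-tt})) using Lemmas \ref{lem-3.3} and \ref{lem-3.6}, then integrate; the ``moreover'' clause is then immediate from $m\circ\sigma^{-j}=m$. The finiteness check $H_m(\P_0^{k-1})\le k\log\ell$ is also fine.

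However, your specific telescoping --- induction on $k$ by peeling off the front factor $\P$ --- does not go through as written. After splitting $\P_0^{k-1}=\P\vee\sigma^{-1}\P_0^{k-2}$ with $\A=\sigma^{-k}\pi^{-1}\gamma$, the leading term is ${\bf I}_m(\P\mid\sigma^{-k}\pi^{-1}\gamma)$, not ${\bf I}_m(\P\mid\sigma^{-1}\pi^{-1}\gamma)$, and Lemma \ref{lem-3.6} does \emph{not} convert one into the other: that lemma only says $\widehat\P\vee\sigma^{-1}\pi^{-1}\gamma=\widehat\P\vee\pi^{-1}\gamma$, so it helps only \emph{after} you have joined with $\widehat\P$, not for conditioning $\P$ alone. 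Nor does Lemma \ref{lem-3.6} iterate to give $\widehat\P\vee\sigma^{-k}\pi^{-1}\gamma=\widehat\P\vee\pi^{-1}\gamma$; the proof genuinely uses one application of $S_{x_1}$. For the same reason, the residual conditioning $\widehat\P\vee\sigma^{-k}\pi^{-1}\gamma$ is not of the form $\sigma^{-1}(\cdot)$, so Lemma \ref{lem-3.3}(i) cannot be invoked to pass to $m\circ\sigma^{-1}$ and run the induction.

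The paper's fix is to telescope along a different axis: keep the partition $\P_0^{k-1}$ fixed and vary the conditioning $\sigma$-algebra through the intermediate values $\sigma^{-j}\pi^{-1}\gamma$, $j=0,\ldots,k$. At step $j$ one pulls out $\sigma^{-j}\P$ (not $\P$) via Lemma \ref{lem-3.3}(ii), so that the conditioning becomes $\sigma^{-j}\widehat\P\vee\sigma^{-j}\pi^{-1}\gamma=\sigma^{-j}(\widehat\P\vee\pi^{-1}\gamma)$; now the $\sigma^{-j}$-pullback of Lemma \ref{lem-3.6} gives $\sigma^{-j}(\widehat\P\vee\sigma^{-1}\pi^{-1}\gamma)$, and re-absorbing $\sigma^{-j}\P$ via Lemma \ref{lem-3.3}(ii) yields
\[
{\bf I}_m\!\big(\P_0^{k-1}\,\big|\,\sigma^{-j}\pi^{-1}\gamma\big)-{\bf I}_m\!\big(\P_0^{k-1}\,\big|\,\sigma^{-(j+1)}\pi^{-1}\gamma\big)
={\bf I}_{m\circ\sigma^{-j}}(\P\mid\pi^{-1}\gamma)\circ\sigma^{j}-{\bf I}_{m\circ\sigma^{-j}}(\P\mid\sigma^{-1}\pi^{-1}\gamma)\circ\sigma^{j},
\]
with Lemma \ref{lem-3.3}(i) applying cleanly since both partition and $\sigma$-algebra are $\sigma^{-j}$-pullbacks. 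Summing over $j$ gives the telescoped identity. Your plan has all the right ingredients; it is only the choice of which index to telescope over that needs adjusting.
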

\begin{proof} For $j=0,1,\ldots,k-1$,  we have
\begin{eqnarray*}
&\mbox{}&
{\bf I}_m\left(\P_{0}^{k-1}\big|\sigma^{-j}\pi^{-1}\gamma\right)-
{\bf I}_m\left(\P_{0}^{k-1}\big|\sigma^{-(j+1)}\pi^{-1}\gamma\right)\\
&=& {\bf I}_m\left(\sigma^{-j}\P\big|\sigma^{-j}\pi^{-1}\gamma\right)+
{\bf I}_m\left(\bigvee_{0\leq i\leq k-1,\;i\neq
j}\sigma^{-i}\P\big|\sigma^{-j}\widehat{\P}\vee\sigma^{-j}\pi^{-1}\gamma\right)\\
&&  \mbox{}\;\;  -{\bf I}_m\left(\P_{0}^{k-1}\big|\sigma^{-(j+1)}\pi^{-1}\gamma\right)\qquad (\mbox{by Lemma \ref{lem-3.3}(ii)})
\\
&=& {\bf I}_m\left(\sigma^{-j}\P\big|\sigma^{-j}\pi^{-1}\gamma\right)+
{\bf I}_m\left(\bigvee_{0\leq i\leq k-1,\;i\neq
j}\sigma^{-i}\P\big|\sigma^{-j}\widehat{\P}\vee\sigma^{-(j+1)}\pi^{-1}\gamma\right) \\
&& \mbox{}\;\;-{\bf I}_m\left(\P_{0}^{k-1}\big|\sigma^{-(j+1)}\pi^{-1}\gamma\right)\qquad (\mbox{by Lemma \ref{lem-3.6}})\\
&=&
{\bf I}_m\left(\sigma^{-j}\P\big|\sigma^{-j}\pi^{-1}\gamma\right)-{\bf I}_m\left(\sigma^{-j}\P\big|\sigma^{-(j+1)}\pi^{-1}\gamma\right)
\quad (\mbox{by Lemma \ref{lem-3.3}(ii)})\\
&=&{\bf I}_{m\circ \sigma^{-j}}\left(\P\big|\pi^{-1}\gamma\right)\circ
\sigma^j-{\bf I}_{m\circ \sigma^{-j}}\left(\P\big|\sigma^{-1}\pi^{-1}\gamma\right)\circ
\sigma^{j}\quad (\mbox{by Lemma \ref{lem-3.3}(i)}).
\end{eqnarray*}
Summing $j$ over $\{0,\ldots,k-1\}$ yields
\begin{equation}
\label{e-tt}
\begin{split}
\mbox{}&{\bf I}_m\left(\P_{0}^{k-1}\big|\pi^{-1}\gamma\right)-
{\bf I}_m\left(\P_{0}^{k-1}\big|\sigma^{-k}\pi^{-1}\gamma\right)\\
&=\sum_{j=0}^{k-1}\left({\bf I}_{m\circ \sigma^{-j}}\left(\P\big|\pi^{-1}\gamma\right)\circ
\sigma^j-{\bf I}_{m\circ \sigma^{-j}}\left(\P\big|\sigma^{-1}\pi^{-1}\gamma\right)\circ
\sigma^{j}\right).\\
\end{split} \end{equation}
 Taking  integration, we obtain
the desired formula. \end{proof}

For any $n\in \N$, let $\D_n$ be the partition of $\R^d$ given by
\begin{equation}
\label{e-at1}
\D_n=\{[0,2^{-n})^d+\alpha:\; \alpha\in 2^{-n} \Z^d\}.
\end{equation}

\begin{lem}
\label{lem-3.8} Let $m\in \M_\sigma(\Sigma)$.  For each $n\in \N$,
we have
$$H_m(\P|\sigma^{-1}\pi^{-1}\widehat{\D_n})-H_m(\P|\pi^{-1}\widehat{\D_n})\geq -d\log
(\sqrt{d}+1).$$
\end{lem}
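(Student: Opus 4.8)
The plan is to reduce the claimed estimate to a purely combinatorial fact about how the images $S_j(Q)$, $Q\in\D_n$, sit inside the grid $\D_n$, and then invoke Lemma~\ref{lem-3.5}. Since $\pi(\Sigma)=K$ is compact, only finitely many cubes of $\D_n$ meet $K$, so $\pi^{-1}\D_n$ and $\sigma^{-1}\pi^{-1}\D_n$ are (modulo null sets) finite partitions of $\Sigma$ with finite entropy, and $\pi^{-1}\widehat{\D_n}=\widehat{\pi^{-1}\D_n}$, $\sigma^{-1}\pi^{-1}\widehat{\D_n}=\widehat{\sigma^{-1}\pi^{-1}\D_n}$. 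Using the elementary identity $H_m(\P\,|\,\widehat{\zeta})=H_m(\P\vee\zeta)-H_m(\zeta)$ for finite partitions $\zeta$, together with the $\sigma$-invariance of $m$, which gives $H_m(\sigma^{-1}\pi^{-1}\D_n)=H_m(\pi^{-1}\D_n)$, I would rewrite
$$
H_m(\P\,|\,\sigma^{-1}\pi^{-1}\widehat{\D_n})-H_m(\P\,|\,\pi^{-1}\widehat{\D_n})
=H_m\bigl(\P\vee\sigma^{-1}\pi^{-1}\D_n\bigr)-H_m\bigl(\P\vee\pi^{-1}\D_n\bigr).
$$
It is precisely here that the hypothesis $m\in\M_\sigma(\Sigma)$ enters; note also that the two conditioning $\sigma$-algebras are not nested, so no direct monotonicity argument is available.

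Next I would identify the members of $\P\vee\sigma^{-1}\pi^{-1}\D_n$. As in the proof of Lemma~\ref{lem-3.6}, on the cylinder $[j]$ one has $\pi\circ\sigma=S_j^{-1}\circ\pi$, whence $[j]\cap\sigma^{-1}\pi^{-1}(Q)=[j]\cap\pi^{-1}(S_jQ)$; thus the members of $\P\vee\sigma^{-1}\pi^{-1}\D_n$ are exactly the sets $[j]\cap\pi^{-1}(S_jQ)$ with $1\le j\le\ell$ and $Q\in\D_n$. The key geometric input is then that, since each $S_j$ is contractive, the diameter of $S_j(Q)$ is at most $\mathrm{diam}(Q)=\sqrt d\,2^{-n}$, and an elementary packing estimate shows that a set of diameter at most $\sqrt d\,2^{-n}$ can meet at most $(\sqrt d+1)^d$ cubes of the grid $\D_n$. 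Consequently every member $[j]\cap\pi^{-1}(S_jQ)$ of $\P\vee\sigma^{-1}\pi^{-1}\D_n$ intersects at most $(\sqrt d+1)^d$ members $[j]\cap\pi^{-1}(Q')$ of $\P\vee\pi^{-1}\D_n$: the two sets must lie in the same cylinder $[j]$, and then one needs $S_jQ\cap Q'\ne\emptyset$, which holds for at most $(\sqrt d+1)^d$ choices of $Q'$.

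Finally I would apply Lemma~\ref{lem-3.5} with $\xi=\P\vee\sigma^{-1}\pi^{-1}\D_n$ and $\eta=\P\vee\pi^{-1}\D_n$, obtaining $H_m(\xi)\ge H_m(\xi\vee\eta)-d\log(\sqrt d+1)\ge H_m(\eta)-d\log(\sqrt d+1)$, the last step because $\xi\vee\eta$ refines $\eta$. Combined with the displayed identity this gives the stated lower bound. The only genuinely non-formal ingredient is the packing estimate of the previous paragraph; everything else is bookkeeping with the standard (conditional) entropy identities of Lemma~\ref{lem-3.3} and the structural observation of Lemma~\ref{lem-3.6}. The main point to be careful about is the finiteness of $\pi^{-1}\D_n$ (which rests on compactness of $K$), since it is needed both to make all the entropies involved finite and to legitimise the identity $H_m(\P\,|\,\widehat{\zeta})=H_m(\P\vee\zeta)-H_m(\zeta)$.
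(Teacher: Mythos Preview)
Your proposal is correct and follows essentially the same route as the paper: both reduce the difference of conditional entropies to $H_m(\P\vee\sigma^{-1}\pi^{-1}\D_n)-H_m(\P\vee\pi^{-1}\D_n)$ via $\sigma$-invariance, use the identity $[j]\cap\sigma^{-1}\pi^{-1}(Q)=[j]\cap\pi^{-1}(S_jQ)$, bound the number of cubes of $\D_n$ meeting $S_j(Q)$ by $(\sqrt d+1)^d$ from the diameter estimate, and then invoke Lemma~\ref{lem-3.5}. Your extra remark about the finiteness of $\pi^{-1}\D_n$ (via compactness of $K$) is a useful technical point that the paper leaves implicit.
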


\begin{proof} Since $m$ is $\sigma$-invariant, by Lemma \ref{lem-3.3}(iii), we have
\begin{equation}
\label{e-3.4}
\begin{split}
\mbox{}& H_m(\P|\sigma^{-1}\pi^{-1}\widehat{\D_n})-H_m(\P|\pi^{-1}\widehat{\D_n})\\
&=H_m(\P\vee
\sigma^{-1}\pi^{-1}{\D_n})-H_m(\sigma^{-1}\pi^{-1}{\D_n})\\
&\mbox{}\qquad -H_m(\P\vee \pi^{-1}{\D_n})+H_m(\pi^{-1}{\D_n})\\
&=H_m(\P\vee \sigma^{-1}\pi^{-1}{\D_n})-H_m(\P\vee \pi^{-1}{\D_n}).\\
\end{split}
\end{equation}
Observe that for each $1\leq j\leq \ell$ and  $Q\in \D_n$, $$[j]\cap
\sigma^{-1}\pi^{-1}(Q)=[j]\cap \pi^{-1}(S_j(Q)).$$
 Since $S_j$ is
contractive,  $\text{diam}(S_j(Q))\leq 2^{-n}\sqrt{d}$ and thus $S_j(Q)$ intersects at most $(\sqrt{d}+1)^d$  members in
$\D_n$. It deduces that $[j]\cap \sigma^{-1}\pi^{-1}(Q)$ intersects
at most $(\sqrt{d}+1)^d$  members in $\P\vee \pi^{-1}{\D_n}$. By Lemma
\ref{lem-3.5}, we have
\begin{equation}
\label{e-3.5'}
\begin{split}
 H_m(\P\vee
\sigma^{-1}\pi^{-1}{\D_n})&\geq H_m(\P\vee \sigma^{-1}\pi^{-1}{\D_n}\vee
\pi^{-1}{\D_n})\\
&\mbox{}\qquad -d\log (\sqrt{d}+1)\\
&\geq H_m(\P\vee \pi^{-1}{\D_n})-d\log (\sqrt{d}+1).\\
\end{split}
\end{equation}
Combining it with (\ref{e-3.4}) yields the desired inequality.
\end{proof}

\begin{proof}[Proof of Proposition \ref{pro-3.1}] We first prove
part (i) of the proposition, i.e.,
$$0\leq h_\pi(\sigma,m)\leq
h(\sigma,m).$$ Since $\widehat{\D_n}\uparrow \gamma$ as $n$ tends to $\infty$, by Lemma \ref{lem-3.3}(iv),
we have
$$\lim_{n\to
\infty}H_m(\P|\sigma^{-1}\pi^{-1}\D_n)-H_m(\P|\pi^{-1}\D_n)=
H_m(\P|\sigma^{-1}\pi^{-1}\gamma)-H_m(\P|\pi^{-1}\gamma).$$
It together with Lemma
\ref{lem-3.8} yields
$$H_m(\P|\sigma^{-1}\pi^{-1}\gamma)-H_m(\P|\pi^{-1}\gamma)\geq
-d\log(\sqrt{d}+1).$$ Using the same argument to the IFS $\{S_{i_1\ldots i_k}: 1\leq i_j\leq \ell, 1\leq j\leq k\}$, we have
$$H_m\left(\P_{0}^{k-1}\big|\sigma^{-k}\pi^{-1}\gamma\right)-
H_m\left(\P_{0}^{k-1}\big|\pi^{-1}\gamma\right)\geq
-d\log(\sqrt{d}+1).$$
It together with Lemma \ref{lem-3.7} yields
$h_\pi(\sigma,m)\geq -d\log (\sqrt{d}+1)/k$. Since $k$ is arbitrary, we have $h_\pi(\sigma,m)\geq 0$. To see $h_\pi(\sigma,m)\leq
h(\sigma,m)$, it suffices to observe that
\begin{eqnarray*}
kh_\pi(\sigma,m)&=&H_m\left(\P_{0}^{k-1}\big|\sigma^{-k}\pi^{-1}\gamma\right)-
H_m\left(\P_{0}^{k-1}\big|\pi^{-1}\gamma\right)\\
&\leq&
H_m\left(\P_{0}^{k-1}\big|\sigma^{-k}\pi^{-1}\gamma\right)\leq
H_m\left(\P_{0}^{k-1}\right).
\end{eqnarray*}

Now we turn to the proof of part (ii).
 Let $m_1,m_2\in \M_{\sigma}(\Sigma)$ and $m=pm_1+(1-p)m_2$ for some $p\in [0,1]$.
 Using (\ref{e-3.3}), for any finite or countable Borel
partition $\xi$ we have
\begin{equation}
\label{e-3.5} |H_m(\xi)-pH_{m_1}(\xi)-(1-p)H_{m_2}(\xi)|\leq
g(p)+g(1-p)\leq \log 2.
\end{equation}
Let $k\in \N$. By Lemma \ref{lem-3.7}, Lemma \ref{lem-3.3}(iv),  and (\ref{e-3.4}), we
have
\begin{equation}
\label{e-july3}
\begin{split}
h_\pi(\sigma,m)&= \frac{1}{k}\left(
H_m\left(\P_{0}^{k-1}\big|\sigma^{-k}\pi^{-1}\gamma\right)-
H_m\left(\P_{0}^{k-1}\big|\pi^{-1}\gamma\right)\right)\\
&= \frac{1}{k}\lim_{n\to \infty} \left(
H_m\left(\P_{0}^{k-1}\big|\sigma^{-k}\pi^{-1}\widehat{\D_n}\right)-
H_m\left(\P_{0}^{k-1}\big|\pi^{-1}\widehat{\D_n}\right)\right)\\
&= \frac{1}{k}\lim_{n\to \infty} \left(
H_m\left(\P_{0}^{k-1}\vee\sigma^{-k}\pi^{-1}\D_n\right)-
H_m\left(\P_{0}^{k-1}\vee\pi^{-1}\D_n\right)\right).\\
\end{split}
\end{equation}
The above statement is true when $m$ is replaced by $m_1$ and $m_2$.
However by (\ref{e-3.5}),
$$
H_m\left(\P_{0}^{k-1}\vee\sigma^{-k}\pi^{-1}\D_n\right)-
H_m\left(\P_{0}^{k-1}\vee\pi^{-1}\D_n\right)
$$
differs from
$$
\sum_{j=1}^2 p_j
\left[H_{m_j}\left(\P_{0}^{k-1}\vee\sigma^{-k}\pi^{-1}\D_n\right)-
H_{m_j}\left(\P_{0}^{k-1}\vee\pi^{-1}\D_n\right)\right]
$$
 at most $2\log 2$, where $p_1=p$ and $p_2=1-p$. This together with (\ref{e-july3}) yields
 (\ref{e-3.1}).
 \end{proof}

\begin{proof}[Proof of Proposition \ref{pro-3.2}]
 Let $k\geq 2$ and $\nu\in \M_{\sigma^k}(\Sigma)$. We claim that
$h_\pi(\sigma^k,\nu\circ \sigma^{-j})=h_\pi(\sigma^k,\nu)$ for
any $1\leq j\leq k-1$. To prove the claim, it suffices to prove
$h_\pi(\sigma^k,\nu\circ \sigma^{-1})=h_\pi(\sigma^k,\nu)$. Note that both $\nu$ and  $\nu\circ \sigma^{-1}$ are $\sigma^{k}$-invariant. By
Lemma \ref{lem-3.7}, we have
\begin{eqnarray*}
h_\pi(\sigma^k,\nu)&=&
{H}_\nu\left(\P_{0}^{k-1}\big|\sigma^{-k}\pi^{-1}\gamma\right)-
{H}_\nu\left(\P_{0}^{k-1}\big|\pi^{-1}\gamma\right)\\
&=&\sum_{j=0}^{k-1}\left(H_{\nu\circ \sigma^{-j}}\left(\P\big|\sigma^{-1}\pi^{-1}\gamma\right)-H_{\nu\circ \sigma^{-j}}\left(\P\big|\pi^{-1}\gamma\right)\right), \end{eqnarray*} whilst
\begin{eqnarray*}
h_\pi(\sigma^k,\nu\circ \sigma^{-1})&=&
{H}_{\nu\circ \sigma^{-1}}\left(\P_{0}^{k-1}\big|\sigma^{-k}\pi^{-1}\gamma\right)-
{H}_{\nu\circ \sigma^{-1}}\left(\P_{0}^{k-1}\big|\pi^{-1}\gamma\right)\\
&=&\sum_{j=0}^{k-1}\left(H_{\nu\circ \sigma^{-j-1}}\left(\P\big|\sigma^{-1}\pi^{-1}\gamma\right)-H_{\nu\circ \sigma^{-j-1}}\left(\P\big|\pi^{-1}\gamma\right)\right). \end{eqnarray*}
Since $\nu$ is
$\sigma^{k}$-invariant, we obtain $h_\pi(\sigma^k,\nu\circ \sigma^{-1})=h_\pi(\sigma^k,\nu)$. This finishes the proof of the claim.  To complete the proof of the proposition, let  $m=\frac{1}{k}\sum_{i=0}^{k-1}\nu\circ \sigma^{-i}$. It is clear that $m$ is $\sigma$-invariant. By Proposition \ref{pro-3.1}(ii), $h_\pi(\sigma^k,\cdot)$ is affine on $\M_{\sigma^k}(\Sigma)$. Hence
$$h_\pi(\sigma^k,m)=\frac{1}{k}\sum_{i=0}^{k-1}h_\pi(\sigma^k,\nu\circ\sigma^{-i})=h_\pi(\sigma^k,\nu).$$
Combining it with Lemma \ref{lem-3.7} yields the equality
$h_\pi(\sigma,m)=\frac{1}{k}h_\pi(\sigma^k,\nu)$. \end{proof}

\subsection{A version of Shannon-McMillan-Breiman  Theorem associated with IFS}

In this subsection, we prove the following Shannon-McMillan-Breiman type theorem associated with IFS, which is needed in the proof of Theorem \ref{thm-1.3}. It is also of independent interest.

\begin{pro}
\label{pro-3.13}
Let $\{S_i\}_{i=1}^\ell$ be an  IFS and  $m\in \M_\sigma(\Sigma)$. Then
\begin{equation}
\label{e-3.10}
\lim_{k\to \infty}\frac{1}{k}{\bf I}_m\left(\P_{0}^{k-1}\big|\pi^{-1}\gamma \right)(x)=\E_m(f|\I)(x)=h(\sigma,m,x)-
h_\pi(\sigma,m,x).
\end{equation}
almost everywhere and in $L^1$, where $$f:={\bf I}_m(\P|\sigma^{-1}\B(\Sigma))+ {\bf I}_m(\P|\pi^{-1}\gamma)-{\bf I}_m(\P|\sigma^{-1}\pi^{-1}\gamma),$$  $\I=\{B\in \B(\Sigma):\; \sigma^{-1}B=B\}$, and $h(\sigma,m,x)$,  $h_\pi(\sigma,m,x)$ denote the classical local entropy and the local projection entropy of $m$ at $x$ (see Definition \ref{de-1.1}), respectively.
Moreover if $m$ is ergodic, then the limit in (\ref{e-3.10}) equals
$h(\sigma,m)-h_\pi(\sigma,m)$ for $m$-a.e.\! $x\in \Sigma$.
\end{pro}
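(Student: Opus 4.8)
The plan is to decompose $\frac1k{\bf I}_m\left(\P_0^{k-1}|\pi^{-1}\gamma\right)$ into an ordinary ergodic average plus a ``relativized Shannon--McMillan--Breiman'' term, and to treat the latter with the martingale/maximal-function machinery behind the classical SMB theorem, the one genuinely new point being that $\pi^{-1}\gamma$ is not $\sigma$-invariant, so Lemma \ref{lem-3.6} must be invoked to reorganize it. First, the computation carried out in the proof of Lemma \ref{lem-3.7} (see (\ref{e-tt})), specialized to $\sigma$-invariant $m$ so that $m\circ\sigma^{-j}=m$, gives
$${\bf I}_m\left(\P_0^{k-1}\big|\pi^{-1}\gamma\right)(x)={\bf I}_m\left(\P_0^{k-1}\big|\sigma^{-k}\pi^{-1}\gamma\right)(x)+\sum_{j=0}^{k-1}\phi(\sigma^j x),$$
where $\phi:={\bf I}_m(\P|\pi^{-1}\gamma)-{\bf I}_m(\P|\sigma^{-1}\pi^{-1}\gamma)\in L^1(m)$ (integrability because $\P$ is finite). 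By Birkhoff's ergodic theorem $\frac1k\sum_{j=0}^{k-1}\phi\circ\sigma^j\to\E_m(\phi|\I)=-h_\pi(\sigma,m,\cdot)$ a.e.\ and in $L^1$, the last equality being Definition \ref{de-1.1}. Hence everything reduces to showing $\frac1k{\bf I}_m\left(\P_0^{k-1}|\sigma^{-k}\pi^{-1}\gamma\right)\to h(\sigma,m,\cdot)$ a.e.\ and in $L^1$, where $h(\sigma,m,x):=\E_m\big({\bf I}_m(\P|\sigma^{-1}\B(\Sigma))\,\big|\,\I\big)(x)$ is the local entropy; granting this and writing $f={\bf I}_m(\P|\sigma^{-1}\B(\Sigma))+\phi$, all three equalities in (\ref{e-3.10}) follow, the ergodic case being immediate since then $\I$ is trivial.

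To prove the remaining limit I would unfold ${\bf I}_m\left(\P_0^{k-1}|\sigma^{-k}\pi^{-1}\gamma\right)$ by the chain rule (Lemma \ref{lem-3.3}(ii)), peeling $\sigma^{-(k-1)}\P,\dots,\sigma^{-1}\P,\P$ off in turn, to get
$${\bf I}_m\left(\P_0^{k-1}\big|\sigma^{-k}\pi^{-1}\gamma\right)=\sum_{i=0}^{k-1}{\bf I}_m\left(\sigma^{-i}\P\,\Big|\,\widehat{\P_{i+1}^{k-1}}\vee\sigma^{-k}\pi^{-1}\gamma\right).$$
Iterating Lemma \ref{lem-3.6} collapses $\sigma^{-k}\pi^{-1}\gamma$ down to $\sigma^{-(i+1)}\pi^{-1}\gamma$ in the presence of $\widehat{\P_{i+1}^{k-1}}$, so $\widehat{\P_{i+1}^{k-1}}\vee\sigma^{-k}\pi^{-1}\gamma=\sigma^{-(i+1)}\big(\widehat{\P_0^{k-2-i}}\vee\pi^{-1}\gamma\big)$; then Lemma \ref{lem-3.3}(i) together with $\sigma$-invariance of $m$ turns the $i$-th summand into $\rho_{k-i}(\sigma^i x)$, where
$$\rho_n:={\bf I}_m\left(\P\,\Big|\,\widehat{\P_1^{n-1}}\vee\sigma^{-1}\pi^{-1}\gamma\right)\qquad(n\ge 1).$$
Thus ${\bf I}_m\left(\P_0^{k-1}|\sigma^{-k}\pi^{-1}\gamma\right)(x)=\sum_{n=1}^k\rho_n(\sigma^{k-n}x)$, a ``moving-index'' ergodic sum.

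The sub-$\sigma$-algebras $\widehat{\P_1^{n-1}}\vee\sigma^{-1}\pi^{-1}\gamma$ increase to $\sigma^{-1}\B(\Sigma)$, so by Lemma \ref{lem-3.3}(iv) one has $\rho_n\to\psi:={\bf I}_m(\P|\sigma^{-1}\B(\Sigma))$ a.e.\ and in $L^1$, and by the standard maximal inequality for the conditional information of a finite partition along an increasing filtration (the abstract counterpart of the estimate in Proposition \ref{pro-2.5}; cf.\ also \cite{Par-book}) one gets $G:=\sup_n\rho_n\in L^1(m)$. It then remains to pass from the moving-index sum to its limit: $\frac1k\sum_{n=1}^k\rho_n(\sigma^{k-n}x)=\frac1k\sum_{j=0}^{k-1}\rho_{k-j}(\sigma^j x)\to\E_m(\psi|\I)(x)=h(\sigma,m,x)$. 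This is Maker's ergodic theorem; alternatively, split $\rho_{k-j}=\psi+(\rho_{k-j}-\psi)$, handle $\frac1k\sum_j\psi\circ\sigma^j$ by Birkhoff, dominate the tail via $\frac1k\sum_{j=0}^{k-1}R_N(\sigma^j x)\to\E_m(R_N|\I)$ with $R_N:=\sup_{n\ge N}|\rho_n-\psi|\downarrow 0$ (so $\int R_N\to 0$ by dominated convergence, using $R_N\le G+\psi$), and observe that the at most $N$ remaining ``large-index'' terms each tend to $0$ since $G(\sigma^j x)/j\to 0$ a.e.\ for $G\in L^1(m)$; the $L^1$ statement is simpler, via $\sum_{n=1}^k\|\rho_n-\psi\|_1=o(k)$.

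The hard part is this last step. Since $\pi^{-1}\gamma$ is not $\sigma$-invariant, the usual relativized SMB theorem does not apply directly; instead one must manufacture the auxiliary sequence $\rho_n$ — which is precisely where Lemma \ref{lem-3.6} enters — establish its a.e.\ and $L^1$ convergence along with the integrability of its maximal function, and then feed it through an ergodic average with a shifting index. Assembling the Doob-type convergence of Lemma \ref{lem-3.3}(iv), the maximal inequality, and Birkhoff's theorem into that single moving-index limit is the technical core of the argument.
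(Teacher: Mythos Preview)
Your proof is correct and follows essentially the same route as the paper. The paper proves a slightly more general statement (Proposition~\ref{pro-3.13'}) by defining $g_k(x):={\bf I}_m(\P_0^{k-1}|\pi^{-1}\gamma)(x)-{\bf I}_m(\P_0^{k-2}|\pi^{-1}\gamma)(\sigma x)$, showing via Lemma~\ref{lem-3.6} that $g_k=\rho_k+\phi$ in your notation, and then invoking the moving-index ergodic lemma (Lemma~\ref{lem-3.15}, i.e.\ Maker's theorem) once; you instead split off the $\phi$-part first via (\ref{e-tt}) and apply Birkhoff to it, and then run the identical Maker-type argument on the $\rho_n$-part alone---a purely cosmetic reorganization of the same ingredients.
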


\begin{rem}
\label{rem-3.14}
{\rm
If $\xi$ is a countable Borel partition of $\Sigma$, and $\A\subset \B(\Sigma)$ is a sub-$\sigma$-algebra with $\sigma^{-1}\A=\A$, then the relativized
Shannon-McMillan-Breiman Theorem states that
$$
\lim_{k\to \infty}\frac{1}{k}{\bf I}_m\left(\xi_{0}^{k-1}\big|\A \right)(x)=\E_m(g|\I)(x)
\qquad \mbox{ for $m$-a.e.\! $x\in \Sigma$},
$$
where $g={\bf I}_m\left(\xi|\A\vee \xi_{1}^\infty \right)$ (see, e.g., \cite[Lemma 4.1]{Bog92}). However under the  setting of Proposition \ref{pro-3.13}, the sub-$\sigma$-algebra $\pi^{-1}\gamma$ is not invariant in general.
}
\end{rem}

In the following we present a generalized version of Proposition \ref{pro-3.13}.
\begin{pro}
\label{pro-3.13'}
Let $\xi$ be a countable Borel partition of $\Sigma$ with $H_m(\xi)<\infty$, and let $\A\subset \B(\Sigma)$ be  a sub-$\sigma$-algebra so that  $\widehat{\xi}\vee \sigma^{-1}\A=\widehat{\xi}\vee\A$.
Let $m\in \M_\sigma(\Sigma)$. Then
\begin{equation}
\label{e-3.10'}
\lim_{k\to \infty}\frac{1}{k}{\bf I}_m\left(\xi_{0}^{k-1}\big|\A\right)(x)=\E_m(f|\I)(x)
\end{equation}
almost everywhere and in $L^1$, where $$f:={\bf I}_m\left(\xi|\sigma^{-1}\A\vee \bigvee_{i=1}^\infty \sigma^{-i}\widehat{\xi}\right)+ {\bf I}_m(\xi|\A)-{\bf I}_m(\xi|\sigma^{-1}\A),$$
 and $\I=\{B\in \B(\Sigma):\; \sigma^{-1}B=B\}$.
\end{pro}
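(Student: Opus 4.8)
The plan is to decompose the information cocycle $\frac1k\,{\bf I}_m(\xi_0^{k-1}|\A)$ into a telescoping sum and then apply a relativized (conditional) Shannon--McMillan--Breiman theorem together with the Birkhoff ergodic theorem. First I would use the chain rule for conditional information, Lemma \ref{lem-3.3}(ii), to write
\begin{equation*}
{\bf I}_m\left(\xi_0^{k-1}\big|\A\right)={\bf I}_m\left(\xi_0^{k-1}\big|\sigma^{-k}\A\right)+\left({\bf I}_m\left(\xi_0^{k-1}\big|\A\right)-{\bf I}_m\left(\xi_0^{k-1}\big|\sigma^{-k}\A\right)\right),
\end{equation*}
exactly as in the proof of Lemma \ref{lem-3.7}. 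The second bracket, after telescoping over the intermediate $\sigma$-algebras $\sigma^{-j}\A$ and using the hypothesis $\widehat{\xi}\vee\sigma^{-1}\A=\widehat{\xi}\vee\A$ (the analogue of Lemma \ref{lem-3.6}) together with Lemma \ref{lem-3.3}(i)--(ii), equals $\sum_{j=0}^{k-1}\bigl({\bf I}_{m\circ\sigma^{-j}}(\xi|\A)-{\bf I}_{m\circ\sigma^{-j}}(\xi|\sigma^{-1}\A)\bigr)\circ\sigma^j$, which by $\sigma$-invariance of $m$ is a Birkhoff sum $\sum_{j=0}^{k-1}\varphi\circ\sigma^j$ for $\varphi={\bf I}_m(\xi|\A)-{\bf I}_m(\xi|\sigma^{-1}\A)$. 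Since $H_m(\xi)<\infty$ one checks $\varphi\in L^1(m)$ (both terms are dominated by an integrable function, again as in \S2), so the Birkhoff ergodic theorem gives $\frac1k\sum_{j=0}^{k-1}\varphi\circ\sigma^j\to\E_m(\varphi|\I)$ a.e.\ and in $L^1$.

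For the first term I would handle $\frac1k\,{\bf I}_m(\xi_0^{k-1}|\sigma^{-k}\A)$. Expanding $\xi_0^{k-1}=\bigvee_{i=0}^{k-1}\sigma^{-i}\xi$ and using the chain rule repeatedly,
\begin{equation*}
{\bf I}_m\left(\xi_0^{k-1}\big|\sigma^{-k}\A\right)=\sum_{i=0}^{k-1}{\bf I}_m\left(\sigma^{-i}\xi\,\Big|\,\sigma^{-k}\A\vee\bigvee_{j=i+1}^{k-1}\sigma^{-j}\widehat{\xi}\right).
\end{equation*}
Applying Lemma \ref{lem-3.3}(i) to pull out $\sigma^{-i}$ and using $\sigma$-invariance of $m$, the $i$-th summand is $g_{k-i}\circ\sigma^i$ where $g_n(x):={\bf I}_m\bigl(\xi\,\big|\,\sigma^{-n}\A\vee\bigvee_{j=1}^{n-1}\sigma^{-j}\widehat{\xi}\bigr)(x)$. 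As $n\to\infty$ the conditioning $\sigma$-algebras increase to $\sigma^{-1}\A\vee\bigvee_{j=1}^{\infty}\sigma^{-j}\widehat{\xi}$ wait --- one must be slightly careful: after the shift the relevant limit $\sigma$-algebra is $\bigvee_{j\ge1}\sigma^{-j}\widehat{\xi}\vee\sigma^{-n}\A$, and since $\widehat{\xi}\vee\sigma^{-1}\A=\widehat{\xi}\vee\A$ forces $\sigma^{-n}\A\subset\widehat{\xi_1^{n}}\vee\sigma^{-1}\A$, these increase to $\sigma^{-1}\A\vee\bigvee_{j=1}^{\infty}\sigma^{-j}\widehat{\xi}$. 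By Lemma \ref{lem-3.3}(iv), $g_n\to g_\infty:={\bf I}_m\bigl(\xi\,\big|\,\sigma^{-1}\A\vee\bigvee_{j=1}^{\infty}\sigma^{-j}\widehat{\xi}\bigr)$ a.e.\ and in $L^1$; moreover all $g_n$ and $g_\infty$ are dominated by a single integrable function (the maximal-inequality bound from $H_m(\xi)<\infty$, as in the proof of the classical SMB theorem). A standard Cesàro/dominated-convergence argument along a Birkhoff average --- i.e.\ $\frac1k\sum_{i=0}^{k-1}g_{k-i}\circ\sigma^i\to\E_m(g_\infty|\I)$ --- then applies; this is precisely the step used in the proof of the relativized SMB theorem (cf.\ \cite[Lemma 4.1]{Bog92}), so I would cite or reproduce that argument.

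Combining the two parts, $\frac1k{\bf I}_m(\xi_0^{k-1}|\A)\to\E_m(g_\infty|\I)+\E_m(\varphi|\I)=\E_m(f|\I)$ with $f=g_\infty+\varphi$, which is exactly the asserted formula. I expect the main obstacle to be the careful bookkeeping in the second paragraph: verifying the domination of the family $\{g_n\}$ by an integrable function (so that the Cesàro-along-orbit limit genuinely converges a.e.\ and in $L^1$, not merely in some weaker sense), and checking that the increasing $\sigma$-algebras really have the stated limit given only the hypothesis $\widehat{\xi}\vee\sigma^{-1}\A=\widehat{\xi}\vee\A$ rather than $\sigma$-invariance of $\A$. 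Once the relativized SMB machinery is set up correctly for a non-invariant $\A$, the rest is the telescoping identity already essentially contained in Lemma \ref{lem-3.7}. Proposition \ref{pro-3.13} follows by taking $\xi=\P$ and $\A=\pi^{-1}\gamma$, noting $\widehat{\P}\vee\sigma^{-1}\pi^{-1}\gamma=\widehat{\P}\vee\pi^{-1}\gamma$ by Lemma \ref{lem-3.6} and that $\sigma^{-1}\A\vee\bigvee_{i\ge1}\sigma^{-i}\widehat{\P}=\sigma^{-1}\B(\Sigma)$ up to $m$-null sets, so that $g_\infty={\bf I}_m(\P|\sigma^{-1}\B(\Sigma))$.
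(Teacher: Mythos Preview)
Your proposal is correct and is essentially the paper's proof, reorganised. The paper does not split ${\bf I}_m(\xi_0^{k-1}|\A)$ into two pieces; instead it defines directly
\[
g_k(x):={\bf I}_m(\xi_0^{k-1}|\A)(x)-{\bf I}_m(\xi_0^{k-2}|\A)(\sigma x),
\]
telescopes, and then proves the identity
\[
g_k={\bf I}_m\!\left(\xi\,\Big|\,\sigma^{-1}\A\vee\bigvee_{i=1}^{k-1}\sigma^{-i}\widehat\xi\right)+{\bf I}_m(\xi|\A)-{\bf I}_m(\xi|\sigma^{-1}\A),
\]
after which Lemma \ref{lem-3.3}(iv) gives $g_k\to f$ a.e.\ and in $L^1$, and a single application of Lemma \ref{lem-3.15} finishes. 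Your two sums recombine to exactly this: your $\sigma$-algebras $\sigma^{-n}\A\vee\bigvee_{j=1}^{n-1}\sigma^{-j}\widehat\xi$ coincide with $\sigma^{-1}\A\vee\bigvee_{j=1}^{n-1}\sigma^{-j}\widehat\xi$ (this is the content of your ``careful'' remark, and it is correct), so your $g_n+\varphi$ equals the paper's $g_n$, and your Birkhoff step plus your Ces\`aro step is just Lemma \ref{lem-3.15} applied once to the combined sequence. Your worry about domination is unnecessary: Lemma \ref{lem-3.3}(iv) already delivers $L^1$ convergence of $g_n$ to $g_\infty$, which is all Lemma \ref{lem-3.15} requires.
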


To prove Proposition \ref{pro-3.13'}, we need the following lemma.

\begin{lem}[\cite{Man87}, Corollary 1.6, p. 96]
\label{lem-3.15}
Let $m\in \M_\sigma(\Sigma)$. Let $F_k\in L^1(\Sigma,m)$ be a sequence that converges almost everywhere
and in $L^1$ to $F\in L^1(\Sigma,m)$. Then
$$
\lim_{k\to \infty}\frac{1}{k}\sum_{j=0}^{k-1}F_{k-j}(\sigma^j(x))=\E_m(F|\I)(x)
$$
almost everywhere and in $L^1$.
\end{lem}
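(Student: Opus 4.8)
The plan is to prove Proposition~\ref{pro-3.13'} by decomposing the conditional information ${\bf I}_m(\xi_0^{k-1}|\A)$ into a telescoping sum to which Lemma~\ref{lem-3.15} applies directly. First I would apply the chain rule for conditional information, Lemma~\ref{lem-3.3}(ii), to write
\begin{equation*}
{\bf I}_m\left(\xi_0^{k-1}\big|\A\right)={\bf I}_m\left(\xi\big|\A\right)+\sum_{j=1}^{k-1}{\bf I}_m\left(\sigma^{-j}\xi\,\big|\,\A\vee\widehat{\xi}\vee\sigma^{-1}\widehat{\xi}\vee\cdots\vee\sigma^{-(j-1)}\widehat{\xi}\right).
\end{equation*}
The $j$-th term can be rewritten using Lemma~\ref{lem-3.3}(i) and the hypothesis $\widehat{\xi}\vee\sigma^{-1}\A=\widehat{\xi}\vee\A$ (iterated, which gives $\bigvee_{i=0}^{j-1}\sigma^{-i}\widehat{\xi}\vee\sigma^{-j}\A=\bigvee_{i=0}^{j-1}\sigma^{-i}\widehat{\xi}\vee\A$): one shows that the $j$-th summand equals $G_{k-j}\circ\sigma^j$, where $G_n(x):={\bf I}_m\bigl(\xi\,\big|\,\sigma^{-1}\A\vee\bigvee_{i=1}^{n-1}\sigma^{-i}\widehat{\xi}\bigr)(x)$ for $n\ge 2$, together with lower-order correction terms. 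The necessity of the hypothesis $\widehat{\xi}\vee\sigma^{-1}\A=\widehat{\xi}\vee\A$ is exactly to move the conditioning $\sigma$-algebra through $\sigma$-shifts in this step; this is the analogue of the invariance assumption $\sigma^{-1}\A=\A$ in the classical relativized theorem (Remark~\ref{rem-3.14}).

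Next I would observe that $G_n\uparrow G_\infty:={\bf I}_m\bigl(\xi\,\big|\,\sigma^{-1}\A\vee\bigvee_{i=1}^{\infty}\sigma^{-i}\widehat{\xi}\bigr)$ as $n\to\infty$, both almost everywhere and in $L^1$, by the martingale-type convergence in Lemma~\ref{lem-3.3}(iv) applied to the increasing sequence of $\sigma$-algebras $\sigma^{-1}\A\vee\bigvee_{i=1}^{n-1}\sigma^{-i}\widehat{\xi}$. The correction terms, which involve the difference ${\bf I}_m(\xi|\A)-{\bf I}_m(\xi|\sigma^{-1}\A)$ and finitely many shifted copies of bounded-in-$L^1$ functions, contribute to the Cesàro average only through the ergodic theorem: indeed, collecting the corrections across all $j$ produces a Birkhoff average $\frac1k\sum_{j=0}^{k-1}\bigl[{\bf I}_m(\xi|\A)-{\bf I}_m(\xi|\sigma^{-1}\A)\bigr]\circ\sigma^j$, which converges by the $L^1$ ergodic theorem to $\E_m\bigl({\bf I}_m(\xi|\A)-{\bf I}_m(\xi|\sigma^{-1}\A)\,\big|\,\I\bigr)$. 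Here $H_m(\xi)<\infty$ guarantees the relevant information functions lie in $L^1$ (using Lemma~\ref{lem-3.3} and standard bounds such as in Proposition~\ref{pro-2.5}).

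Then I would apply Lemma~\ref{lem-3.15} to the sequence $F_n:=G_n$ (extended by $F_1:=0$ or any integrable choice, which does not affect the limit), which converges a.e.\ and in $L^1$ to $G_\infty$: this yields
\begin{equation*}
\lim_{k\to\infty}\frac1k\sum_{j=0}^{k-1}G_{k-j}(\sigma^j x)=\E_m(G_\infty|\I)(x)
\end{equation*}
a.e.\ and in $L^1$. Combining the three pieces — the main telescoped sum, the convergence $G_n\to G_\infty$, and the ergodic averaging of the corrections — and grouping them back, the limit in \eqref{e-3.10'} equals $\E_m(f|\I)$ with
\begin{equation*}
f={\bf I}_m\left(\xi\,\Big|\,\sigma^{-1}\A\vee\bigvee_{i=1}^{\infty}\sigma^{-i}\widehat{\xi}\right)+{\bf I}_m(\xi|\A)-{\bf I}_m(\xi|\sigma^{-1}\A),
\end{equation*}
as claimed. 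The main obstacle is the bookkeeping in the first step: carefully identifying the $j$-th conditional-information term as $G_{k-j}\circ\sigma^j$ plus explicit, uniformly $L^1$-controlled lower-order terms, and verifying that all the error contributions genuinely collapse into a single Birkhoff average rather than leaving a residual that does not vanish after dividing by $k$. Once that combinatorial identity is in place, the convergence statements are routine consequences of Lemma~\ref{lem-3.3}(iv), the $L^1$ ergodic theorem, and Lemma~\ref{lem-3.15}. Finally, Proposition~\ref{pro-3.13} follows from Proposition~\ref{pro-3.13'} by taking $\xi=\P$ and $\A=\pi^{-1}\gamma$, noting that $\widehat{\P}\vee\sigma^{-1}\pi^{-1}\gamma=\widehat{\P}\vee\pi^{-1}\gamma$ by Lemma~\ref{lem-3.6}, that $\bigvee_{i=1}^\infty\sigma^{-i}\widehat{\P}=\sigma^{-1}\B(\Sigma)$, and that the resulting $f$ matches ${\bf I}_m(\P|\sigma^{-1}\B(\Sigma))+{\bf I}_m(\P|\pi^{-1}\gamma)-{\bf I}_m(\P|\sigma^{-1}\pi^{-1}\gamma)$; when $m$ is ergodic, $\I$ is trivial and $\E_m(f|\I)=\int f\,dm=h(\sigma,m)-h_\pi(\sigma,m)$.
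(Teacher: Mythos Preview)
Your proposal does not address Lemma~\ref{lem-3.15} at all: that lemma is not proved in the paper but is simply cited from Ma\~n\'e's book. What you have written is instead a sketch of how to deduce Proposition~\ref{pro-3.13'} from Lemma~\ref{lem-3.15}, which is the application immediately following it.

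Regarding that sketch for Proposition~\ref{pro-3.13'}: the overall target decomposition and the final steps (Lemma~\ref{lem-3.3}(iv) for $G_n\to G_\infty$, Lemma~\ref{lem-3.15} for the Ces\`aro sum, Birkhoff for the correction) match the paper. The gap is in the step you yourself flag as the ``main obstacle.'' Your chain-rule expansion gives
\[
{\bf I}_m(\xi_0^{k-1}|\A)(x)=\sum_{j=0}^{k-1} T_j(x),\qquad T_j(x)={\bf I}_m\Bigl(\sigma^{-j}\xi\,\Big|\,\A\vee\bigvee_{i=0}^{j-1}\sigma^{-i}\widehat{\xi}\Bigr)(x),
\]
and each $T_j$ depends only on $j$, not on $k-j$. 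Moreover, even after using the hypothesis to replace $\A$ by $\sigma^{-j}\A$ in the conditioning, the factor $\widehat{\xi}$ at position $0$ remains and cannot be written as $\sigma^{-j}$ of anything, so $T_j(x)$ is genuinely a function of $x$ and not of $\sigma^j x$. Thus the sum is simply not of the form $\sum_j F_{k-j}(\sigma^j x)$ required by Lemma~\ref{lem-3.15}, and no ``lower-order corrections'' can repair the indexing, because the $T_j$ do not depend on $k$ at all.

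The paper circumvents this by \emph{not} using a single chain-rule expansion. It sets $g_k(x)={\bf I}_m(\xi_0^{k-1}|\A)(x)-{\bf I}_m(\xi_0^{k-2}|\A)(\sigma x)$, so that telescoping immediately gives
\[
{\bf I}_m(\xi_0^{k-1}|\A)(x)={\bf I}_m(\xi|\A)(\sigma^{k-1}x)+\sum_{j=0}^{k-2} g_{k-j}(\sigma^j x),
\]
which has the correct $(k-j,\sigma^j x)$-structure by construction. It then proves the exact identity~\eqref{e-3.12},
\[
g_k(x)={\bf I}_m\Bigl(\xi\,\Big|\,\sigma^{-1}\A\vee\bigvee_{i=1}^{k-1}\sigma^{-i}\widehat{\xi}\Bigr)(x)+{\bf I}_m(\xi|\A)(x)-{\bf I}_m(\xi|\sigma^{-1}\A)(x),
\]
by expanding ${\bf I}_m(\xi_0^{k-1}|\sigma^{-1}\A)$ with the chain rule in \emph{two different orders} (once peeling off $\xi$ first, once peeling off $\bigvee_{i\ge 1}\sigma^{-i}\xi$ first) and equating; the hypothesis $\widehat{\xi}\vee\sigma^{-1}\A=\widehat{\xi}\vee\A$ is used precisely in this comparison. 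This yields an exact formula with no residual error terms, after which Lemma~\ref{lem-3.3}(iv) and Lemma~\ref{lem-3.15} finish the proof exactly as you outline.
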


\begin{proof}[Proof of Proposition \ref{pro-3.13'}]
For $k\geq 2$ and $x\in \Sigma$, we write
$$
g_k(x)={\bf I}_m\left(\xi_{0}^{k-1}\big|\A \right)(x)
-{\bf I}_m\left(\xi_{0}^{k-2}\big|\A \right)(\sigma x).
$$
Then
\begin{equation}
\label{e-3.11}
{\bf I}_m\left(\xi_{0}^{k-1}\big|\A \right)(x)={\bf I}_m(\xi|\A)(\sigma^{k-1}x)+\sum_{j=0}^{k-2}g_{k-j}(\sigma^jx).
\end{equation}
We claim that
\begin{equation}
\label{e-3.12}
g_k(x)={\bf I}_m\left(\xi\big|\sigma^{-1}\A \vee \bigvee_{i=1}^{k-1}\sigma^{-i}\widehat{\xi} \right)(x)
+ {\bf I}_m\left(\xi\big|\A \right)(x)-{\bf I}_m\left(\xi\big|\sigma^{-1}\A \right)(x).
\end{equation}
By the claim and Lemma \ref{lem-3.3}(iv), $g_k$ converges almost everywhere and in $L^1$ to $f$.
It together with (\ref{e-3.11}) and Lemma \ref{lem-3.15} yields (\ref{e-3.10'}).

Now we turn to the proof of (\ref{e-3.12}). Let $k\geq 2$. We have
\begin{equation}
\label{e-3.13}
\begin{split}
{\bf I}_m\left(\xi_{0}^{k-1}\big|\sigma^{-1}\A \right)(x)
&={\bf I}_m\left(\xi\big|\sigma^{-1}\A \right)(x)+
{\bf I}_m\left(\bigvee_{i=1}^{k-1}\sigma^{-i}\xi\big|\sigma^{-1}\A\vee \widehat{\xi} \right)(x)\\
&={\bf I}_m\left(\xi\big|\sigma^{-1}\A \right)(x)+
{\bf I}_m\left(\bigvee_{i=1}^{k-1}\sigma^{-i}\xi\big|\A \vee \widehat{\xi} \right)(x),\\
\end{split}
\end{equation}
using the property $\sigma^{-1}\A \vee \widehat{\xi}=\A \vee \widehat{\xi}$.
Meanwhile, we have
\begin{equation}
\label{e-3.14}
\begin{split}
\mbox{} & {\bf I}_m\left(\xi_{0}^{k-1}\big|\sigma^{-1}A \right)(x)\\
&={\bf I}_m\left(\bigvee_{i=1}^{k-1}\sigma^{-i}\xi\big|\sigma^{-1}\A \right)(x)
+
{\bf I}_m\left(\xi\big|\sigma^{-1}\A \vee \bigvee_{i=1}^{k-1}\sigma^{-i}\widehat{\xi} \right)(x)\\
&=
{\bf I}_m\left(\xi_0^{k-2}\big|\A \right)(\sigma x)
+
{\bf I}_m\left(\xi\big|\sigma^{-1}\A \vee \bigvee_{i=1}^{k-1}\sigma^{-i}\widehat{\xi} \right)(x).\\
\end{split}
\end{equation}
Combining (\ref{e-3.13}) with (\ref{e-3.14}) yields
\begin{equation}
\label{e-3.15}
\begin{split}
&\mbox{} {\bf I}_m\left(\xi\big|\sigma^{-1}\A \right)(x)+
{\bf I}_m\left(\bigvee_{i=1}^{k-1}\sigma^{-i}\xi\big|\A \vee \widehat{\xi} \right)(x)\\
&={\bf I}_m\left(\xi_0^{k-2}\big|\A \right)(\sigma x)
+
{\bf I}_m\left(\xi\big|\sigma^{-1}\A \vee \bigvee_{i=1}^{k-1}\sigma^{-i}\widehat{\xi} \right)(x).\\
\end{split}
\end{equation}
However
\begin{equation}
\label{e-3.16}
{\bf I}_m\left(\xi_{0}^{k-1}\big|\A \right)(x)=
{\bf I}_m\left(\xi\big|\A \right)(x)+
{\bf I}_m\left(\bigvee_{i=1}^{k-1}\sigma^{-i}\xi\big|\A \vee \widehat{\xi} \right)(x).
\end{equation}
Combining (\ref{e-3.15}) with (\ref{e-3.16}) yields (\ref{e-3.12}). This finishes the proof of Proposition \ref{pro-3.13'}.
\end{proof}

\bigskip
We remark that Proposition \ref{pro-3.13} can be stated in terms of conditional measures. To see it,
let
$$\eta=\{\pi^{-1}(z): z\in \R^d\}$$
be the measurable partition of $\Sigma$ generated by the canonical projection $\pi$ associated with $\{S_i\}_{i=1}^\ell$. For  $m\in \M_\sigma(\Sigma)$, let $\{m_x^\eta\}_{x\in \Sigma}$ denote the canonical system of conditional
measures w.r.t. $\eta$. For $x\in \Sigma$ and $k\in \N$, let $\P^k_0(x)$ denote the element in the partition $\P_0^k$ containing $x$. Then Proposition \ref{pro-3.13} can be restated as the following.

\begin{pro}
\label{pro-3.16}
For $m\in \M_\sigma(\Sigma)$,  we have
\begin{equation}
\label{e-3.17} -\lim_{k\to \infty}\frac{1}{k} \log
m^\eta_x(\P^k_0(x))=\E_m(f|\I)(x) \qquad \mbox{ for $m$-a.e.\! $x\in
\Sigma$},
\end{equation}
where $f:={\bf I}_m(\P|\sigma^{-1}\B(\Sigma))+
{\bf I}_m(\P|\pi^{-1}\gamma)-{\bf I}_m(\P|\sigma^{-1}\pi^{-1}\gamma)$. Moreover
if $m$ is ergodic, then the limit in (\ref{e-3.17}) equals
$h(\sigma,m)-h_\pi(\sigma,m)$ for $m$-a.e.\! $x\in \Sigma$.
\end{pro}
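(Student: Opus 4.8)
The plan is to deduce Proposition \ref{pro-3.16} directly from Proposition \ref{pro-3.13} by identifying, for $m$-a.e.\ $x$ and each $k$, the quantity $-\log m^\eta_x(\P^k_0(x))$ with the conditional information ${\bf I}_m(\P_0^k|\pi^{-1}\gamma)(x)$, and then invoking the convergence already proved in Proposition \ref{pro-3.13}.

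First I would record the relevant disintegration facts. Since $\pi$ is Borel measurable and $(\Sigma,\B(\Sigma),m)$ is a Lebesgue space, $\eta=\{\pi^{-1}(z):z\in\R^d\}$ is a measurable partition whose associated $\sigma$-algebra $\widehat{\eta}$ coincides, modulo $m$-null sets, with $\pi^{-1}\gamma$. Applying Theorem \ref{thm-2.1} to the characteristic function $\chi_A$ of a member $A$ of the countable partition $\P_0^k$ gives $m^\eta_x(A)=\E_m(\chi_A|\pi^{-1}\gamma)(x)$ for $m$-a.e.\ $x$. Taking the union over all $A\in\P_0^k$ and all $k\in\N$ of the corresponding exceptional sets — a countable union of $m$-null sets — this identity holds simultaneously for every such $A$ and $k$ off a single $m$-null set $N$. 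Moreover, by Lemma \ref{lem-2.10}, $\E_m(\chi_A|\pi^{-1}\gamma)(x)>0$ for $m$-a.e.\ $x\in A$, so after enlarging $N$ if necessary we may assume $m^\eta_x(\P^k_0(x))>0$ for all $x\notin N$ and all $k$; this guarantees the logarithms below are well defined.

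Next, for $x\notin N$ and each $k$, since $x$ belongs to exactly one member $\P^k_0(x)$ of $\P_0^k$, the definition (\ref{e-1.2}) of conditional information gives
\[
-\log m^\eta_x\!\left(\P^k_0(x)\right)
=-\sum_{A\in\P_0^k}\chi_A(x)\log m^\eta_x(A)
=-\sum_{A\in\P_0^k}\chi_A(x)\log\E_m(\chi_A|\pi^{-1}\gamma)(x)
={\bf I}_m\!\left(\P_0^k\big|\pi^{-1}\gamma\right)(x).
\]
Hence, writing $\P_0^{k}=\P_0^{(k+1)-1}$,
\[
-\frac1k\log m^\eta_x\!\left(\P^k_0(x)\right)
=\frac1k\,{\bf I}_m\!\left(\P_0^k\big|\pi^{-1}\gamma\right)(x)
=\frac{k+1}{k}\cdot\frac{1}{k+1}\,{\bf I}_m\!\left(\P_0^{(k+1)-1}\big|\pi^{-1}\gamma\right)(x).
\]
Applying Proposition \ref{pro-3.13} with $k+1$ in place of $k$, the last factor converges almost everywhere (and in $L^1$) to $\E_m(f|\I)(x)$, and since $(k+1)/k\to1$ this yields (\ref{e-3.17}). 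The final assertion, that the limit equals $h(\sigma,m)-h_\pi(\sigma,m)$ when $m$ is ergodic, is immediate from the corresponding statement in Proposition \ref{pro-3.13}.

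I do not expect a genuine obstacle here, since this is essentially a reformulation: the only points demanding care are the simultaneous identification of $m^\eta_x(A)$ with $\E_m(\chi_A|\pi^{-1}\gamma)(x)$ over all members of all the partitions $\P_0^k$ at once (handled by a countable union of null sets) and the positivity $m^\eta_x(\P^k_0(x))>0$ needed to make sense of the logarithm (supplied by Lemma \ref{lem-2.10}); the substantive convergence is inherited wholesale from Proposition \ref{pro-3.13}.
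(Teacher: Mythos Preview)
Your proposal is correct and follows essentially the same approach as the paper: identify $-\log m^\eta_x(\P^k_0(x))$ with ${\bf I}_m(\P_0^k|\pi^{-1}\gamma)(x)$ via Theorem~\ref{thm-2.1}, then invoke Proposition~\ref{pro-3.13}. Your version is in fact slightly more careful than the paper's own proof, which does not explicitly address the positivity issue (your use of Lemma~\ref{lem-2.10}) or the index shift between $\P_0^k$ and $\P_0^{k-1}$.
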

\begin{proof}
It suffices to show that
for each $k\in \N$,
$$\log m^\eta_x(\P^k_0(x))=-{\bf I}_m(\P^k_0|\pi^{-1}\gamma)(x) \mbox{ almost everywhere}.$$
 To see this, by Theorem \ref{thm-2.1} we have
$$
\sum_{A\in
\P^k_0}\chi_A(x)m^\eta_x(A)=\sum_{A\in
\P^k_0}\chi_A(x)\E_m(\chi_A|\pi^{-1}\gamma)(x)\quad \mbox{ for $m$-a.e.\!
$x\in \Sigma$}.$$
Taking logarithm yields the desired result.
\end{proof}

\begin{rem}
\label{rem-3.18}
{\rm
In Proposition \ref{pro-3.16},
for $m$-a.e.\! $x\in \Sigma$, we have
$$
\lim_{k\to \infty} -\frac{1}{k}\log
m^\eta_x(\P^k_0(y))=\E_m(f|\I)(y) \quad \mbox{ for $m^\eta_x$-a.e.\!
$y\in \eta(x)$}.
$$
To see this, denote
$$
R=\left\{y\in \Sigma: \;-\lim_{k\to \infty}\frac{1}{k} \log
m^\eta_y(\P^k_0(y))=\E_m(f|\I)(y)\right\}.
$$
Then $1=m(R)=\int m^\eta_x(R\cap \eta(x))\;dm(x)$. Hence
$m^\eta_x(R\cap \eta(x))=1$  $m$-a.e. For $y\in R\cap \eta(x)$, we have $$\lim_{k\to
\infty} -\frac{1}{k}\log m^\eta_x(\P^k_0(y))= \lim_{k\to \infty}
-\frac{1}{k}\log m^\eta_y(\P^k_0(y))= \E_m(f|\I)(y).$$
}
\end{rem}

As a corollary of Proposition \ref{pro-3.16}, we have

\begin{cor}
\label{cor-3.17}
Let $m\in \M_\sigma(\Sigma)$. Then
\begin{eqnarray*}
h_\pi(\sigma,m)=h(\sigma,m) & \Longleftrightarrow & \lim_{k\to \infty}\frac{1}{k}\log m_x^\eta(\P_0^k(x))=0 \;\mbox{ $m$-a.e.}\\
 & \Longleftrightarrow &\dim_H m_x^\eta=0 \;\mbox{ $m$-a.e.}
\end{eqnarray*}
In particular, if $\dim_H\pi^{-1}(z)=0$ for each $z\in \R^d$, then $h_\pi(\sigma,m)=h(\sigma,m)$. Here $\dim_H$ denotes the Hausdorff dimension.
\end{cor}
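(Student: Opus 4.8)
The plan is to derive Corollary \ref{cor-3.17} from Proposition \ref{pro-3.16} together with a standard local-dimension fact about conditional measures. First I would establish the first equivalence. Since $h_\pi(\sigma,m) \le h(\sigma,m)$ always (Proposition \ref{pro-3.1}(i)), and since $\E_m(f|\I) \ge 0$ by Proposition \ref{pro-3.16} (being the a.e.\ limit of $-\tfrac1k \log m_x^\eta(\P_0^k(x))$, which is nonnegative as $\P_0^k(x)$ is contained in the support of $m_x^\eta$), integrating (\ref{e-3.17}) gives $\int \E_m(f|\I)\,dm = h(\sigma,m) - h_\pi(\sigma,m) \ge 0$. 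Thus $h_\pi(\sigma,m) = h(\sigma,m)$ holds if and only if $\E_m(f|\I)(x) = 0$ for $m$-a.e.\ $x$, which by (\ref{e-3.17}) is equivalent to $\lim_{k\to\infty}\tfrac1k \log m_x^\eta(\P_0^k(x)) = 0$ for $m$-a.e.\ $x$. This gives the first ``$\Longleftrightarrow$''.

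Next I would address the second equivalence, relating the exponential decay rate of $m_x^\eta(\P_0^k(x))$ to $\dim_H m_x^\eta$. The key observations are: (a) the partition $\eta = \{\pi^{-1}(z) : z\in\R^d\}$ has atoms $\eta(x)$ on which $m_x^\eta$ is supported; (b) the cylinders $\P_0^k(x)$ shrink to the point $x$ as $k\to\infty$ and their ``diameters'' in the natural metric on $\Sigma$ decay like $\ell^{-k}$ (or more precisely, the metric on $\Sigma$ can be chosen so that $\P_0^k(x)$ is a ball of radius comparable to $\ell^{-(k+1)}$). Hence $-\tfrac1k \log m_x^\eta(\P_0^k(x))$ is, up to the constant factor $\log\ell$, the local dimension of $m_x^\eta$ at $x$ computed along the cylinder filtration. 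By Remark \ref{rem-3.18}, for $m$-a.e.\ $x$ the limit $\lim_k -\tfrac1k \log m_x^\eta(\P_0^k(y)) = \E_m(f|\I)(y)$ holds for $m_x^\eta$-a.e.\ $y\in\eta(x)$, so the local dimension of $m_x^\eta$ is $m_x^\eta$-a.e.\ equal to $\E_m(f|\I)(y)/\log\ell$. Invoking Young's theorem (cited in the introduction) that exact-dimensionality implies $\dim_H$ equals the common local dimension, we get $\dim_H m_x^\eta = 0$ for $m$-a.e.\ $x$ if and only if $\E_m(f|\I)(y) = 0$ for $m_x^\eta$-a.e.\ $y$ for $m$-a.e.\ $x$, which (integrating over the atoms) is the same as $\E_m(f|\I) = 0$ $m$-a.e.; this closes the loop with the second characterization.

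For the final assertion, suppose $\dim_H \pi^{-1}(z) = 0$ for every $z\in\R^d$. Then each atom $\eta(x) = \pi^{-1}(\pi x)$ has Hausdorff dimension $0$, hence so does the measure $m_x^\eta$ supported on it (monotonicity of Hausdorff dimension under restriction to a set carrying the measure), i.e.\ $\dim_H m_x^\eta = 0$ for every $x$. By the equivalences just proved, $h_\pi(\sigma,m) = h(\sigma,m)$.

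The main obstacle I anticipate is the careful handling of the second equivalence: one must be precise about the fact that the decay rate $-\tfrac1k\log m_x^\eta(\P_0^k(x))$ genuinely computes the Hausdorff dimension of the conditional measure $m_x^\eta$ and not merely some upper or lower box-type quantity. This requires the a.e.\ convergence statement on the fibers (Remark \ref{rem-3.18}) so that the local dimension exists $m_x^\eta$-a.e.\ — making $m_x^\eta$ exact dimensional with local dimension $\E_m(f|\I)/\log\ell$ — and then quoting Young's result to identify this with $\dim_H m_x^\eta$. A minor technical point is the dependence of the metric on $\Sigma$: one should fix the metric $d(x,y) = \ell^{-\min\{n:\, x_n\neq y_n\}}$ (or similar) so that $\P_0^k(x)$ is exactly a metric ball, making the identification of local dimension with the cylinder-decay rate immediate without distortion estimates.
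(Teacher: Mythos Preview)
Your proposal follows essentially the same route as the paper's proof: derive the first equivalence from Proposition~\ref{pro-3.16} by integrating $\E_m(f|\I)\ge 0$, and derive the second from Remark~\ref{rem-3.18} together with a dimension-theoretic identity for $\dim_H m_x^\eta$. The final assertion is handled identically.

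There is one small gap in your treatment of the second equivalence. You invoke Young's theorem (exact dimensionality implies $\dim_H\mu$ equals the common local dimension) to obtain the biconditional. But Remark~\ref{rem-3.18} only tells you that the local dimension of $m_x^\eta$ at $y$ exists and equals $\E_m(f|\I)(y)/\log\ell$ for $m_x^\eta$-a.e.\ $y$; the value $\E_m(f|\I)(y)$ need not be constant along the fiber $\eta(x)$, so $m_x^\eta$ is not a priori exact dimensional. Young's theorem therefore only cleanly gives you the implication ``$\E_m(f|\I)=0$ a.e.\ $\Rightarrow$ $\dim_H m_x^\eta=0$ a.e.'' (since in that case the local dimension \emph{is} constant, namely zero). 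For the converse you need the sharper identity
\[
\dim_H m_x^\eta = \mathop{\mathrm{ess\,sup}}_{y\in\eta(x)} \liminf_{k\to\infty}\frac{\log m_x^\eta(\P_0^k(y))}{\log \ell^{-k}},
\]
which the paper cites from \cite{Fan94}. Combined with Remark~\ref{rem-3.18} this gives $\dim_H m_x^\eta = (\log\ell)^{-1}\mathop{\mathrm{ess\,sup}}_{y\in\eta(x)}\E_m(f|\I)(y)$, and now both directions follow immediately. So replace your appeal to Young's theorem with this essential-supremum formula for the Hausdorff dimension of a measure, and the argument is complete and matches the paper's.
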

\begin{proof}Let $f$ be defined as in Proposition \ref{pro-3.16}. Then
$$\int \E_m(f|\I) \; dm =\int f\;dm= h(\sigma,m)-h_\pi(\sigma,m).$$
By (\ref{e-3.17}), $\E_m(f|\I)(x)\geq 0$ for $m$-a.e.\! $x\in \Sigma$. Hence we have
\begin{eqnarray*}
h(\sigma,m)=h_\pi(\sigma,m) &\Longleftrightarrow& \E_m(f|\I)=0\;\mbox{ $m$-a.e.} \\
&\Longleftrightarrow &\lim_{k\to \infty}\frac{1}{k}\log m_x^\eta(\P_0^k(x))=0\;\mbox{ $m$-a.e.}
\end{eqnarray*}
Using dimension theory of measures (see, e.g., \cite{Fan94}), we have  $$\dim_H m_x^\eta={\mbox{ess}\sup}_{y\in \eta(x)} \liminf_{k\to \infty} \frac{\log m_x^\eta(\P_0^k(y))}{\log \ell^{-k}}.$$
It together with Remark \ref{rem-3.18} yields
$$\E_m(f|\I)=0\;\mbox{ $m$-a.e.}\Longleftrightarrow \dim_H m_x^\eta=0\;\mbox{ $m$-a.e.}
$$
This finishes the proof of the first part of the corollary.

To complete the proof, assume that $\dim_H\pi^{-1}(z)=0$ for each $z\in \R^d$. Then for each $x\in \Sigma$, $\dim_H\eta(x)=0$ and hence $\dim_H m_x^\eta=0$. Thus $h_\pi(\sigma,m)=h(\sigma,m)$.
\end{proof}

\subsection{Projection entropy under the ergodic decomposition}
In this subsection, we first prove the following result.
\begin{pro}\label{pro-tt} Let $\{S_i\}_{i=1}^\ell$ be an IFS and $m\in \M_\sigma(\Sigma)$. Assume that
$m=\int \nu \;d{\Bbb P}(\nu)$ is the ergodic decomposition of $m$. Then
$$
h_\pi(\sigma,m)=\int h_\pi(\sigma,\nu) \;d {\Bbb P} (\nu).
$$
\end{pro}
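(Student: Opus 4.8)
The plan is to reduce the proposition to the single identity
$$h_\pi(\sigma,m)=H_m\bigl(\P\,\big|\,\sigma^{-1}\pi^{-1}\gamma\vee\I\bigr)-H_m\bigl(\P\,\big|\,\pi^{-1}\gamma\vee\I\bigr),\qquad(\star)$$
which asserts that one may enlarge both conditioning $\sigma$-algebras in Definition~\ref{de-1.1} by the invariant $\sigma$-algebra $\I$ without changing the projection entropy. Granting $(\star)$, the proposition follows at once: realize the ergodic decomposition $m=\int\nu\,d{\Bbb P}(\nu)$ via the disintegration $\{m_x^\I\}$ of $m$ over $\I$, so that $m_x^\I$ is ergodic for $m$-a.e.\ $x$ and ${\Bbb P}$ is the distribution of $x\mapsto m_x^\I$. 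Then for any countable partition $\xi$ with $H_m(\xi)<\infty$ and any sub-$\sigma$-algebra $\A$ one has
$$H_m(\xi\,|\,\A\vee\I)=\int H_\nu(\xi\,|\,\A)\,d{\Bbb P}(\nu),$$
a routine identity which, via Theorem~\ref{thm-2.1}, reduces to the pointwise equality $\E_m(\chi_B|\A\vee\I)(x)=\E_{m_x^\I}(\chi_B|\A)(x)$ for $m$-a.e.\ $x$, itself checked by testing both sides against sets of the form $D\cap Z$ with $D\in\A$ and $Z\in\I$. Applying this with $\xi=\P$ and $\A=\sigma^{-1}\pi^{-1}\gamma$, respectively $\A=\pi^{-1}\gamma$, rewrites the right-hand side of $(\star)$ as $\int\bigl(H_\nu(\P|\sigma^{-1}\pi^{-1}\gamma)-H_\nu(\P|\pi^{-1}\gamma)\bigr)\,d{\Bbb P}(\nu)=\int h_\pi(\sigma,\nu)\,d{\Bbb P}(\nu)$, as required.

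To prove $(\star)$, note that since $h_\pi(\sigma,m)=H_m(\P|\sigma^{-1}\pi^{-1}\gamma)-H_m(\P|\pi^{-1}\gamma)$ by definition, $(\star)$ is equivalent to $I(\sigma^{-1}\pi^{-1}\gamma)=I(\pi^{-1}\gamma)$, where
$$I(\A):=H_m(\P\,|\,\A)-H_m(\P\,|\,\A\vee\I).$$
Fix an increasing sequence $\alpha_1\subseteq\alpha_2\subseteq\cdots$ of finite $\I$-measurable partitions with $\widehat{\alpha_k}\uparrow\I$ modulo $m$-null sets (possible since $\B(\Sigma)$, hence $\I$, is countably generated mod $m$); each $\alpha_k$ is then $\sigma$-invariant, $\sigma^{-1}\alpha_k=\alpha_k$. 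By Lemma~\ref{lem-3.3}(iv), $I(\A)=\lim_{k\to\infty}\bigl(H_m(\P|\A)-H_m(\P|\A\vee\widehat{\alpha_k})\bigr)$, and by the symmetric form of conditional mutual information (a direct consequence of Lemma~\ref{lem-3.3}(iii)),
$$H_m(\P|\A)-H_m(\P|\A\vee\widehat{\alpha_k})=H_m(\alpha_k|\A)-H_m(\alpha_k|\A\vee\widehat{\P}).$$
Now compare the two choices of $\A$ termwise. For the first term, the $\sigma$-invariance of $\alpha_k$ and of $m$, together with Lemma~\ref{lem-3.3}(i), give
$$H_m(\alpha_k\,|\,\sigma^{-1}\pi^{-1}\gamma)=H_m(\sigma^{-1}\alpha_k\,|\,\sigma^{-1}\pi^{-1}\gamma)=H_{m\circ\sigma^{-1}}(\alpha_k\,|\,\pi^{-1}\gamma)=H_m(\alpha_k\,|\,\pi^{-1}\gamma).$$
For the second term, Lemma~\ref{lem-3.6} supplies the identity $\widehat{\P}\vee\sigma^{-1}\pi^{-1}\gamma=\widehat{\P}\vee\pi^{-1}\gamma$, so $H_m(\alpha_k|\sigma^{-1}\pi^{-1}\gamma\vee\widehat{\P})=H_m(\alpha_k|\pi^{-1}\gamma\vee\widehat{\P})$. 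Hence $H_m(\P|\sigma^{-1}\pi^{-1}\gamma)-H_m(\P|\sigma^{-1}\pi^{-1}\gamma\vee\widehat{\alpha_k})$ and $H_m(\P|\pi^{-1}\gamma)-H_m(\P|\pi^{-1}\gamma\vee\widehat{\alpha_k})$ agree for every $k$; letting $k\to\infty$ yields $I(\sigma^{-1}\pi^{-1}\gamma)=I(\pi^{-1}\gamma)$, and with it $(\star)$.

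The only genuinely new point is the passage to $(\star)$. One should \emph{not} expect the local identity $h_\pi(\sigma,m,x)=h_\pi(\sigma,m_x^\I)$ to hold pointwise --- equivalently, $\E_m(\chi_{[j]}|\pi^{-1}\gamma)$ need not agree with $\E_{m_x^\I}(\chi_{[j]}|\pi^{-1}\gamma)$, since knowing the $\pi$-fibre of a point does not tell which ergodic component it lies in. What does survive is that the \emph{difference} $H_m(\P|\sigma^{-1}\pi^{-1}\gamma)-H_m(\P|\pi^{-1}\gamma)$ is insensitive to adjoining $\I$, and this is forced by Lemma~\ref{lem-3.6} together with the shift-invariance bookkeeping above. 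Everything else --- the disintegration identity for conditional entropy, the martingale convergence of Lemma~\ref{lem-3.3}(iv), and the mutual-information symmetry from Lemma~\ref{lem-3.3}(iii) --- is standard, and since every partition occurring here has entropy at most $\log\ell$, no integrability issue arises; for finite convex combinations this is also consistent with the affineness already established in Proposition~\ref{pro-3.1}(ii).
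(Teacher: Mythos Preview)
Your proof is correct and takes a genuinely different route from the paper's.

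The paper establishes the equality by proving two inequalities separately. For ``$\leq$'' it invokes the relativized Shannon--McMillan--Breiman theorem (Proposition~\ref{pro-3.13'}) applied with $\A=\pi^{-1}\gamma\vee\I$, together with the classical identity $h(\sigma,m)=\int h(\sigma,m_x^\varepsilon)\,dm(x)$ and the monotonicity $H_m(\P_0^{k-1}\mid\pi^{-1}\gamma\vee\I)\le H_m(\P_0^{k-1}\mid\pi^{-1}\gamma)$. For ``$\geq$'' it approximates $\gamma$ by the dyadic partitions $\D_n$, uses concavity of the conditional entropy $B(m)=H_m(\sigma^{-1}\pi^{-1}\D_n\mid\P\vee\pi^{-1}\widehat{\D_n})$ via Jensen, incurs an additive error $c=d\log(\sqrt d+1)$, and then kills this error by passing to $\sigma^k$ and applying Proposition~\ref{pro-3.2}.

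Your argument bypasses all of this by proving the single identity $(\star)$ directly. The key observation --- that the symmetry of conditional mutual information converts the problem into comparing $H_m(\alpha_k\mid\sigma^{-1}\pi^{-1}\gamma)$ with $H_m(\alpha_k\mid\pi^{-1}\gamma)$ (handled by $\sigma$-invariance of $\alpha_k$ and $m$) and $H_m(\alpha_k\mid\widehat\P\vee\sigma^{-1}\pi^{-1}\gamma)$ with $H_m(\alpha_k\mid\widehat\P\vee\pi^{-1}\gamma)$ (handled by Lemma~\ref{lem-3.6}) --- is clean and avoids both the SMB machinery and the $\sigma^k$ iteration. The disintegration identity $H_m(\xi\mid\A\vee\I)=\int H_\nu(\xi\mid\A)\,d{\Bbb P}(\nu)$ you use is exactly Remark~\ref{rem-2.5}(ii) specialized to the $\I$-partition, so it is already available in the paper's framework. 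Your proof is shorter and more conceptual; the paper's approach, on the other hand, exhibits $(\star)$ only implicitly and ties the result to the asymptotic behaviour of ${\bf I}_m(\P_0^{k-1}\mid\pi^{-1}\gamma)$, which is of independent interest for the later dimension estimates.
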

\begin{proof}
Let $\I$ denote the $\sigma$-algebra $\{B\in \B(\Sigma):\ \sigma^{-1}B=B\}$, and  let $m\in \M_\sigma(\Sigma)$. Then there exists an $m$-measurable partition
$\varepsilon$ of $\Sigma$ such that $\widehat{\varepsilon}=\I$ modulo sets of zero $m$-measure (see \cite[pp. 37-38]{Par-book1}). Let $\{m_x^\varepsilon\}$ denote the conditional measures of $m$  associated with the partition
$\varepsilon$. Then $m=\int m_x^\varepsilon\; dm(x)$ is just the ergodic decomposition of $m$ (see e.g., \cite[Theorem 2.3.3]{Kel-book}). Hence to prove the proposition, we need to show that
\begin{equation}\label{e-a12}
h_\pi(\sigma,m)=\int h_\pi(\sigma, m_x^\varepsilon) \;dm(x).
\end{equation}

We first show the direction ``$\leq$'' in (\ref{e-a12}). Note that $\I$ is $\sigma$-invariant and $\widehat{\P}\vee \sigma^{-1}\pi^{-1}\gamma=\widehat{\P}\vee \pi^{-1}\gamma$. Hence we have $
\widehat{\P}\vee \sigma^{-1}\pi^{-1}\gamma\vee \I=\widehat{\P}\vee \pi^{-1}\gamma\vee \I
$. Taking $\xi=\P$ and $\A=\pi^{-1}\gamma\vee \I$ in Proposition \ref{pro-3.13'} yields
\begin{equation}
\label{e-a13}
\lim_{k\to \infty}\frac{1}{k}{\bf I}_m\left(\P_{0}^{k-1}\big|\pi^{-1}\gamma\vee \I\right)(x)=\E_m(f|\I)(x)
\end{equation}
almost everywhere and in $L^1$, where $$f:={\bf I}_m\left(\P|\sigma^{-1}\B(\Sigma)\right)+ {\bf I}_m(\P|\pi^{-1}\gamma\vee \I)-{\bf I}_m(\P|\sigma^{-1}\pi^{-1}\gamma\vee \I).$$
By Remark \ref{rem-2.5}(ii), we have $${\bf I}_{m^\varepsilon_x}\left(\P_{0}^{k-1}|\pi^{-1}\gamma\right)(x)={\bf I}_m\left(\P_{0}^{k-1}\big|\pi^{-1}\gamma\vee \I\right)(x).$$ Hence according to the ergodicity of $m_x^\varepsilon$ and Proposition \ref{pro-3.13}, we have
\begin{eqnarray*}
h(\sigma, m_x^\varepsilon)-h_\pi(\sigma, m_x^\varepsilon)&=&\lim_{k\to \infty}\frac{1}{k}{\bf I}_{m^\varepsilon_x}\left(\P_{0}^{k-1}|\pi^{-1}\gamma\right)(x)\\
&=&\lim_{k\to \infty} \frac{1}{k}{\bf I}_m\left(\P_{0}^{k-1}\big|\pi^{-1}\gamma\vee \I\right)(x)
\end{eqnarray*}
almost everywhere and
\begin{equation}
\label{e-a14}
\int h(\sigma, m_x^\varepsilon)-h_\pi(\sigma, m_x^\varepsilon)\; dm(x)=\lim_{k\to \infty}\frac{1}{k}H_m\left(\P_{0}^{k-1}\big|\pi^{-1}\gamma\vee \I\right).
\end{equation}
Using Proposition \ref{pro-3.13} again we have
\begin{equation}
\label{e-a15}
h(\sigma, m)-h_\pi(\sigma, m)=\lim_{k\to \infty}\frac{1}{k}H_m\left(\P_{0}^{k-1}\big|\pi^{-1}\gamma\right).
\end{equation}
However, $H_m\left(\P_{0}^{k-1}\big|\pi^{-1}\gamma\vee \I\right)\leq H_m\left(\P_{0}^{k-1}\big|\pi^{-1}\gamma\right)$ (see e.g. \cite[Theorem 4.3 (v)]{Wal-book}). By (\ref{e-a14}), (\ref{e-a15}) and the above inequality, we have
$$\int h(\sigma, m_x^\varepsilon)-h_\pi(\sigma, m_x^\varepsilon)\; dm(x)\leq h(\sigma, m)-h_\pi(\sigma, m).$$
It is well known (see \cite[Theorem 8.4]{Wal-book}) that $\int h(\sigma, m_x^\varepsilon)\; dm(x)=h(\sigma, m)$.
Hence we obtain the inequality $h_\pi(\sigma, m)\leq \int h_\pi(\sigma, m_x^\varepsilon)\; dm(x)$.

Now we prove the direction ``$\geq$ '' in (\ref{e-a12}). For any $n\in \N$, let $\D_n$ be defined as in (\ref{e-at1}). Since $\widehat{\D_n}\uparrow \gamma$, we have
\begin{equation}
\label{e-temp1}
h_\pi(\sigma,m)=\lim_{n\to \infty}H_m(\P|\sigma^{-1}\pi^{-1}\widehat{\D_n})-H_m(\P|\pi^{-1}\widehat{\D_n}).
\end{equation}
Now fix $n\in \N$ and denote
$A(m)=H_m(\P|\sigma^{-1}\pi^{-1}\widehat{\D_n})-H_m(\P|\pi^{-1}\widehat{\D_n})$
and
\begin{eqnarray*}
B(m)&=&H_m(\sigma^{-1}\pi^{-1}{\D_n}|\P\vee \pi^{-1}\widehat{\D_n})\\
&=&H_m(\P\vee\sigma^{-1}\pi^{-1}\widehat{\D_n}\vee \pi^{-1}\widehat{\D_n})-H_m(\P\vee \pi^{-1}\widehat{\D_n}).
\end{eqnarray*}
Then by (\ref{e-3.4}) and (\ref{e-3.5'}), we have
\begin{equation}
\label{temp2}
B(m)-c\leq A(m)\leq B(m),
\end{equation}
where $c=d\log(\sqrt{d}+1)$. As a conditional entropy function, $B(m)$ is  concave on $\M_\sigma(\Sigma)$ (see, e.g., \cite[Lemma 3.3 (1)]{HYZ06}). Hence by Jensen's inequality and (\ref{temp2}), we have
$$
A(m)\geq B(m)-c\geq \int B(m^\varepsilon_x) \; dm(x)-c\geq \int A(m^\varepsilon_x) \; dm(x)-c.
$$
That is,
\begin{eqnarray*}
&\mbox{}& H_m(\P|\sigma^{-1}\pi^{-1}\widehat{\D_n})-H_m(\P|\pi^{-1}\widehat{\D_n})\\
&\mbox{}& \quad  \geq  \int H_{m^\varepsilon_x}(\P|\sigma^{-1}\pi^{-1}\widehat{\D_n})-H_{m^\varepsilon_x}(\P|\pi^{-1}\widehat{\D_n}) \; dm(x)-c.
\end{eqnarray*}
Letting $n\to \infty$, using (\ref{e-temp1}) and Lebesgue dominated convergence theorem, we have
$$
h_\pi(\sigma,m)\geq  \int h_\pi(\sigma, m^\varepsilon_x)\; dm(x)-c.
$$
Replacing $\sigma$ by $\sigma^k$ we have
\begin{equation}
\label{e-1120}
h_\pi(\sigma^k,m)\geq  \int h_\pi(\sigma^k, m^{\varepsilon_k}_x)\; dm(x)-c,
\end{equation}
where $\varepsilon_k$ denotes a measurable partition of $\Sigma$ such that
$$\widehat{\varepsilon_k}=\{B\in \B(\Sigma):\;\sigma^{-k}B=B\}$$
modulo sets of zero $m$-measure.
Note that $m=\int m_x^{\varepsilon_k}\; dm(x)$ is the ergodic decomposition of $m$ with respect to $\sigma^k$. Hence
$m=\int (1/k)\sum_{i=0}^{k-1} m_x^{\varepsilon_k}\circ \sigma^{-i}~ dm(x)$ is the ergodic decomposition of $m$ with respect to $\sigma$.
It follows that
\begin{equation}
\label{e-1120a}
\frac{1}{k}\sum_{i=0}^{k-1} m_x^{\varepsilon_k}\circ \sigma^{-i}=m_x^\varepsilon \quad \mbox{$m$-a.e.}
\end{equation}
By (\ref{e-1120}),   Proposition \ref{pro-3.2} and (\ref{e-1120a}), we have
\begin{eqnarray*}
h_\pi(\sigma^k,m)&=&\frac{1}{k} \sum_{i=0}^{k-1}h_\pi(\sigma^k,m\circ \sigma^{-i})\\
&\geq&  \frac{1}{k} \sum_{i=0}^{k-1} \int  h_\pi(\sigma^k, m^{\varepsilon_k}_x\circ \sigma^{-i})\; dm(x)-c\\
 &=&\int  h_\pi\Big(\sigma^k, \frac{1}{k} \sum_{i=0}^{k-1} m^{\varepsilon_k}_x\circ \sigma^{-i}\Big)\; dm(x)-c\\
 &=&\int  h_\pi(\sigma^k,  m^{\varepsilon}_x)\; dm(x)-c.
 \end{eqnarray*}
  Using Proposition \ref{pro-3.2} again yields
  $$
h_\pi(\sigma,m)\geq  \int h_\pi(\sigma, m^\varepsilon_x)\; dm(x)-c/k\quad \mbox{for any $k\in \N$.}
 $$
 Hence we have $h_\pi(\sigma,m)\geq  \int h_\pi(\sigma, m^\varepsilon_x)\; dm(x)$, as desired.
\end{proof}

\begin{proof}[Proof of Theorem \ref{thm-1.0}] It follows directly from Propositions \ref{pro-3.1}, \ref{pro-3.13}  and \ref{pro-tt}.
\end{proof}

\subsection{The projection entropy  for certain affine IFS and the proof of Theorem \ref{thm-1.0'}}

In this subsection, we assume that $\Phi=\{S_i\}_{i=1}^\ell$ is an IFS on $\R^d$ of the form
$$S_i(x)=Ax+c_i\qquad (i=1,\ldots, \ell),$$
where $A$ is a $d\times d$ non-singular real matrix with $\|A\|<1$ and $c_i\in \R^d$. Let $K$ denote the attractor of $\Phi$.

Let $\Q$ denote the partition $\{[0,1)^d+\alpha:\; \alpha\in \Z^d\}$ of $\R^d$. For $n=0, 1,\ldots$, and $x\in \R^d$,  we set
$$
\Q_n=\{A^nQ:\; Q\in \Q\},\quad \Q_n+x=\{A^nQ+x:\; Q\in \Q\}.
$$
We have  the following geometric characterization of $h_\pi$ for the
IFS $\Phi$ (i.e., Theorem \ref{thm-1.0'}).

\begin{pro}
\label{pro-3.19}
\begin{itemize}
\item[(i)] Let $m\in \M_\sigma(\Sigma)$. Then
\begin{equation}
\label{e-3.18}
h_\pi(\sigma,m)=\lim_{n\to\infty}\frac{H_m(\pi^{-1}\Q_n)}{n}.
\end{equation}

\item[(ii)]
\begin{equation*}
\lim_{n\to \infty}\frac{\log \#\{Q\in \Q:\; A^nQ\cap K\neq \emptyset\}}{n}
=\sup\{h_\pi(\sigma,m):\; m\in \M_\sigma(\Sigma)\}.
\end{equation*}
\end{itemize}
\end{pro}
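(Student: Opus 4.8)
The plan is to derive both parts from the ergodic results already in hand---Lemma~\ref{lem-3.7}, Proposition~\ref{pro-3.2}, the Shannon--McMillan--Breiman type Proposition~\ref{pro-3.13}, and the affinity statement in Theorem~\ref{thm-1.0}---together with one elementary geometric observation about the affine maps. Write $v_n(u)=\sum_{j=1}^{n}A^{j-1}c_{u_j}$, so that $S_u(y)=A^ny+v_n(u)$ for $|u|=n$ and $\pi x=A^n\pi(\sigma^nx)+v_n(u)$ for $x\in[u]$, and note $\sup_{n,u}|v_n(u)|<\infty$. Two consequences will be used repeatedly: (a) $\pi([u])\subseteq S_u(K)=A^nK+v_n(u)$, a translate of $A^nK$, which meets at most a constant number $C=C(A,d)$ of cells of $\Q_n$; and (b) on $[u]$ the partition $\pi^{-1}\Q_N$ coincides with $\sigma^{-n}\pi^{-1}\bigl(\Q_{N-n}-A^{-n}v_n(u)\bigr)$, and since $\Q_{N-n}=A^{N-n}\Q$, any translate of $\Q_{N-n}$ differs from $\Q_{N-n}$ by multiplicity at most $2^d$ cell by cell.

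For part (i): by (a) and Lemma~\ref{lem-3.5} we get $H_m(\pi^{-1}\Q_n\,|\,\P_0^{n-1})\le\log C$, so $H_m(\P_0^{n-1}\vee\pi^{-1}\Q_n)=H_m(\P_0^{n-1})+O(1)$ uniformly in $n$, whence
\begin{equation*}
H_m(\pi^{-1}\Q_n)=H_m(\P_0^{n-1})-H_m\bigl(\P_0^{n-1}\,\big|\,\pi^{-1}\widehat{\Q_n}\bigr)+O(1).
\end{equation*}
Since $\tfrac1nH_m(\P_0^{n-1})\to h(\sigma,m)$ and, by Proposition~\ref{pro-3.13}, $\tfrac1nH_m(\P_0^{n-1}\,|\,\pi^{-1}\gamma)\to h(\sigma,m)-h_\pi(\sigma,m)$, and since $\widehat{\Q_n}\subseteq\gamma$ gives $H_m(\P_0^{n-1}\,|\,\pi^{-1}\widehat{\Q_n})\ge H_m(\P_0^{n-1}\,|\,\pi^{-1}\gamma)$, the inequality $\limsup_n\tfrac1nH_m(\pi^{-1}\Q_n)\le h_\pi(\sigma,m)$ follows at once. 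The reverse inequality is the heart of the matter and is equivalent to
\begin{equation*}
\tfrac1n\Bigl(H_m\bigl(\P_0^{n-1}\,\big|\,\pi^{-1}\widehat{\Q_n}\bigr)-H_m\bigl(\P_0^{n-1}\,\big|\,\pi^{-1}\gamma\bigr)\Bigr)\longrightarrow 0.
\end{equation*}
I would prove this in two steps. First, cutting $H_m(\,\cdot\mid\cdot\,)$ over $\P_0^{n-1}$, transporting each cylinder by $\sigma^n$ using (b), and invoking the concavity of conditional entropy in the measure (as in the proof of Proposition~\ref{pro-tt}, since the push-forwards of the conditional measures on the cylinders $[u]$ average back to $m$), one shows that $\phi(n):=H_m(\P_0^{n-1}\,|\,\pi^{-1}\widehat{\Q_n})$ is subadditive up to a fixed additive constant; hence $\tfrac1n\phi(n)$ converges, and it remains only to identify its limit. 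Second, the extra information in $\pi^{-1}\gamma$ beyond $\pi^{-1}\widehat{\Q_n}$ lives at scales finer than $A^n$, i.e.\ (after transport by $\sigma^n$) inside the tail $\sigma$-algebra generated by $x_{n+1},x_{n+2},\dots$; and the mutual information between any generation-$n$ cylinder and a function of that tail is at most $H_m(\P_0^{n-1})-nh(\sigma,m)=o(n)$. Combining this with the exhaustion $\widehat{\bigvee_{j=0}^{N}\Q_j}\uparrow\gamma$ and Lemma~\ref{lem-3.3}(iv) (to replace $\gamma$ by the grids $\Q_N$), and controlling the bounded-multiplicity errors coming from (a) and (b), pins $\lim\tfrac1n\phi(n)=h(\sigma,m)-h_\pi(\sigma,m)$ and finishes (i). I expect the uniform bookkeeping of these error terms---in particular making them independent of $n$ despite the possible anisotropy of $A$---to be the main technical difficulty.

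For part (ii): one inequality is immediate. Since $\mu=m\circ\pi^{-1}$ is carried by $K$, every positive-measure atom of $\pi^{-1}\Q_n$ corresponds to a cell $A^nQ$ meeting $K$, so $H_m(\pi^{-1}\Q_n)\le\log N_n$ with $N_n:=\#\{Q\in\Q:\ A^nQ\cap K\neq\emptyset\}$; by part (i), $h_\pi(\sigma,m)\le\liminf_n\tfrac1n\log N_n$ for every $m$, hence $\sup_mh_\pi(\sigma,m)\le\liminf_n\tfrac1n\log N_n$. For the reverse I would run the standard variational construction. For large $M$, select for each cell $C$ of $\Q_M$ meeting $K$ a word $w^{(C)}$ (of length a fixed multiple of $M$, chosen large enough that $S_{w^{(C)}}(K)$ meets only boundedly many cells of $\Q_M$) with $S_{w^{(C)}}(K)\cap C\neq\emptyset$, arranging that distinct cells receive distinct words and that the images $S_{w^{(C)}}(K)$ are suitably spread out; the resulting family $\mathcal W_M$ then has $|\mathcal W_M|\ge N_M/\mathrm{const}$. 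Let $\nu_M$ be the Bernoulli measure on $\Sigma$ obtained by concatenating i.i.d.\ blocks drawn uniformly from $\mathcal W_M$, and let $m_M$ be the $\sigma$-invariant measure obtained by averaging $\nu_M$ over one block-length of shifts. Because the first block is independent of the rest under $\nu_M$, Lemma~\ref{lem-3.7} and Proposition~\ref{pro-3.2} reduce $h_\pi(\sigma,m_M)$ to (a fixed multiple of) $\log|\mathcal W_M|-H_{\nu_M}(\P_0^{\,\cdot}\,|\,\pi^{-1}\gamma)$, and the cell--word injectivity together with the spreading of the images shows the last conditional entropy is $o(\log N_M)$; thus $h_\pi(\sigma,m_M)\ge\tfrac1M\log N_M-o(1)$, giving $\sup_mh_\pi(\sigma,m)\ge\limsup_M\tfrac1M\log N_M$. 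Together with the first inequality this shows $\lim_n\tfrac1n\log N_n$ exists and equals $\sup_mh_\pi(\sigma,m)$; and, $h_\pi$ being affine by Theorem~\ref{thm-1.0}, the supremum is attained along ergodic measures if one prefers to state it that way. Here the delicate point (again tied to the geometry of $A$) is the correct choice of $\mathcal W_M$ making the accompanying self-affine measure sufficiently non-concentrated, so that the conditional entropy term is genuinely negligible.
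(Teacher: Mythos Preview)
For part~(i), your upper bound is correct, but the lower bound has a genuine gap. The assertion that ``the extra information in $\pi^{-1}\gamma$ beyond $\pi^{-1}\widehat{\Q_n}$ lives \dots\ inside the tail $\sigma$-algebra generated by $x_{n+1},x_{n+2},\dots$'' is false: knowing $\pi x$ exactly can reveal information about $x_1,\dots,x_n$ (through the translation $v_n(x_1,\dots,x_n)$) that is \emph{not} measurable with respect to $\pi^{-1}\widehat{\Q_n}\vee\sigma^{-n}\B(\Sigma)$. A one-dimensional example with three maps $x/2,\;x/2+1/2,\;x/2+1/4$ already shows two points with the same tail and the same $\Q_1$-cell but different $\pi$-images. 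So the mutual-information bound $H_m(\P_0^{n-1})-nh(\sigma,m)$ does not apply, and even if the containment held, the inequality $I(\,\cdot\,;\,\cdot\mid\cdot\,)\le I(\,\cdot\,;\,\cdot\,)$ you implicitly use is not valid in general. Subadditivity of $\phi(n)=H_m(\P_0^{n-1}\mid\pi^{-1}\widehat{\Q_n})$ alone gives you a limit, but you have no mechanism to identify it.

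The paper sidesteps this entirely by a telescoping argument at a \emph{fixed} block length. Fix $p$; for $k$ large, Lemma~\ref{lem-3.3}(iv) gives
\[
H_m\bigl(\P_0^{p-1}\,\big|\,\sigma^{-p}\pi^{-1}\widehat{\Q_{kp}}\bigr)
-H_m\bigl(\P_0^{p-1}\,\big|\,\pi^{-1}\widehat{\Q_{(k+1)p}}\bigr)
=ph_\pi(\sigma,m)+O(1)
\]
via Lemma~\ref{lem-3.7}. Expanding both conditional entropies as differences of joint entropies and using your observation~(b) (each atom of $\P_0^{p-1}\vee\sigma^{-p}\pi^{-1}\Q_{kp}$ meets at most $2^d$ atoms of $\P_0^{p-1}\vee\pi^{-1}\Q_{(k+1)p}$, and conversely), the left side equals $H_m(\pi^{-1}\Q_{(k+1)p})-H_m(\pi^{-1}\Q_{kp})+O(1)$. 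Telescoping gives the result along the subsequence $(kp)$; a volume argument handles general $n$, and finally $p\to\infty$ removes the $O(1/p)$ error. The point is that one never has to compare $\pi^{-1}\widehat{\Q_n}$ with $\pi^{-1}\gamma$ at the \emph{same} scale $n$, which is precisely where your argument stalls.

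For part~(ii) you are close, but the paper's construction is both simpler and sharper. Instead of bounding a conditional-entropy error as $o(\log N_M)$, thin the cells to a subfamily $\Gamma$ with $\#\Gamma\ge 7^{-d}N_n$ and $2Q\cap 2\widetilde Q=\emptyset$ for distinct $Q,\widetilde Q\in\Gamma$; for each $Q\in\Gamma$ pick $u(Q)\in\Sigma_n$ with $S_{u(Q)}K\cap A^nQ\ne\emptyset$. The separation forces the images $S_{u(Q)}(K)$ to be pairwise disjoint, so by Lemma~\ref{lem-3.20} the Bernoulli measure $\nu$ on $\{u(Q):Q\in\Gamma\}^{\N}$ satisfies $h_\pi(\sigma^n,\nu)=h(\sigma^n,\nu)=\log\#\Gamma$ \emph{exactly}; Proposition~\ref{pro-3.2} then gives $h_\pi(\sigma,\mu)=n^{-1}\log\#\Gamma\ge n^{-1}\log(7^{-d}N_n)$ for $\mu=\tfrac1n\sum_{i=0}^{n-1}\nu\circ\sigma^{-i}$. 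No asymptotic error term is needed, and the words have length exactly $n$, not a multiple of it.
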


To prove the above proposition, we need the following lemma.

\begin{lem}
\label{lem-3.20}
Assume that $\Omega$ is a subset of $\{1,\ldots,\ell\}$ such that $S_i(K)\cap S_j(K)=\emptyset$ for all $i,j\in \Omega$ with $i\neq j$.  Suppose that $\nu$ is an invariant measure on $\Sigma$  supported on $\Omega^\N$, i.e., $\nu([j])=0$ for all $j\in \{1,\ldots, \ell\}\backslash \Omega$. Then $h_\pi(\sigma,\nu)=h(\sigma,\nu)$.
\end{lem}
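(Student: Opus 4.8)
The plan is to deduce the equality from Corollary \ref{cor-3.17} by showing that, under the disjointness hypothesis, the projection $\pi$ is injective on the $\nu$-full-measure set $\Omega^\N:=\{x\in\Sigma:\ x_n\in\Omega\text{ for every }n\}$. First note that $\Omega^\N$ is $\sigma$-invariant and $\nu(\Omega^\N)=1$: since $\nu$ is $\sigma$-invariant and $\nu([j])=0$ for every $j\notin\Omega$, we get $\nu(\sigma^{-n}[j])=0$ for all $n\geq 0$, hence $\nu(\Sigma\setminus\Omega^\N)=0$.

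Next I would establish that $\pi|_{\Omega^\N}$ is injective. Suppose $x,y\in\Omega^\N$ with $\pi x=\pi y=:z$. By (\ref{e-1.1}) we have $z\in S_{x_1}(K)\cap S_{y_1}(K)$; since $x_1,y_1\in\Omega$ and this intersection is non-empty, the hypothesis forces $x_1=y_1$. Because $S_{x_1}$ is a homeomorphism of $\R^d$ (here $S_i(x)=Ax+c_i$ with $A$ non-singular) and $z=S_{x_1}(\pi\sigma x)=S_{x_1}(\pi\sigma y)$, we obtain $\pi\sigma x=\pi\sigma y$. Since $\sigma x,\sigma y\in\Omega^\N$, induction on $n$ (applying the same argument to $\sigma^{n-1}x,\sigma^{n-1}y$) gives $x_n=y_n$ for every $n$, i.e., $x=y$.

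Now let $\eta=\{\pi^{-1}(z):z\in\R^d\}$ and let $\{\nu_x^\eta\}$ be the corresponding canonical system of conditional measures. By Theorem \ref{thm-2.1}, $\int\nu_x^\eta(\Omega^\N)\,d\nu(x)=\nu(\Omega^\N)=1$, so $\nu_x^\eta(\Omega^\N)=1$ for $\nu$-a.e.\! $x$. Combining this with the injectivity of $\pi|_{\Omega^\N}$, for $\nu$-a.e.\! $x$ the set $\eta(x)\cap\Omega^\N$ is the single point $x$ and $\nu_x^\eta$ is concentrated there; thus $\nu_x^\eta=\delta_x$, and in particular $\dim_H\nu_x^\eta=0$ (equivalently $\nu_x^\eta(\P_0^k(x))=1$ for every $k$, so $\frac{1}{k}\log\nu_x^\eta(\P_0^k(x))\to 0$). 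Corollary \ref{cor-3.17} then yields $h_\pi(\sigma,\nu)=h(\sigma,\nu)$.

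I do not anticipate a genuine obstacle; the only delicate point is the measurable-partition bookkeeping that passes from "$\pi$ injective on $\Omega^\N$" together with "$\nu_x^\eta$ is carried by $\Omega^\N$" to "$\nu_x^\eta$ is a point mass", which is handled above. As an alternative route avoiding conditional measures, one can invoke the Lusin--Souslin theorem to see that $\pi|_{\Omega^\N}$ is a Borel isomorphism onto its image, whence $\pi^{-1}\gamma$ agrees with $\B(\Sigma)$ modulo $\nu$-null sets; then $H_\nu(\P\,|\,\pi^{-1}\gamma)=0$ and $H_\nu(\P\,|\,\sigma^{-1}\pi^{-1}\gamma)=H_\nu(\P\,|\,\sigma^{-1}\B(\Sigma))=h(\sigma,\nu)$, and Definition \ref{de-1.1} gives the conclusion directly.
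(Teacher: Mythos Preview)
Your argument is correct. Both the conditional-measure route via Corollary~\ref{cor-3.17} and the alternative Lusin--Souslin route are valid in the standing affine setting of this subsection (where each $S_i(x)=Ax+c_i$ with $A$ non-singular, hence injective).

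The paper proceeds differently and somewhat more elementarily. Rather than proving injectivity of $\pi$ on $\Omega^\N$ and invoking Corollary~\ref{cor-3.17}, it shows directly that $H_\nu(\P\mid\pi^{-1}\gamma)=0$: using compactness, the sets $S_i(K)$, $i\in\Omega$, are at positive distance $\delta>0$; for any finite Borel partition $\xi$ of $K$ with mesh $<\delta/2$, each atom of $\pi^{-1}\xi$ meets at most one $[i]$ with $i\in\Omega$, so $H_\nu(\P\mid\pi^{-1}\widehat{\xi})=0$; letting the mesh tend to $0$ and applying Lemma~\ref{lem-3.3}(iv) gives $H_\nu(\P\mid\pi^{-1}\gamma)=0$. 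Combined with $H_\nu(\P\mid\sigma^{-1}\pi^{-1}\gamma)\geq H_\nu(\P\mid\sigma^{-1}\B(\Sigma))=h(\sigma,\nu)$ and Proposition~\ref{pro-3.1}(i), this yields the lemma. The paper's argument has the advantage of not using the invertibility of the $S_i$ (so it applies verbatim to an arbitrary contractive IFS) and of being logically independent of the Shannon--McMillan--Breiman machinery behind Corollary~\ref{cor-3.17}. Your approach, on the other hand, is conceptually transparent---injective projection forces Dirac conditional measures---and your alternative Lusin--Souslin route reaches the same key identity $H_\nu(\P\mid\pi^{-1}\gamma)=0$ by identifying $\pi^{-1}\gamma$ with $\B(\Sigma)$ modulo $\nu$-null sets.
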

\begin{proof} It suffices to prove that $h_\pi(\sigma,\nu)\geq h(\sigma,\nu)$. Recall that
$$h_\pi(\sigma,\nu)=H_\nu(\P|\sigma^{-1}\pi^{-1}\gamma)-H_\nu(\P|\pi^{-1}\gamma)$$ and
$H_\nu(\P|\sigma^{-1}\pi^{-1}\gamma)\geq H_\nu(\P|\sigma^{-1}\B(\Sigma))=h(\sigma,\nu)$. Hence we only need to show
 $H_\nu(\P|\pi^{-1}\gamma)=0$. To do this, denote
$$\delta=\min\{d(S_i(K), S_j(K)):\; i,j\in \Omega, i\neq j\}.$$
Then $\delta>0$. Let $\xi$ be an arbitrary  finite Borel partition of $K$ so that $\mbox{diam}(A)<\delta/2$ for $A\in \xi$.
Set ${\mathcal W}=\{[i]:\; i\in \Omega\}$. Since $\nu$ is supported on $\Omega^\N$, we have
$$
H_\nu(\P|\pi^{-1}\widehat{\xi})=H_\nu(\P\vee \pi^{-1}\xi)-H_\nu(\pi^{-1}\xi)=H_\nu({\mathcal W}\vee \pi^{-1}\xi)-H_\nu(\pi^{-1}\xi).$$
However for each $A\in \xi$, there is at most one $i\in \Omega$ such that $S_i(K)\cap A\neq \emptyset$, i.e.,
 $[i]\cap \pi^{-1}A\neq \emptyset$.  This forces that $H_\nu({\mathcal W}\vee \pi^{-1}\xi)=H_\nu(\pi^{-1}\xi)$.
Hence $$H_\nu(\P|\pi^{-1}\widehat{\xi})=0.$$ By the arbitrariness of
$\xi$ and Lemma
\ref{lem-3.3}(iv), we have $H_\nu(\P|\pi^{-1}\gamma)=0.$
\end{proof}

\begin{proof}[Proof of Proposition \ref{pro-3.19}] We first prove (i). Let $m\in \M_\sigma(\Sigma)$. Denote $\gamma=\B(\R^d)$. According to Proposition \ref{pro-3.2}, we have
$$
H_m(\P_0^{p-1}|\sigma^{-p}\pi^{-1}\gamma)-H_m(\P_0^{p-1}|\pi^{-1}\gamma)=ph_\pi(\sigma,m)\qquad (p\in \N).
$$
Now fix $p$. Since $\widehat{\Q_n}\uparrow \gamma$, by Lemma \ref{lem-3.3}(iv), there exists $k_0$ such that for $k\geq k_0$,
\begin{eqnarray*}
&\mbox{}&|H_m(\P_0^{p-1}|\sigma^{-p}\pi^{-1}\gamma)-H_m(\P_0^{p-1}|\sigma^{-p}\pi^{-1}\widehat{\Q_{kp}})|\leq 1,
\quad \mbox{and}  \\
&& |H_m(\P_0^{p-1}|\pi^{-1}\gamma)-H_m(\P_0^{p-1}|\pi^{-1}\widehat{\Q_{(k+1)p}})|\leq 1.
\end{eqnarray*}
It follows that for $k\geq k_0$,
\begin{equation}
\label{e-3.20}
\begin{split}
ph_\pi(\sigma,m)-2 & \leq H_m(\P_0^{p-1}|\sigma^{-p}\pi^{-1}\widehat{\Q_{kp}})-H_m(\P_0^{p-1}|\pi^{-1}\widehat{\Q_{(k+1)p}})\\
\mbox{} &\leq ph_\pi(\sigma,m)+2.
\end{split}
\end{equation}
Now we estimate  the difference of conditional entropies in (\ref{e-3.20}). Note that
\begin{eqnarray*}
H_m(\P_0^{p-1}|\sigma^{-p}\pi^{-1}\widehat{\Q_{kp}})&=&H_m(\P_0^{p-1}\vee \sigma^{-p}\pi^{-1}\Q_{kp})-
H_m(\sigma^{-p}\pi^{-1}\Q_{kp})\\
&=&H_m(\P_0^{p-1}\vee \sigma^{-p}\pi^{-1}\Q_{kp})-
H_m(\pi^{-1}\Q_{kp})
\end{eqnarray*}
and
$$
H_m(\P_0^{p-1}|\pi^{-1}\widehat{\Q_{(k+1)p}})=H_m(\P_0^{p-1}\vee \pi^{-1}\Q_{(k+1)p})-
H_m(\pi^{-1}\Q_{(k+1)p}).
$$
Hence we have
\begin{equation}
\label{e-3.21}
\begin{split}
&\mbox{}H_m(\P_0^{p-1}|\sigma^{-p}\pi^{-1}\widehat{\Q_{kp}})-H_m(\P_0^{p-1}|\pi^{-1}\widehat{\Q_{(k+1)p}})\\
&= H_m(\P_0^{p-1}\vee \sigma^{-p}\pi^{-1}\Q_{kp})-H_m(\P_0^{p-1}\vee \pi^{-1}\Q_{(k+1)p})\\
& \mbox{}\quad\;\; +H_m(\pi^{-1}\Q_{(k+1)p})-H_m(\pi^{-1}\Q_{kp}).\\
\end{split}
\end{equation}
Observe that for each $[u]\in \P_0^{p-1}$ and any $Q\in \Q$,
$$[u]\cap \sigma^{-p}\pi^{-1}A^{kp}Q=[u]\cap \pi^{-1}S_u A^{kp}Q.$$
 Since the linear part of $S_u$ is $A^p$,  the set $S_u A^{kp}Q$ intersects at most $2^d$ elements of $\Q_{(k+1)p}$.
Therefore each element of
$\P_0^{p-1}\vee\sigma^{-p}\pi^{-1}{\Q_{kp}}$ intersects at
most $2^d$ elements of $\P_0^{p-1}\vee \pi^{-1}\Q_{(k+1)p}$. Similarly, the statement is also true if the two partitions are interchanged. Therefore by Lemma \ref{lem-3.5}, we have
\begin{equation*}
|H_m(\P_0^{p-1}\vee \sigma^{-p}\pi^{-1}\Q_{kp})-H_m(\P_0^{p-1}\vee \pi^{-1}\Q_{(k+1)p})|\leq d\log 2.
\end{equation*}
It together with (\ref{e-3.20}) and (\ref{e-3.21}) yields
\begin{eqnarray*}
ph_\pi(\sigma,m)-2-d\log 2&\leq& H_m(\pi^{-1}\Q_{(k+1)p})-H_m(\pi^{-1}\Q_{kp})\\
&\leq & ph_\pi(\sigma,m)+2+d\log 2
\end{eqnarray*}
for $k\geq k_0$. Hence we have
\begin{eqnarray*}
&\mbox{}&\limsup_{k\to \infty} \frac{H_m(\pi^{-1}\Q_{kp})}{kp}\leq h_\pi(\sigma,m)+\frac{2+d\log 2}{p} \quad
\mbox{ and }\\ &&\liminf_{k\to \infty} \frac{H_m(\pi^{-1}\Q_{kp})}{kp}\geq h_\pi(\sigma,m)-\frac{2+d\log 2}{p}.
\end{eqnarray*}
By a volume argument,  there is a large integer $N$ ($N$ depends on $A$, $d$, $p$; and it is independent of $k$) such that for any $i=0,\ldots, p-1$, each element of $\Q_{kp+i}$ intersects at most $N$ elements of $\Q_{kp}$, and vice versa.
Hence by Lemma \ref{lem-3.5}, $|H_m(\pi^{-1}\Q_{kp})-H_m(\pi^{-1}\Q_{kp+i})|<\log N$ for $0\leq i\leq p-1$.
It follows that
\begin{eqnarray*}
 &\mbox{}&\limsup_{k\to \infty} H_m(\pi^{-1}\Q_{kp})/(kp)=\limsup_{n\to \infty}H_m(\pi^{-1}\Q_{n})/n
\mbox{ and}\\
&\mbox{}&\liminf_{k\to \infty} H_m(\pi^{-1}\Q_{kp})/(kp)=\liminf_{n\to \infty}H_m(\pi^{-1}\Q_{n})/n.
 \end{eqnarray*}
   Thus we have
\begin{eqnarray*}
h_\pi(\sigma,m)-\frac{2+d\log 2}{p}&\leq& \liminf_{n\to \infty} \frac{H_m(\pi^{-1}\Q_{n})}{n}\leq
\limsup_{n\to \infty} \frac{H_m(\pi^{-1}\Q_{n})}{n}\\
&\leq& h_\pi(\sigma,m)+\frac{2+d\log 2}{p}.
\end{eqnarray*}
Letting $p$ tend to infinity, we obtain (\ref{e-3.18}).

To show (ii), we assume $K\subset [0,1)^d$, without loss of generality. Note that the number of (non-empty) elements
in the partition $\pi^{-1}\Q_n$ is just equal to
$$N_n:=\#\{Q\in \Q:\; A^nQ\cap K\neq \emptyset\}.$$ Hence by (\ref{e-3.2}), we have
$$
H_m(\pi^{-1}\Q_n)\leq \log N_n, \quad \forall \; m\in \M_\sigma(\Sigma).
$$
This together with (i)  proves
\begin{equation*}
\liminf_{n\to \infty}\frac{\log N_n}{n}\geq \sup\{h_\pi(\sigma,m):\; m\in \M_\sigma(\Sigma)\}.
\end{equation*}
To prove (ii), we still need to show
\begin{equation}
\label{e-3.23}
\limsup_{n\to \infty}\log N_n/n\leq \sup\{h_\pi(\sigma,m):\; m\in \M_\sigma(\Sigma)\}.
\end{equation}
We may assume that $\limsup_{n\to \infty}\log N_n/n>0$, otherwise there is nothing to prove. Let $n$ be a large integer so that $N_n>7^d$.  Choose a subset $\Gamma$ of $$\{Q: \; A^nQ\cap K\neq \emptyset, Q\in \Q\}$$ such that
$\# \Gamma>7^{-d}N_n$, and
\begin{equation}
\label{e-3.24}
2Q\cap 2\widetilde{Q}=\emptyset \qquad \mbox{ for different }Q,\widetilde{Q}\in \Gamma,
\end{equation}
where $2Q:=\bigcup_{P\in \Q: \; \overline{P}\cap \overline{Q}\neq \emptyset}P$, and $\overline{P}$ denotes the closure of $P$.
For each $Q\in \Gamma$, since $A^nQ\cap K\neq \emptyset$, we can pick a word $u=u(Q)\in \Sigma_n$ such that $S_uK\cap A^nQ\neq \emptyset$. Consider the collection $W=\{u(Q): Q\in \Gamma\}$. The separation condition (\ref{e-3.24}) for  elements in $\Gamma$ guarantees that
$$
S_{u(Q)}(K)\cap S_{u(\widetilde{Q})}(K)=\emptyset\quad\mbox{for all } Q,\widetilde{Q}\in \Gamma \mbox{ with }Q\neq \widetilde{Q}.
$$
Define a Bernoulli measure $\nu$ on $W^\N$  by
$$\nu([w_1\ldots w_k])=(\#\Gamma)^{-k}\qquad (k\in \N, \;w_1,\ldots, w_k\in W).$$
Then $\nu$ can be viewed as a $\sigma^n$-invariant measure on $\Sigma$ (by viewing $W^\N$ as a subset of $\Sigma$).
By Lemma \ref{lem-3.20}, we have $h_\pi(\sigma^n, \nu)=h(\sigma^n,\nu)=\log \#\Gamma$. Define $\mu=\frac{1}{n}\sum_{i=0}^{n-1}\nu\circ \sigma^{-i}$. Then $\mu\in \M_\sigma(\Sigma)$, and by Proposition \ref{pro-3.2},
$$
h_\pi(\sigma, \mu)=\frac{h_\pi(\sigma^n, \nu)}{n}=\frac{\log \#\Gamma}{n}\geq \frac{\log (7^{-d}N_n)}{n},
$$
from which (\ref{e-3.23}) follows.
\end{proof}

\subsection{Upper semi-continuity of $h_\pi(\sigma,\cdot)$ under the AWSC}

In this subsection, we prove the following proposition.

\begin{pro}
\label{pro-3.9} Assume that $\{S_i\}_{i=1}^\ell$ is an IFS which satisfies the AWSC (see Definition \ref{de-1.5}). Then the map $m\mapsto
h_\pi(\sigma, m)$ is upper semi-continuous on $\M_\sigma(\Sigma)$.
\end{pro}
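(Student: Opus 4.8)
The plan is to express $h_\pi(\sigma,m)$ as an infimum of continuous functions of $m$, which forces upper semi-continuity. The natural candidates are the quantities appearing in the proof of Proposition~\ref{pro-3.1}: for each $n\in\N$ set
$$
A_n(m):=H_m(\P\mid\sigma^{-1}\pi^{-1}\widehat{\D_n})-H_m(\P\mid\pi^{-1}\widehat{\D_n})
=H_m(\P\vee\sigma^{-1}\pi^{-1}\D_n)-H_m(\P\vee\pi^{-1}\D_n),
$$
using (\ref{e-3.4}); by Lemma~\ref{lem-3.3}(iv) and the monotonicity of conditional entropy in the conditioning $\sigma$-algebra, $A_n(m)\downarrow h_\pi(\sigma,m)$ as $n\to\infty$. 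Each $A_n$ is a difference of two partition-entropy functions of the form $m\mapsto H_m(\text{fixed finite partition})$, which would be continuous in the weak-$\ast$ topology \emph{if} the partition elements had boundaries of zero $m$-measure; but $\pi^{-1}\D_n$ is a pull-back of dyadic cubes and its boundaries need not be null, so $A_n$ is only upper semi-continuous in general, and the naive ``infimum of continuous functions'' argument fails. This is exactly where the AWSC must enter.

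First I would replace the problematic partitions by ones adapted to the coding. Fix $n$ and a scale; the key geometric input from the AWSC is that for a word $u\in\{1,\dots,\ell\}^n$, the cylinder $[u]\cap\pi^{-1}(\cdot)$ is determined by $S_u(K)$, and the number of distinct maps $S_u$ whose images can meet a given point grows subexponentially ($\frac1n\log t_n\to0$, with $t_n$ as in (\ref{e-1.16})). Concretely, for $m\in\M_\sigma(\Sigma)$ one has, by Proposition~\ref{pro-3.2} and Lemma~\ref{lem-3.7},
$$
h_\pi(\sigma,m)=\frac1n\Big(H_m(\P_0^{n-1}\mid\sigma^{-n}\pi^{-1}\gamma)-H_m(\P_0^{n-1}\mid\pi^{-1}\gamma)\Big)
\le \frac1n H_m(\P_0^{n-1}\mid\sigma^{-n}\pi^{-1}\gamma).
$$
Now I would bound $H_m(\P_0^{n-1}\mid\sigma^{-n}\pi^{-1}\gamma)$ above by $\log(\text{number of }u\in\{1,\dots,\ell\}^n\text{ with distinct }S_u\text{ restricted to }\pi(\Sigma))$ plus an error controlled by $t_n$: two words $u,v$ with $S_u=S_v$ on $\pi(\Sigma)$ cannot be separated by $\sigma^{-n}\pi^{-1}\gamma$, and among words sharing a common point of $S_u(K)$ there are at most $t_n$ of them, so the relevant partition of each atom has a controlled number of pieces. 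The upshot should be an inequality of the form
$$
h_\pi(\sigma,m)\le \frac1n\Big(H_m\big(\text{an explicit partition into cylinders}\big)+\log t_n\Big),
$$
where the explicit partition is a \emph{coarsening of} $\P_0^{n-1}$ (a cylinder partition), hence a \emph{clopen} partition of $\Sigma$, so $m\mapsto H_m$ of it is genuinely weak-$\ast$ continuous.

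Combining, $h_\pi(\sigma,\cdot)$ is dominated, for every $n$, by $\frac1n$ times a continuous function plus $\frac1n\log t_n$. To finish, I would show the reverse estimate in the limit: by Proposition~\ref{pro-3.13} (or directly from the subadditive structure of $H_m(\P_0^{n-1}\mid\sigma^{-n}\pi^{-1}\gamma)$ in $n$) the quantity $\frac1n H_m(\P_0^{n-1}\mid\sigma^{-n}\pi^{-1}\gamma)$ decreases to $h_\pi(\sigma,m)$, while $H_m(\P_0^{n-1}\mid\pi^{-1}\gamma)$ stays bounded (by $d\log(\sqrt d+1)$ below, by $H_m(\P_0^{n-1})$ above) so contributes $o(n)$ in a way that is itself upper semi-continuous; thus along a subsequence the continuous upper bounds converge to $h_\pi(\sigma,m)$. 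Writing $h_\pi(\sigma,\cdot)=\inf_n\big(\frac1n\,\Psi_n(\cdot)+\frac1n\log t_n\big)$ with each $\Psi_n$ continuous and $\frac1n\log t_n\to0$, upper semi-continuity follows, since an infimum of upper semi-continuous functions is upper semi-continuous.

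The main obstacle I anticipate is the bookkeeping in the middle paragraph: making precise that, under the AWSC, the conditional entropy $H_m(\P_0^{n-1}\mid\sigma^{-n}\pi^{-1}\gamma)$ (and likewise the difference defining $h_\pi$) can be sandwiched between the entropy of a genuinely cylinder (clopen) partition and the same plus $\log t_n$, uniformly in $m$. This requires identifying, at level $n$, the equivalence ``$S_u\equiv S_v$ on $\pi(\Sigma)$'' and checking that the fibres of $\sigma^{-n}\pi^{-1}\gamma$ over a point are refined by $\P_0^{n-1}$ into at most $t_n$ atoms up to a set that is itself measurable with respect to a clopen partition — a point that is geometric rather than ergodic-theoretic and where Lemma~\ref{lem-3.5} together with the definition (\ref{e-1.16}) of $t_n$ will do the work.
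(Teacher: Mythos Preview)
Your instinct is right: pass to level $n$, collapse words $u,v$ with $S_u=S_v$, and use that the AWSC bounds the residual multiplicity by $t_n$. But the execution has a genuine error. You claim that
\[
\frac{1}{n}H_m\big(\P_0^{n-1}\mid\sigma^{-n}\pi^{-1}\gamma\big)\ \searrow\ h_\pi(\sigma,m).
\]
This is false. By Lemma~\ref{lem-3.7}, $H_m(\P_0^{n-1}\mid\sigma^{-n}\pi^{-1}\gamma)=n\,h_\pi(\sigma,m)+H_m(\P_0^{n-1}\mid\pi^{-1}\gamma)$, and by Proposition~\ref{pro-3.13} the second term satisfies $\frac1n H_m(\P_0^{n-1}\mid\pi^{-1}\gamma)\to h(\sigma,m)-h_\pi(\sigma,m)$. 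Hence
\[
\frac{1}{n}H_m\big(\P_0^{n-1}\mid\sigma^{-n}\pi^{-1}\gamma\big)\ \longrightarrow\ h(\sigma,m),
\]
not $h_\pi(\sigma,m)$. So the continuous (or usc) upper bounds you build from this quantity will sandwich down to $h(\sigma,m)$, and you will have reproved upper semi-continuity of classical entropy, not of $h_\pi$. Relatedly, bounding $h_\pi$ by $\frac1n H_m(\text{a single clopen partition})$ cannot be tight: a single partition entropy at level $n$ does not encode $h_\pi(\sigma,m)$ in the limit.

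The paper repairs this by working on the quotient symbolic space, not on $\Sigma$. For each $k$, form the alphabet $\J$ of equivalence classes of $\{1,\dots,\ell\}^k$ under $S_u=S_v$, the factor map $G\colon\Sigma\to\J^\N$, the shift $T$ on $\J^\N$, and the projection $\widetilde{\pi}$ for the IFS $\{S_{\underline{u}}:\underline u\in\J\}$. Lemma~\ref{lem-3.new} gives the exact identity $h_\pi(\sigma^k,m)=h_{\widetilde\pi}(T,m\circ G^{-1})$; this is the step that your ``clopen coarsening'' idea is groping towards, but it equates $h_\pi$ with a \emph{projection entropy on the quotient}, not with a single partition entropy on $\Sigma$. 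Now on the quotient IFS the AWSC says each point lies in at most $t_k$ images $S_{\underline u}(K)$, so Corollary~\ref{cor-3.11} yields the two-sided sandwich
\[
h(T,m\circ G^{-1})-\log t_k\ \le\ h_{\widetilde\pi}(T,m\circ G^{-1})\ \le\ h(T,m\circ G^{-1}).
\]
Since $m\mapsto h(T,m\circ G^{-1})$ is upper semi-continuous (classical entropy on a finite full shift, composed with the continuous pushforward $m\mapsto m\circ G^{-1}$), one gets for any sequence $\nu_n\to m$:
\[
\limsup_n h_\pi(\sigma,\nu_n)=\limsup_n\frac1k h_{\widetilde\pi}(T,\nu_n\circ G^{-1})\le \frac1k h(T,m\circ G^{-1})\le h_\pi(\sigma,m)+\frac{\log t_k}{k},
\]
and letting $k\to\infty$ finishes. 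In your framework this is exactly the statement $h_\pi(\sigma,\cdot)=\inf_k\big(\frac1k h(T_k,\,\cdot\circ G_k^{-1})\big)$, an infimum of usc functions; the point is that the right usc majorants are \emph{entropies of the quotient shift}, not conditional or partition entropies on $\Sigma$.
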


We first prove a lemma.
\begin{lem}
\label{lem-3.10} Let $\{S_i\}_{i=1}^\ell$ be an IFS with attractor $K\subset \R^d$. Assume that
$$\#\{1\leq i\leq \ell:\; x\in S_i(K)\}\leq k$$ for some $k\in \N$ and each $x\in \R^d$.  Then
$H_\nu(\P|\pi^{-1}\gamma)\leq \log k$ for any Borel probability measure $\nu$ on $\Sigma$.
\end{lem}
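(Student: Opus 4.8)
The plan is to reduce the assertion to the elementary fact that the Shannon entropy of a probability vector having at most $k$ nonzero entries is at most $\log k$. For $1\le j\le \ell$ put $\psi_j:=\E_\nu(\chi_{[j]}|\pi^{-1}\gamma)$; each $\psi_j$ is $\pi^{-1}\gamma$-measurable with $0\le\psi_j\le 1$, and, by linearity of conditional expectation, $\sum_{j=1}^\ell\psi_j=1$ $\nu$-a.e.\! since $\sum_{j=1}^\ell\chi_{[j]}=1$. The crucial observation is that $\psi_j=0$ $\nu$-a.e.\! outside $\pi^{-1}(S_j(K))$. Indeed, $S_j(K)$ is compact, hence Borel, so $W_j:=\{\psi_j>0\}\setminus\pi^{-1}(S_j(K))$ belongs to $\pi^{-1}\gamma$; using it as a test set in the defining property of conditional expectation gives $\int_{W_j}\psi_j\,d\nu=\nu([j]\cap W_j)$, and the right-hand side vanishes because $[j]\subseteq\pi^{-1}(S_j(K))$ (as $\pi([j])\subseteq S_j(K)$ by (\ref{e-1.1})) while $W_j$ is disjoint from $\pi^{-1}(S_j(K))$. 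Since $\psi_j>0$ on $W_j$, this forces $\nu(W_j)=0$.

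Discarding the $\nu$-null set $\bigcup_{j=1}^\ell W_j$, we get that for $\nu$-a.e.\! $x\in\Sigma$ and every $j$ one has $\psi_j(x)>0\Rightarrow\pi(x)\in S_j(K)$, so the hypothesis yields
\[
\#\{j:\ \psi_j(x)>0\}\le\#\{1\le i\le\ell:\ \pi(x)\in S_i(K)\}\le k.
\]
Next, writing $g(t)=-t\log t$ for $t\in[0,1]$ and using that $\psi_j$ is $\pi^{-1}\gamma$-measurable together with $\E_\nu(\chi_{[j]}|\pi^{-1}\gamma)=\psi_j$ and Lemma \ref{lem-2.10} (which ensures $\psi_j>0$ $\nu$-a.e.\! on $[j]$, so $\chi_{[j]}\log\psi_j$ is well defined $\nu$-a.e.), the pull-out property of conditional expectation gives
\[
-\int_\Sigma\chi_{[j]}\log\psi_j\,d\nu=-\int_\Sigma\E_\nu\!\big(\chi_{[j]}\log\psi_j\,\big|\,\pi^{-1}\gamma\big)\,d\nu=-\int_\Sigma\psi_j\log\psi_j\,d\nu .
\]
Summing over $j$ and recalling the definition (\ref{e-1.2}) of conditional information and of conditional entropy, we obtain
\[
H_\nu(\P|\pi^{-1}\gamma)=\int_\Sigma\Big(-\sum_{j=1}^\ell\chi_{[j]}(x)\log\psi_j(x)\Big)\,d\nu(x)=\int_\Sigma\sum_{j=1}^\ell g\big(\psi_j(x)\big)\,d\nu(x).
\]

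It remains to bound the integrand. For $\nu$-a.e.\! $x$ the numbers $(\psi_j(x))_{j=1}^\ell$ form a probability vector with at most $k$ nonzero coordinates, so by Jensen's inequality (concavity of $\log$)
\[
\sum_{j=1}^\ell g\big(\psi_j(x)\big)=\sum_{j:\,\psi_j(x)>0}\psi_j(x)\log\frac{1}{\psi_j(x)}\le\log\#\{j:\ \psi_j(x)>0\}\le\log k,
\]
and integrating over $\Sigma$ yields $H_\nu(\P|\pi^{-1}\gamma)\le\log k$. The one genuinely non-formal point is the support statement $\psi_j=0$ a.e.\! off $\pi^{-1}(S_j(K))$, which as indicated rests only on $S_j(K)$ being Borel and on $\pi([j])\subseteq S_j(K)$; the remaining steps are routine manipulations of conditional information, so I do not anticipate any real difficulty.
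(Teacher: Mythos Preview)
Your proof is correct and takes a genuinely different route from the paper's. The paper argues by approximation: a compactness argument upgrades the pointwise multiplicity hypothesis to a uniform statement that each small dyadic cube $Q\in\D_n$ meets at most $k$ of the sets $S_i(K)$; then each atom of $\pi^{-1}\D_n$ meets at most $k$ atoms of $\P\vee\pi^{-1}\D_n$, so Lemma~\ref{lem-3.5} gives $H_\nu(\P|\pi^{-1}\widehat{\D_n})\le\log k$, and one passes to the limit via Lemma~\ref{lem-3.3}(iv). You instead work directly with the conditional expectations: the support observation $\E_\nu(\chi_{[j]}\mid\pi^{-1}\gamma)=0$ a.e.\ off $\pi^{-1}(S_j(K))$ immediately forces the conditional probability vector $(\psi_j(x))_j$ to have at most $k$ nonzero entries for $\nu$-a.e.\ $x$, whence the pointwise bound $\sum_j g(\psi_j(x))\le\log k$ integrates to the conclusion. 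Your argument is shorter and avoids both the compactness step and the approximation by finite partitions; the paper's approach, on the other hand, fits the combinatorial framework (Lemma~\ref{lem-3.5}) used repeatedly elsewhere in Section~\ref{S3}. Both proofs apply to an arbitrary Borel probability measure $\nu$ (no invariance is used).
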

\begin{proof} A compactness argument shows that there is $r_0>0$ such that $$\#\{1\leq i\leq \ell:\; B(x,r_0)\cap S_i(K)\neq \emptyset\}\leq k$$
for each $x\in \R^d$.    Let  $n\in \N$ so that $2^{-n}\sqrt{d}<r_0$. Then for each
$Q\in\D_n$, where $\D_n$ is defined as in (\ref{e-at1}), there are at most $k$ different  $i\in\{1,\ldots,
\ell\}$
 such that $S_i(K)\cap Q\neq \emptyset$. It follows that each member
 in $\pi^{-1}\D_n$ intersects at most $k$ members of $\P\vee
 \pi^{-1}\D_n$. By Lemma \ref{lem-3.5}, we have
 $$H_\nu(\P|\pi^{-1}\widehat{\D_n})=H_\nu(\P\vee\pi^{-1}\D_n)-H_\nu(\pi^{-1}\D_n)\leq \log
 k.$$
Note that $\pi^{-1}\widehat{\D_n}\uparrow \pi^{-1}\gamma$.  Applying Lemma \ref{lem-3.3}(iv), we obtain
$$H_\nu(\P|\pi^{-1}\gamma)=\lim_{n\to \infty}H_\nu(\P|\pi^{-1}\widehat{\D_n})\leq \log
 k.$$
 \end{proof}

As a corollary, we have
\begin{cor}
\label{cor-3.11}
Under the condition of Lemma \ref{lem-3.10}, we have
$$h_\pi(\sigma,m)\geq h(\sigma,m)-\log k$$
 for any $m\in
\M_\sigma(\Sigma)$.
\end{cor}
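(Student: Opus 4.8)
The statement will follow by combining the definition of projection entropy with Lemma \ref{lem-3.10} and the standard monotonicity of conditional entropy, so the argument is short. Recall from Definition \ref{de-1.1} that
$$h_\pi(\sigma,m)=H_m(\P|\sigma^{-1}\pi^{-1}\gamma)-H_m(\P|\pi^{-1}\gamma).$$
First I would bound the first term from below. Since $\pi$ is $\B(\Sigma)$-measurable, $\sigma^{-1}\pi^{-1}\gamma\subseteq \sigma^{-1}\B(\Sigma)$, and conditional entropy is non-increasing when the conditioning $\sigma$-algebra is enlarged, so $H_m(\P|\sigma^{-1}\pi^{-1}\gamma)\geq H_m(\P|\sigma^{-1}\B(\Sigma))$. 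Because $\P$ is a finite generating partition for $(\Sigma,\sigma)$, one has $\sigma^{-1}\B(\Sigma)=\bigvee_{i=1}^\infty\sigma^{-i}\P$, and the classical identity $H_m(\P|\bigvee_{i\geq 1}\sigma^{-i}\P)=h(\sigma,m,\P)=h(\sigma,m)$ — the same one recorded in Theorem \ref{thm-1.0}(iii) — yields $H_m(\P|\sigma^{-1}\pi^{-1}\gamma)\geq h(\sigma,m)$.

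Second, the hypothesis of the corollary is precisely the hypothesis of Lemma \ref{lem-3.10}, so applying that lemma with $\nu=m$ gives $H_m(\P|\pi^{-1}\gamma)\leq \log k$. Subtracting the two estimates yields $h_\pi(\sigma,m)\geq h(\sigma,m)-\log k$, as claimed. I do not expect any real obstacle: all of the geometric content (the compactness argument that upgrades the pointwise overlap bound to one valid uniformly at a small positive scale, hence a bound on how many atoms of $\P\vee\pi^{-1}\D_n$ meet a given atom of $\pi^{-1}\D_n$) has already been absorbed into Lemma \ref{lem-3.10}, and what remains is a one-line manipulation of conditional entropies together with the identification of $H_m(\P|\sigma^{-1}\B(\Sigma))$ with the Kolmogorov--Sinai entropy.
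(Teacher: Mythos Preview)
Your proof is correct and follows essentially the same approach as the paper: both use the definition of $h_\pi$, bound $H_m(\P|\pi^{-1}\gamma)\leq \log k$ via Lemma~\ref{lem-3.10}, and bound $H_m(\P|\sigma^{-1}\pi^{-1}\gamma)\geq H_m(\P|\sigma^{-1}\B(\Sigma))=h(\sigma,m)$ by monotonicity of conditional entropy. Your additional remarks on why $H_m(\P|\sigma^{-1}\B(\Sigma))$ equals the Kolmogorov--Sinai entropy just make explicit what the paper leaves implicit.
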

\begin{proof} By the definition of $h_\pi(\sigma,m)$ and Lemma
\ref{lem-3.10}, we have
$$
h_\pi(\sigma,m)=H_m(\P|\sigma^{-1}\pi^{-1}\gamma)-H_m(\P|\pi^{-1}\gamma)\geq
H_m(\P|\sigma^{-1}\pi^{-1}\gamma)-\log k.
$$
However, $H_m(\P|\sigma^{-1}\pi^{-1}\gamma)\geq
H_m(\P|\sigma^{-1}\B(\Sigma))=h(\sigma,m)$. This implies the desired
result. \end{proof}

To prove Proposition \ref{pro-3.9}, we  need the following lemma.

\begin{lem}
\label{lem-3.new}
Let $\{S_i\}_{i=1}^\ell$ be an IFS with attractor $K$. Suppose that $\Omega$ is a subset of $\{1,\ldots, \ell\}$ such that there is a map $g\colon \{1,\ldots,\ell\}\to \Omega$ so that $$
S_i=S_{g(i)}\qquad (i=1,\ldots,\ell).
$$
Let $(\Omega^\N, \widetilde{\sigma})$ denote the one-sided full shift over $\Omega$. Define $G: \Sigma\to \Omega^\N$ by $(x_j)_{j=1}^\infty\mapsto (g(x_j))_{j=1}^\infty$. Then
\begin{itemize}
\item[(i)] $K$ is also the attractor of $\{S_i\}_{i\in \Omega}$. Moreover if we let $\widetilde{\pi}\colon \Omega^\N\to K$ denote the canonical projection w.r.t. $\{S_i\}_{i\in \Omega}$, then we have $\pi=\widetilde{\pi}\circ G$.
\item[(ii)]
Let $m\in \M_\sigma(\Sigma)$. Then $\nu=m\circ G^{-1}\in \M_{\widetilde{\sigma}}(\Omega^\N)$. Furthermore,
$h_\pi(\sigma,m)=h_{\widetilde{\pi}}(\widetilde{\sigma},\nu)$. In particular, $h_\pi(\sigma,m)\leq \log (\#\Omega)$.
\end{itemize}
\end{lem}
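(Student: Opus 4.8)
The plan is to regard $G$ as a (continuous, hence Borel) factor map $(\Sigma,\sigma)\to(\Omega^{\N},\widetilde\sigma)$ compatible with the two coding maps, and then transport the computation of projection entropy from $\Sigma$ down to $\Omega^{\N}$.

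For (i), since $S_i=S_{g(i)}$ with $g(i)\in\Omega\subseteq\{1,\ldots,\ell\}$, one has $\bigcup_{i\in\Omega}S_i(K)=\bigcup_{i=1}^{\ell}S_i(K)=K$: the inclusion ``$\subseteq$'' holds because $S_i(K)\subseteq K$ for each $i\in\Omega$, and ``$\supseteq$'' holds because $S_i=S_{g(i)}$ with $g(i)\in\Omega$. By the uniqueness of the attractor (Hutchinson \cite{Hut81}), $K$ is also the attractor of $\{S_i\}_{i\in\Omega}$. The identity $\pi=\widetilde\pi\circ G$ is then read off from (\ref{e-1.1}): replacing each $S_{x_j}$ by $S_{g(x_j)}$ shows that $\bigcap_{n\ge1}S_{x_1}\circ\cdots\circ S_{x_n}(K)$, which determines $\pi(x)$, equals $\bigcap_{n\ge1}S_{g(x_1)}\circ\cdots\circ S_{g(x_n)}(K)$, which determines $\widetilde\pi(G(x))$.

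For (ii), the assertion $\nu\in\M_{\widetilde\sigma}(\Omega^{\N})$ is routine: $G$ is continuous and $G\circ\sigma=\widetilde\sigma\circ G$, so for Borel $B\subseteq\Omega^{\N}$ we get $\nu(\widetilde\sigma^{-1}B)=m(\sigma^{-1}G^{-1}B)=m(G^{-1}B)=\nu(B)$ by $\sigma$-invariance of $m$. For the entropy identity, let $\mathcal G:=G^{-1}\widetilde\P$, a finite partition of $\Sigma$ which is refined by $\P$ (since $[j]\subseteq G^{-1}[g(j)]$, so $\P=\P\vee\mathcal G$). From $\pi=\widetilde\pi\circ G$ and $G\circ\sigma=\widetilde\sigma\circ G$ we obtain $G^{-1}(\widetilde\pi^{-1}\gamma)=\pi^{-1}\gamma$ and $G^{-1}(\widetilde\sigma^{-1}\widetilde\pi^{-1}\gamma)=\sigma^{-1}\pi^{-1}\gamma$. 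Since $\nu=m\circ G^{-1}$, for any $\phi\in L^1(\nu)$ and any sub-$\sigma$-algebra $\widetilde\A$ of $\B(\Omega^{\N})$ one has $\E_m(\phi\circ G\,|\,G^{-1}\widetilde\A)=\E_\nu(\phi\,|\,\widetilde\A)\circ G$ $m$-a.e. (both sides are $G^{-1}\widetilde\A$-measurable and have equal integrals over $G^{-1}D$ for $D\in\widetilde\A$); consequently ${\bf I}_m(G^{-1}\widetilde\xi\,|\,G^{-1}\widetilde\A)={\bf I}_\nu(\widetilde\xi\,|\,\widetilde\A)\circ G$ $m$-a.e. for every countable partition $\widetilde\xi$, and integration gives $H_m(G^{-1}\widetilde\xi\,|\,G^{-1}\widetilde\A)=H_\nu(\widetilde\xi\,|\,\widetilde\A)$. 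Taking $\widetilde\xi=\widetilde\P$ and $\widetilde\A=\widetilde\sigma^{-1}\widetilde\pi^{-1}\gamma$, then $\widetilde\A=\widetilde\pi^{-1}\gamma$, yields
$$h_{\widetilde\pi}(\widetilde\sigma,\nu)=H_m(\mathcal G\,|\,\sigma^{-1}\pi^{-1}\gamma)-H_m(\mathcal G\,|\,\pi^{-1}\gamma).$$

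It remains to identify this with $h_\pi(\sigma,m)=H_m(\P\,|\,\sigma^{-1}\pi^{-1}\gamma)-H_m(\P\,|\,\pi^{-1}\gamma)$. Using $\P=\P\vee\mathcal G$ and the chain rule Lemma \ref{lem-3.3}(iii) with $\A=\sigma^{-1}\pi^{-1}\gamma$ and with $\A=\pi^{-1}\gamma$, subtraction gives
$$h_\pi(\sigma,m)-h_{\widetilde\pi}(\widetilde\sigma,\nu)=H_m(\P\,|\,\widehat{\mathcal G}\vee\sigma^{-1}\pi^{-1}\gamma)-H_m(\P\,|\,\widehat{\mathcal G}\vee\pi^{-1}\gamma).$$
The key step, an analogue of Lemma \ref{lem-3.6}, is to show $\widehat{\mathcal G}\vee\sigma^{-1}\pi^{-1}\gamma=\widehat{\mathcal G}\vee\pi^{-1}\gamma$, which makes the right-hand side vanish. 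For ``$\subseteq$'', a typical member of $\widehat{\mathcal G}\vee\sigma^{-1}\pi^{-1}\gamma$ has the form $\bigcup_{i\in\Omega}G^{-1}[i]\cap\sigma^{-1}\pi^{-1}B_i$ with $B_i\in\gamma$; for $x\in G^{-1}[i]$ we have $g(x_1)=i$, hence $S_{x_1}=S_{g(x_1)}=S_i$ and $\pi(x)=S_i(\pi\sigma x)$, so $G^{-1}[i]\cap\sigma^{-1}\pi^{-1}B_i=G^{-1}[i]\cap\pi^{-1}(S_iB_i)$ with $S_iB_i\in\gamma$ (injectivity and continuity of $S_i$, exactly as in the proof of Lemma \ref{lem-3.6}); the reverse inclusion follows symmetrically, writing $G^{-1}[i]\cap\pi^{-1}C_i=G^{-1}[i]\cap\sigma^{-1}\pi^{-1}(S_i^{-1}C_i)$ with $S_i^{-1}C_i\in\gamma$. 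This establishes $h_\pi(\sigma,m)=h_{\widetilde\pi}(\widetilde\sigma,\nu)$, and the final bound follows from $h_{\widetilde\pi}(\widetilde\sigma,\nu)\le H_\nu(\widetilde\P\,|\,\widetilde\sigma^{-1}\widetilde\pi^{-1}\gamma)\le H_\nu(\widetilde\P)\le\log(\#\Omega)$. The main obstacle is the careful bookkeeping of these $\sigma$-algebra identities — in particular the Lemma \ref{lem-3.6} analogue — which is where the hypothesis $S_i=S_{g(i)}$ is used and where attention is needed because $\pi^{-1}\gamma$ is not $\sigma$-invariant.
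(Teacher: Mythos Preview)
Your proof is correct and follows essentially the same approach as the paper's: pull back $h_{\widetilde\pi}(\widetilde\sigma,\nu)$ to a conditional entropy on $\Sigma$ via the factor map $G$, use the chain rule with $\P=\P\vee\mathcal G$ to express the difference as $H_m(\P\mid\widehat{\mathcal G}\vee\sigma^{-1}\pi^{-1}\gamma)-H_m(\P\mid\widehat{\mathcal G}\vee\pi^{-1}\gamma)$, and then kill it with the Lemma~\ref{lem-3.6} analogue $\widehat{\mathcal G}\vee\sigma^{-1}\pi^{-1}\gamma=\widehat{\mathcal G}\vee\pi^{-1}\gamma$. The paper merely asserts this last identity (``an argument similar to the proof of Lemma~\ref{lem-3.6}''), whereas you write it out explicitly, which is a welcome addition; otherwise the two arguments coincide.
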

\begin{proof}
(i) is obvious. To see (ii), let $m\in \M_\sigma(\Sigma)$. It is easily seen that the following diagram commutes:
$$
\begin{CD}
\Sigma @ > \sigma >>\Sigma\\
@VGVV  @VVGV\\
\Omega^\N @>\widetilde{\sigma}>>\Omega^\N.
\end{CD}
$$
That is, $\widetilde{\sigma}\circ G=G\circ \sigma$. Hence $\nu=m\circ G^{-1}\in \M_{\widetilde{\sigma}}(\Omega^\N)$.
To show that $h_\pi(\sigma,m)=h_{\widetilde{\pi}}(\widetilde{\sigma},\nu)$,
let $\Q=\{[i]: \;i\in \Omega\}$ be the canonical partition of $\Omega^\N$. Then
\begin{eqnarray*}
h_{\widetilde{\pi}}(\widetilde{\sigma},\nu)
&=&H_{m\circ
G^{-1}}(\Q|\widetilde{\sigma}^{-1}\widetilde{\pi}^{-1}\gamma)-
H_{m\circ G^{-1}}(\Q|\widetilde{\pi}^{-1}\gamma)\\
&=&H_m(G^{-1}(\Q)|G^{-1}\widetilde{\sigma}^{-1}\widetilde{\pi}^{-1}\gamma)-
H_m(G^{-1}(\Q)|G^{-1}\widetilde{\pi}^{-1}\gamma)\\
&=&H_m(G^{-1}(\Q)|\sigma^{-1}{\pi}^{-1}\gamma)-
H_\nu(G^{-1}(\Q)|{\pi}^{-1}\gamma),
\end{eqnarray*}
using the facts $G\circ \sigma=\widetilde{\sigma}\circ G$ and
$\widetilde{\pi}\circ G=\pi$. Since $\P\vee G^{-1}(\Q)=\P$, we have
\begin{eqnarray*}
&\mbox{}& h_\pi(\sigma,m)- h_{\widetilde{\pi}}(\widetilde{\sigma}, m\circ
G^{-1})\\
&\mbox{}&\quad =\left(H_m\left(\P|\sigma^{-1}{\pi}^{-1}\gamma\right)-
H_m\left(\P|{\pi}^{-1}\gamma\right)\right)\\
&\mbox{}&\qquad \quad -
\left(H_m(G^{-1}(\Q)|\sigma^{-1}{\pi}^{-1}\gamma)-
H_m(G^{-1}(\Q)|{\pi}^{-1}\gamma)\right)\\
&\mbox{}&\quad =\left(H_m\left(\P|\sigma^{-1}{\pi}^{-1}\gamma\right)-
H_m(G^{-1}(\Q)|\sigma^{-1}{\pi}^{-1}\gamma)\right)\\
&\mbox{}&\qquad \quad -
\left(H_m\left(\P|{\pi}^{-1}\gamma\right)-
H_m(G^{-1}(\Q)|{\pi}^{-1}\gamma)\right)\\
&\mbox{}&\quad =H_m\left(\P|\sigma^{-1}{\pi}^{-1}\gamma\vee G^{-1}(\widehat{\Q})\right)-
H_m\left(\P|{\pi}^{-1}\gamma\vee G^{-1}(\widehat{\Q})\right).
\end{eqnarray*}
An argument similar to the proof of Lemma \ref{lem-3.6} shows that
$$\sigma^{-1}{\pi}^{-1}\gamma\vee G^{-1}(\widehat{\Q})={\pi}^{-1}\gamma\vee
G^{-1}(\widehat{\Q}).$$ Hence we have $h_\pi(\sigma,m)= h_{\widetilde{\pi}}(\widetilde{\sigma}, m\circ
G^{-1})$.
\end{proof}

\begin{proof}[Proof of Proposition \ref{pro-3.9}] Let $(\nu_n)$ be a sequence in $\M_\sigma(\Sigma)$ converging to $m$ in the weak-star topology. We need to show that $\limsup_{n\to \infty}h_\pi(\sigma, \nu_n)\leq h_\pi(\sigma,
m)$. To see this, it suffices to show that
\begin{equation}
\label{e-3.8}
\limsup_{n\to
\infty}h_\pi(\sigma, \nu_n)\leq h_\pi(\sigma, m)+\frac{1}{k}\log
t_k
\end{equation}
 for each $k\in \N$, where $t_k$ is given as in Definition \ref{de-1.5}.

 To prove (\ref{e-3.8}), we fix $k\in \N$. Define an equivalence relation $\sim$ on $\{1,\ldots, \ell\}^k$ by $u\sim v$ if $S_u=S_v$.
Let $\underline{u}$ denotes the equivalence class containing $u$. Denote $S_{\underline{u}}=S_u$.  Set $\J=\{\underline{u}:\; u\in \{1,\ldots, \ell\}^k\}$.  Let $(\J^\N, T)$ denote the one-sided full shift space over the alphabet $\J$. Let $G: \Sigma\to \J^N$ be  defined by
$$(x_i)_{i=1}^\infty \mapsto  \left(\underline{x_{jk+1}\cdots x_{(j+1)k}}\right)_{j=0}^\infty.$$
It is clear that the following diagram commutes:
$$
\begin{CD}
\Sigma @>\sigma^k>> \Sigma\\
@VGVV  @VVGV\\
 \J^N @> T>>  \J^N
\end{CD}
$$
That is,  $T\circ G=G\circ \sigma^k$.   It implies that $\nu_n\circ G^{-1}$, $m\circ G^{-1}\in \M_{T}(\J^\N)$ and $$\lim_{n\to \infty}\nu_n\circ G^{-1}=m\circ G^{-1}.$$ Hence we have
\begin{equation}
\label{e-3.9}
h(T, m\circ G^{-1})\geq \limsup_{n\to \infty}h(T,\nu_n\circ G^{-1}),
\end{equation}
where we use  the upper semi-continuity of the classical measure-theoretic entropy map on $(\J^\N, T)$.
Define $\widetilde{\pi}: \J^\N\to K$ by
$$\widetilde{\pi}\left((\underline{u_i})_{i=1}^\infty\right)=
\lim_{n\to \infty}S_{\underline{u_1}}\circ \cdots \circ S_{\underline{u_n}}(K).$$
Then  $\widetilde{\pi}\circ G=\pi$. By the assumption of AWSC (\ref{e-1.16}) and  Corollary \ref{cor-3.11} (considering the IFS $\{S_{\underline{u}}:\;
 \underline{u}\in \J\}$), we have
\begin{eqnarray*}
h_{\widetilde{\pi}}(T, m\circ G^{-1})&\geq& h(T, m\circ G^{-1})-\log t_k\\
&\geq& \limsup_{n\to \infty}h(T,\nu_n\circ G^{-1})-\log t_k\qquad (\mbox{ by (\ref{e-3.9})})\\
&\geq& \limsup_{n\to \infty}h_{\widetilde{\pi}}(T,\nu_n\circ G^{-1})-\log t_k,
\end{eqnarray*}
where the last inequality follows from Proposition \ref{pro-3.1}(i).  Then (\ref{e-3.8}) follows from the above inequality, together with Proposition \ref{pro-3.2} and the following claim:
\begin{equation}
\label{e-3.10*}
h_{\widetilde{\pi}}(T, \nu\circ G^{-1})=h_\pi(\sigma^k, \nu)\qquad (\nu\in \M_\sigma(\Sigma)).
\end{equation}
However, (\ref{e-3.10*}) just comes from Lemma \ref{lem-3.new}, where we consider the IFS $\{S_{u}:\; u\in \{1,\ldots,\ell\}^k\}$ rather than $\{S_i\}_{i=1}^\ell$.
\end{proof}

\section{Some geometric properties of $C^1$ IFS}\label{S4}
In this section we give some geometric properties of $C^1$ IFS.
\begin{lem}
\label{lem-4.1} Let $S: U\to S(U)\subset \R^d$ be a $C^1$  diffeomorphism  on an
open set $U\subset \R^d$, and $X$ a compact subset of $U$. Let $c>1$. Then there exists $r_0>0$ such that
\begin{equation}
\label{e-a4.1}
 c^{-1} \sm S'(x) \sm \cdot |x-y|\leq |S(x)-S(y)|\leq c\|S'(x)\|\cdot |x-y|
\end{equation}
for all $x\in X$,  $y\in U$ with $|x-y|\leq r_0$, where $S'(x)$ denotes the differential of $S$ at $x$,
$\sm \cdot\sm$ and $\|\cdot\|$ are defined as in (\ref{e-M1}). As a consequence,
\begin{equation}
\label{e-a4.2}
 B(S(x),c^{-1} \sm S'(x) \sm r)\subset
S\left(B(x,r)\right)\subset B(S(x),c\|S'(x)\|r)
\end{equation}
for all $x\in X$ and $0<r\leq r_0$.
\end{lem}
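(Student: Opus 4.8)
The plan is to derive (\ref{e-a4.1}) from uniform continuity of the differential $S'$ on a compact neighbourhood of $X$, and then obtain (\ref{e-a4.2}) as a formal consequence. First I would fix a compact set $X' \subset U$ with $X$ in its interior and $\mathrm{dist}(X, \partial X') > 0$; shrinking later to a smaller radius guarantees the line segment $[x,y]$ stays in $X'$ whenever $x \in X$ and $|x-y| \le r_0$, since $X'$ can be taken convex near $X$ (or one covers $X$ by finitely many convex balls inside $U$). On $X'$ the map $x \mapsto S'(x)$ is continuous, hence uniformly continuous, and moreover $\sm S'(x)\sm$ is bounded below by a positive constant $\delta_0$ (because $S$ is a diffeomorphism, $S'(x)$ is invertible, and $x \mapsto \sm S'(x)\sm$ is continuous and positive on the compact set $X$, so also bounded below on a neighbourhood). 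Given $c > 1$, choose $\epsilon > 0$ small enough that $\delta_0 - \epsilon \ge c^{-1}\delta_0$ and $\|S'(x)\| + \epsilon \le c\,\|S'(x)\|$ uniformly; the latter holds as soon as $\epsilon \le (c-1)\inf_{x\in X}\|S'(x)\|$, which is positive.

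Next, using uniform continuity of $S'$ on $X'$, pick $r_0 > 0$ so that $\|S'(z) - S'(x)\| \le \epsilon$ whenever $x \in X$, $z \in X'$ and $|z-x| \le r_0$ (and $r_0$ small enough to keep segments in $X'$ as above). Then for $x \in X$, $y \in U$ with $|x-y| \le r_0$, write
\begin{equation*}
S(y) - S(x) = \int_0^1 S'\big(x + t(y-x)\big)(y-x)\,dt = S'(x)(y-x) + \int_0^1 \big(S'(x+t(y-x)) - S'(x)\big)(y-x)\,dt.
\end{equation*}
The remainder integral has norm at most $\epsilon |x-y|$. Hence
\begin{equation*}
|S(y)-S(x)| \le \|S'(x)\|\,|x-y| + \epsilon|x-y| \le c\|S'(x)\|\,|x-y|,
\end{equation*}
and, using $|S'(x)(y-x)| \ge \sm S'(x)\sm\,|x-y|$ together with the reverse triangle inequality,
\begin{equation*}
|S(y)-S(x)| \ge \sm S'(x)\sm\,|x-y| - \epsilon|x-y| \ge (\delta_0 - \epsilon)|x-y| \ge c^{-1}\sm S'(x)\sm\,|x-y|.
\end{equation*}
Wait — the lower bound needs $\sm S'(x)\sm - \epsilon \ge c^{-1}\sm S'(x)\sm$ pointwise, i.e. $\epsilon \le (1-c^{-1})\sm S'(x)\sm$; this is again guaranteed by taking $\epsilon \le (1-c^{-1})\delta_0$. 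This proves (\ref{e-a4.1}).

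Finally, (\ref{e-a4.2}) follows purely formally. For the right inclusion: if $z \in B(x,r)$ with $r \le r_0$ then $|x-z| \le r_0$, so by (\ref{e-a4.1}), $|S(z)-S(x)| \le c\|S'(x)\|\,|x-z| \le c\|S'(x)\|r$, i.e. $S(z) \in B(S(x), c\|S'(x)\|r)$. For the left inclusion: let $w \in B(S(x), c^{-1}\sm S'(x)\sm r)$; since $S$ is a diffeomorphism of $U$ onto $S(U)$ and (shrinking $r_0$ if needed so that $B(S(x), c^{-1}\sm S'(x)\sm r_0)\subset S(U)$ for all $x\in X$, using compactness and that $S(U)$ is open) there is a unique $z \in U$ with $S(z) = w$; applying the lower bound in (\ref{e-a4.1}) — valid once we check $|x-z|\le r_0$, which one arranges by a continuity/connectedness argument or by first proving the inclusion for small $r$ and bootstrapping — gives $c^{-1}\sm S'(x)\sm\,|x-z| \le |S(z)-S(x)| < c^{-1}\sm S'(x)\sm r$, hence $|x-z| < r$ and $z \in B(x,r)$, so $w = S(z) \in S(B(x,r))$.

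The main obstacle is the bookkeeping in the left inclusion of (\ref{e-a4.2}): one must ensure the preimage point $z$ actually lies within distance $r_0$ of $x$ before invoking (\ref{e-a4.1}), which requires either a small-radius argument combined with the continuity of $S^{-1}$, or observing that $S^{-1}$ restricted to a suitable compact neighbourhood of $S(X)$ is Lipschitz, so that $w$ close to $S(x)$ forces $z$ close to $x$. Everything else is a routine mean-value-inequality estimate and a matching of constants, so I would state the neighbourhood choices once at the start and not belabour them.
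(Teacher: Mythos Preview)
Your proof is correct, but it proceeds differently from the paper's. You argue directly: fix a compact neighbourhood $X'$, exploit uniform continuity of $S'$ on $X'$ to pick $r_0$, and use the integral form of the mean value inequality along the segment $[x,y]$ to bound the remainder $S(y)-S(x)-S'(x)(y-x)$ by $\epsilon|x-y|$. The paper instead argues by contradiction: assuming (\ref{e-a4.1}) fails for every $r_0$, it extracts sequences $x_n\in X$, $y_n\in U$ with $|x_n-y_n|\to 0$ violating the inequality, passes to a convergent subsequence $x_n\to x$, and applies the \emph{component-wise} mean value theorem to write $S(x_n)-S(y_n)=M_n(x_n-y_n)$ for a matrix $M_n$ built from gradients of the coordinate functions at intermediate points; continuity of $S'$ forces $M_n\to S'(x)$, contradicting the violated inequality.

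The two arguments are essentially equivalent in strength; yours is the more standard constructive route and makes the dependence of $r_0$ on the modulus of continuity of $S'$ explicit, while the paper's contradiction argument avoids fixing $X'$ and $\epsilon$ at the outset. For (\ref{e-a4.2}) the paper simply says it is ``not hard to derive'' from (\ref{e-a4.1}) and gives no details; your right inclusion is immediate, and your left inclusion is fine once you observe (as you suggest at the end) that $S^{-1}$ is Lipschitz on a compact neighbourhood of $S(X)$, which forces $z=S^{-1}(w)$ close to $x$ and legitimises the appeal to the lower bound in (\ref{e-a4.1}). Equivalently, apply the upper bound of (\ref{e-a4.1}) to $S^{-1}$ on $S(X)$, noting $\|(S^{-1})'(S(x))\|=\sm S'(x)\sm^{-1}$, which gives the left inclusion in one line without the bootstrapping worry.
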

\begin{proof} Let $c>1$. We only prove (\ref{e-a4.1}), for it is not hard to derive (\ref{e-a4.2}) from (\ref{e-a4.1}).
Assume on the contrary that (\ref{e-a4.1}) is not true. Then there
exist two sequences $(x_n)\subset X, (y_n)\subset U$ such that
$x_n\neq y_n$, $\lim_{n\to \infty} |x_n-y_n|=0$ and for each $n\geq
1$,
\begin{equation}
\label{e-4.2}
 \begin{split}
 \mbox{either }&|S(x_n)-S(y_n)|\geq c \|S^\prime (x_n)\|\cdot|x_n-y_n|, \\
 \mbox{or }\quad  &|S(x_n)-S(y_n)|\leq c^{-1} \sm S^\prime (x_n) \sm \cdot|x_n-y_n|.
 \end{split}
\end{equation}
Since $X$ is compact, without lost of generality, we assume that
$$\lim_{n\to \infty} x_n=x=\lim_{n\to \infty}y_n.$$
 Write
$S=(f_1,f_2,\ldots,f_d)^t$. Then each component $f_j$ of $S$ is a
$C^1$ real-valued function defined on $U$. Choose a small
$\epsilon>0$ such that
$$
\{z\in \R^d: |z-x|\leq \epsilon \mbox { for some } x\in X\}\subset U.
$$
Take $N\in \N$ such that $|x_n-y_n|<\epsilon$ for $n\geq N$. By the mean value theorem, for each $n\geq N$ and $1\leq j\leq d$, there exists $z_{n,j}$ on the segment $L_{x_n, y_n}$ connecting $x_n$ and $y_n$ such that
$$
f_j(x_n)-f_j(y_n)=\nabla f_j(z_{n,j})\cdot (x_n-y_n),
$$
where $\nabla f_j$ denote the gradient of $f_j$. Therefore
$|S(x_n)-S(y_n)|=|M_n(x_n-y_n)|$ with $M_n:=(\nabla f_1(z_{n,1}),\ldots, \nabla f_d(z_{n,d}))^t$. It follows
\begin{equation}
\label{e-4.3}
\sm M_n \sm \cdot |x_n-y_n|\leq |S(x_n)-S(y_n)|\leq \|M_n\|\cdot|x_n-y_n|.
\end{equation}
Since $S$ is $C^1$, $M_n$ tends to $S^\prime(x)$ as $n\to \infty$.
Thus we have  $\sm M_n \sm\to \sm S'(x)\sm$ and $\|M_n\|\to
\|S'(x)\|$. Meanwhile, $\sm S'(x_n) \sm\to \sm S'(x)\sm$ and
$\|S'(x_n)\|\to \|S'(x)\|$. These limits together (\ref{e-4.3}) lead
to a contradiction with (\ref{e-4.2}). \end{proof}

Let $\{S_1,\ldots, S_\ell\}$ be a  $C^1$ IFS  on a compact set
$X\subset \R^d$. Let $\pi:\Sigma\to \R^d$ be defined as in
(\ref{e-1.1}). By Lemma \ref{lem-4.1}, we have directly

\begin{lem}
\label{lem-4.2} Let $c>1$. Then there exists  $r_0>0$ such that
for any $1\leq i\leq \ell$,  $x\in \Sigma$ and $0<r<r_0$,
$$
B(S_i(\pi x), c^{-1}\sm S_i'(\pi x) \sm r)\subset
S_i\left(B(\pi x, r)\right)\subset B(S_i(\pi x), c\|S_i'(\pi x)\|r).
$$
\end{lem}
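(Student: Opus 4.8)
The plan is to deduce Lemma~\ref{lem-4.2} directly from Lemma~\ref{lem-4.1} by applying the latter separately to each of the finitely many maps $S_i$. Fix $c>1$. For each $i\in\{1,\ldots,\ell\}$, the map $S_i$ extends to a $C^1$-diffeomorphism $S_i:U\to S_i(U)\subset U$ on an open set $U\supset X$ (Definition~\ref{e-j7.7}), and $X$ is a compact subset of $U$. Hence Lemma~\ref{lem-4.1}, applied with $S=S_i$, yields a radius $r_i>0$ such that
\begin{equation*}
 B(S_i(x),c^{-1}\sm S_i'(x)\sm r)\subset S_i\left(B(x,r)\right)\subset B(S_i(x),c\|S_i'(x)\|r)
\end{equation*}
for all $x\in X$ and all $0<r\le r_i$. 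Setting $r_0=\min_{1\le i\le \ell} r_i>0$, which is positive because the minimum is over finitely many positive numbers, the inclusions hold simultaneously for every $i$, every $x\in X$, and every $0<r\le r_0$.

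It remains only to rewrite this with the point $x$ replaced by $\pi x$ for $x\in\Sigma$. By the definition of $\pi$ in (\ref{e-1.1}), $\pi(\Sigma)=K\subset X$, so for any $x\in\Sigma$ the point $\pi x$ lies in $X$ and the displayed inclusions apply with $x$ replaced by $\pi x$. This gives exactly
\begin{equation*}
 B(S_i(\pi x),c^{-1}\sm S_i'(\pi x)\sm r)\subset S_i\left(B(\pi x,r)\right)\subset B(S_i(\pi x),c\|S_i'(\pi x)\|r)
\end{equation*}
for all $1\le i\le \ell$, all $x\in\Sigma$, and all $0<r<r_0$, which is the assertion of the lemma.

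There is essentially no obstacle here: the statement is a packaging of Lemma~\ref{lem-4.1} for a finite family of maps, and the only point requiring a word of care is taking the minimum of the radii $r_i$ over $i$ (legitimate since $\ell<\infty$) and observing that the attractor $K$, hence $\pi(\Sigma)$, sits inside the compact set $X$ on which Lemma~\ref{lem-4.1} was established. Accordingly the write-up will be short, consisting of the two paragraphs above.
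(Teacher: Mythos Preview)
Your proof is correct and takes essentially the same approach as the paper: the paper simply states that Lemma~\ref{lem-4.2} follows directly from Lemma~\ref{lem-4.1}, and your write-up spells out the (obvious) details of applying Lemma~\ref{lem-4.1} to each $S_i$, taking the minimum of the finitely many radii, and specializing to points of the form $\pi x\in K\subset X$.
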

\bigskip

Let $\overline{\rho},\underline{\rho}:\;\Sigma\to \R$ be defined by
\begin{equation}
\label{e-4.4} \overline{\rho}(x)=\|S^{\prime}_{x_1}(\pi \sigma x)\|,
\quad \underline{\rho}(x)=\sm S^{\prime}_{x_1}(\pi \sigma x)\sm
\qquad (x=(x_i)_{i=1}^\infty\in \Sigma).
\end{equation}
Let $\P$ be the partition of $\Sigma$ defined as in (\ref{e-1P}).
For $x\in \Sigma$, let $\P(x)$ denote the element in $\P$ which
contains $x$. Then we have

\begin{lem}
\label{lem-4.3} Let $c>1$. Then there exists  $r_0>0$ such that
for any  $z\in \Sigma$ and $0<r<r_0$,
$$
B^\pi(z, c^{-1}\underline{\rho}(z)r)\cap \P(z)\subset
B^{\pi\sigma}(z, r)
\cap\P(z)\subset B^\pi(z,c\overline{\rho}(z)r)\cap \P(z),
$$
where $B^\pi(z,r)$ is defined as in (\ref{e-ball}).
\end{lem}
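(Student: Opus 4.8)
The plan is to reduce the statement directly to Lemma~\ref{lem-4.2} by exploiting the coding relation on the cylinder $\P(z)$. Write $z=(z_i)_{i=1}^\infty$, so that $\P(z)=[z_1]$. The key observation is that every $y\in\P(z)$ has $y_1=z_1$, and hence, by the definition (\ref{e-1.1}) of $\pi$, $\pi y=S_{z_1}(\pi\sigma y)$; in particular $\pi z=S_{z_1}(\pi\sigma z)$. Consequently the differential $S'_{z_1}(\pi\sigma z)$ has norm $\overline{\rho}(z)$ and smallest singular value $\underline{\rho}(z)$ by the definitions in (\ref{e-4.4}).

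First I would apply Lemma~\ref{lem-4.2} with $i=z_1$ and with $\sigma z$ in place of $x$: there exists $r_0>0$, independent of $z$ and of $r$, such that for all $0<r<r_0$,
\[
B\bigl(\pi z,\,c^{-1}\underline{\rho}(z)r\bigr)\subset S_{z_1}\bigl(B(\pi\sigma z,r)\bigr)\subset B\bigl(\pi z,\,c\overline{\rho}(z)r\bigr),
\]
where I have used $S_{z_1}(\pi\sigma z)=\pi z$ and the above identification of $\|S'_{z_1}(\pi\sigma z)\|$ and $\sm S'_{z_1}(\pi\sigma z)\sm$. The uniformity of $r_0$ over all symbols and all base points is precisely what Lemma~\ref{lem-4.2} (and behind it Lemma~\ref{lem-4.1} applied on the compact set $K=\pi(\Sigma)$) supplies, so no further work is needed on that point.

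Next I would apply the monotone operation $E\mapsto\pi^{-1}(E)\cap\P(z)$ to the three sets in the displayed chain. The left and right terms become $B^\pi(z,c^{-1}\underline{\rho}(z)r)\cap\P(z)$ and $B^\pi(z,c\overline{\rho}(z)r)\cap\P(z)$ by (\ref{e-ball}). For the middle term one checks the identity $\pi^{-1}\bigl(S_{z_1}(B(\pi\sigma z,r))\bigr)\cap\P(z)=B^{\pi\sigma}(z,r)\cap\P(z)$: for $y\in\P(z)$ we have $\pi y=S_{z_1}(\pi\sigma y)$, and since $S_{z_1}$ is injective (it is a contracting diffeomorphism), $\pi y\in S_{z_1}(B(\pi\sigma z,r))$ if and only if $\pi\sigma y\in B(\pi\sigma z,r)$, i.e.\ if and only if $y\in B^{\pi\sigma}(z,r)$. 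Combining these, the chain becomes
\[
B^\pi(z,c^{-1}\underline{\rho}(z)r)\cap\P(z)\subset B^{\pi\sigma}(z,r)\cap\P(z)\subset B^\pi(z,c\overline{\rho}(z)r)\cap\P(z),
\]
which is the assertion.

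I do not expect a real obstacle here: essentially all the analytic content sits in Lemma~\ref{lem-4.2}, and what remains is the bookkeeping identity $\pi|_{\P(z)}=S_{z_1}\circ(\pi\circ\sigma)|_{\P(z)}$ together with the injectivity of $S_{z_1}$. The only point requiring mild care is keeping the closed-ball conventions consistent — Lemma~\ref{lem-4.2} is stated with closed balls, which matches the definitions of $B^\pi$ and $B^{\pi\sigma}$ — and recording that $r_0$ may be chosen uniformly in $z$, as Lemma~\ref{lem-4.2} already guarantees.
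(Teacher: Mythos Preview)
Your proposal is correct and follows essentially the same route as the paper: apply Lemma~\ref{lem-4.2} with $i=z_1$ and $x=\sigma z$, use $S_{z_1}(\pi\sigma z)=\pi z$ to rewrite the resulting chain of balls, then pull back by $\pi$ and intersect with $\P(z)$, verifying the middle identity via $\pi|_{\P(z)}=S_{z_1}\circ(\pi\sigma)|_{\P(z)}$ and the injectivity of $S_{z_1}$. The paper carries out the last verification as a chain of explicit equivalences, but the content is identical to yours.
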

\begin{proof} Let $z=(z_j)^\infty_{j=1}\in \Sigma$. Taking $i=z_1$ and
$x=\sigma z$ in Lemma \ref{lem-4.2} we obtain
{\small
$$
B(S_{z_1}(\pi \sigma
z),c^{-1}\sm S_{z_1}'(\pi \sigma z)\sm r)\subset S_{z_1}(B(\pi \sigma z,r))\subset
B(S_{z_1}(\pi \sigma z),c\|S_{z_1}'(\pi \sigma z)\|r).
$$
}
That is,
$$
B(\pi z,c^{-1}\underline{\rho}(z)r)\subset S_{z_1}B(\pi \sigma
z,r)\subset B(\pi z,c\overline{\rho}(z)r),
$$
where weuse the fact  $S_{z_1}(\pi \sigma z)=\pi z$,
which can be checked directly from the definition of $\pi$. Thus
we have
{\small
$$
B^\pi(\pi  z,c^{-1}\underline{\rho}(z)r)\cap \P(z)\subset
\pi^{-1}\left(S_{z_1}\left(B(\pi \sigma z,r)\right)\right)\cap
\P(z)\subset B^\pi(z,c\overline{\rho}(z)r)\cap \P(z).
$$
}
At last we show that $\pi^{-1}\left(S_{z_1}\left(B(\pi \sigma z,r)\right)\right)\cap \P(z)= B^{\pi\sigma}(z, r) \cap\P(z)$. To see this, let $y=(y_j)_{j=1}^\infty\in \Sigma$. Then we have
the following equivalent implications.
\begin{eqnarray*}
&\mbox{}&y\in\pi^{-1}\left(S_{z_1}\left(B(\pi \sigma z,r)\right)\right)\cap \P(z)\\
&\mbox{}&\quad \Longleftrightarrow y_1=z_1,\quad \pi y\in S_{z_1}
\left(B(\pi \sigma z,r)\right)\\
&\mbox{}&\quad \Longleftrightarrow y_1=z_1, \quad S_{y_1}(\pi \sigma y)\in
S_{z_1}
\left(B(\pi \sigma z,r)\right)\\
& \mbox{}&\quad \Longleftrightarrow y_1=z_1, \quad \pi \sigma y\in
B(\pi \sigma z,r)\\
&\mbox{}&\quad \Longleftrightarrow y_1=z_1, \quad  y\in
B^{\pi \sigma}(z,r)\\
&\mbox{}&\quad \Longleftrightarrow y\in B^{\pi \sigma}(z,r)\cap \P(z).
\end{eqnarray*}
This finishes the proof of the lemma.
 \end{proof}

\begin{lem}
\label{lem-4.4} Assume that $\{S_i\}_{i=1}^\ell$ is a  weakly
conformal IFS with attractor $K$. Then for any $c>1$, there exists
$D>0$ such that for any $n\in \N$, $u\in \{1,\ldots,\ell\}^n$, and
$x,y\in K$ we have
\begin{equation*}
\label{e-4.5} D^{-1} c^{-n} \|S_u^\prime (x)\|\cdot |x-y| \leq
|S_u(x)-S_u(y)|\leq D c^n \|S_u^\prime (x)\|\cdot |x-y|.
\end{equation*}
and
\begin{equation}
\label{e-4.6}
  D^{-1}c^{-n}\|S_u^\prime(x)\|\leq \mbox{diam}(S_u(K))\leq
Dc^n\|S_u^\prime(x)\|.
\end{equation}
\end{lem}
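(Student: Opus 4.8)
The plan is to reduce the statement to an iteration of Lemma~\ref{lem-4.1}, applied not to the single maps $S_i$ but to \emph{blocks} $S_v$ with $|v|$ equal to a large fixed length $L$; passing to blocks is exactly what lets the weak conformality (Definition~\ref{de-1.3}) absorb the two errors that accumulate along the iteration, namely the multiplicative constant produced by each use of the small-scale estimate and --- the essential point --- the gap between the product of the block derivative norms along the orbit of $x$ and the single number $\|S_u'(x)\|$. Concretely: fix the target $c>1$, put $\epsilon=\tfrac12\log c$, and use the uniform convergence in Definition~\ref{de-1.3} to choose $L$ so that $\|S_v'(z)\|\le e^{\epsilon|v|}\sm S_v'(z)\sm$ for all $z\in K$ and all $|v|\ge L$, enlarging $L$ so that also $2^{1/L}\le c^{1/2}$. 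Apply Lemma~\ref{lem-4.1} with constant $2$ to each of the finitely many $C^1$ diffeomorphisms $S_v$, $|v|=L$, on $X=K$, and let $r_0>0$ be the smallest resulting radius. Since each $S_i$ is a contraction, $\mbox{diam}(S_w(K))\to0$ exponentially in $|w|$, so pick $N_1\ge L$ with $\mbox{diam}(S_w(K))\le r_0$ whenever $|w|\ge N_1$; and since $S_w'(z)$ is invertible for $z\in K$, fix $c_0\ge1$ with $c_0^{-1}\le\sm S_w'(z)\sm\le\|S_w'(z)\|\le c_0$ for all $z\in K$ and all $|w|\le N_1+L$.

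For $n<N_1+L$ there are finitely many words $u$, and the asserted inequalities are an instance of the fact that a $C^1$ diffeomorphism is bi-Lipschitz on a compact set with the $\|S_u'\|$-normalized constants bounded; enlarging $D$ absorbs these. For $n\ge N_1+L$ put $q=\lfloor(n-N_1)/L\rfloor\ge1$ and write $u=B_1\cdots B_q\,w$ with $|B_j|=L$ and $|w|=n-qL\in[N_1,N_1+L)$. Given $x,y\in K$, set $x'=S_w(x)$, $y'=S_w(y)$ and, for $0\le j\le q$, $a_j=S_{B_{j+1}\cdots B_q w}(x)\in K$ and $b_j=S_{B_{j+1}\cdots B_q w}(y)\in K$, so that $a_q=x'$, $a_0=S_u(x)$ and $a_{j-1}=S_{B_j}(a_j)$. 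Since $|B_{j+1}\cdots B_q w|\ge N_1$ we get $|a_j-b_j|\le\mbox{diam}(S_{B_{j+1}\cdots B_q w}(K))\le r_0$, so Lemma~\ref{lem-4.1} applies to $S_{B_j}$ at the base point $a_j$; telescoping over $j=q,\dots,1$ gives
\begin{equation*}
2^{-q}\Big(\prod_{j=1}^{q}\sm S_{B_j}'(a_j)\sm\Big)|x'-y'|\ \le\ |S_u(x)-S_u(y)|\ \le\ 2^{q}\Big(\prod_{j=1}^{q}\|S_{B_j}'(a_j)\|\Big)|x'-y'|.
\end{equation*}
By the chain rule $S_u'(x)=A\,S_w'(x)$ with $A:=S_{B_1}'(a_1)\cdots S_{B_q}'(a_q)$, hence $\|S_u'(x)\|\le c_0\|A\|$ and $\|A\|=\|S_u'(x)S_w'(x)^{-1}\|\le c_0\|S_u'(x)\|$; combining this with $\sm A\sm\le\|A\|$, submultiplicativity of $\|\cdot\|$, supermultiplicativity of $\sm\cdot\sm$, the per-block bound $\|S_{B_j}'(a_j)\|\le e^{\epsilon L}\sm S_{B_j}'(a_j)\sm$ and $qL\le n$ yields
\begin{equation*}
c_0^{-1}e^{-\epsilon n}\|S_u'(x)\|\ \le\ \prod_{j=1}^{q}\sm S_{B_j}'(a_j)\sm\ \le\ \prod_{j=1}^{q}\|S_{B_j}'(a_j)\|\ \le\ c_0e^{\epsilon n}\|S_u'(x)\|.
\end{equation*}
Since $c_0^{-1}|x-y|\le|x'-y'|\le c_0|x-y|$, $2^{q}\le c^{n/2}$ and $e^{\epsilon n}=c^{n/2}$, the two displays combine to $c_0^{-2}c^{-n}\|S_u'(x)\||x-y|\le|S_u(x)-S_u(y)|\le c_0^{2}c^{n}\|S_u'(x)\||x-y|$, which is the first assertion with $D=c_0^2$ (enlarged to cover $n<N_1+L$).

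The diameter estimate then follows from the two-point one. Comparing the upper and lower two-point bounds at two distinct points of $K$ shows $\|S_u'(a)\|\le D^2c^{2n}\|S_u'(b)\|$ for all $a,b\in K$, so $\mbox{diam}(S_u(K))=\sup_{y,z\in K}|S_u(y)-S_u(z)|\le Dc^n\mbox{diam}(K)\sup_y\|S_u'(y)\|\le D^3c^{3n}\mbox{diam}(K)\|S_u'(x)\|$; and picking $z\in K$ with $|x-z|\ge\tfrac12\mbox{diam}(K)$ gives $\mbox{diam}(S_u(K))\ge|S_u(x)-S_u(z)|\ge\tfrac12D^{-1}c^{-n}\mbox{diam}(K)\|S_u'(x)\|$. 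Running the whole argument with $c^{1/3}$ in place of $c$ at the outset and absorbing $\mbox{diam}(K)$ and the powers of $D$ produces the stated exponent $c^n$.

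The step I expect to be the main obstacle is controlling $\prod_j\|S_{B_j}'(a_j)\|$ by $\|S_u'(x)\|$: for a merely $C^1$ (non-conformal) IFS this ratio is a priori exponentially large in $n$, since each single-letter factor $\|S_i'\|/\sm S_i'\sm$ may exceed $1$ by a fixed amount and these compound; making the per-block distortion defect $e^{\epsilon L}$ negligible on the exponential scale is precisely why one must block the word and invoke the \emph{uniform} sub-exponential decay built into the definition of weak conformality. The accompanying bookkeeping --- arranging that the chain-rule base points $a_j$ coincide with the base points at which the telescoped small-scale inequalities are evaluated --- is the other delicate point, and it is what forces the particular bracketing $u=B_1\cdots B_q w$ and the choice of $a_j=S_{B_{j+1}\cdots B_q w}(x)$ above.
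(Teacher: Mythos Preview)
Your argument is essentially correct and, in fact, supplies a full proof where the paper gives none: the paper simply cites \cite[Lemma 3.5 and Corollary 3.6]{Fen07} for the conformal case and asserts that ``a slight modification of that proof works for the weakly conformal case.'' Your blocking device---grouping letters into words of a fixed large length $L$ so that the uniform convergence in Definition~\ref{de-1.3} forces $\|S_{B_j}'\|\le e^{\epsilon L}\sm S_{B_j}'\sm$ on each block---is exactly the natural modification, and your bookkeeping of the chain-rule base points $a_j=S_{B_{j+1}\cdots B_q w}(x)$ is correct.

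One small point to tighten: you define $c_0$ via the \emph{derivative} bounds $c_0^{-1}\le\sm S_w'(z)\sm\le\|S_w'(z)\|\le c_0$ on $K$, but then use it as a \emph{bi-Lipschitz} constant in the step $c_0^{-1}|x-y|\le|S_w(x)-S_w(y)|\le c_0|x-y|$ for arbitrary $x,y\in K$. That inference needs convexity (or a suitable path-connectedness) of the domain, which $K$ need not have. The fix is trivial: since there are only finitely many $w$ with $|w|\le N_1+L$, and each $S_w$ is a $C^1$ diffeomorphism on a neighbourhood of the compact set $K$, a compactness argument (split $K\times K$ into $\{|x-y|\le r_0'\}$, where Lemma~\ref{lem-4.1} applies, and its compact complement, where $|S_w(x)-S_w(y)|/|x-y|$ is continuous and bounded away from $0$ and $\infty$ by injectivity) gives a uniform bi-Lipschitz constant $c_1$. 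Use $c_1$ in place of $c_0$ in that step and absorb the change into $D$.
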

\begin{proof}
The results were proved in the  conformal case in \cite[Lemma 3.5 and Corollary 3.6]{Fen07}.
A slight modification of that proof works for the  weakly conformal case.
\end{proof}

As a corollary, we have

\begin{cor}
\label{cor-4.5} Under the assumption of Lemma \ref{lem-4.4}, for $\alpha>0$,  there is $r_0>0$ such that for any $0<r<r_0$ and $z\in K$,
there exist $n\in \N$ and  $u\in
\{1,\ldots,\ell\}^n$ such that $S_u(K)\subset B(z,r)$ and
\begin{equation}
\label{e-4.7} |S_u(x)-S_u(y)|\geq r^{1+\alpha} |x-y|\qquad
(x,y\in K).
\end{equation}
\end{cor}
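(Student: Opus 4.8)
The plan is to realize $B(z,r)$ as containing a cylinder set $S_u(K)$ whose diameter is comparable to $r$, and then to read off from the bounded distortion of Lemma \ref{lem-4.4} that $S_u$ cannot contract distances by much more than $r$. We may assume $\mathrm{diam}(K)>0$ (otherwise there is nothing to prove), and we set $\rho:=\max_{1\leq i\leq\ell}\sup_{x\in U}\|S_i'(x)\|<1$ (finite and strictly less than $1$ since each $S_i$ is a contraction) and $\delta_0:=\min_{1\leq i\leq\ell}\mathrm{diam}(S_i(K))>0$ (positive because each $S_i$ is injective and $K$ is not a single point). Given $\alpha>0$, the first step is to fix $c>1$ so close to $1$ that $4\log c<\alpha\log(1/\rho)$, and to let $D=D(c)>0$ be the constant produced by Lemma \ref{lem-4.4}; the threshold $r_0$ will be fixed at the very end, subject to $r_0\leq\min\{\delta_0,\mathrm{diam}(K)\}$ and one additional smallness requirement.

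Now take $0<r<r_0$ and $z\in K$, choose $w=(w_j)_{j\geq1}\in\Sigma$ with $\pi(w)=z$ (so that $z\in S_{w_1\cdots w_n}(K)$ for every $n$, by (\ref{e-1.1})), write $u^{(n)}:=w_1\cdots w_n$ and $d_n:=\mathrm{diam}(S_{u^{(n)}}(K))$, and let $n=n(z,r)$ be the least index with $d_n\leq r$. Since $d_1\geq\delta_0>r$ we have $n\geq2$, and since $r<d_{n-1}\leq\rho^{n-1}\mathrm{diam}(K)$ we obtain the bound $n-1<\log(\mathrm{diam}(K)/r)/\log(1/\rho)$, uniform in $z$. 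Because $z\in S_{u^{(n)}}(K)$ and $\mathrm{diam}(S_{u^{(n)}}(K))=d_n\leq r$, we get $S_{u^{(n)}}(K)\subset B(z,r)$ immediately, so $u:=u^{(n)}$ will be the required word and only the contraction bound (\ref{e-4.7}) remains.

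The heart of the argument is to bound $d_n$ from below by a quantity of order $r$. Writing $S_{u^{(n)}}=S_{u^{(n-1)}}\circ S_{w_n}$ and choosing $p,q\in S_{w_n}(K)\subseteq K$ with $|p-q|=\mathrm{diam}(S_{w_n}(K))\geq\delta_0$, the lower bound in Lemma \ref{lem-4.4} applied to the word $u^{(n-1)}$ (legitimate because its length $n-1\geq1$) gives $d_n\geq|S_{u^{(n-1)}}(p)-S_{u^{(n-1)}}(q)|\geq D^{-1}c^{-(n-1)}\|S_{u^{(n-1)}}'(p)\|\,\delta_0$, while (\ref{e-4.6}) applied to $u^{(n-1)}$ together with $d_{n-1}>r$ gives $\|S_{u^{(n-1)}}'(p)\|\geq D^{-1}c^{-(n-1)}d_{n-1}>D^{-1}c^{-(n-1)}r$; combining, $d_n\geq\delta_0 r D^{-2}c^{-2(n-1)}$. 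Feeding this into (\ref{e-4.6}) for the word $u$ yields $\|S_u'(x)\|\geq D^{-1}c^{-n}d_n\geq\delta_0 r D^{-3}c^{-(3n-2)}$ for all $x\in K$, whence the lower bound in Lemma \ref{lem-4.4} for $u$ gives $|S_u(x)-S_u(y)|\geq D^{-1}c^{-n}\|S_u'(x)\|\,|x-y|\geq\delta_0 r D^{-4}c^{-(4n-2)}|x-y|$ for all $x,y\in K$. Thus (\ref{e-4.7}) holds as soon as $\delta_0 r D^{-4}c^{-(4n-2)}\geq r^{1+\alpha}$, i.e. $(4n-2)\log c\leq\log(\delta_0 D^{-4})+\alpha\log(1/r)$; substituting the upper bound on $n$ from the previous paragraph, the left side is an affine function of $\log(1/r)$ with leading coefficient $4\log c/\log(1/\rho)$, and since this is strictly less than $\alpha$ by the choice of $c$, the inequality holds for all sufficiently small $r$ — this fixes the last constraint on $r_0$ and completes the proof.

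The main obstacle is precisely the lower bound $d_n\geq\delta_0 r D^{-2}c^{-2(n-1)}$: one must show that descending from level $n-1$, where the cylinder still exceeds $r$ in diameter, to level $n$ costs only a factor polynomial (not exponential) in $1/r$. After that, the remaining issue is that the bounded–distortion factors $c^{n}$ coming from Lemma \ref{lem-4.4} accumulate as $r\to0$ (since $n$ grows like $\log(1/r)$) and must be absorbed into the slack $r^{\alpha}$, which is exactly why $c$ has to be chosen — after $\alpha$ and $\rho$ are given — so small that $4\log c<\alpha\log(1/\rho)$; everything else is routine bookkeeping with the constants of Lemma \ref{lem-4.4}.
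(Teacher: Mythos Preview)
Your proof is correct and follows essentially the same strategy as the paper: pick a cylinder along the coding of $z$ at the scale $r$, then use the bounded--distortion estimate of Lemma~\ref{lem-4.4} and choose $c>1$ close enough to $1$ (depending on $\alpha$ and the contraction ratio) so that the accumulated factors $c^{n}$, with $n$ of order $\log(1/r)$, are absorbed into the slack $r^{\alpha}$. The only real difference is the stopping criterion: the paper picks $n$ via the derivative threshold $\|S'_{\omega_1\cdots\omega_n}(\pi\sigma^n\omega)\|<r^{1+\alpha/2}$ and carries an auxiliary exponent $\alpha/2$, whereas you pick $n$ directly by $\mathrm{diam}\,S_{u^{(n)}}(K)\leq r$; this makes your bookkeeping slightly cleaner but otherwise buys nothing new. (A trivial remark: it suffices to take $\rho=\sup_{x\in K,\,i}\|S_i'(x)\|<1$, which is finite by compactness; taking the supremum over all of $U$ as you wrote is unnecessary, though harmless once the contractivity is used.)
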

\begin{proof} Denote $a=\inf\{\sm S_i^\prime(x)\sm: \; x\in K, 1\leq
i\leq \ell\}$ and $b=\sup\{\|S_i^\prime(x)\|: \; x\in K, 1\leq i\leq
\ell\}$. Then $0<a\leq b<1$. Choose $c$ so that
\begin{equation}
\label{e-t34}
1<c<b^{\frac{-\alpha}{3(2+\alpha)}}.
\end{equation}
Let $D$ be the constant  in Lemma \ref{lem-4.4} corresponding to $c$.
Take $n_0\in \N$ and $r_0>0$ such that
\begin{equation}
\label{e-4.8'} \left(c^3
b^{\alpha/(2+\alpha)}\right)^{n_0}<D^{-3}ab^{\alpha/(2+\alpha)},\quad
(1+\alpha/2) \cdot \frac{\log r_0}{\log a} =n_0.
\end{equation}

Now fix $z\in K$ and $0<r<r_0$. We shall show that there exist
$n\in \N$ and $u\in \{1,\ldots,\ell\}^n$ such that $S_u(K)\subset
B(z,r)$ and (\ref{e-4.7}) holds. To see this, take
$\omega=(\omega_i)_{i=1}^\infty\in \Sigma$ such that $z=\pi \omega$,
where $\pi$ is defined as in (\ref{e-1.1}). Let $n$ be the unique
integer such that
\begin{equation}\label{e-t36}
\|S^\prime_{\omega_1\cdots \omega_n}(\pi
\sigma^n\omega)\|<r^{1+\alpha/2}\leq \|S^\prime_{\omega_1\cdots
\omega_{n-1}}(\pi \sigma^{n-1}\omega)\|.
\end{equation}
It follows $a^n<r^{1+\alpha/2}\leq b^{n-1}$, which together with
(\ref{e-4.8'}) forces that
\begin{equation}
\label{e-t35}
n>n_0 \quad \mbox{and}\quad c^{3n}<D^{-3} a
r^{-\alpha/2}.
\end{equation}
To see (\ref{e-t35}), we first assume on the contrary that $n\leq n_0$. Then
$$a^n\geq a^{n_0}=a^{(1+\alpha/2)\log r_0/\log \alpha}=r_0^{1+\alpha/2}>r^{1+\alpha/2},$$
which contradicts the fact $a^n<r^{1+\alpha/2}$. Hence $n>n_0$. To see $c^{3n}< D^{-3}a r^{-\alpha/2}$, note that
 \begin{eqnarray*}
c^{3n}r^{\alpha/2}&\leq &c^{3n}b^{(n-1)\alpha/(2+\alpha)}   \qquad (\mbox{ using $r^{1+\alpha/2}\leq b^{n-1}$ })\\
&\leq & \left(c^3b^{\alpha/(2+\alpha)}\right)^n b^{-\alpha/(2+\alpha)}\\
&\leq & \left(c^3b^{\alpha/(2+\alpha)}\right)^{n_0} b^{-\alpha/(2+\alpha)} \qquad (\mbox{ using $n>n_0$ and (\ref{e-t34}) })\\
&\leq & D^{-3} a \quad \qquad (\mbox{by (\ref{e-4.8'})}).
\end{eqnarray*}
This completes the proof of (\ref{e-t35}).
By (\ref{e-4.6}), we have
\begin{eqnarray*}
\mbox{diam} S_{\omega_1\cdots \omega_n}(K)\leq Dc^n
\|S^\prime_{\omega_1\cdots \omega_n}(\pi \sigma^n\omega)\|\leq Dc^n
r^{1+\alpha/2}<r.
\end{eqnarray*}
Since $z\in S_{\omega_1\cdots \omega_n}(K)$, the above inequality
implies $S_{\omega_1\cdots \omega_n}(K)\subset B(z,r)$.
By (\ref{e-4.6}) again, we have
\begin{equation}
\label{e-qq*}
\|S_u'(x)\|\geq D^{-2}c^{-2n}\|S_u'(y)\|,\quad \forall \; u\in \{1,\ldots,\ell\}^n, \;\forall\;x,y\in K.
\end{equation}
 By Lemma
\ref{lem-4.4}, we have for $x,y\in K$,
\begin{eqnarray*}
&\mbox{}& |S_{\omega_1\cdots \omega_n}(x)-S_{\omega_1\cdots
\omega_n}(y)|\\
&\mbox{}& \quad \geq D^{-1}c^{-n}
\|S^\prime_{\omega_1\cdots \omega_n}(x)\|\cdot |x-y|\\
&\mbox{}& \quad\geq D^{-3}c^{-3n}\|S^\prime_{\omega_1\cdots \omega_n}(\pi\sigma^n
\omega)\|\cdot |x-y|\qquad\mbox{ (by (\ref{e-qq*}))}\\
&\mbox{}& \quad\geq D^{-3}c^{-3n}\|S^\prime_{\omega_1\cdots
\omega_{n-1}}(\pi\sigma^{n-1} \omega)\|\sm
S^\prime_{\omega_n}(\pi\sigma^n\omega) \sm \cdot |x-y|\\
&\mbox{}& \quad\geq D^{-3}c^{-3n}a r^{1+\alpha/2} |x-y|\qquad\mbox{ (by (\ref{e-t36}))}\\
&\mbox{}& \quad\geq r^{1+\alpha}|x-y| \qquad\mbox{ (by (\ref{e-t35}))}.
\end{eqnarray*}
Hence the corollary follows by taking $u=\omega_1\cdots \omega_n$.
\end{proof}

\begin{pro}
\label{pro-4.6} Let $\{S_i\}_{i=1}^\ell$ be a $C^1$ IFS with attractor $K$. Assume that $K$ is not a singleton.
Then \begin{itemize}
\item[(i)]
for any $m\in \M_\sigma(\Sigma)$, we have for $m$-a.e.\! $x=(x_i)_{i=1}^\infty\in \Sigma$,
\begin{eqnarray*}
&\mbox{}& \liminf_{n\to \infty}\frac{\log
\text{\rm diam} S_{x_1\ldots x_n}(K)}{n}\geq -\overline{\lambda}(x),\\
&& \limsup_{n\to
\infty}\frac{\log \mbox{\rm diam} S_{x_1\ldots x_n}(K)}{n}\leq -\underline{\lambda}(x),
\end{eqnarray*}
where $\underline{\lambda},\overline{\lambda}$ are defined as in
Definition \ref{de-1.2}. In particular, if $\{S_i\}_{i=1}^\ell$ is
$m$-conformal, then for $m$-a.e.\! $x=(x_i)_{i=1}^\infty\in
\Sigma$,
$$
\lim_{n\to \infty}\frac{\log \mbox{\rm diam} S_{x_1\ldots x_n}(K)}{n}=
-\lambda(x).
$$
\item[(ii)] If $\{S_i\}_{i=1}^\ell$ is weakly conformal, then it is $m$-conformal for each $m\in \M_\sigma(\Sigma)$.
\end{itemize}

\end{pro}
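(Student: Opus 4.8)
The plan is to deduce both parts from Lemma \ref{lem-4.4} and the subadditive ergodic theorem. For part (i), fix $c>1$ and let $D=D(c)$ be the constant from Lemma \ref{lem-4.4}, applied with $x=y$ replaced by the diameter estimate (\ref{e-4.6}): for every $n$, every $u=x_1\ldots x_n$ and every $z\in K$,
$$
D^{-1}c^{-n}\|S_u'(z)\|\le \mathrm{diam}\,S_u(K)\le Dc^n\|S_u'(z)\|.
$$
Taking logarithms and dividing by $n$, the terms $\pm(\log D)/n\pm\log c$ are negligible as $n\to\infty$ (and $\log c$ can be made arbitrarily small afterward), so it suffices to control $\frac1n\log\|S_{x_1\ldots x_n}'(z_n)\|$ for a varying point $z_n=\pi\sigma^nx$. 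First I would observe that $\|S_{x_1\ldots x_n}'(\pi\sigma^nx)\|\le \prod_{k=0}^{n-1}\|S_{x_{k+1}}'(\pi\sigma^{k+1}x)\|$ by the chain rule and submultiplicativity of the operator norm, and similarly $\sm S_{x_1\ldots x_n}'(\pi\sigma^nx)\sm\ge\prod_{k=0}^{n-1}\sm S_{x_{k+1}}'(\pi\sigma^{k+1}x)\sm$. Hence $-\frac1n\log\|S_{x_1\ldots x_n}'(\pi\sigma^nx)\|\ge -\frac1n\sum_{k=0}^{n-1}\log\overline\rho(\sigma^kx)$ and, by definition of $\overline\lambda$, the $\liminf$ of the left side is $\ge\underline\lambda(x)$; combined with the displayed two-sided bound this gives $\limsup_n\frac1n\log\mathrm{diam}\,S_{x_1\ldots x_n}(K)\le-\underline\lambda(x)+\log c$, and letting $c\downarrow1$ yields the second inequality. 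The first inequality is symmetric, using $\sm\cdot\sm$ and $\overline\lambda$. The $m$-conformal case is then immediate since $\overline\lambda(x)=\underline\lambda(x)=\lambda(x)$ $m$-a.e.

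For part (ii), assume $\{S_i\}_{i=1}^\ell$ is weakly conformal, so that
$$
\varepsilon_n:=\sup_{x\in\Sigma}\frac1n\bigl(\log\|S_{x_1\ldots x_n}'(\pi\sigma^nx)\|-\log\sm S_{x_1\ldots x_n}'(\pi\sigma^nx)\sm\bigr)\longrightarrow 0.
$$
Fix $m\in\M_\sigma(\Sigma)$. The functions $\varphi_n(x):=-\log\sm S_{x_1\ldots x_n}'(\pi\sigma^nx)\sm$ form a subadditive cocycle: using the chain rule $S_{x_1\ldots x_{n+k}}'=S_{x_1\ldots x_n}'(\pi\sigma^nx)\cdot S_{x_{n+1}\ldots x_{n+k}}'(\pi\sigma^{n+k}x)$ together with $\sm AB\sm\ge\sm A\sm\sm B\sm$ — wait, one needs the reverse; instead I would work with $\psi_n(x):=-\log\|S_{x_1\ldots x_n}'(\pi\sigma^nx)\|$, which is superadditive since $\|AB\|\le\|A\|\,\|B\|$, equivalently $-\log\|AB\|\ge-\log\|A\|-\log\|B\|$, and note $\psi_n(x)=\psi_n\circ\sigma$-translates combine as $\psi_{n+k}(x)\ge\psi_n(x)+\psi_k(\sigma^nx)$. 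By Kingman's subadditive (here superadditive) ergodic theorem, $\frac1n\psi_n(x)$ converges $m$-a.e. to some limit, which by definition equals $\underline\lambda(x)$ $m$-a.e. The weak conformality bound gives $0\le\frac1n\varphi_n(x)-\frac1n\psi_n(x)\le\varepsilon_n\to0$ uniformly, so $\frac1n\varphi_n(x)$ converges to the same limit $m$-a.e.; but $\limsup\frac1n\varphi_n=\overline\lambda$, hence $\overline\lambda(x)=\underline\lambda(x)$ $m$-a.e., i.e. $\{S_i\}$ is $m$-conformal.

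The main obstacle is the bookkeeping with the point of evaluation: $S_{x_1\ldots x_n}'$ is evaluated at the moving point $\pi\sigma^nx$ rather than at a fixed point, so the cocycle identity and the subadditivity estimates must be set up carefully (via the chain rule $S_{x_1\ldots x_{n+k}}'(\pi\sigma^{n+k}x)=S_{x_1\ldots x_n}'(\pi\sigma^n x)\,S_{x_{n+1}\ldots x_{n+k}}'(\pi\sigma^{n+k}x)$, using $\pi\sigma^n x = S_{x_{n+1}\ldots x_{n+k}}(\pi\sigma^{n+k}x)$), and one must make sure that the quantities whose Birkhoff/Kingman averages are taken are genuinely $\sigma$-equivariant. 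Once the cocycle is correctly identified, part (ii) is a direct application of Kingman's theorem, and part (i) needs only the chain rule together with Lemma \ref{lem-4.4}; I do not expect either to require the full strength of the density results of Section \ref{S2}.
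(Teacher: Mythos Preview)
Your argument for part (ii) is essentially the paper's: apply Kingman's theorem to the subadditive cocycle $A_p^*(x)=\log\|S'_{x_1\ldots x_p}(\pi\sigma^px)\|$ and the superadditive cocycle $A_p(x)=\log\sm S'_{x_1\ldots x_p}(\pi\sigma^px)\sm$, then use weak conformality to force the limits to agree. That part is fine.

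The gap is in part (i). You base the entire argument on the diameter estimate (\ref{e-4.6}) of Lemma \ref{lem-4.4}, but that lemma is stated and proved only for \emph{weakly conformal} IFS, whereas Proposition \ref{pro-4.6}(i) is for an arbitrary $C^1$ IFS. The two-sided bound $D^{-1}c^{-n}\|S_u'(z)\|\le\mathrm{diam}\,S_u(K)\le Dc^n\|S_u'(z)\|$ with the \emph{same} quantity $\|S_u'(z)\|$ on both sides genuinely uses weak conformality; for a general $C^1$ IFS there is no reason for $\|S_u'\|$ to control the diameter from below (the map may collapse in some direction), nor is there an a priori bounded-distortion estimate letting you compare $\|S_u'\|$ at different points of $K$ with a constant independent of $n$. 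So the very first displayed inequality in your proposal is not available in the generality required.

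The paper's route avoids this by never invoking Lemma \ref{lem-4.4}. Instead it iterates the one-step estimates of Lemmas \ref{lem-4.1}--\ref{lem-4.2} (valid for any $C^1$ IFS) to obtain bounds of the form
\[
r_1\,c^{-n}\prod_{j=0}^{n-1}\underline\rho(\sigma^jx)\;\le\;\mathrm{diam}\,S_{x_1\ldots x_n}(K)\;\le\;r_0\,c^{n}\prod_{j=0}^{n-1}\overline\rho(\sigma^jx),
\]
i.e.\ products of one-step derivatives rather than the derivative of the composition. Birkhoff's theorem then gives $g_*(x)\ge -\log c+\E_m(\log\underline\rho\mid\I)(x)$ and $g^*(x)\le\log c+\E_m(\log\overline\rho\mid\I)(x)$. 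These bounds are too crude (they involve $\E_m(\log\underline\rho\mid\I)$ rather than $\overline\lambda$), so the paper repeats the argument for the $p$-th iterate IFS $\{S_{i_1\ldots i_p}\}$, obtaining $g_*(x)\ge -\log c+\frac1p\E_m(A_p\mid\I)(x)$ and $g^*(x)\le\log c+\frac1p\E_m(A_p^*\mid\I)(x)$, and only then lets $p\to\infty$ using Kingman's theorem to reach $-\overline\lambda(x)$ and $-\underline\lambda(x)$. Your shortcut via Lemma \ref{lem-4.4} would recover this if the IFS were weakly conformal, but in the stated generality you need the iterate-and-refine argument.
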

\begin{proof} We first prove (i).
Take $c>1$ small enough so that $c\sup_{x\in \Sigma} \overline{\rho}(x)<1$.
 Let $r_0>0$ be given as in Lemma \ref{lem-4.2}. Let $x=(x_i)_{i=1}^\infty\in \Sigma$. Applying Lemma \ref{lem-4.2} repeatedly, we have
\begin{equation}
\label{e-t37}
 S_{x_1\cdots
x_n}(B(\pi \sigma^n x,r_0))\subset
B(\pi x, c^{n}\overline{\rho}(x)\cdots
\overline{\rho}(\sigma^{n-1}x)r_0).
\end{equation}
Since $\{S_i\}_{i=1}^\ell$ is contractive, there is a constant $k$
such that $$S_{x_{n+1}\cdots x_{n+k}}(K)\subset B(\pi \sigma^n
x,r_0).$$ This together with (\ref{e-t37}) yields
\begin{equation}
\label{e-t29}
\text{diam} S_{x_1\ldots
x_{n+k}}(K)\leq \text{diam} S_{x_1\cdots
x_n}(B(\pi \sigma^n x,r_0)) \leq c^{n}\overline{\rho}(x)\ldots
\overline{\rho}(\sigma^{n-1}x)r_0.
\end{equation}
Since $K$ is not a singleton, there exists $0<r_1<r_0$ such that for each $z\in K$,
 there exists $w\in K$ such that $r_1\leq |z-w|\leq r_0$. Indeed, to obtain
 $r_1$, one chooses an integer $n_0$ large enough such that $\sup_{u\in
 \Sigma_{n_0}} \text{diam} S_u(K)\leq r_0$, then set $$r_1=(1/2)\inf_{u\in
 \Sigma_{n_0}} \text{diam} S_u(K).$$
 For each such pair $(z,w)$, applying (\ref{e-a4.1}) repeatedly yields
$$
\text{diam} S_{x_1\ldots x_{n}}(K)\geq |S_{x_1\ldots
x_{n}}(z)-S_{x_1\ldots x_{n}}(w)|\geq r_1 c^{-n}\prod_{j=1}^n \sm
S_{x_j}^\prime(S_{x_{j+1}\ldots x_n}(z)\sm .
$$
Hence by taking $z=\pi{\sigma^n} x$, we have
\begin{equation}
\label{e-t4.15}
 \text{diam} S_{x_1\ldots x_{n}}(K)\geq r_1
c^{-n}\underline{\rho}(x)\ldots \underline{\rho}(\sigma^{n-1}x).
\end{equation}
Denote
\begin{eqnarray*}
&\mbox{}& g_*(x)=\liminf_{n\to \infty}\frac{\log \text{diam}
S_{x_1\ldots x_n}(K)}{n}\quad\mbox{ and }\\
&&g^*(x)=\limsup_{n\to
\infty}\frac{\log \text{diam} S_{x_1\ldots x_n}(K)}{n}.
\end{eqnarray*}
 It is clear
that $g_*(x)=g_*(\sigma x)$ and $g^*(x)=g^*(\sigma x)$. Let $\I$
denote the $\sigma$-algebra $\{B\in \B(\Sigma):\; \sigma^{-1}B=B\}$.
Then by (\ref{e-t4.15}), the Birkhoff ergodic theorem, and Theorem 34.2
in \cite{Bil95}, we have for $m$-a.e.\! $x\in \Sigma$,
\begin{equation}
\label{e-t291}
\begin{split}
g_*(x)=\E_m(g_*|\I)(x)&\geq \E_m\left(\lim_{n\to \infty}\frac{-n\log c+\sum_{i=0}^{n-1}\log \underline{\rho}\circ \sigma^{-i}}{n}\big|\I\right)(x)\\
&=-\log c+\lim_{n\to \infty}\frac{1}{n}\sum_{i=0}^{n-1}\E_m(\log \underline{\rho}\circ \sigma^{-i}|\I)(x)\\
&=-\log c+\E_m(\log \underline{\rho}|\I)(x)\\
\end{split}
\end{equation}
and similarly by (\ref{e-t29}),
\begin{equation}
\label{e-t292}g^*(x)\leq \log c+\E_m(\log \overline{\rho}|\I)(x).
\end{equation}
For $p\in \N$, write $A_p(x)=\log \sm S^\prime_{x_1\cdots x_p}(\pi \sigma^p x)\sm$
and $A^*_p(x)=\log \| S^\prime_{x_1\cdots x_p}(\pi \sigma^p
x)\|$.
Consider the IFS $\{S_{i_1\ldots i_p}:\;1\leq i_j\leq \ell, 1\leq
j\leq p\}$ rather than $\{S_i\}_{i=1}^\ell$. Then (\ref{e-t291}) and (\ref{e-t292}) can be replaced by
\begin{equation*}
g_*(x)\geq -\log c+\frac{1}{p}\E_m(A_p|\I_p)(x),\qquad g^*(x)\leq \log c+\frac{1}{p}\E_m(A_p^*|\I_p)(x),
\end{equation*}
where $\I_p:=\{B\in \B(\Sigma):\; \sigma^{-p}B=B\}$. Taking the conditional expectation with respect to $\I$ in the above inequalities and noting that
$g_*$, $g^*$ are $\sigma$-invariant, we obtain
\begin{equation}\label{e-t293}
g_*(x)\geq -\log c+\frac{1}{p}\E_m(A_p|\I)(x),\qquad g^*(x)\leq \log c+\frac{1}{p}\E_m(A_p^*|\I)(x).
\end{equation}
Since $A_p(x)$ is sup-additive (i.e., $A_{p+q}(x)\geq A_p(x)+ A_q(\sigma^p x)$) and $A^*_p(x)$ is sub-additive
(i.e., $A^*_{p+q}(x)\leq A^*_p(x)+ A^*_q(\sigma^p x)$), by Kingman's sub-additive ergodic theorem (cf. \cite{Wal-book}), we have
\begin{equation}
\label{e-t294}
\lim_{p\to \infty}  A_p(x)/p=-\overline{\lambda}(x),\qquad \lim_{p\to \infty} A^*_p(x)/p=-\underline{\lambda}(x)
\end{equation}
almost everywhere and in $L^1$. Hence letting $c\to 1$ and $p\to
\infty$ in (\ref{e-t293}) and using Theorem 34.2  in \cite{Bil95},
we obtain that $g_*(x)\geq -\overline{\lambda}(x)$ and $g^*(x)\leq
-\underline{\lambda}(x)$ almost everywhere. This finishes the proof
of (i).

To see (ii), assume that $\{S_i\}_{i=1}^\ell$ is weakly conformal
and $m\in \M_\sigma(\Sigma)$. Then $|A_p(x)-A^*_p(x)|/p$ converges
to $0$ uniformly as $p$ tends to infinity. This together with
(\ref{e-t294}) yields $\overline{\lambda}(x)=\underline{\lambda}(x)$
for $m$-a.e.\! $x\in \Sigma$. This proves (ii).
\end{proof}

\section{Estimates for local dimensions of invariant measures for $C^1$ IFS}\label{S5}

In this section, we prove a general version of Theorem \ref{thm-1.1},
which is also needed in the proof of Theorem \ref{thm-1.3}.  Let
$\{T_i\}_{i=1}^\ell$ be a $C^1$ IFS on $\R^d$, and
$\{S_i\}_{i=1}^\ell$ a $C^1$ IFS on $\R^k$. Let $\phi: \Sigma\to
\R^d$ and $\pi: \Sigma\to \R^k$ denote the canonical projections
associated with $\{T_i\}_{i=1}^\ell$ and $\{S_i\}_{i=1}^\ell$
respectively. Let $\eta$ and $\xi$ be two partitions of $\Sigma$
defined respectively by
$$\eta=\{\phi^{-1}(z):\; z\in \R^d\} ,\qquad \xi=\sigma^{-1}\eta.$$

Let $\P$ be the partition of $\Sigma$ given as in (\ref{e-1P}) and
let $\overline{\rho}(x),\underline{\rho}(x)$ be defined as in
(\ref{e-4.4}). Applying  Lemma \ref{lem-4.3} to the IFS
$\{S_i\}_{i=1}^\ell$, we have  for any $c>1$ there exist
$0<\delta<c-1$ and $r_0>0$ such that for any $r\in (0, r_0)$ and
$x\in \Sigma$,
\begin{equation}
\label{e-5.1}
B^\pi(x,(c-\delta)^{-1}\underline{\rho} (x) r)\cap \P(x)\subset B^{\pi\sigma}(x,r)\cap \P(x)\subset B^\pi(x,(c-\delta)\overline{\rho}(x) r)\cap \P(x).
\end{equation}
The following technical proposition is substantial in our proof.

\begin{pro}
\label{pro-5.1}
Let $m\in \M_\sigma(\Sigma)$ and $c>1$. Let $\delta,r_0$ be given as above. Then there exists  $\Lambda\subset\Sigma$ with $m(\Lambda)=1$ such that for all $x\in \Lambda$ and  $r\in (0, r_0)$,
\begin{equation}
\label{e-5.2}
\frac{m_x^{\eta}(B^\pi(x,c\overline{\rho} (x) r) \cap \P(x)) }{
m_{\sigma x}^{\eta}(B^\pi(\sigma x, r))}\geq f(x)\cdot
\frac{m_x^{\xi}(B^{\pi\sigma}(x, r) \cap \P(x)) }{
m_{x}^{\xi}(B^{\pi\sigma}(x, r))}
\end{equation}
and
\begin{equation}
\label{e-5.3} \frac{m_x^{\eta}(B^{\pi}(x,c^{-1}\underline{\rho} (x)r) \cap \P(x)) }{ m_{\sigma x}^{\eta}(B^\pi(\sigma x, r))}\leq f(x)\cdot
\frac{m_x^{\xi}(B^{\pi\sigma}(x,(1-c\delta/2)r)
\cap \P(x)) }{ m_{ x}^{\xi}(B^{\pi\sigma}(x,(1-c\delta/2)r))},
\end{equation}
where $f:=\sum_{A\in \P}
\chi_A\frac{\E_m(\chi_A|\phi^{-1}\gamma)}{\E_m(\chi_A|\sigma^{-1}\phi^{-1}\gamma)}$,
$\gamma=\B(\R^d)$.
\end{pro}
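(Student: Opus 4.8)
The plan is to reduce the two inequalities to statements about conditional measures along the partition $\xi=\sigma^{-1}\eta$ by comparing them with the corresponding quantities along $\eta$ itself, using the commutation of $\sigma$ with the coding. First I would unwind the notation: since $\xi = \sigma^{-1}\eta$, Rohlin's theory gives $m_x^\xi = (\sigma\text{-pushforward relation})$, so that $m_x^\xi(\sigma^{-1}E) $ and $m_{\sigma x}^\eta(E)$ are related; more precisely, $\sigma$ maps $\xi(x)$ onto $\eta(\sigma x)$ and the conditional measure $m_x^\xi$ pushes forward under $\sigma$ to $m_{\sigma x}^\eta$ (up to the $\sigma$-invariance of $m$). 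Hence $m_{\sigma x}^\eta(B^\pi(\sigma x,r)) = m_x^\xi(\sigma^{-1}B^\pi(\sigma x,r)) = m_x^\xi(B^{\pi\sigma}(x,r))$, which identifies the denominator on the left-hand side of \eqref{e-5.2}--\eqref{e-5.3} with the denominator on the right-hand side. This is the key bookkeeping step that makes the two sides comparable.

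Next I would handle the numerator. The factor $f(x) = \sum_{A\in\P}\chi_A(x)\,\dfrac{\E_m(\chi_A|\phi^{-1}\gamma)(x)}{\E_m(\chi_A|\sigma^{-1}\phi^{-1}\gamma)(x)}$ is exactly the Radon–Nikodym-type factor relating $m_x^\eta$ restricted to $\P(x)$ with $m_x^\xi$ restricted to $\P(x)$: indeed, by Remark \ref{rem-2.5}(ii) and Theorem \ref{thm-2.1}, for $A=\P(x)$ one has $m_x^\eta(A\cap\,\cdot\,) $ and $m_x^\xi(A\cap\,\cdot\,)$ differing (as measures on $\eta(x)$, resp. $\xi(x)$, which agree up to the $\sigma$-identification) precisely by the ratio of the two conditional expectations of $\chi_A$ with respect to $\phi^{-1}\gamma$ and $\sigma^{-1}\phi^{-1}\gamma$. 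Then I would apply Lemma \ref{lem-4.3} / the inclusions \eqref{e-5.1} to sandwich $B^{\pi\sigma}(x,r)\cap\P(x)$ between $B^\pi(x,(c-\delta)^{-1}\underline\rho(x)r)\cap\P(x)$ and $B^\pi(x,(c-\delta)\overline\rho(x)r)\cap\P(x)$, and absorb the discrepancy between $c-\delta$ and $c$ (as well as the shrinkage factor $1-c\delta/2$ in \eqref{e-5.3}) into the choice of $\delta<c-1$; the point is that on $\P(x)=[x_1]$, the set $B^{\pi\sigma}(x,r)\cap\P(x)$ is the $\pi$-preimage of $S_{x_1}(B(\pi\sigma x,r))$, and $S_{x_1}$ distorts balls by at most the factor $c$ on the relevant scale. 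Combining the ball inclusions with the identification of numerators and denominators yields \eqref{e-5.2} and \eqref{e-5.3}; the extra $(1-c\delta/2)$ in \eqref{e-5.3} arises because, after applying $S_{x_1}^{-1}$, the ball $B(\pi x, c^{-1}\underline\rho(x)r)$ is strictly contained in $S_{x_1}(B(\pi\sigma x,(1-c\delta/2)r))$ by the lower distortion bound, so passing to the slightly smaller radius is needed to keep the inclusion valid for \emph{all} small $r$.

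I expect the main obstacle to be the careful treatment of the null sets and the "for all $r\in(0,r_0)$" uniformity: the identities relating $m_x^\eta$, $m_x^\xi$ and the factor $f$ hold only $m$-a.e., and a priori for each fixed $r$ separately, whereas the conclusion must hold simultaneously for a full-measure set $\Lambda$ of $x$ and all $r<r_0$. To get around this I would, as in the proofs of Lemmas \ref{lem-2.2} and \ref{lem-2.4}, first establish the measure identities on a countable dense set of radii (rationals), using the approximation-from-above of balls by finite unions of dyadic cubes, and then invoke monotonicity in $r$ together with Lemma \ref{lem-2.4} and Lemma \ref{lem-2.10} (to ensure the denominators are positive $m$-a.e., so the ratios are well-defined) to extend to all $r$. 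A secondary technical point is verifying that $\sigma$ restricted to $\xi(x)$ really does carry $m_x^\xi$ to $m_{\sigma x}^\eta$ — this follows from the uniqueness clause in Rohlin's theorem (Theorem \ref{thm-2.1}) applied to $m$ and $m\circ\sigma^{-1}=m$, together with $\widehat{\xi}=\sigma^{-1}\widehat{\eta}$ — but it should be spelled out explicitly since everything downstream rests on it.
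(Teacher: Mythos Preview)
Your approach is correct and is genuinely more direct than the paper's. The paper proves the proposition by expressing both $m_x^\eta$ and $m_x^\xi$ as limits of ratios as an auxiliary $\phi$-radius $t\to 0$ (via Lemma~\ref{lem-2.7} and Remark~\ref{rem-2.8}), with $f(x)$ arising as $\lim_{t\to 0}m(\phi^{-1}R_{t,x}(\phi\sigma x))/m(B^\phi(x,t))$; this forces the use of open/closed ball approximations, rational radii, and chains of one-sided $\limsup/\liminf$ estimates. Your argument bypasses all of that by using two clean measure identities: (a) $m_x^\xi(\sigma^{-1}E)=m_{\sigma x}^\eta(E)$ for $m$-a.e.\ $x$ and all $E$ (the pushforward statement you identify), and (b) $m_x^\eta(E\cap\P(x))=f(x)\,m_x^\xi(E\cap\P(x))$ for $m$-a.e.\ $x$ and all $E$. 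With (a), (b) and the inclusions \eqref{e-5.1}, inequality \eqref{e-5.2} follows immediately, and \eqref{e-5.3} follows with the shrinking factor $1-\delta/c$ in place of $1-c\delta/2$; this variant is equally good for the proof of Theorem~\ref{thm-5.2}, though it does not match the stated constant verbatim when $c>\sqrt2$.

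Two points to tighten. First, identity (b) does not come from Remark~\ref{rem-2.5}(ii); the key input is the analogue of Lemma~\ref{lem-3.6} for $\phi$, namely $\widehat\P\vee\phi^{-1}\gamma=\widehat\P\vee\sigma^{-1}\phi^{-1}\gamma$, which shows that $\eta\vee\P$ and $\xi\vee\P$ are literally the same measurable partition of $\Sigma$ (their atoms $\eta(x)\cap\P(x)$ and $\xi(x)\cap\P(x)$ coincide, since on $[x_1]$ one has $\phi y=\phi x\Leftrightarrow \phi\sigma y=\phi\sigma x$). Hence $m_x^{\eta\vee\P}=m_x^{\xi\vee\P}$, and the transitivity formula $m_x^{\eta\vee\P}(E)=m_x^\eta(E\cap\P(x))/m_x^\eta(\P(x))$ gives (b). Your phrase ``which agree up to the $\sigma$-identification'' is misleading here---no $\sigma$ is involved, the sets are equal. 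Second, your worry about the ``for all $r$'' uniformity is unfounded: once (a) and (b) hold as identities of \emph{measures} on a full-measure set of $x$ (which they do, by Rohlin's uniqueness and a countable generating class), they hold for every measurable set, in particular for $B^\pi(x,c\overline\rho(x)r)$ and $B^{\pi\sigma}(x,r)$ at every radius simultaneously. No rational-radius approximation or appeal to Lemma~\ref{lem-2.4} is needed.
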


\begin{proof} Write $R_{t,x}(z)=T_{x_1}^{-1}B(T_{x_1} z, t)$ for $t>0$,
$x=(x_i)_{i=1}^\infty\in \Sigma$ and $z\in \R^d$. It is direct to
check that
\begin{equation}
\label{e-5.4'} \sigma^{-1} \phi^{-1}R_{t,x}(\phi \sigma x)\cap
\P(x)=B^\phi(x,t)\cap \P(x). \end{equation}
 Hence for
$m$-a.e.\! $x$,
\begin{eqnarray*}
\frac{m(\phi^{-1}R_{t,x}(\phi \sigma x))}{m(B^\phi(x,t))}&=& \frac{m(B^\phi(x,t)\cap
\P(x))}{m(B^\phi(x,t))}\cdot
\frac{m(\phi^{-1}R_{t,x}(\phi \sigma x))}{m(B^\phi(x,t)\cap \P(x))}\\
&=& \frac{m(B^\phi(x,t)\cap \P(x))}{m(B^\phi(x,t))}\cdot \frac{m(\sigma^{-1}\phi^{-1}R_{t,x}(\phi \sigma
x))}{m(\sigma^{-1} \phi^{-1}R_{t,x}(\phi \sigma x)\cap \P(x))}.
\end{eqnarray*}
Letting $t\to 0$ and applying Proposition \ref{pro-2.5} and Remark \ref{rem-2.6}, we have
\begin{equation}
\label{e-5.4} \lim_{t\to 0}\frac{m(\phi^{-1}R_{t,x}(\phi \sigma
x))}{m(B^\phi(x,t))}=\sum_{A\in \P}
\chi_A(x)\frac{\E_m(\chi_A|\phi^{-1}\gamma)(x)}{\E_m(\chi_A|\sigma^{-1}\phi^{-1}\gamma)(x)}=:f(x).
\end{equation}
for $m$-a.e.\! $x$.
Let $\widetilde{\Lambda}$ denote the set of $x\in\Sigma$ such that
the following properties (1)-(4) hold:

(1) $\displaystyle\lim_{t\to 0}\frac{m(B^\phi(x,t)\cap
\P(x))}{m(B^\phi(x,t))}=\sum_{A\in \P}\chi_A
\E_m(\chi_A|\phi^{-1}\gamma)(x)>0$.

(2) $\displaystyle\lim_{t\to
0}\frac{m(\sigma^{-1}\phi^{-1}R_{t,x}(\phi \sigma x)\cap
\P(x))}{m(\sigma^{-1}\phi^{-1} R_{t,x}(\phi\sigma x))}=\sum_{A\in
\P}\chi_A \E_m(\chi_A|\sigma^{-1}\phi^{-1}\gamma)(x)>0$.

(3) For all $q\in {\Bbb Q}^+$,
{\small
\begin{eqnarray*}
&& m^\eta_x(B^\pi(x,q)\cap \P(x))\geq \limsup_{t\to 0}
\frac{ m\left(B^\pi(x,q)\cap \P(x)\cap B^\phi(x,t)\right)}{m\left(B^\phi(x,t)\right)},\\
&& m^\eta_x(U^\pi(x,q)\cap \P(x))\leq \liminf_{t\to 0}\frac{ m\left(B^\pi(x,q)\cap \P(x) \cap
B^\phi(x,t)\right)}{m\left(B^\phi(x,t)\right)},\\
&& m^\xi_x(B^{\pi\sigma}(x,q)\cap \P(x))\geq \limsup_{t\to 0}\frac{ m\left(
B^{\pi\sigma}(x,q)\cap \P(x) \cap \sigma^{-1}\phi^{-1}R_{t,x}(\phi\sigma x)\right)}
{m\left(\sigma^{-1}\phi^{-1}R_{t,x}(\phi \sigma x)\right)},\\
&&m^\xi_x(U^{\pi\sigma}(x,q)\cap \P(x))\leq \liminf_{t\to 0}\frac{
 m\left(B^{\pi\sigma}(x,q)\cap \P(x) \cap \sigma^{-1}\phi^{-1}R_{t,x}(\phi\sigma x)\right)}
 {m\left(\sigma^{-1}\phi^{-1}R_{t,x}(\phi \sigma x)\right)},\\
\end{eqnarray*}
}
where $U^\pi(x,q):=\pi^{-1}U(\pi x,q)$, $U^{\pi\sigma}(x,q):=\sigma^{-1}\pi^{-1}U(\pi\sigma x,q)$ and $U(z,q)$ denotes the open ball in $\R^k$ of radius $q$ centered at $z$.

(4) $\displaystyle\lim_{t\to 0}\frac{m(\phi^{-1}R_{t,x}(\phi \sigma x))}{m(B^\phi(x,t))}=f(x)$.\\
Then we have  $m(\widetilde{\Lambda})=1$ by Proposition
\ref{pro-2.5}, Lemma \ref{lem-2.7}, Remarks \ref{rem-2.6},
\ref{rem-2.8} and (\ref{e-5.4}).

Now let $\Lambda=\widetilde{\Lambda}\cap
\sigma^{-1}\widetilde{\Lambda}$. Then $m(\Lambda)=1$.  Fix $x\in
\Lambda$ and $r\in (0,r_0)$. Let $q_1\in {\Bbb Q}^+\cap (r,
cr/(c-\delta))$. Choose $q_2, q_3\in {\Bbb Q}^+$ such that
$q_1<q_2<cr/(c-\delta)$ and
$q_2(c-\delta)\overline{\rho}(x)<q_3<c\overline{\rho}(x)r$. By
(\ref{e-5.1}), we have $B^\pi(x,q_3) \cap \P(x)\supset
B^{\pi\sigma}(x,q_2) \cap \P(x).$ It together
with (\ref{e-5.4'}) yields
\begin{equation}
\label{e-5.6'} B^\pi(x,q_3) \cap
\P(x)\cap B^\phi(x,t))\supset B^{\pi\sigma}(x,q_2) \cap \P(x)\cap \sigma^{-1}\phi^{-1}R_{t,x}(\phi
\sigma x)).
\end{equation}
Hence we  have
\begin{eqnarray*}
&\mbox{}& \frac{m_x^{\eta}(B^\pi(x,c\overline{\rho} (x) r) \cap \P(x)) }{
m_{\sigma x}^{\eta}(B^\pi(\sigma x, r))}\\
&\geq&
\frac{m_x^{\eta}(B^\pi(x,q_3) \cap \P(x)) }{
m_{\sigma x}^{\eta}(U^{\pi\sigma}(x,q_1))}
\\
&\geq &
\frac
{
\limsup_{t\to 0}m(B^\pi(x,q_3) \cap \P(x)\cap B^\phi(x,t))/m(B^\phi(x,t))
}
{\liminf_{t\to 0}m(B^\pi(\sigma x,q_1)\cap \phi^{-1}R_{t,x}(\phi \sigma x))/m(\phi^{-1}R_{t,x}(\phi \sigma x))
}\\
&& \qquad (\mbox{by Lemma \ref{lem-2.7} and Remark \ref{rem-2.8}})\\
 &\geq & \lim_{t\to 0} \frac{m(\phi^{-1}R_{t,x}(\phi \sigma
x))}{m(B^\phi(x,t))}\cdot \limsup_{t\to 0}\frac{
m(B^\pi(x,q_3) \cap \P(x)\cap B^\phi(x,t)) } {
m(\sigma^{-1}B^{\pi}( \sigma x,q_1) \cap
\sigma^{-1}\phi^{-1}R_{t,x}(\phi \sigma x))
}\\
&= & \lim_{t\to 0} \frac{m(\phi^{-1}R_{t,x}(\phi \sigma
x))}{m(B^\phi(x,t))}\cdot \limsup_{t\to 0}\frac{
m(B^\pi(x,q_3) \cap \P(x)\cap B^\phi(x,t)) } {
m(B^{\pi\sigma}(x,q_1) \cap
\sigma^{-1}\phi^{-1}R_{t,x}(\phi \sigma x))
}.\\
\end{eqnarray*}
Denote
\begin{eqnarray*}
&\mbox{}&X_t:=m(B^{\pi\sigma}(x,q_2) \cap \P(x)\cap \sigma^{-1}\phi^{-1}R_{t,x}(\phi\sigma x)),\\
&\mbox{}&Y_t:=m(B^{\pi\sigma}( x,q_1) \cap \sigma^{-1}\phi^{-1}R_{t,x}(\phi \sigma x)),\\
&\mbox{}&Z_t:=m(\sigma^{-1}\phi^{-1}R_{t,x}(\phi\sigma x)).
\end{eqnarray*}
Using the property (4), we have

\begin{eqnarray*}
&\mbox{}& \frac{m_x^{\eta}(B^\pi(x,c\overline{\rho} (x) r) \cap \P(x)) }{
m_{\sigma x}^{\eta}(B^\pi(\sigma x, r))}\\
&\geq & f(x)\cdot
\limsup_{t\to 0}\frac{
m(B^\pi(x,q_3) \cap \P(x)\cap B^\phi(x,t))
}
{
m(B^{\pi\sigma}( x,q_1) \cap \sigma^{-1}\phi^{-1}R_{t,x}(\phi \sigma x))
}\\
&\geq & f(x)\cdot
\limsup_{t\to 0}X_t/Y_t\qquad (\mbox{by (\ref{e-5.6'})})\\
&\geq & f(x)\cdot
\limsup_{t\to 0}\frac{X_t/Z_t}{Y_t/Z_t}
\geq  f(x)\cdot
\frac{\liminf_{t\to 0}X_t/Z_t}
{\limsup_{t\to 0} Y_t/Z_t
}\\
&\geq & f(x)\cdot
\frac{m^\xi_x(U^{\pi\sigma}( x,q_1) \cap \P(x))
}
{m^\xi_x(B^{\pi\sigma}( x,q_1))
}\quad (\mbox{by Lemma \ref{lem-2.7} and Remark \ref{rem-2.8}})\\
&\geq & f(x)\cdot \frac{m^\xi_x(B^{\pi\sigma}(x, r) \cap \P(x)) } {m^\xi_x(B^{\pi\sigma}( x,q_1))
}.
\end{eqnarray*}

Letting $q_1\downarrow r$, we obtain (\ref{e-5.2}). (\ref{e-5.3})
follows from an analogous argument. \end{proof}

Let $(\phi,\pi)$ denote the map $\Sigma\to \R^d\times\R^k$,
$x\mapsto (\phi x,\pi x)$. It is easy to see that $(\phi,\pi)$ is
the canonical projection w.r.t. the direct product of
$\{T_i\}_{i=1}^\ell$ and $\{S_i\}_{i=1}^\ell$. In the following we
give a general version of Theorem \ref{thm-1.1}.

\begin{thm}
\label{thm-5.2}
Let $m\in \M_\sigma(\Sigma)$. Then for $m$-a.e.\! $x\in \Sigma$, we have
\begin{eqnarray}
&& \limsup_{r\to 0} \frac{\log m^\eta_x(B^\pi(x,r))}{\log r}\leq \frac{\E_m(g|\I)(x)}{- \underline{\lambda}(x)} \quad {\mbox{and}} \label{e-5.5}\\
&&\liminf_{r\to 0} \frac{\log m^\eta_x(B^\pi(x,r))}{\log
r}\geq \frac{\E_m(g|\I)(x)}{- \overline{\lambda}(x)},
\label{e-5.6}
\end{eqnarray}
where \begin{eqnarray*}
g&:=&{\bf I}_m(\P|\sigma^{-1}\phi^{-1}\B(\R^d))-{\bf I}_m(\P|\phi^{-1}\B(\R^d))\\
&\mbox{}&\;\; +{\bf I}_m(\P|(\phi,\sigma)^{-1}\B(\R^d\times \R^k))-
{\bf I}_m(\P|\sigma^{-1}(\phi,\pi)^{-1}\B(\R^d\times\R^k)),
\end{eqnarray*}
and $\overline{\lambda}(x)$,\;$\underline{\lambda}(x)$ denote the
upper and lower Lyapunov exponents of $\{S_i\}_{i=1}^\ell$ at $x$
(see Definition \ref{de-1.2}). In particular, if
$\{S_i\}_{i=1}^\ell$ is  $m$-conformal, we have
$$
\lim_{r\to 0} \frac{\log m^\eta_x(B^\pi(x,r))}{\log
r}=\frac{h_{(\phi, \pi)}(\sigma,m, x)-h_\phi(\sigma,m,x)}{ \lambda(x)}.
$$
\end{thm}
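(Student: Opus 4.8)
The plan is to estimate $m^\eta_x(B^\pi(x,r))$ by comparing it, through one application of $\sigma$, with $m^\eta_{\sigma x}(B^\pi(\sigma x, r'))$ for a suitably rescaled radius $r'$, and then to iterate. First I would combine the two inequalities of Proposition~\ref{pro-5.1}: writing $\phi$ for the projection of the auxiliary IFS $\{T_i\}$ and $\pi$ for that of $\{S_i\}$, one has for $m$-a.e.\ $x$ and all small $r$ that the ratio
$$
\frac{m_x^{\eta}(B^\pi(x,c\overline{\rho}(x)r)\cap \P(x))}{m_{\sigma x}^{\eta}(B^\pi(\sigma x,r))}\ \geq\ f(x)\cdot \frac{m_x^{\xi}(B^{\pi\sigma}(x,r)\cap \P(x))}{m_x^{\xi}(B^{\pi\sigma}(x,r))},
$$
and a matching upper bound with $\overline\rho$ replaced by $\underline\rho$. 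Taking logarithms, dividing by $n$ after iterating $x,\sigma x,\dots,\sigma^{n-1}x$, and using Proposition~\ref{pro-2.9} to kill the boundary term $\frac1n\log m^\eta_{\sigma^n x}(B^\pi(\sigma^n x,r_0))$, the Birkhoff averages of $\log\overline\rho$ and $\log\underline\rho$ produce $-\overline\lambda(x)$ and $-\underline\lambda(x)$ in the denominators via Kingman/Birkhoff as in Proposition~\ref{pro-4.6}, while the averages of $-\log f$ and of the $\xi$-ratios produce, by Proposition~\ref{pro-2.5} and the Shannon--McMillan--Breiman-type statement of Proposition~\ref{pro-3.13'}, the conditional information function $g$ and hence $\E_m(g|\I)(x)$.

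More concretely, the key steps in order are: (1) identify the one-step transfer inequality from Proposition~\ref{pro-5.1} and take $\log$; (2) telescope along the orbit to get, for each fixed small $r\in(0,r_0)$,
$$
\log m^\eta_x(B^\pi(x,\textstyle\prod_{j=0}^{n-1}c\,\overline\rho(\sigma^j x)\,r))\ \geq\ \sum_{j=0}^{n-1}\log f(\sigma^j x)+\sum_{j=0}^{n-1}\log\frac{m^\xi_{\sigma^j x}(B^{\pi\sigma}(\sigma^j x,r)\cap\P(\sigma^j x))}{m^\xi_{\sigma^j x}(B^{\pi\sigma}(\sigma^j x,r))}+\log m^\eta_{\sigma^n x}(B^\pi(\sigma^n x,r));
$$
(3) let $r\to 0$ inside the $\xi$-ratio sum using Proposition~\ref{pro-2.5} (applied to the partition $\P$ with the projection $\pi\sigma$ and the partition $\xi=\sigma^{-1}\eta$), which converts each summand into $-{\bf I}_m(\P|\widehat\xi\vee(\pi\sigma)^{-1}\gamma)(\sigma^j x)$; (4) recognise that the three information terms assembled from $\log f$, from this $\xi$-term, and from $\log m^\eta$ combine into the function $g$ of the statement, via the identity $\widehat\xi\vee(\pi\sigma)^{-1}\gamma = \sigma^{-1}((\phi,\pi)^{-1}\gamma')$ up to the relevant $\vee\widehat\P$ adjustments (this is where Lemma~\ref{lem-3.6}-type manipulations and the commutation $\widehat\P\vee\sigma^{-1}\phi^{-1}\gamma=\widehat\P\vee\phi^{-1}\gamma$ enter); (5) apply Lemma~\ref{lem-3.15} (the Maharam-type averaging lemma) to replace $\frac1n\sum_{j=0}^{n-1}(\cdot)(\sigma^j x)$ by $\E_m(g|\I)(x)$ and the Birkhoff/Kingman averages of $\log\overline\rho$, $\log\underline\rho$ by $-\overline\lambda(x)$, $-\underline\lambda(x)$; (6) divide numerator log by the denominator log, let $c\downarrow 1$, and read off \eqref{e-5.5} and \eqref{e-5.6}. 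For the $m$-conformal case, $\overline\lambda(x)=\underline\lambda(x)=\lambda(x)$ by Proposition~\ref{pro-4.6}(i), the two bounds coincide, and $\E_m(g|\I)(x)$ is by definition $h_{(\phi,\pi)}(\sigma,m,x)-h_\phi(\sigma,m,x)$ once one checks the information-term bookkeeping against Definition~\ref{de-1.1}.

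The main obstacle I anticipate is step (4), the bookkeeping that shows the accumulated information functions really assemble into the stated $g$: one must carefully track which $\sigma$-algebra each conditional information is taken with respect to, use that $S_i$ and $T_i$ are injective and contractive to move $\sigma^{-1}$ past $\pi^{-1}\gamma$ and $\phi^{-1}\gamma$ modulo $\widehat\P$ (the analogue of Lemma~\ref{lem-3.6}), and handle the slightly mismatched radii $(1-c\delta/2)r$ versus $r$ appearing in \eqref{e-5.3} — these discrepancies must be shown to contribute nothing to the limit, which follows because the relevant ratios are monotone in $r$ and one can sandwich between rational radii as in the proof of Lemma~\ref{lem-2.2}(ii). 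A secondary technical point is justifying the interchange of the limits $r\to 0$ and $n\to\infty$: one fixes $r$, does the $n\to\infty$ averaging first (so Propositions~\ref{pro-2.9}, \ref{pro-3.13'} and Lemma~\ref{lem-3.15} apply on a single full-measure set), and only afterwards lets $r\to 0$ and $c\downarrow 1$ along countable sequences, using the measurability statements of Lemma~\ref{lem-2.2} to keep everything on one exceptional-null set.
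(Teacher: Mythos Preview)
Your overall strategy matches the paper's --- iterate Proposition~\ref{pro-5.1}, kill the boundary term with Proposition~\ref{pro-2.9}, average with Lemma~\ref{lem-3.15}, and pass to Lyapunov exponents via the time-$q$ IFS and Kingman --- but the telescoping in your step~(2) is set up incorrectly in two respects, and both matter.

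First, you are missing a term. Proposition~\ref{pro-5.1} controls $m_x^\eta(B^\pi(x,c\overline\rho(x)r)\cap\P(x))$, not $m_x^\eta(B^\pi(x,c\overline\rho(x)r))$. To telescope the latter you must insert, at each step, the ratio
\[
G_n(x)=\log\frac{m_x^\eta\bigl(B^\pi(x,c^n\overline\rho_n(x)r_0)\cap\P(x)\bigr)}{m_x^\eta\bigl(B^\pi(x,c^n\overline\rho_n(x)r_0)\bigr)},
\]
which by Proposition~\ref{pro-2.5} converges to $-{\bf I}_m(\P\mid\widehat\eta\vee\pi^{-1}\gamma)$. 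This is not a cosmetic adjustment: together with the limit of your $\xi$-ratio it supplies the two $(\phi,\pi)$-information terms in $g$, while $\log f$ supplies the two $\phi$-information terms. Without $G_n$ you cannot assemble $g$.

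Second, the radii in the $\xi$-ratios are not a fixed $r$. When you apply Proposition~\ref{pro-5.1} at $\sigma^j x$ in the chain, the relevant radius is $c^{n-j-1}\overline\rho_{n-j-1}(\sigma^{j+1}x)r_0$, so the summand is $W_{n-j}(\sigma^j x)$ with $W_k\to W$ as $k\to\infty$ (Proposition~\ref{pro-2.5}). This is precisely the situation Lemma~\ref{lem-3.15} is designed for: the Ces\`aro average $\frac1n\sum_{j=0}^{n-1}W_{n-j}(\sigma^j x)$ converges to $\E_m(W\mid\I)$. Your description ``fix $r$, take $n\to\infty$, then let $r\to0$'' therefore misreads the structure: there is no separate $r\to0$ step --- the shrinking of the intermediate radii is already built into $n\to\infty$. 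Once the telescoping is written this way, the mismatched factors $(1-c\delta/2)$ in \eqref{e-5.3} cause no trouble, and after letting $c\downarrow1$ you obtain the bound with $\E_m(\log\overline\rho\mid\I)$ in the denominator; replacing this by $-\underline\lambda(x)$ then requires the time-$q$ iterate and Kingman, as you correctly note.
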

\begin{proof}
 It suffices to prove (\ref{e-5.5}) and (\ref{e-5.6}). For short we only prove (\ref{e-5.5}). The proof of
(\ref{e-5.6}) is analogous.

We first prove the following inequality
 \begin{equation}
 \label{e-5.7}
  \limsup_{r\to 0} \frac{\log m^\eta_x(B^\pi(x,r))}{\log r}\leq \frac{\E_m(g|\I)(x)}
 {\E_m(\log \overline{\rho}|\I)(x)}\qquad \mbox{$m$-a.e.},
 \end{equation}
 where $\overline{\rho}(x)=\|S'_{x_1}(\sigma x)\|$ for $x=(x_i)_{i=1}^\infty$. To see it, let $c>1$ so that $$c\sup_{x\in \Sigma} \overline{\rho}(x)<1.$$
  Let $
r_0$ and $f$ be given as in Proposition \ref{pro-5.1}. For $n\in \N$
and $x\in \Sigma$, define
$$\overline{\rho}_n(x)=\overline{\rho}(x)\overline{\rho}(\sigma x)\cdots \overline{\rho}(\sigma^{n-1}x).$$
 Write
\begin{equation*}
\begin{split}
H_n(x)&:=\log \frac{m_x^\eta\left(B^\pi(x, c^n\overline{\rho}_n(x)r_0)\right)}
{m_{\sigma x}^\eta\left(B^\pi(\sigma x, c^{n-1}\overline{\rho}_{n-1}(\sigma x)r_0)\right)}, \\
G_n(x)&:=\log \frac{m_x^\eta\left(B^\pi(x, c^n\overline{\rho}_n(x)r_0) \cap \P(x)\right)}
{m_{x}^\eta\left(B^\pi(x, c^{n}\overline{\rho}_{n}(x)r_0)\right)},\\
W_n(x)&:=\log \frac{m_x^\xi\left(B^{\pi\sigma}(x, c^{n-1}\overline{\rho}_{n-1}(\sigma x)r_0) \cap \P(x)\right)}
{m_{x}^\xi\left(B^{\pi\sigma}(x, c^{n-1}\overline{\rho}_{n-1}(\sigma x)r_0)\right)}.\\
\end{split}
\end{equation*}
Then by Proposition \ref{pro-5.1} we have for $m$-a.e.\! $x$,
$H_n(x)+G_n(x)\geq \log f(x) +W_n(x)$, that is,
$$H_{n}(x)\geq \log f(x)-G_{n}(x)+W_{n}(x).$$
However
$$\log m_x^\eta\left(B^\pi(x, c^n\overline{\rho}_n(x)r_0) \right)=
\sum_{j=0}^{n-1}H_{n-j}(\sigma^jx)+
\log m^\eta_{\sigma^n x}\left(B^\pi(\sigma^n x, r_0)\right).$$
Hence  for $m$-a.e.\! $x$,
\begin{eqnarray*}\label{e-5.8}
\frac{\log m_x^\eta\left(B^\pi(x, c^n\overline{\rho}_n(x)r_0) \right)}{n}&\geq& \frac{1}{n}\sum_{j=0}^{n-1}\left[\log f(\sigma^jx)-G_{n-j}(\sigma^j x)+W_{n-j}(\sigma^{j}x)\right]
\nonumber \\
&& \mbox{} +\frac{1}{n}
\log m^\eta_{\sigma^n x}\left(B^\pi(\sigma^n x, r_0)\right).
\end{eqnarray*}
Note that by Proposition \ref{pro-2.5},
\begin{eqnarray*}
&\mbox{}& G_n\to G:=-{\bf I}_m(\P|\hat{\eta}\vee \pi^{-1}\B(\R^k)),\\
&&       W_n\to W:=-{\bf I}_m(\P|\sigma^{-1}\hat{\eta}\vee \sigma^{-1}\pi^{-1}\B(\R^k))
\end{eqnarray*}
pointwise and in $L^1$. By Lemma \ref{lem-3.15} and Proposition \ref{pro-2.9}, we have for $m$-a.e.\! $x$,
\begin{eqnarray*}
\liminf_{n\to \infty}\frac{\log
m_x^\eta\left(B^\pi(x, c^n\overline{\rho}_n(x)r_0)
\right)}{n}&\geq&
{\E_m((\log f-G+W)|\I)(x)}\\
&=&\E_m(g|\I)(x).
\end{eqnarray*}
In the meantime, by Birkhoff ergodic Theorem, we have $$ \lim_{n\to
\infty} \frac{1}{n}\log (c^n\overline{\rho}_n(x)r_0) =\log c
+\E_m(\log \overline{\rho}|\I)(x) \quad \mbox{$m$-a.e.}$$ Hence we
have
 \begin{eqnarray*}
  \limsup_{r\to 0}\frac{\log m_x^\eta\left(B^\pi(x,r)\right)}{\log r}
  &=&\limsup_{n\to \infty}\frac{\log m_x^\eta\left(B^\pi(x, c^n\overline{\rho}_n(x)r_0) \right)}
  {\log (c^n\overline{\rho}_n(x)r_0)}\\
  &\leq &
\frac{\E_m(g|\I)(x)}{\log c +\E_m(\log \overline{\rho}|\I)(x)}.
  \end{eqnarray*}
Taking $c\to 1$, we obtain (\ref{e-5.7}).

Let $q\in \N$. Considering the IFS $\{T_{i_1\ldots i_q}: \; 1\leq i_j\leq \ell,\; 1\leq j\leq q\}$ and
$\{S_{i_1\ldots i_q}: \; 1\leq i_j\leq \ell,\; 1\leq j\leq q\}$, analogous to (\ref{e-5.7}) we have
\begin{equation}
 \label{e-5.9}
  \limsup_{r\to 0} \frac{\log m^\eta_x(B^\pi(x,r))}{\log r}\leq \frac{\E_m(g_q|\I)(x)}
 {\E_m(\log h_q|\I)(x)},
 \end{equation}
where
\begin{eqnarray*}
g_q&:=&{\bf I}_m(\P_0^{q-1}|\sigma^{-q}\phi^{-1}\B(\R^d))-{\bf I}_m(\P_0^{q-1}|\phi^{-1}\B(\R^d))\\
&\mbox{}&\;\; +{\bf I}_m(\P_{0}^{q-1}|(\phi,\pi)^{-1}\B(\R^d\times
\R^k))- {\bf
I}_m(\P_{0}^{q-1}|\sigma^{-q}(\phi,\pi)^{-1}\B(\R^d\times\R^k))
\end{eqnarray*}
and $h_q(x):=\|S^\prime_{x_1\ldots x_q}(\sigma^{q}x)\|$ for
$x=(x_i)_{i=1}^{\infty}$.

Due to (\ref{e-tt}), we have $\E_m(g_q|\I)(x)=q\E_m(g|\I)(x)$. It is
easily seen that  $h_q(x)$ is sub-multiplicative in the sense that
$h_{p+q}(x)\leq h_{p}(x)h_q(\sigma^px)$. Thus by Kingman
sub-additive ergodic theorem (cf. \cite{Wal-book}), we have
  $$\lim_{q\to \infty} \frac{1}{q}\E_m(\log h_q|\I)(x)=-\underline{\lambda}(x)\quad\mbox{ for $m$-a.e.\! $x$}.$$
  Hence letting $q\to \infty$ in (\ref{e-5.9}) we obtain (\ref{e-5.5}). This finishes the proof of Theorem \ref{thm-5.2}.
\end{proof}

\begin{proof}[Proof of Theorem \ref{thm-1.1}]
In Theorem \ref{thm-5.2}, we take $T_i(x)=x/2$ for all $1\leq i\leq \ell$ to obtain Theorem \ref{thm-1.1}. To see it, we know that  the attractor of $\{T_i\}_{i=1}^\ell$ is just the singleton $\{0\}$.  Hence   $\eta$ is the trivial partition $\{\Sigma,\emptyset\}$ of $\Sigma$, and thus we have $m^\eta_x\equiv m$.
\end{proof}

\section{Proofs of Theorem \ref{thm-1.3} and Theorem \ref{thm-1.4}}\label{S6}

\subsection{Proof of Theorem \ref{thm-1.3}}
 Let $\Phi=\{S_i\}_{i=1}^\ell$ be the direct product of $k$ $C^1$ IFS $\Phi_1,\ldots, \Phi_k$, which
are defined respectively on compact $X_i\subset \R^{q_i}$ ($i=1,\ldots,k$).
 For each $i$, let $\Gamma_i$ denote the canonical projection w.r.t.  $\Phi_i$, and
 let $\lambda_i(x)$ denote the Lyapunov exponent of $\Phi_i$ at $x$ provided  it exists.

Let $m\in \M_\sigma(\Sigma)$.   Assume that $\Phi_1,\ldots,\Phi_k$ are $m$-conformal. Let
$\Omega$ denote the collection of all permutations of $\{1,\ldots,k\}$. For $\tau\in \Omega$, we denote
$$
\Lambda_\tau:=
\left\{x\in \Sigma: \; \lambda_i(x) \mbox{ exists for all $i$,\; }
\lambda_{\tau(1)}(x)\leq \lambda_{\tau(2)}(x)\leq \cdots \leq \lambda_{\tau(k)}(x)\right\}.
$$
Then $m\left(\bigcup_{\tau\in \Omega}\Lambda_\tau\right)=1$.
Let $\pi$ denote the canonical projection associated with the IFS $\Phi$.  In the following we show that the local dimension $d(m\circ \pi^{-1},\pi x)$ exists for $m$-a.e.\! $x\in \Sigma$.

Without loss of generality we only show that $d(m\circ \pi^{-1},\pi x)$ exists for $m$-a.e.\! $x\in \Lambda_e$,
 where $e$ denotes the identity in $\Omega$.
 Here we may assume $m(\Lambda_e)>0$.
 For  other $\Lambda_\tau$'s, the proof is essentially identical under a change of coordinates.

For $i=1,\ldots, k$, let $\pi_i$ denote the canonical projection w.r.t. $\Phi_1\times\cdots\times\Phi_i$.
It is clear that $\pi=\pi_k$.  Bear in mind  that
$$\lambda_1(x)\leq \lambda_2(x)\leq \cdots\leq \lambda_k(x)\qquad (x\in \Lambda_e).$$

For $i=1,\ldots, k$, we use $\{m^i_x\}$ to denote the family of conditional measures $\{m^{\eta_i}_x\}$ of $m$ associated with
the partition $$\eta_i=\left\{\pi_i^{-1}(z): \; z\in \prod_{t=1}^i\R^{q_t}\right\}.$$ For convenience,  we use
$\{m^0_x\}$ denote the family of conditional measures of $m$ with the trivial partition $\{\Sigma,\emptyset\}$.
It is clear that $m^0_x=m$ for all $x\in \Sigma$.

For $i=1,\ldots, k$, we give a metric $d_i$ on $\prod_{t=1}^i\R^{q_t}$ by
$$
d_i((z_1,\ldots, z_i), (w_1,\ldots, w_i))=\sup_{1\leq t\leq i}
|z_t-w_t|_{\R^{q_t}}.
$$
and define $d=d_k$.
We claim that for any $x\in \Lambda_e$ and $\epsilon>0$,
\begin{equation}
\label{e-t301} \eta_i(x)\cap \P_0^n(x)\subset  B^\pi(x, e^{-n(\lambda_{i+1}(x)-\epsilon)})
\end{equation}
when $n$ is large enough. Here $B^\pi(x,r)$ is defined as in (\ref{e-ball}). To see the claim, let $x\in \Lambda_e$ and $y\in
\eta_i(x)$. Then $\pi_i y=\pi_i x$. Thus $$d(\pi y,\pi x)=\sup_{1\leq
t\leq k}|\Gamma_ty,\Gamma_tx|_{\R^{q_t}}=\sup_{i+1\leq t\leq
k}|\Gamma_ty,\Gamma_tx|_{\R^{q_t}}.$$
 Since $y\in \P_0^n(x)$ and
$\lambda_{i+1}(x)\leq\ldots\leq \lambda_k(x)$, by Proposition
\ref{pro-4.6}, we have  $$d(\pi y,\pi x)\leq
e^{-n(\lambda_{i+1}(x)-\epsilon)}$$ when $n$ is large enough, and
(\ref{e-t301}) follows.

For $i=0,1,\ldots, k$ and $x\in \Sigma$, denote
$$
h_i(x)=\lim_{n\to \infty}\frac{-\log m^i_x(\P^n_0(x))}{n+1}
$$
provided that the limit exists.
By Proposition \ref{pro-3.16},
\begin{equation}
\label{e-h}
h_i(x)=h(\sigma,m,x)-h_{\pi_i}(\sigma,m,x)\quad \mbox{ for $m$-a.e.\! }x\in \Sigma.
\end{equation}
For $i=0,1,\ldots, k-1$ and $x\in \Sigma$, denote
$$
\vartheta_i(x)=\liminf_{r\to 0}\frac{\log
m^i_x(B^{\Gamma_{i+1}}(x,r))}{\log r}.
$$
By Theorem \ref{thm-5.2} and (\ref{e-h}), we have
\begin{equation}
\label{e-h'} \vartheta_i(x)=\frac{
h_{\pi_{i+1}}(\sigma,m,x)-h_{\pi_i}(\sigma,m,x)} {\lambda_{i+1}(x)}=
\frac{h_i(x)-h_{i+1}(x)}{\lambda_{i+1}(x)}
\end{equation}
 for $m$-a.e.\!
$x\in \Sigma$.

For $i=0,1,\ldots, k$ and $x\in \Sigma$, define
$$
\overline{\delta}_i(x)=\limsup_{r\to 0}\frac{\log m^i_x(B^\pi(x,r))}{\log r},\quad
\underline{\delta}_i(x)=\liminf_{r\to 0}\frac{\log m^i_x(B^\pi(x,r))}{\log r}.
$$
We claim that
\begin{itemize}

\item[(C1)] $\overline{\delta}_{k}(x)=\underline{\delta}_{k}(x)=0$ for all $x\in \Sigma$.
\item[(C2)]
 $h_{i}(x)-h_{i+1}(x)\geq \lambda_{i+1}(\overline{\delta}_i(x)-\overline{\delta}_{i+1}(x))$ for $m$-a.e.\! $x\in \Lambda_e$ and  $i=0,1\ldots, k-1$;
\item[(C3)]
 $\underline{\delta}_{i+1}(x)+\vartheta_i(x)\leq \underline{\delta}_i(x)$ for $m$-a.e.\! $x\in \Lambda_e$ and $i=0,1\ldots, k-1$;
\end{itemize}

It is easy to see that (C1)-(C3) together with (\ref{e-h})-(\ref{e-h'}) force that for $m$-a.e.\! $x\in \Lambda_e$, $\underline{\delta}_i(x)=\overline{\delta}_i(x)$ (we denoted the common value as $\delta_i(x)$) for $i=0,\ldots,k$ and, furthermore
\begin{equation}
d(m\circ\pi^{-1},\pi x)=\delta_0(x)=\sum_{i=0}^{k-1} \vartheta_i(x)=
\sum_{i=0}^{k-1} \frac{h_i(x)-h_{i+1}(x) }{\lambda_{i+1}(x)}.
\end{equation}
which is the desired result in Theorem \ref{thm-1.3}. In the
following we prove (C1)-(C3) respectively.

\bigskip

\begin{proof}[Proof of (C1)]  Since $\eta_k=\left\{\pi^{-1}(z): z\in \prod_{t=1}^k\R^{q_t}\right\}$, we have
$$m^k_x(B^\pi(x,r))=m^k_x(\eta_k(x))=1$$ for all $x\in \Sigma$. Thus $\overline{\delta}_k(x)=\underline{\delta}_k(x)=0$ for all $x\in \Sigma$.
\end{proof}

\bigskip

\begin{proof}[Proof of (C2)]  We give a proof by contradiction, which is  modified from \cite[\S10.2]{LeYo85}.
Assume that (C2) is not true. Then there exists $0\leq i\leq k$ such
that
$$
h_i(x)-h_{i+1}(x)<\lambda_{i+1}(x) (\overline{\delta}_i(x)-\overline{\delta}_{i+1}(x))
$$
on a subset of $\Lambda_e$ with positive measure. Hence there exist $\alpha>0$ and real numbers $h_i,h_{i+1},\lambda_{i+1}, \overline{\delta}_i,\overline{\delta}_{i+1}$ with $\lambda_{i+1}>0$ such that
\begin{equation}
\label{e-ass}
h_i-h_{i+1}<\lambda_{i+1} (\overline{\delta}_i-\overline{\delta}_{i+1})-\alpha
\end{equation}
and for any $\epsilon>0$, there exists $B_\epsilon\subset \Lambda_e$ with $m(B_\epsilon)>0$ so that for $x\in B_\epsilon$,
$$|h_i(x)-h_i|<\epsilon/2, \quad |h_{i+1}(x)-h_{i+1}|<\epsilon/2, \quad |\lambda_{i+1}(x)-\lambda_{i+1}|<\epsilon/2$$
and
$$|\overline{\delta}_i(x)-\overline{\delta}_i|<\epsilon/2, \quad |\overline{\delta}_{i+1}(x)-\overline{\delta}_{i+1}|<\epsilon/2.$$
Fix $\epsilon>0$. There exists $n_0\colon B_\epsilon\to \N$ such
that for $m$-a.e.\! $x\in B_\epsilon$ and $n>n_0(x)$, we have
\begin{itemize}
\item[(1)] $\displaystyle
\frac{\log m^{i+1}_x\left(B^\pi(x,e^{-n(\lambda_{i+1}-2\epsilon)})\right)}
 {-n(\lambda_{i+1}-2\epsilon)}\leq
 \overline{\delta}_{i+1}+\epsilon;$
\item[(2)] $\displaystyle
-\frac{1}{n} \log m^{i+1}_x(\P_{0}^n(x))\geq h_{i+1}-\epsilon$\qquad
(by (\ref{e-h}));

\item[(3)] $\displaystyle
\eta_i(x)\cap \P_0^n(x)\subset B^\pi(x,e^{-n(\lambda_{i+1}-2\epsilon)})$ \qquad (by (\ref{e-t301}));

\item[(4)] $\displaystyle
-\frac{1}{n}\log m^i_x(\P_0^n(x))\leq h_i+\epsilon$\qquad (by
(\ref{e-h})).
\end{itemize}

 Take $N_0$ such that $$
\Delta:=\{x\in B_\epsilon\colon n_0(x)\leq N_0\}$$
 has the positive measure.  By Lemma \ref{lem-2.4} and Lemma \ref{lem-2.10}, there exist $c>0$ and $\Delta'\subset \Delta$ with $m(\Delta')>0$ such that for $x\in \Delta'$, there exists $n=n(x)\geq N_0$ such that

\begin{itemize}
\item[(5)]  $\displaystyle \frac{m^{i+1}_x (L\cap \Delta)}{m^{i+1}_x(L)}\geq c$, where
$$
L:=B^\pi(x,e^{-n(\lambda_{i+1}-2\epsilon)});
$$
\item[(6)]
$\displaystyle
\frac{\log m^{i}_x\left(B^\pi(x,2e^{-n(\lambda_{i+1}-2\epsilon)})\right)} {-n(\lambda_{i+1}-2\epsilon)}> \overline{\delta}_{i}-\epsilon;$

\item[(7)] $\displaystyle\frac{\log (1/c)}{n}<\epsilon$.

\end{itemize}

Take $x\in \Delta'$ such that (1)--(7) are satisfied with $n=n(x)$.
Denote $C=\eta_{i+1}(x)$ and $C'=\eta_i(x)$. Then by (5) and (1),
$$m_x^{i+1}(L\cap \Delta )\geq cm_x^{i+1}(L)\geq
ce^{-n(\lambda_{i+1}-2\epsilon)(\overline{\delta}_{i+1}+\epsilon)}.$$
But for each $y\in L\cap \Delta$, we have by (2),
$m_y^{i+1}(\P_0^n(y))\leq e^{-n(h_{i+1}-\epsilon)}$. It follows that
the number of distinct $\P_0^n$-atoms intersecting $C\cap L\cap
\Delta$ is larger than $$m_x^{i+1}(L\cap \Delta)
e^{n(h_{i+1}-\epsilon)}.$$ However each such a $\P_0^n$-atom, say
$\P_0^n(y)$,  intersects $C'\cap L\cap \Delta$, and this together
with (3) guarantees that $C'\cap \P_0^n(y)$ is contained in $C'\cap
B^\pi(x,2e^{-n(\lambda_{i+1}-2\epsilon)})$. To see this,
let $z\in \P_0^n(y)\cap C^\prime\cap L\cap \Delta$. Since $z\in
\Delta$, we have $d(\pi z, \pi x)\leq
e^{-n(\lambda_{i+1}-2\epsilon)}$. Thus
$$\C'\cap\P_0^n(y)=\eta_i(z)\cap \P_0^n(z)\subset  B^\pi(x,e^{-n(\lambda_{i+1}-2\epsilon)})
\subset B^\pi(x,2e^{-n(\lambda_{i+1}-2\epsilon)}).$$
Meanwhile by (4), $m^i_x(\P_0^n(y))\geq e^{-n(h_i+\epsilon)}$ (for $w\in \P_0^n(y)\cap  C'\cap L$, we have $\eta_i(x)=\eta_i(w)$ and thus $m^i_x(\P_0^n(y))=m_w^i(\P_0^n(w))$). Hence we have
\begin{eqnarray*}
m^i_{x}(B^\pi(x,2e^{-n(\lambda_{i+1}-2\epsilon)})) &\geq &
 \#  \{ \mbox{$\P_0^n$-atoms intersecting } C'\cap L\cap\Delta\} \cdot e^{-n(h_i+\epsilon)}\\
 &\geq & m_x^{i+1}(L\cap \Delta)  e^{n(h_{i+1}-\epsilon)}e^{-n(h_i+\epsilon)}\\
&\geq & ce^{-n(\lambda_{i+1}-2\epsilon)
(\overline{\delta}_{i+1}+\epsilon)}e^{n(h_{i+1}-\epsilon)}e^{-n(h_i+\epsilon)}.
\end{eqnarray*}
Comparing this with (6), we have
\begin{eqnarray*}
&\mbox{}&(\lambda_{i+1}-2\epsilon)(\overline{\delta}_{i}-\epsilon)\\
&& \quad \leq (\lambda_{i+1}-2\epsilon)(\overline{\delta}_{i+1}+\epsilon)(\lambda_i-2\epsilon)+\frac{\log (1/c)}{n}+h_i-h_{i+1}+2\epsilon\\
&& \quad \leq (\lambda_{i+1}-2\epsilon)(\overline{\delta}_{i+1}+\epsilon)(\lambda_i-2\epsilon)+h_i-h_{i+1}+3\epsilon.
\end{eqnarray*}
Taking $\epsilon\to 0$ yields $h_i-h_{i+1}\geq \lambda_{i+1} (\overline{\delta}_i-\overline{\delta}_{i+1})$, which leads to a contradiction with (\ref{e-ass}).
\end{proof}

\bigskip

\begin{proof}[Proof of (C3)] Here  we give a proof by contradiction,
adopting an idea from the proof of \cite[Lemma 11.3.1]{LeYo85}.
Assume that (C3) is not true. Then there exists $0\leq i\leq k-1$
such that $
\underline{\delta}_{i+1}(x)+\vartheta_i(x)>\underline{\delta}_i(x) $
on a subset of $\Lambda_e$ with positive measure. Hence there exists
$\beta>0$ and real numbers
$\underline{\delta}_i,\underline{\delta}_{i+1},\lambda_{i}$   such
that
\begin{equation}
\label{e-as1}
\underline{\delta}_{i+1}+\vartheta_i>\underline{\delta}_i+\beta,
\end{equation}
and for any $\epsilon>0$, there exists $A_\epsilon\subset \Lambda_e$ with $m(A_\epsilon)>0$ so that for $x\in A_\epsilon$,
\begin{equation}\label{e-as2}
|\underline{\delta}_i(x)-\underline{\delta}_i|<\epsilon/2, \quad
|\underline{\delta}_{i+1}(x)-\underline{\delta}_{i+1}|<\epsilon/2,
\quad |\vartheta_i(x)-\vartheta_{i}|<\epsilon/2.
\end{equation}

Let  $0<\epsilon<\beta/4$. Find $N_1$ and a set $A_\epsilon'\subset A_\epsilon$ with $m(A_\epsilon')>0$ such that for $x\in A_\epsilon'$ and $n>N_1$,
\begin{equation}
\label{e-6.2} m_x^{i+1}\left(B^\pi(x,2e^{-n})\right)\leq
e^{-n(\underline{\delta}_{i+1}-\epsilon)}.
\end{equation}
By Lemma  \ref{lem-2.4} and Lemma \ref{lem-2.10}, we can find $c>0$ and   $A_\epsilon''\subset A_\epsilon'$ with $m(A_\epsilon'')>0$ and
$N_2$ such that for all $x\in A_\epsilon''$ and $n\geq N_2$,
$$
\frac{m_x^i(A_\epsilon'\cap B^\pi(x,e^{-n}))}{m_x^i(B^\pi(x,e^{-n}))}>c.
$$
For $x\in A_\epsilon''$ and $n\geq N_2$, we have
\begin{equation}
\label{e-6.3}
\begin{split}
m_x^i(B^\pi(x,e^{-n}))&\leq c^{-1}m_x^i(A_\epsilon'\cap B^\pi(x,e^{-n}))\\
&= c^{-1}\int m_y^{i+1}(A_\epsilon'\cap B^\pi(x,e^{-n}))\; dm^i_x(y)\\
&= c^{-1}\int_{B^{\Gamma_{i+1}}(x,e^{-n})} m_y^{i+1}(A_\epsilon'\cap B^\pi(x,e^{-n}))\; dm^i_x(y).\\
\end{split}
\end{equation}

Let $y\in \eta_i(x)$ such that $\eta_{i+1}(y)\cap A_\epsilon'\cap B^\pi(x,e^{-n})\neq \emptyset$.
Then there exists $w\in A_\epsilon'\cap B^\pi(x,e^{-n})$ such that $\pi_{i+1}y=\pi_{i+1} w$. Hence
$A_\epsilon'\cap B^\pi(x,e^{-n}) \subset  B^\pi(w,2e^{-n})$ and by (\ref{e-6.2})
\begin{eqnarray*}
m^{i+1}_y(A_\epsilon'\cap B^\pi(w,e^{-n}))&=&m^{i+1}_w(A_\epsilon'\cap B^\pi(w,e^{-n}))\\
&\leq& m^{i+1}_w(B^\pi(w,2e^{-n}))\\
&\leq& e^{-n(\underline{\delta}_{i+1}-\epsilon)}.
\end{eqnarray*}
Combining  it with (\ref{e-6.3}), we have
$$
m_x^i(B^\pi(x,e^{-n})) \leq c^{-1}e^{-n(\underline{\delta}_{i+1}-\sigma)}
m^i_x(B^{\Gamma_{i+1}}(x,e^{-n}))\qquad (x\in A_\epsilon'',\; n\geq N_2).
$$
Letting $n\to \infty$, we obtain $\underline{\delta}_i(x)\geq
\underline{\delta}_{i+1}-\epsilon+\vartheta_i(x)$ for $x\in
A_\epsilon''$. Combining it with (\ref{e-as2}) yields
$$
\underline{\delta}_i\geq
\underline{\delta}_{i+1}+\vartheta_i-4\epsilon\geq
\underline{\delta}_{i+1}+\vartheta_i-\beta,
$$
which contradicts (\ref{e-as1}).
\end{proof}

\subsection{Proof of Theorem \ref{thm-1.4}}
\begin{de}
{\rm
A  real square matrix $A$ is called {\it asymptotically similar} if all the (complex) eigenvalues of $A$ are equal in modulus.  Correspondingly, a linear transformation  $T$ on a finite-dimensional vector space $V$ is called {\it asymptotically similar} if its representation  matrix (associated with  some basis of $V$) is  asymptotically similar.
}
\end{de}

\begin{lem}
\label{lem-6.2}
Let $(A_1,\ldots, A_\ell)$ be an $\ell$-tuple of commuting linear transformations on $\R^d$. Then there are  subspaces $V_1, \ldots, V_k$ of $\R^d$ such that
\begin{itemize}
\item [(i)]
$\R^d=V_1\oplus \cdots \oplus V_k$;
  \item[(ii)]
$V_i$ is $A_{j}$-invariant for $1\leq i\leq k$ and $1\leq j\leq \ell$;
\item[(iii)]
The restriction of $A_j$ on $V_i$ is  asymptotically similar for $1\leq i\leq k$ and $1\leq j\leq \ell$.
\end{itemize}
\end{lem}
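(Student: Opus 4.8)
The statement is a standard decomposition result in linear algebra once one realizes that commuting operators can be simultaneously refined along generalized eigenspaces. The plan is to prove it by induction on $\ell$, using the primary (generalized eigenspace) decomposition of a single operator as the base step, and then refining that decomposition so that it is compatible with all the remaining operators.

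First I would dispose of the case $\ell=1$. For a single linear transformation $A_1$ on $\R^d$, the primary decomposition theorem over $\R$ gives $\R^d=\bigoplus_\mu W_\mu$, where the sum runs over the distinct moduli $\mu$ of the (complex) eigenvalues of $A_1$, and each $W_\mu$ is the sum of the real generalized eigenspaces attached to the eigenvalues of modulus $\mu$ (grouping $\lambda$ and $\bar\lambda$ together so that each $W_\mu$ is a genuine real subspace). Each $W_\mu$ is $A_1$-invariant, and by construction the restriction $A_1|_{W_\mu}$ has all eigenvalues of modulus $\mu$, hence is asymptotically similar. This gives (i)--(iii) for $k$ equal to the number of distinct moduli.

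For the inductive step, suppose the result holds for $\ell-1$ commuting operators, and let $(A_1,\dots,A_\ell)$ be given. Apply the induction hypothesis to $(A_1,\dots,A_{\ell-1})$ to get $\R^d=V_1\oplus\cdots\oplus V_m$ with each $V_i$ invariant under $A_1,\dots,A_{\ell-1}$ and each $A_j|_{V_i}$ ($j\le \ell-1$) asymptotically similar. The key point is that, because $A_\ell$ commutes with $A_1,\dots,A_{\ell-1}$, it preserves every subspace canonically built from them — in particular each generalized eigenspace, hence each $V_i$ (this is where one uses that the $V_i$ are intersections/sums of kernels of polynomials in $A_1,\dots,A_{\ell-1}$, which commute with $A_\ell$). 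So $A_\ell$ restricts to an operator on each $V_i$; apply the $\ell=1$ case to $A_\ell|_{V_i}$ to split $V_i=\bigoplus_t V_{i,t}$ into pieces on which $A_\ell$ is asymptotically similar. Each $V_{i,t}$ is $A_\ell$-invariant, and it is also $A_j$-invariant for $j\le\ell-1$ because it is a generalized-eigenspace-type subspace for $A_\ell$ and $A_j$ commutes with $A_\ell$; moreover $A_j|_{V_{i,t}}$ is the restriction of the asymptotically similar $A_j|_{V_i}$, hence still asymptotically similar (all its eigenvalues are among those of $A_j|_{V_i}$, which share a common modulus). Relabelling the $V_{i,t}$ as $V_1,\dots,V_k$ gives the claim.

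The main obstacle is making precise the assertion that each subspace produced at one stage is invariant under the operators introduced at later stages. The clean way to handle this is to observe that every subspace appearing in the construction is of the form $\ker p(A_{j_1},\dots)$ or a sum of such, where $p$ is a polynomial, and any operator commuting with all of $A_{j_1},\dots$ therefore leaves it invariant; alternatively one can phrase the whole argument as: the commuting family generates a commutative subalgebra of $\mathrm{End}(\R^d)$, pass to the decomposition into joint generalized eigenspaces over $\C$, group conjugate eigenvalue-tuples to return to $\R$, and check asymptotic similarity coordinate by coordinate. Either route reduces the remaining work to the routine verifications sketched above.
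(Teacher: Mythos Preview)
Your proposal is correct and follows essentially the same route as the paper: iteratively refine the primary decomposition of one operator, using commutativity to guarantee that each piece is invariant under the remaining operators. The only cosmetic differences are that the paper writes out the case $\ell=2$ and decomposes along irreducible factors of the minimal polynomial (a slightly finer splitting than your grouping by eigenvalue modulus), whereas you phrase the argument as a formal induction on $\ell$; neither difference affects the substance.
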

\begin{proof}
For brevity, we only prove the lemma in the case  $\ell=2$. The reader will see that the idea  works for all  cases.

Let $S, T$ be two commuting  linear transformations on $\R^d$. Let $f$ denote the real minimal polynomial of $S$. Suppose $f=f_1^{t_1}\cdots f_p^{t_p}$ is the decomposition of $f$ into powers of distinct, real irreducible monic factors $f_i$. Let $W_i$ denote the null space of $[f_i(S)]^{t_i}$, $i=1,\ldots, p$. Then $W_i$'s are $S$-invariant and $\R^d=W_1\oplus\cdots\oplus W_p$ (cf. \cite[Theorem 7.3]{Sto52}). Moreover $S_{W_i}$, the restriction of $S$ on $W_i$, is asymptotically similar.

Since $ST=TS$, $W_i$ is also $T$-invariant for each $i$.  But $T_{W_i}$ may be not asymptotically similar. However,  as above, for each $i$, we can decomposed $W_i$ into $W_i=W_{i,1}\oplus\cdots \oplus W_{i,u_i}$ such that $W_{i,j}$ are the null spaces corresponding to some factors of the minimal polynomial of $T_{W_i}$. Again,  $W_{i,j}$ is $T_{W_i}$-invariant and $S_{W_i}$-invariant. Furthermore $T_{W_{i,j}}$ and $S_{W_{i,j}}$ are asymptotically similar.
Hence  $\R^d=\bigoplus_{i,j} W_{i,j}$ is the desired  decomposition for $S$ and $T$.
\end{proof}

\noindent \begin{proof}[Proof of Theorem \ref{thm-1.4}] Let $\{S_i\}_{i=1}^\ell$ be the IFS given in the theorem. By Lemma
\ref{lem-6.2}, there is a non-singular linear transformation $Q$ on $\R^d$ such that  $\{QS_iQ^{-1}\}_{i=1}^\ell$ is
the direct product of $k$ asymptotically conformal IFS. Hence the desired result follows from Theorem \ref{thm-1.3}.
\end{proof}

\section{A variational principle about  dimensions of self-conformal sets}\label{S7}

In this section, we assume that $K$ is the attractor of a $C^1$ weakly
conformal IFS $\Phi=\{S_i\}_{i=1}^\ell$ on a compact set $X\subset \R^d$. The main result of this section
is the following variational principle.

\begin{thm}
\label{thm-7.1} Under the above setting, we have
\begin{eqnarray}\mbox{}\qquad  \dim_H K&=&\dim_BK\label{e-7.1'}\\
&=&
\sup
\left\{\dim_H \mu:\; \mu=m\circ \pi^{-1}, \;m\in
\M_\sigma(\Sigma), \; m \mbox{ is ergodic}\right\}\label{e-7.2'}\\
&=&
\max
\left\{\dim_H \mu:\; \mu=m\circ \pi^{-1}, \;m\in
\M_\sigma(\Sigma)\right\}\label{e-7.2''}\\
&=&\sup\left\{\frac{h_\pi(\sigma,m)}{\int{\lambda} \;dm}: \;m\in
\M_\sigma(\Sigma)\right\}.\label{e-7.3'} \end{eqnarray}
\end{thm}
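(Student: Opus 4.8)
\textbf{Proof plan for Theorem \ref{thm-7.1}.}
The plan is to establish the chain of (in)equalities by proving a circle of inclusions, using the dimension estimates for invariant measures (Theorem \ref{thm-1.1}, Theorem \ref{thm-1.2}) together with the geometry of weakly conformal IFS from Section \ref{S4}. First I would record the trivial inequalities: since $\mu=m\circ\pi^{-1}$ is supported on $K$ for every $m\in\M_\sigma(\Sigma)$, one always has $\dim_H\mu\le\dim_H K\le\overline{\dim}_B K$, so the supremum in \eqref{e-7.2'} and the maximum in \eqref{e-7.2''} are both $\le\dim_B K$ (once the box dimension is shown to exist); and by Theorem \ref{thm-1.2}, for any ergodic $m$ with $\mu=m\circ\pi^{-1}$ we have $\dim_H\mu=d(\mu,z)=h_\pi(\sigma,m)/\int\lambda\,dm$, which via the affinity of $h_\pi$ (Theorem \ref{thm-1.0}(ii)) and the ergodic decomposition shows that \eqref{e-7.2'}, \eqref{e-7.2''} and \eqref{e-7.3'} all coincide. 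Indeed for a general $m=\int\nu\,d\mathbb{P}(\nu)$, since $\int\lambda\,dm=\int\big(\int\lambda\,d\nu\big)d\mathbb{P}(\nu)$ and $h_\pi(\sigma,m)=\int h_\pi(\sigma,\nu)\,d\mathbb{P}(\nu)$, an elementary argument (the ratio $a/b$ with $a=\int a_\nu$, $b=\int b_\nu$ lies below $\sup_\nu a_\nu/b_\nu$) gives $h_\pi(\sigma,m)/\int\lambda\,dm\le\sup_{\nu\text{ ergodic}}h_\pi(\sigma,\nu)/\int\lambda\,d\nu$, so passing to the supremum over all $m$ does not enlarge the quantity beyond the ergodic supremum; combined with Theorem \ref{thm-1.2} this yields \eqref{e-7.2'}$=$\eqref{e-7.2''}$=$\eqref{e-7.3'}.

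The real content is then the single inequality
$$\dim_H K\le\sup\Big\{\tfrac{h_\pi(\sigma,m)}{\int\lambda\,dm}:m\in\M_\sigma(\Sigma)\Big\},$$
together with $\overline{\dim}_B K\le$ the same supremum, since these close the loop: $\overline{\dim}_B K\le\sup(\cdots)=\sup_{\text{ergodic}}\dim_H\mu\le\dim_H K\le\overline{\dim}_B K$, forcing all to be equal and in particular $\dim_H K=\dim_B K$, which is \eqref{e-7.1'}. To prove the box-dimension bound I would fix $\alpha>0$ and use Corollary \ref{cor-4.5}: for all small $r$ and every $z\in K$ there is a word $u$ with $S_u(K)\subset B(z,r)$ and $|S_u(x)-S_u(y)|\ge r^{1+\alpha}|x-y|$ on $K$. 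Covering $K$ efficiently by such balls and counting, one obtains for each $\alpha$ an IFS-like combinatorial bound on the covering number $N(K,r)$; taking $r\to0$ and then $\alpha\to0$ bounds $\overline{\dim}_B K$ by a quantity of the form $\limsup_n \frac1n\log(\text{number of relevant words of ``length'' }\approx n)$, and this is in turn dominated by $\sup_m h_\pi(\sigma,m)/\int\lambda\,dm$ by constructing, from a near-optimal collection of such words with pairwise disjoint images $S_u(K)$, a Bernoulli measure $\nu$ on the corresponding subsystem. Lemma \ref{lem-3.20} then gives $h_\pi(\sigma,\nu)=h(\sigma,\nu)=\log(\#\text{words})$, while the contraction ratios control $\int\lambda\,d\nu$ via Proposition \ref{pro-4.6} and Lemma \ref{lem-4.4}, so the ratio $h_\pi/\int\lambda$ for this $\nu$ is essentially the exponent appearing in the box-counting estimate. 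This same construction simultaneously furnishes, for each $\epsilon>0$, an ergodic measure $m$ with $\dim_H(m\circ\pi^{-1})\ge\dim_H K-\epsilon$, which is the nontrivial lower bound for \eqref{e-7.2'}.

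The main obstacle I expect is the bookkeeping in the box-counting step: one must choose the words $u$ so that the images $S_u(K)$ are genuinely separated (not merely contained in small balls), so that Lemma \ref{lem-3.20} applies and the resulting Bernoulli measure has full entropy equal to the log of the cardinality. This requires a Besicovitch/Vitali-type selection of a positive proportion of the covering balls whose doubles are disjoint (exactly the trick used in the proof of Proposition \ref{pro-3.19}(ii) via the sets $2Q$), combined with the distortion control \eqref{e-4.6} to ensure the contraction ratios of all selected words are comparable; the uniformity constant $D$ and the loss $c^n$ there must be absorbed by letting $c\to1$ and $\alpha\to0$ at the end. A secondary technical point is that the weak conformality (rather than genuine conformality) means $\lambda(x)$ exists for every $m$ only through Proposition \ref{pro-4.6}(ii), so throughout I would phrase the Lyapunov exponent via $\lim_n\frac1n\log\mathrm{diam}\,S_{x_1\cdots x_n}(K)$, which is the uniform quantity actually controlled by Lemma \ref{lem-4.4}. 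Once these are in place the equalities \eqref{e-7.1'}–\eqref{e-7.3'} follow by assembling the inclusions above.
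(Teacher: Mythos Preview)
Your overall architecture matches the paper's: reduce everything to the single inequality $\overline{\dim}_B K\le \sup_{m\text{ ergodic}}\dim_H(m\circ\pi^{-1})$, and prove this by extracting from $K$ a large family of words with pairwise disjoint images (via Corollary~\ref{cor-4.5}), putting the uniform Bernoulli measure on them, and averaging to a $\sigma$-ergodic measure. The paper carries this out almost exactly, with two differences worth noting.

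First, the paper does \emph{not} bound $\overline{\dim}_B K$ from above by a word-count and then bound the word-count by the sup; rather it starts directly from the \emph{packing} interpretation of $\overline{\dim}_B K$ (for any $t_1<\overline{\dim}_B K$ there exist $N\ge r^{-t_1}$ pairwise disjoint balls $B(z_i,r)$ with $z_i\in K$), so no Besicovitch selection is needed---the balls are already disjoint. Your description in terms of ``covering $K$ efficiently'' and ``bounding the covering number'' is logically reversible (packing and covering numbers have the same exponent), but the packing viewpoint is cleaner and avoids the selection step you anticipate as the main obstacle.

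Second, once the Bernoulli measure $\nu$ on the selected words is built, the paper does \emph{not} appeal to Lemma~\ref{lem-3.20} and a separate bound on $\int\lambda\,d\nu$. Instead it runs a direct mass-distribution argument (the bi-Lipschitz lower bound from Corollary~\ref{cor-4.5} applied iteratively) to get $\dim_H\eta\ge t_3$ for $\eta=\nu\circ\pi^{-1}$, and only afterwards invokes Theorem~\ref{thm-1.2} to rewrite $\dim_H\eta$ as $h_\pi(\sigma^n,\nu)/\Lambda$. Your route via Lemma~\ref{lem-3.20} is valid, but you then have to bound $\Lambda$ above by something like $(1+\alpha')\log(1/r)$ using Lemma~\ref{lem-4.4}, and the $c^n$ factors there enter the denominator; the bookkeeping works (one re-chooses $\alpha$ to absorb the loss) but is more delicate than the paper's mass-distribution computation, which sidesteps the issue entirely.

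Finally, you do not say why the supremum in \eqref{e-7.2''} is a \emph{maximum}. The paper handles this separately: take a sequence $(m_i)$ in $\M_\sigma(\Sigma)$ with $\dim_H(m_i\circ\pi^{-1})\to\dim_H K$, form $m=\sum_i a_i m_i$ with $a_i>0$, $\sum a_i=1$, and use $\dim_H\big(\sum a_i\mu_i\big)=\sup_i\dim_H\mu_i$. This step is independent of the main construction and should be added to your plan.
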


\begin{proof}
Without  loss of generality we assume that $\overline{\dim}_B(K)>0$, where $\overline{\dim}_B$ denotes the upper box-counting dimension (cf. \cite{Fal-book}).
Let $$0<t_3<t_2<t_1< \overline{\dim}_B(K).$$
We first prove that there is an ergodic measure $m\in \M_\sigma(\Sigma)$ such that
$\dim_H m\circ \pi^{-1}\geq t_3$.  To achieve this,  let $\alpha=\frac{t_2}{t_3}-1$ and  let $r_0$ be given as in Corollary \ref{cor-4.5}.  Since $\overline{\dim}_B(K)>t_1$, for any $0<\epsilon<r_0$, there exist $r\in (0,\epsilon)$ and integer $N\geq r^{-t_1}$ such  that there are disjoint closed balls $B(z_i,r)$ ($i=1,\ldots, N$) with centers $z_i\in K$. By Corollary \ref{cor-4.5}, we can find words $w_i\in \Sigma^*$ ($i=1,\ldots, N$) such that $S_{w_i}(K)\subset B(z_i,r)$ and
\begin{equation}
\label{e-7.5}
|S_{w_i}(x)-S_{w_i}(y)|\geq r^{1+\alpha}|x-y| \qquad (x,y\in K).
\end{equation}
 This implies $r^{1+\alpha}\mbox{diam}(K)\leq  \mbox{diam}(S_{w_i}(K))\leq 2r$. According to this fact and (\ref{e-4.6}), there exist two positive constants $A,B$ (independent of $r$) such that
$$
B\log (1/r)\leq |w_i|\leq A\log (1/r)\mbox{ for all $1\leq i\leq N$}.
$$
Hence by the pigeon hole principle,  there is a subset $\J$ of $\{1,\ldots, N\}$ with cardinality $$\# \J\geq \frac{N}{(A-B)\log (1/r)+1}\geq \frac{r^{-t_1}}{(A-B)\log (1/r)+1}\geq r^{-t_2}$$
 such that the words $w_i$ ($i\in \J$)  have the same length, say $n$.

Now we adopt an argument  from the proof of \cite[Theorem 4]{Fal89}.
Let $$\delta=\min\{d(B(z_i,r), B(z_j,r)): \; i,j\in \J, i\neq j\}.$$ For any positive integer $q$ and distinct sequences $i_1,\ldots, i_q$ and $j_1,\ldots, j_q$ taking values in $\J$,
let $k$ be the least integer such that $i_k\neq j_k$. Applying (\ref{e-7.5}) $(k-1)$ times, we have
\begin{eqnarray*}
&\mbox{}& d(S_{w_{i_1}}\circ\cdots  \circ S_{w_{i_q}}(K), S_{w_{j_1}}\circ\cdots \circ S_{w_{j_q}}(K))\\
&& \quad \geq r^{(1+\alpha)(k-1)}d(B(z_{i_k},r), B(z_{j_k},r))\geq r^{q(1+\alpha)}\delta.
\end{eqnarray*}
Define a measure $\eta$ on the class of finite unions of sets
$S_{w_{i_1}}\circ\cdots  \circ S_{w_{i_q}}(K)$ by letting
$\eta(S_{w_{i_1}}\circ\cdots  \circ S_{w_{i_q}}(K))=(\#\J)^{-q}$. This extends to a measure $\eta$ on the $\sigma$-algebra generating by these sets. Let $U$ be any subset of $K$ with $\mbox{diam}(U)<\delta$ and let $q$ be the least integer such that
$$
 r^{(q+1)(1+\alpha)}\delta\leq \mbox{diam}(U)< r^{q(1+\alpha)}\delta.$$
Then $U$ intersects at most one set $S_{w_{i_1}}\circ\cdots  \circ
S_{w_{i_q}}(K)$, hence \begin{eqnarray*} \eta(U)&\leq &(\#\J)^{-q}
\leq r^{t_2q} \leq r^{-t_2}\delta^{-t_2/(1+\alpha)}
\mbox{diam}(U)^{t_2/(1+\alpha)}\\
&=&r^{-t_2}\delta^{-t_3} \mbox{diam}(U)^{t_3}. \end{eqnarray*}
 This
implies $\dim_H\eta\geq t_3$.

We point out that the measure $\eta$ constructed as above is, indeed, the projection of a $\sigma^n$-invariant and ergodic measure $\nu$ under $\pi$. Actually $\nu$ is the unique measure on $\Sigma$ satisfying
$$
\nu([w_{i_1}\ldots w_{i_q}])=(\#\J)^{-q} \qquad (q \in \N, \;  i_1,\ldots , i_q\in \J).
$$
Applying Theorem \ref{thm-1.2} to the IFS $\{S_{w_i}: \; i\in \J\}$,
We have $$\dim_H \eta=\dim_H \nu\circ \pi^{-1}=\frac{h_\pi(\sigma^n, \nu)}{-\int \log \|S_{x_1\ldots x_n}^\prime(\pi \sigma^nx)\| d\nu}.$$
Take $m=\frac{1}{n}\sum_{i=0}^{n-1} \nu\circ \sigma^{-i}$.   Then $m$ is $\sigma$-invariant and ergodic.
Applying Theorem \ref{thm-1.2} and Proposition \ref{pro-3.2}, we have
\begin{eqnarray*}
\dim_Hm\circ \pi^{-1} &=&\frac{h_\pi(\sigma, m)}{-\int \log \|S_{x_1}^\prime(\pi\sigma x)\| dm}=
\frac{h_\pi(\sigma^n, \nu)}{-\int \log \|S_{x_1\ldots x_n}^\prime(\pi\sigma^n x)\| d\nu}\\
&=&\dim_H\eta\geq t_3.
\end{eqnarray*}
Since $t_3<\overline{\dim}_BK$ is arbitrarily given, we  obtain
(\ref{e-7.1'}) and (\ref{e-7.2'}). To show  (\ref{e-7.2''}), let
$(m_i)$ be a sequence of measures in $\M_\sigma(\Sigma)$ with
$$\lim_{i\to \infty} \dim_H m_i\circ \pi^{-1}=\dim_HK.$$ Take a
sequence of positive numbers $(a_i)$ such that $\sum_{i=1}^\infty
a_i=1$. Then $m=\sum_{i=1}^\infty a_i m_i$ is an element in
$\M_\sigma(\Sigma)$ with $$\dim_H m\circ \pi^{-1}= \sup_{i}\dim_H
m_i\circ \pi^{-1}=\dim_HK.$$

To show (\ref{e-7.3'}), according to (\ref{e-7.2'}), it suffices to show that
\begin{equation}
\label{e-7.6}
\dim_Hm\circ \pi^{-1}\geq \frac{h_\pi(\sigma,m)}{-\int\log \|S^\prime_{x_1}(\pi\sigma x)\| \;dm(x)} \qquad
(m\in \M_\sigma(\Sigma)).
\end{equation}
Fix $m$ and let $\mu=m\circ \pi^{-1}$. Denote by $\Lambda$ the righthand side of (\ref{e-7.6}). By Theorem \ref{thm-1.2}, $d(\mu,z)$ exists for $\mu$-a.e.\! $z\in \R^d$. Hence to show (\ref{e-7.6}), we only need to show that for any $\epsilon>0$, there is a Borel set $E\subset \R^d$ such that  $\mu(E)>0$ and $d(\mu, z)\geq \Lambda-\epsilon$ for $z\in E$. Assume this is false. Then $d(\mu, z)< \Lambda-\epsilon$ for $\mu$-a.e.\! $z\in \R^d$. Thus by
Theorem \ref{thm-1.2} again, we have
$$
h_\pi(\sigma,m,x)<
 \lambda(x)(\Lambda-\epsilon)\qquad \mbox{for $m$-a.e.\! $x\in \Sigma$}.
$$
Taking integration w.r.t. $m$ on both sides yields
$$h_\pi(\sigma,m)<(\Lambda-\epsilon) \int \lambda\;dm,$$ which leads to a contradiction.
\end{proof}

\begin{rem}
\label{rem-4.4} {\rm Assume that $\{S_i\}_{i=1}^\ell$ is a
weakly conformal IFS which  satisfies the AWSC (see Definition
\ref{de-1.5}). Then the supremum in (\ref{e-7.2'}) and
(\ref{e-7.3'}) can be  attained  by ergodic measures.
 To see this, by Proposition \ref{pro-3.9}, the map $m\mapsto h_\pi(\sigma,m)$ is upper semi-continuous on $\M_\sigma(\Sigma)$, hence the supremum in (\ref{e-7.3'}) is  attained at some member, say $m_0$, in $\M_\sigma(\Sigma)$.
Let $m_0=\int \nu \; d{\Bbb P}(\nu)$ be the ergodic decomposition of
$m_0$. By Theorem \ref{thm-1.0}(ii), we have
$$
\dim_HK=\frac{h_\pi(\sigma,m_0)}{\int \lambda \; dm_0}=\frac{\int h_\pi(\sigma,\nu)\; d{\Bbb P}(\nu)}{\int\!\int \lambda\;d\nu\; d{\Bbb P}(\nu)}.
$$
Since $\frac{h_\pi(\sigma,\nu)}{\int \lambda\;d\nu}\leq \dim_HK$ for each $\nu$,
 the above equality implies that $\frac{h_\pi(\sigma,\nu)}{\int \lambda\;d\nu}= \dim_HK$ for ${\Bbb P}$-a.e.$\;\nu$.
Hence the supremum in (\ref{e-7.3'}) can be attained at  some ergodic measure, so do  the supremum in
(\ref{e-7.2'}).
}
\end{rem}

\section{Proof of Theorem \ref{thm-1.6}}\label{S8}

We first present some lemmas.

\begin{lem}
\label{lem-8.1}
Let $\{S_i\}_{i=1}^\ell$ be an IFS with attractor $K$.   For $n\in \N$, write $\Sigma_n=\{1,\ldots,\ell\}^n$ and denote
$$
N_n=\#\{S_u:\; u\in \Sigma_n\}.
$$
Then
\begin{itemize}
\item[(i)] $\sup\{h_\pi(\sigma,m):\; m\in \M_\sigma(\Sigma)\}\leq \frac{\log N_n}{n}$.
\item[(ii)] Let $t_n=\sup_{x\in \R^d}\#\{S_u:\; u\in \Sigma_n, x\in S_u(K)\}$. Then
$$\sup\{h_\pi(\sigma,m):\; m\in \M_\sigma(\Sigma), \mbox { $m$ is ergodic}\}\geq  \frac{\log N_n-\log t_n}{n}.$$
\end{itemize}
\end{lem}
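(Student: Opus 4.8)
The plan is to prove the two inequalities separately, each time reducing to facts already available for general IFS by passing to the auxiliary IFS $\Psi=\{S_{\underline u}:\underline u\in\J\}$ formed from the $N_n$ \emph{distinct} maps among $\{S_u:u\in\Sigma_n\}$. Here $\J$ denotes the set of equivalence classes of $\Sigma_n$ under $u\sim v\iff S_u=S_v$, and $\Omega\subset\Sigma_n$ is a fixed complete set of representatives, $\#\Omega=N_n$. As in Lemma~\ref{lem-3.new}(i), $K$ is again the attractor of $\Psi$, and identifying the code space $\J^\N$ of $\Psi$ with the $\sigma^n$-invariant subset $\Omega^\N\subset\Sigma$ (concatenate the length-$n$ words) the canonical projection of $\Psi$ agrees with $\pi$ on $\Omega^\N$. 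I will use throughout that $h_\pi(\sigma,m)=\frac1n h_\pi(\sigma^n,m)$ for $m\in\M_\sigma(\Sigma)$, which is Lemma~\ref{lem-3.7} together with Definition~\ref{de-3.1} (equivalently Proposition~\ref{pro-3.2} with $\nu=m$).

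For (i), fix $m\in\M_\sigma(\Sigma)$, so that $n\,h_\pi(\sigma,m)=h_\pi(\sigma^n,m)=H_m(\P_0^{n-1}\,|\,\sigma^{-n}\pi^{-1}\gamma)-H_m(\P_0^{n-1}\,|\,\pi^{-1}\gamma)$. Let $\Q$ be the partition of $\Sigma$ whose atoms are the unions of those cylinders $[u]$, $u\in\Sigma_n$, sharing a common map $S_u$; then $\#\Q=N_n$ and $\widehat\Q\subseteq\widehat{\P_0^{n-1}}$. Exactly as in Lemma~\ref{lem-3.6}, for an atom $Q$ of $\Q$ with common contraction $S_Q$ one has $Q\cap\sigma^{-n}\pi^{-1}A=Q\cap\pi^{-1}(S_Q A)$, whence $\widehat\Q\vee\sigma^{-n}\pi^{-1}\gamma=\widehat\Q\vee\pi^{-1}\gamma$; splitting both conditional entropies along $\Q$ by Lemma~\ref{lem-3.3}(iii), the two terms conditioned on $\widehat\Q\vee(\cdot)$ cancel and we are left with $n\,h_\pi(\sigma,m)=H_m(\Q\,|\,\sigma^{-n}\pi^{-1}\gamma)-H_m(\Q\,|\,\pi^{-1}\gamma)\le H_m(\Q)\le\log N_n$. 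Taking the supremum over $m$ gives (i). (A shorter route: $h_\pi(\sigma^n,m)=h_{\widetilde\pi}(\widetilde\sigma,m\circ G^{-1})\le h(\widetilde\sigma,m\circ G^{-1})\le\log N_n$ by Lemma~\ref{lem-3.new}(ii) and Proposition~\ref{pro-3.1}(i).)

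For (ii), let $\nu$ be the Bernoulli measure on $\Sigma$ supported on $\Omega^\N$ with $\nu([u_1\cdots u_k])=N_n^{-k}$ for $u_1,\dots,u_k\in\Omega$; it is $\sigma^n$-invariant and $\sigma^n$-ergodic, and $(\Sigma,\sigma^n,\nu)$ is the uniform Bernoulli shift on $N_n$ symbols, so $h(\sigma^n,\nu)=\log N_n$. Applying Corollary~\ref{cor-3.11} to $\Psi$ --- for which every $x\in\R^d$ lies in at most $t_n$ of the sets $S_{\underline u}(K)$, by definition of $t_n$ --- with this uniform Bernoulli measure yields $h_\pi(\sigma^n,\nu)\ge h(\sigma^n,\nu)-\log t_n=\log N_n-\log t_n$; concretely this amounts to $H_\nu(\P_0^{n-1}\,|\,\sigma^{-n}\pi^{-1}\gamma)\ge H_\nu(\P_0^{n-1}\,|\,\sigma^{-n}\B(\Sigma))=h(\sigma^n,\nu)$ together with $H_\nu(\P_0^{n-1}\,|\,\pi^{-1}\gamma)\le\log t_n$, the latter being Lemma~\ref{lem-3.10} applied to $\Psi$. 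Finally set $m=\frac1n\sum_{i=0}^{n-1}\nu\circ\sigma^{-i}\in\M_\sigma(\Sigma)$, which is ergodic because $\nu$ is $\sigma^n$-ergodic; by Proposition~\ref{pro-3.2}, $h_\pi(\sigma,m)=\frac1n h_\pi(\sigma^n,\nu)\ge\frac{\log N_n-\log t_n}{n}$, which is (ii).

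The only point that is not pure bookkeeping is the inequality $H_\nu(\P_0^{n-1}\,|\,\pi^{-1}\gamma)\le\log t_n$ (equivalently, the applicability of Corollary~\ref{cor-3.11} to $\Psi$): this needs the compactness argument upgrading the pointwise multiplicity bound $t_n$ into a uniform bound valid on small balls, exactly as in the first line of the proof of Lemma~\ref{lem-3.10}. Everything else is transit between $(\Sigma,\sigma)$, $(\Sigma,\sigma^n)$ and the auxiliary system, handled by Lemmas~\ref{lem-3.7} and~\ref{lem-3.new} and Proposition~\ref{pro-3.2}.
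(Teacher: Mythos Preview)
Your proof is correct and follows essentially the same strategy as the paper's: pass to the reduced IFS $\Psi$ on $N_n$ symbols, invoke Lemma~\ref{lem-3.new}(ii) for (i) (your ``shorter route'') and Corollary~\ref{cor-3.11} for (ii), then average via Proposition~\ref{pro-3.2}. Your first argument for (i), using the coarse partition $\Q$ together with the identity $\widehat\Q\vee\sigma^{-n}\pi^{-1}\gamma=\widehat\Q\vee\pi^{-1}\gamma$, is a pleasant direct variant that the paper does not spell out separately, but it is precisely the mechanism underlying Lemma~\ref{lem-3.new}.
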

\begin{proof}
We first show (i). Let   $n\in \N$ and $m\in \M_{\sigma}(\Sigma)$.
By the definition of $N_n$, we can construct a subset $\Omega$ of $\Sigma_n$ with $\#\Omega=N_n$ such that for any $u\in \Sigma_n$, there exists $w\in \Omega$ so that $S_u=S_w$.   Hence there is a map $g:\;\Sigma_n\to \Omega$ such that
$S_u=S_{g(u)}$ for each $u\in \Sigma_n$. Let $(\Omega^\N, T)$ denote the one-sided full shift over $\Omega$. Define $G:\; \Sigma\to \Omega^\N$ by
$$G((x_i)_{i=0}^\infty)=(w_j)_{j=1}^\infty\qquad ((x_i)_{i=1}^\infty\in \Sigma),$$
where $w_j=g(x_{(j-1)n+1}x_{(j-1)n+2}\cdots x_{jn})$. Let $\widetilde{\pi}:\ \Omega^\N\to \R^d$ denote the canonical projection w.r.t. the IFS $\{S_u:\; u\in \Omega\}$. Then by Lemma \ref{lem-3.new}(ii), we have
$$h_\pi(\sigma^n,m)=h_{\widetilde{\pi}}(T,m\circ G^{-1})\leq \log (\#\Omega)=\log N_n.$$ It follows
that $h_\pi(\sigma,m)\leq \log N_n/n$. This proves (i).

To show (ii), let $\nu$ be the Bernoulli measure on $\Omega^\N$ with
probability weight $(1/N_n, \ldots, 1/N_n)$. Then $\nu$ can be
viewed as a $\sigma^n$-invariant measure on $\Sigma$. By Lemma
\ref{lem-3.new}(ii), we have
$h_\pi(\sigma^n,\nu)=h_{\widetilde{\pi}}(T,\nu)$. Note that for
$x\in \R^d$, there are  at most $t_n$ words $u$ in $\Omega$ such
that $x\in S_u(K)$.  By Corollary \ref{cor-3.11}, we have
$$h_{\widetilde{\pi}}(T,\nu)\geq h(T,\nu)-\log t_n=\log N_n-\log
t_n.$$ Let $\mu=\frac{1}{n}\sum_{i=0}^{n-1}\nu\circ \sigma^{-i}$.
Then $\mu$ is $\sigma$-invariant and ergodic, furthermore
$$h_\pi(\sigma,\mu)=\frac{1}{n}h_\pi(\sigma^n,\nu)=\frac{1}{n}h_{\widetilde{\pi}}(T,\nu)\geq
(\log N_n-\log t_n)/n,$$
as desired.
\end{proof}

\begin{lem}
\label{lem-8.2}
Let $\Phi=\{S_i\}_{i=1}^\ell$ be an affine  IFS on $\R^d$ given by
$$S_i(x_1,\ldots,x_d)=(\rho_1 x_1,\cdots, \rho_dx_d)+(a_{i,1},\ldots, a_{i,d}),$$
where $1>\rho_1>\rho_2>\cdots >\rho_d>0$ and $a_{i,j}\in \R$. Let $K$ denote the attractor of $\Phi$, and write $\lambda_j=\log(1/\rho_j)$ for $j=1,\ldots, d$ and $\lambda_{d+1}=\infty$. View $\Phi$ as the direct product of $\Phi_1,\ldots, \Phi_d$, where $\Phi_j=\{S_{i,j}(x_j)=\rho_jx_j+ a_{i,j}\}_{i=1}^\ell$.
Let $\pi_j$ denote the canonical projection w.r.t. the IFS $\Phi_1\times\cdots \times \Phi_j$. Then
we have
\begin{equation}
\label{e-8.1}
\sum_{j=1}^d\left(\frac{1}{\lambda_j}-\frac{1}{\lambda_{j+1}}\right){H}_j
\leq
\underline{\dim}_B(K)\leq \overline{\dim}_B(K)\leq \sum_{j=1}^d\left(\frac{1}{\lambda_j}-\frac{1}{\lambda_{j+1}}\right)\widetilde{H}_j,
\end{equation}
with $H_j=\sup\left\{h_{\pi_j}(\sigma,m):\; m\in \M_\sigma(\Sigma)\right\}$ and
$$
\widetilde{H}_j=\lim_{n\to \infty} \frac{\log \#\left\{S^{(j)}_u:\; u\in \Sigma_n\right\}}{n},
$$
where $\left\{S^{(j)}_i\right\}_{i=1}^\ell$ is the IFS $\Phi_1\times\cdots \times \Phi_j$ on $\R^j$.
\end{lem}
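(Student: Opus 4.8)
The plan is to pin down $\dim_B K$ by estimating, from above and below, the number of \emph{approximate squares} of $K$ at a scale $\delta$. Fix a small $\delta>0$ and set $n_j=\lceil \log(1/\delta)/\lambda_j\rceil$ for $j=1,\dots,d$ and $n_{d+1}=0$; since $\lambda_1<\cdots<\lambda_d$ we have $n_1\ge n_2\ge\cdots\ge n_d\ge1$, and $\rho_j^{n_j}\in[\delta,\delta/\rho_j]$ while $(n_j-n_{j+1})/\log(1/\delta)\to 1/\lambda_j-1/\lambda_{j+1}$ as $\delta\to0$. For $u\in\{1,\dots,\ell\}^m$ write $S^{(j)}_u=(S_{u,1},\dots,S_{u,j})$ for the corresponding map of $\Phi_1\times\cdots\times\Phi_j$ on $\R^j$; recall $M^{(j)}_m:=\#\{S^{(j)}_u:u\in\Sigma_m\}$ is submultiplicative with $\frac1m\log M^{(j)}_m\to\widetilde H_j$, whereas by Proposition~\ref{pro-3.19}(ii) the purely geometric counts $b^{(j)}_m:=\#\{Q\in\Q:(A^{(j)})^mQ\cap K_j\neq\emptyset\}$ satisfy $\frac1m\log b^{(j)}_m\to H_j$, where $K_j=\Pi_j(K)$ is the attractor of $\Phi_1\times\cdots\times\Phi_j$ ($\Pi_j$ the projection onto the first $j$ coordinates). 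Note $b^{(j)}_m\le 2^j M^{(j)}_m$, reflecting why $H_j\le\widetilde H_j$ and why the two sides of \eqref{e-8.1} need not coincide without extra separation.

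\emph{Upper bound.} Since $\Phi$ is a direct product, $\pi x=(\pi^{(1)}x,\dots,\pi^{(d)}x)$ with $\pi^{(j)}$ the one-dimensional coding map of $\Phi_j$, and $\pi^{(j)}x$ is determined up to an error $\le\rho_j^{n_j}\,\mathrm{diam}(K_j)\le C\delta$ by the map $S_{x_1\cdots x_{n_j},j}$. Hence the sets
$$
Q(x):=\pi\big(\{y\in\Sigma:\ S^{(j)}_{y_1\cdots y_{n_j}}=S^{(j)}_{x_1\cdots x_{n_j}}\text{ for }1\le j\le d\}\big)
$$
have diameter $\le C_0\delta$ and cover $K$. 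Such a set is labelled by the tuple $\big(S^{(1)}_{x_1\cdots x_{n_1}},\dots,S^{(d)}_{x_1\cdots x_{n_d}}\big)$: choosing the shared prefix $x_1\cdots x_{n_d}$ ($\le M^{(d)}_{n_d}$ ways) and then, for $j=d-1,\dots,1$, the block $x_{n_{j+1}+1}\cdots x_{n_j}$ (which, by $S^{(j)}_{vw}=S^{(j)}_vS^{(j)}_w$, adds at most $M^{(j)}_{n_j-n_{j+1}}$ possibilities for the $j$-th entry) shows there are at most $\prod_{j=1}^d M^{(j)}_{n_j-n_{j+1}}$ of them, so $N_{C_0\delta}(K)\le\prod_{j=1}^d M^{(j)}_{n_j-n_{j+1}}$. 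Because each $n_j-n_{j+1}\to\infty$ with $(n_j-n_{j+1})/\log(1/\delta)\to1/\lambda_j-1/\lambda_{j+1}$ and $\frac1m\log M^{(j)}_m\to\widetilde H_j$, taking $\limsup_{\delta\to0}$ gives $\overline{\dim}_BK\le\sum_{j=1}^d(1/\lambda_j-1/\lambda_{j+1})\widetilde H_j$.

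\emph{Lower bound.} Here one dualises the count, replacing the map counts $M^{(j)}_m$ by the overlap-insensitive grid counts $b^{(j)}_m$. Working with the boxes of $\Q_{n_j}$ in $\R^j$ (dimensions $\rho_1^{n_j}\times\cdots\times\rho_j^{n_j}$, each side $\asymp\delta$), so that $b^{(j)}_{n_j}$ of them meet $K_j$, one builds a nested Moran subset of $K$ by fixing coordinate $d$ first and successively refining coordinates $d-1,\dots,1$ to the respective depths. At each of the $d$ levels a pigeon-hole step—together with the separation device from the proof of Proposition~\ref{pro-3.19}(ii) (choosing cells $Q$ with $2Q\cap 2\widetilde Q=\emptyset$) so that the surviving pieces stay $\gtrsim\delta$-separated—lets one pass from the average branching $b^{(j)}_{n_j}/b^{(j)}_{n_{j+1}}$ to a uniform branching at the cost of a $\delta$-independent factor. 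This yields a $\gtrsim\delta$-separated subset of $K$ of cardinality $\ge c\prod_{j=1}^d b^{(j)}_{n_j}/b^{(j)}_{n_{j+1}}$, hence $N_\delta(K)\ge c\prod_{j=1}^d b^{(j)}_{n_j}/b^{(j)}_{n_{j+1}}$; since $\frac{1}{\log(1/\delta)}\log\prod_{j=1}^d b^{(j)}_{n_j}/b^{(j)}_{n_{j+1}}\to\sum_{j=1}^d(1/\lambda_j-1/\lambda_{j+1})H_j$, taking $\liminf_{\delta\to0}$ gives $\underline{\dim}_BK\ge\sum_{j=1}^d(1/\lambda_j-1/\lambda_{j+1})H_j$. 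Combining with $\underline{\dim}_BK\le\overline{\dim}_BK$ proves \eqref{e-8.1}.

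The main obstacle is the lower bound: the telescoping product of the $b^{(j)}_m$ only encodes \emph{average} branching numbers, and turning it into an honest $\delta$-separated family inside $K$ requires carrying out the $d$-fold Moran construction across the incompatible coordinate scales $\rho_1^{n_1},\dots,\rho_d^{n_d}$ while pigeon-holing, at every stage, down to a uniformly branching and well-separated subfamily without losing more than a constant factor. The upper bound, by contrast, is a routine covering-plus-submultiplicativity computation.
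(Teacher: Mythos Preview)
Your upper bound is essentially the paper's argument: both cover $K$ by approximate $\delta$-cubes labelled by tuples $(S^{(1)}_{x_1\cdots x_{n_1}},\dots,S^{(d)}_{x_1\cdots x_{n_d}})$ and bound their number by $\prod_j M^{(j)}_{n_j-n_{j+1}}$; the paper parametrises via block lengths $q_j(n)\approx n_j-n_{j+1}$ instead of your cumulative depths $n_j$, but this is cosmetic.

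For the lower bound your sketch is incomplete, and the paper takes a cleaner route that bypasses the very difficulty you flag. Two concrete issues with your plan: (i) the level-$n_j$ and level-$n_{j+1}$ grids in $\R^j$ are not nested (the $\rho_k$ are arbitrary reals in $(0,1)$), so ``average branching $b^{(j)}_{n_j}/b^{(j)}_{n_{j+1}}$'' has no direct tree interpretation; (ii) even granting a tree structure, pigeon-holing from average to uniform branching typically costs a factor polynomial in $\log(1/\delta)$ rather than a $\delta$-independent constant---still enough for the dimension limit, but not what you claim, and you do not describe how to iterate this across the $d$ coordinate levels while staying inside $K$.

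The paper's lower bound is a \emph{product} construction, not a nested one. For each $j$ it applies the $2Q\cap2\widetilde Q=\emptyset$ device once, to words of the \emph{short} length $q_j(n)\approx n_j-n_{j+1}$, producing $\overline{\Omega}_j\subset\Sigma_{q_j(n)}$ with $\#\overline{\Omega}_j\ge 7^{-j}\,b^{(j)}_{q_j(n)}$ (your notation) and $S^{(j)}_w([0,1]^j)$ pairwise disjoint for $w\in\overline{\Omega}_j$. These families are chosen independently of one another. For each tuple $(w_1,\dots,w_d)\in\overline{\Omega}_1\times\cdots\times\overline{\Omega}_d$ the concatenation $w_dw_{d-1}\cdots w_1$ yields a cylinder $S_{w_d\cdots w_1}(K)\subset K$, and the enclosing rectangles $\prod_{j=1}^d S_{w_dw_{d-1}\cdots w_j,\,j}([0,1])$ are pairwise disjoint approximate $\rho_d^n$-cubes: if two tuples differ, take the largest $k$ with $w_k\ne w_k'$; the projections to $\R^k$ then lie in $S^{(k)}_{w_d\cdots w_{k+1}}\bigl(S^{(k)}_{w_k}([0,1]^k)\bigr)$ and $S^{(k)}_{w_d\cdots w_{k+1}}\bigl(S^{(k)}_{w'_k}([0,1]^k)\bigr)$, which are disjoint by construction. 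The count is $\prod_j\#\overline{\Omega}_j\ge\prod_j 7^{-j}\,b^{(j)}_{q_j(n)}$, with loss only the fixed factor $\prod_j 7^{-j}$; no pigeon-holing is needed.
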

\begin{proof}
Without loss of generality we assume that
$$S_{i}([0,1]^d)\subset [0,1]^d\qquad (i=1,\ldots,\ell).$$
For $n\in \N$, we write
$$
N_n^{(j)}=\#\{S_u^{(j)}:\ u\in \Sigma_n\} \qquad (j=1,\ldots,d),
$$
and
$$
q_d(n)=n,\quad  q_j(n)=\left[\left(\frac{\log \rho_d}{\log \rho_j}-\frac{\log \rho_d}{\log \rho_{j+1}}\right)n\right]\mbox{ for }1\leq j\leq d-1,
$$
where $[x]$ denotes the integral part of $x$.

 Construct $\Omega_{n,j}\subset \Sigma_{q_j(n)}$ ($j=1,\ldots,d$) such that
$\#\Omega_{n,j}=N_{q_j(n)}^{(j)}$ and for each $u\in \Sigma_{q_j(n)}$, there is $w\in \Omega_{n,j}$ so that
$S^{(j)}_u=S^{(j)}_w$. Then the family of following rectangles
\begin{equation}
\label{e-8.2}
\prod_{j=1}^d S_{w_dw_{d-1}\cdots w_j, j}([0,1])\qquad (w_1\in \Omega_{n,1}, \ldots, w_d\in \Omega_{n,d})
\end{equation}
is a cover of $K$. To see it, let $u_j\in \Sigma_{q_j(n)}$ ($j=1,\ldots,d$). Then we can find
 $w_j\in \Omega_{n,j}$ ($j=1,\ldots,d$) such that $S^{(j)}_{u_j}=S^{(j)}_{w_j}$. Hence
\begin{eqnarray*}
S_{u_du_{d-1}\ldots u_1}(K)&\subset& S_{u_du_{d-1}\ldots u_1}([0,1]^d)
\subset \prod_{j=1}^d S_{u_du_{d-1}\cdots u_1, j}([0,1])\\
&\subset &\prod_{j=1}^d S_{u_du_{d-1}\cdots u_j, j}([0,1])
= \prod_{j=1}^d S_{w_dw_{d-1}\cdots w_j, j}([0,1]).
\end{eqnarray*}
It follows that the family of rectangles in (\ref{e-8.2}) covers $K$. One can check that each rectangle in  (\ref{e-8.2}) is an almost $(\rho_d)^n$-cube. Hence by the definition of box-counting dimension,  we have
\begin{eqnarray*}
\overline{\dim}_BK&\leq& \limsup_{n\to \infty}\frac{\prod_{j=1}^{d}\# \Omega_{n,j}}{-\log (\rho_d)^n}=
\limsup_{n\to \infty}\frac{\prod_{j=1}^{d} N_{q_j(n)}^{(j)}}{-\log (\rho_d)^n}\\
&=& \sum_{j=1}^d\left(\frac{1}{\lambda_j}-\frac{1}{\lambda_{j+1}}\right)\widetilde{H}_j.
\end{eqnarray*}
This proves one part of (\ref{e-8.1}).

To see the other part of (\ref{e-8.1}), for $j=1,\ldots, d$, let $\Q_j$ denote the collection $\{[0,1)^j+\alpha: \;\alpha\in \Z^j\}$, and define
$$M_n^{(j)}=\#\{Q\in \Q_j:\; \mbox{diag}(\rho_1^n,\ldots, \rho_j^n)Q \cap K_j\neq \emptyset\},$$
where $K_j$ denotes the attractor of $\Phi_1\times\cdots\times
\Phi_j$. Then by Proposition \ref{pro-3.19}(ii), we have
$H_j=\lim_{n\to \infty} \frac{\log M_n^{(j)}}{n}$. We claim that for
$n\in \N$, there exists a subset $\overline{\Omega}_{n, j}\subset
\Sigma_n$ with cardinality $\geq 7^{-j}M_n^{(j)}$ such that
\begin{equation}
\label{e-8.3} S_w^{(j)}([0,1]^j)\cap S_{w'}^{(j)}([0,1]^j)=
\emptyset \mbox{ for all } w,w'\in \overline{\Omega}_{n,j} \mbox{
with } w\neq w'.
\end{equation}
To show the claim, we construct a finite subset of $\Q_j$, denoted
by $W_n^{(j)}$, such that (i) $\#W_n^{(j)}\geq 7^{-j}M_n^{(j)}$;
(ii) $\mbox{diag}(\rho_1^n,\ldots, \rho_j^n)Q \cap K_j\neq
\emptyset$ for each $Q\in W_n^{(j)}$; (iii) $2Q\cap
2\widetilde{Q}=\emptyset$ for $Q,\widetilde{Q}\in W_n^{(j)}$ with
$Q\neq \widetilde{Q}$, where $2Q:=\bigcup_{Q'\in \Q_j:\; Q'\cap
Q\neq \emptyset} Q'$.  For each $Q\in W_n^{(j)}$, since
$\mbox{diag}(\rho_1^n,\ldots, \rho_j^n)Q \cap K_j\neq \emptyset$, we
can pick a word $w(Q)\in \Sigma_n$ such that
$\mbox{diag}(\rho_1^n,\ldots, \rho_j^n)Q \cap S_{w(Q)}^{(j)}K_j\neq
\emptyset$ and hence
$$\mbox{diag}(\rho_1^n,\ldots, \rho_j^n)Q \cap S_{w(Q)}^{(j)}([0,1]^j)\neq \emptyset.$$
Denote $\overline{\Omega}_{n,j}=\{w(Q):\; Q\in W_n^{(j)}\}$. The
separation condition (iii) for the elements in $W_n^{(j)}$
guarantees  (\ref{e-8.3}).  This finishes the proof of the claim.

As above, we can construct   $\overline{\Omega}_{n,j}$ well for each $j=1,\ldots, d$ and $n\in \N$. Now fix $n$
and consider the following collection of rectangles:
\begin{equation*}
\label{e-8.4}
\prod_{j=1}^d S_{w_dw_{d-1}\cdots w_j, j}([0,1])\qquad (w_j\in \overline{\Omega}_{q_j(n),j},\; 1\leq j\leq d).
\end{equation*}
It is clear that  the above rectangles are almost $(\rho_d)^n$-cubes and each of  them intersects with $K$. Furthermore they are disjoint due to (\ref{e-8.3}). Hence by the definition of box-counting dimension, we have
\begin{eqnarray*}
\underline{\dim}_B(K)&\geq& \liminf_{n\to \infty} \frac{\prod_{j=1}^d \#\overline{\Omega}_{q_j(n),j}}{-\log (\rho_d)^n}\geq \liminf_{n\to \infty} \frac{\prod_{j=1}^d 7^{-j}M_{q_j(n)}^{(j)}}{-\log (\rho_d)^n}\\
&=&\sum_{j=1}^d\left(\frac{1}{\lambda_j}-\frac{1}{\lambda_{j+1}}\right){H}_j.
\end{eqnarray*}
This finishes the proof of (\ref{e-8.1}).
\end{proof}

\begin{proof}[Proof of Theorem \ref{thm-1.6}] We divide the proof into two steps:

\noindent {\em Step 1. Show the variational principle for $\dim_HK$}.

We first give an upper bound for $\dim_HK$.  Fix $n\in \N$. Define
$$N_j=\#\{S^{(j)}_u:\; u\in \Sigma_n\}\qquad (j=1,\ldots,d),$$
where $\{S^{(j)}_i\}_{i=1}^\ell$ denotes the IFS $\Phi_1\times\cdots\times \Phi_j$. Then we can construct $$\Omega_j\subset \Sigma_n\quad   (j=d,d-1,\ldots,1)$$  such that  $\#\Omega_j=N_j$,
$\Sigma_n\supset \Omega_d\supset \Omega_{d-1}\supset\cdots \supset \Omega_1$ and furthermore, for each $u\in \Sigma_n$ and $1\leq j\leq d$, there is $w_j\in \Omega_j$ such that $S_u^{(j)}=S^{(j)}_{w_j}$. Hence there are natural maps
$\theta_d,\theta_{d-1},\ldots,\theta_1$ with
$$
\Sigma_n\stackrel{\theta_d}{\longrightarrow}\Omega_d \stackrel{\theta_{d-1}}{\longrightarrow} \Omega_{d-1}\stackrel{\theta_{d-2}}{\longrightarrow}\cdots\stackrel{\theta_2}{\longrightarrow}\Omega_2
\stackrel{\theta_1}{\longrightarrow}\Omega_1
$$
such that $S_u^{(j)}=S_{\theta_j(u)}^{(j)}$ for any $1\leq j\leq d$ and $u\in \Omega_{j+1}$, with convention $\Omega_{d+1}=\Sigma_n$.

Let $Z_d:\Omega_d\to \R$ be the indicator of $\Omega_d$, i.e., $Z_d(u)=1$ for all $u\in \Omega_d$. Define
$$
Z_{d-1}(w)=\sum_{u\in \theta_{d-1}^{-1}(w)} Z_{d}(u)\qquad (w\in
\Omega_{d-1}).
$$
Define inductively
$$
Z_{j}(w)=\sum_{u\in \theta_{j}^{-1}(w)} Z_{j+1}(u)^{\frac{\log \rho_{j+1}}{\log \rho_{j+2}}}\qquad (w\in \Omega_{j}, \; j=d-2,\ldots, 1).
$$
In particular, define
$$
Z_0=\sum_{u\in \Omega_1} Z_{1}(u)^{\frac{ \log \rho_1}{\log \rho_{2}}}.
$$
Using the technique  by Kenyon \& Peres \cite{KePe96} (which is an extension of McMullen \cite{McM84}), we have
\begin{equation}
\label{e-8.5}
\dim_HK\leq \frac{\log Z_0}{-n\log \rho_1}.
\end{equation}
More precisely,  define a probability vector $\left(p(u)\right)_{u\in \Omega_d}$ by
$$
p(u)=\frac{Z_d(u)}{Z_{d-1}(\theta_{d-1}(u))}\cdot
\prod_{j=1}^{d-1}
\frac{
Z_{j}(\theta_j\theta_{j+1}\cdots \theta_{d-1}(u))^
{
\frac{\log \rho_{j}}{\log \rho_{j+1}}
}
}
{
Z_{j-1}(\theta_{j-1}\theta_j\cdots \theta_{d-1}(u))
}
$$
with convention $Z_0(\theta_0\ldots \theta_{d-1}(u))=Z_0$ for any $u\in \Omega_{d}$.  Let $\nu$ be the product measure on $(\Omega_d)^\N$ by assigning probability $p(u)$ to each digit $u\in \Omega_d$. The measure $\nu$ can be viewed as a measure on $\Sigma$, which is $\sigma^n$-invariant and ergodic.  Let $\mu=\nu\circ \pi^{-1}$. Then
\begin{equation}
\label{e-8.6}
\liminf_{r\to 0}\frac{\log\mu(B(\pi x, r))}{\log r}\leq \frac{\log Z_0}{-n\log \rho_1}\qquad (x\in \Sigma).
\end{equation}
A detailed proof of (\ref{e-8.6}) was given by Shmerkin (see the
proof of (4.3) in \cite{Shm06}) for the case $d=2$, whilst a slight
modification of the proof of  \cite[Theorem 1.2]{KePe96} provides a
proof of (\ref{e-8.6}) for  $d\geq 2$. Then (\ref{e-8.5}) follows
from (\ref{e-8.6}) and Billingsley's lemma.

Now we want to indicate certain connection between the upper bound  $\frac{\log Z_0}{-n\log \rho_1}$ and the projection entropies.  First we define the projections  $\theta_{j}^*: \; \Omega_{j+1}^\N\to \Omega_j^\N$ ($j=d-1,\ldots,1$) by
$$\theta_{j}^*\left((u_k)_{k=1}^\infty\right)=\left(\theta_{j}(u_k)\right)_{k=1}^\infty\qquad (\left(u_k\right)_{k=1}^\infty\in \Omega_{j+1}^\N).$$
Then it is easy to see that for each $1\leq j\leq d-1$,
the measure
$$\nu_j:=\nu\circ \left(\theta_j^*\circ \theta_{j+1}^*\circ \cdots\circ \theta_{d-1}^*\right)^{-1}$$
is a product measure on $\Omega_j^\N$. Let $T_j$ denote the left shift operator on $\Omega_j^\N$.  By a direct calculation, we have
\begin{equation*}
\label{e-8.7}
\frac{\log Z_0}{-n\log \rho_1}=\sum_{j=1}^d\left(\frac{1}{\lambda_j}-\frac{1}{\lambda_{j+1}}\right)
\frac{h(T_j,\nu_j)}{n}.
\end{equation*}
Thus we have
\begin{equation}
\label{e-8.8}
\dim_HK\leq \sum_{j=1}^d\left(\frac{1}{\lambda_j}-\frac{1}{\lambda_{j+1}}\right)
\frac{h(T_j,\nu_j)}{n}.
\end{equation}

Let $\widetilde{\pi}_j$ ($j=1,\ldots, d$) denote the canonical
projection from $\Omega_j^\N$ to $\R^j$ w.r.t. the IFS
$\{S^{(j)}_u\}_{u\in \Omega_j}$ ( remember that  $\pi_j$ denotes the
canonical projection from $\Sigma$ to $\R^d$ w.r.t. $\{S^{(j)}_u:\;
u\in \Sigma_n\}$). According to Lemma \ref{lem-3.new}(ii), we have
\begin{equation}
\label{e-8.9}
h_{\widetilde{\pi}_j}(T_j,\nu_j)=h_{\pi_j}(\sigma^n,\nu) \qquad (j=1,\ldots,d).
\end{equation}
Since  $\Phi_1\times \cdots \times \Phi_j$ ($j=1,\ldots,d$) satisfy
the AWSC,  there is a sequence $(t_n)$ of positive integers  with
$\lim_n\log t_n/n=0$, such that
\begin{equation}
\label{e-AA} \sup_{x\in \R^j}\#\{S_{u}^{(j)}:\; u\in \Omega_j, \;
x\in S_{u}^{(j)}(K_j)\}\leq t_n \qquad (j=1,\ldots, d),
\end{equation}
 where $K_j$ denotes the attractor of $\Phi_1\times\cdots \times \Phi_j$. By Corollary \ref{cor-3.11}, we have
$$
h_{\widetilde{\pi}_j}(T_j,\nu_j)\geq h(T_j,\nu_j)-\log t_n\geq
h(T_j,\nu_j)-\log t_n.
$$
It together with (\ref{e-8.9}) yields $h_{\pi_j}(\sigma^n,\nu)\geq
h(T_j,\nu_j)-\log t_n$. Now applying Theorem \ref{thm-1.3} to the
IFS $\{S_u:u\in \Sigma_n\}$, we have
\begin{eqnarray*}
\dim_H\nu\circ \pi^{-1}&=&\frac{1}{n}\sum_{j=1}^d \left(\frac{1}{\lambda_j}-\frac{1}{\lambda_{j+1}}\right)
h_{\pi_j}(\sigma^n,\nu)\\
&\geq& \frac{1}{n}\sum_{j=1}^d \left(\frac{1}{\lambda_j}-\frac{1}{\lambda_{j+1}}\right)
(h(T_j,\nu_j)-\log t_n)\\
&\geq& \dim_HK-\frac{\log t_n}{n}\cdot\sum_{j=1}^d
\left(\frac{1}{\lambda_j}-\frac{1}{\lambda_{j+1}}\right)\qquad
(\mbox{by (\ref{e-8.8})}).
\end{eqnarray*}
Let $m=\frac{1}{n}\sum_{i=1}^n\nu\circ \sigma^{-i}$.
Then $m$ is ergodic and $\dim_Hm\circ \pi^{-1}=\dim_H\nu\circ\pi^{-1}$.
Letting $n$ tend to $\infty$, we obtain
\begin{equation}
\label{e-8.10}
\sup
\{
\dim_H m\circ \pi^{-1}:\; m\in \M_\sigma(\Sigma), \mbox { $m$ is ergodic }\}\geq\dim_HK.
\end{equation}
It is clear the ``$\geq$'' in above inequality can be replaced by
``$=$'' since $m\circ \pi^{-1}$ is supported on $K$. Note that
$h_{\pi_j}(\sigma, \cdot)$ ($j=1,\ldots, d$) are upper
semi-continuous on $\M_\sigma(\Sigma)$ (see Proposition
\ref{pro-3.9} and (\ref{e-AA})). By Theorem \ref{thm-1.0}(ii) and
Theorem \ref{thm-1.3}, we see that the supremum in (\ref{e-8.10}) is
attained at some ergodic element in $\M_\sigma(\Sigma)$. This
finishes the proof of the variational principle for $\dim_HK$.

\bigskip
\noindent {\em Step 2. Show the variational principle for $\dim_BK$}.

By Lemma \ref{lem-8.2}, we only need to show that under the assumption of Theorem \ref{thm-1.6},
\begin{equation}
\label{e-8.12}
H_j\geq \widetilde{H}_j \qquad (j=1,\ldots, d),
\end{equation}
where
$$H_j=\sup\{h_{\pi_j}(\sigma,m):\; m\in \M_\sigma(\Sigma)\},\quad
\widetilde{H}_j=\lim_{n\to \infty} \frac{\log \#\{S^{(j)}_u:\; u\in \Sigma_n\}}{n}.
$$
To see (\ref{e-8.12}), by (\ref{e-AA}) and Lemma \ref{lem-8.1}, we have
$$
H_j\geq \frac{\log \#\left\{S^{(j)}_u:\; u\in \Sigma_n\right\}-\log
t_n}{n} \qquad (n\in \N).
$$
Letting $n\to \infty$, we obtain (\ref{e-8.12}) by the assumption $\log t_n/n\to 0$. This finishes the proof of the theorem.
\end{proof}

\begin{rem}
\label{rem-9.3}
   {\rm With an essentially identical proof, Theorem \ref{thm-1.6} can be  extended to the following class of IFS $\Phi=\Phi_1\times \cdots \times \Phi_k$ on $\R^{q_1}\times\cdots\times\R^{q_k}$, where $\Phi_j$ has the form $\{A_j z_j+ c_{i,j}\}_{i=1}^\ell$ such that $A_j$ is the inverse of an integral matrix and all the eigenvalues of $A_j$ equals $\rho_j$ in modulus, $\rho_1>\cdots>\rho_k$,   $c_{i,j}\in \Q^{q_j}$.
   }
       \end{rem}

    This together with Lemma \ref{lem-6.2} and the proof of Theorem \ref{thm-1.4} yields
\begin{thm}
\label{thm-9.4}
Let $\Phi=\{S_i\}_{i=1}^\ell$ be an IFS on $\R^d$ of the form
$$
S_i(x)=Ax+c_i\qquad (i=1,\ldots,\ell),
$$
where $A$ is the inverse of an integral expanding $d\times d$ matrix, $c_i\in \Z^d$. Let $K$ be the attractor of the IFS. Then there is an ergodic measure on $K$ of full Hausdorff dimension.
\end{thm}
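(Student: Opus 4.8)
\textbf{Proof proposal for Theorem \ref{thm-9.4}.}

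The plan is to reduce Theorem \ref{thm-9.4} to the extended form of Theorem \ref{thm-1.6} described in Remark \ref{rem-9.3} by diagonalizing $A$ into asymptotically-similar blocks, exactly in the spirit of the proof of Theorem \ref{thm-1.4}. First I would apply Lemma \ref{lem-6.2} to the single linear map $A^{-1}$ (an $\ell$-tuple with $\ell=1$): this produces a decomposition $\R^d=V_1\oplus\cdots\oplus V_k$ into $A$-invariant subspaces on each of which the restriction $A|_{V_i}$ is asymptotically similar, say with all eigenvalues of modulus $\rho_i$. After reordering the blocks we may assume $\rho_1>\rho_2>\cdots>\rho_k$; distinctness is arranged by lumping together blocks with equal spectral radius (they can be merged into one invariant subspace without destroying asymptotic similarity). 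The change of coordinates is implemented by a nonsingular linear transformation $Q$ sending $\R^d$ onto $\R^{q_1}\times\cdots\times\R^{q_k}$ with $q_i=\dim V_i$, so that $QAQ^{-1}=\mathrm{diag}(A_1,\ldots,A_k)$ with $A_i=A|_{V_i}$ (in suitable coordinates), and $\{QS_iQ^{-1}\}_{i=1}^\ell$ becomes the direct product $\Phi_1\times\cdots\times\Phi_k$ with $\Phi_i=\{A_iz_i+c'_{i}\}$, $c'_i=Qc_i$.

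The delicate point is that Remark \ref{rem-9.3} requires each $A_i$ to be the inverse of an \emph{integral} matrix and the translation vectors to lie in $\Q^{q_i}$, whereas the diagonalizing matrix $Q$ need not have rational entries — the invariant subspaces $V_i$ of a general integer matrix are defined over an algebraic extension, not over $\Q$. The fix is to choose $Q$ carefully: the generalized eigenspaces (equivalently, the kernels of the powers of the irreducible rational factors of the minimal polynomial of $A^{-1}$, as in the proof of Lemma \ref{lem-6.2}) \emph{are} rational subspaces, so one can pick $Q$ with rational (indeed integer) entries realizing the block decomposition of $A^{-1}$ over $\Q$. Then each $A_i^{-1}=QA^{-1}Q^{-1}|_{V_i}$ is an integral matrix, each $A_i$ is its inverse, the eigenvalues of $A_i$ all have the same modulus $\rho_i$, and $c'_i=Qc_i\in\Z^{q_i}\subset\Q^{q_i}$. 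I should also check that each $A_i$ is still contractive (this is inherited from $\|A\|<1$, or rather from $A$ being the inverse of an expanding matrix, so every eigenvalue of $A$ has modulus $<1$, hence $\rho_i<1$, hence $\rho(A_i)<1$, which after passing to a high iterate — or directly, since asymptotic similarity gives $\|A_i^n\|^{1/n}\to\rho_i$ — makes $\Phi_i$ a weakly conformal contractive IFS; if necessary one replaces $\Phi$ by an iterate $\{S_u:u\in\Sigma_p\}$, which changes neither the attractor nor the conclusion). The AWSC hypothesis of Theorem \ref{thm-1.6}/Remark \ref{rem-9.3} is exactly the content of Remark \ref{rem-9.3}'s standing assumption on $A_j$ being the inverse of an integral matrix with all eigenvalues of one modulus and rational translations: as noted after Theorem \ref{thm-1.6}, in this situation $\Phi_1\times\cdots\times\Phi_j$ satisfies the AWSC, using Proposition 5.3 and Remark 5.5 of \cite{Fen07}.

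With these verifications in place, the extended Theorem \ref{thm-1.6} (Remark \ref{rem-9.3}) applies to $QAQ^{-1}$: it yields an ergodic measure $m\in\M_\sigma(\Sigma)$ whose projection $\widetilde\mu=m\circ\widetilde\pi^{-1}$ under the coding map $\widetilde\pi$ of $\{QS_iQ^{-1}\}$ has $\dim_H\widetilde\mu=\dim_H\widetilde K$, where $\widetilde K=Q(K)$. Finally I transport this back: since $Q$ is a bi-Lipschitz linear bijection, $\dim_H$ is preserved, $\widetilde\pi=Q\circ\pi$, and hence $\mu:=m\circ\pi^{-1}=Q^{-1}\circ\widetilde\mu$ satisfies $\dim_H\mu=\dim_H\widetilde\mu=\dim_H\widetilde K=\dim_H K$, with $m$ ergodic. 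The main obstacle, as flagged above, is the rationality of the block decomposition — ensuring that the reduction to Remark \ref{rem-9.3} can be carried out by a \emph{rational} (hence integrality-preserving) coordinate change rather than the algebraic one that diagonalizes $A$; everything else is a direct citation of Lemma \ref{lem-6.2}, Remark \ref{rem-9.3}, and the behavior of Hausdorff dimension under linear maps.
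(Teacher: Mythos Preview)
Your overall strategy matches the paper's one-line derivation: decompose $A$ into asymptotically similar blocks via Lemma~\ref{lem-6.2}, invoke Remark~\ref{rem-9.3}, and transport back by a linear conjugacy as in the proof of Theorem~\ref{thm-1.4}. You also correctly isolate the crux --- Remark~\ref{rem-9.3} demands that each block $A_j$ be the inverse of an integral matrix with translation parts in $\Q^{q_j}$, which forces the conjugating map $Q$ to be rational.

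Your resolution of this point, however, has a gap. You assert that the invariant subspaces of Lemma~\ref{lem-6.2} are ``the kernels of the powers of the irreducible \emph{rational} factors of the minimal polynomial of $A^{-1}$''. This is not what Lemma~\ref{lem-6.2} does: its proof factors the minimal polynomial over $\R$, where every irreducible factor is linear or has a complex-conjugate pair of roots --- precisely the feature that guarantees each block is asymptotically similar. If instead you factor over $\Q$ (to obtain rational invariant subspaces, hence integral block-inverses and rational translations), a $\Q$-irreducible factor may have real roots of different moduli, and the resulting block then fails the equal-modulus hypothesis of Remark~\ref{rem-9.3}. Concretely, take $A^{-1}=\left(\begin{smallmatrix}3&1\\1&2\end{smallmatrix}\right)$: it is integral and expanding, its minimal polynomial $\lambda^2-5\lambda+5$ is irreducible over $\Q$, so the only rational $A$-invariant subspaces are $\{0\}$ and $\R^2$; yet the eigenvalues $(5\pm\sqrt5)/2$ have distinct moduli, and hence no rational conjugacy can produce blocks meeting all hypotheses of Remark~\ref{rem-9.3} simultaneously. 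Closing the gap requires verifying the AWSC for each partial product $\Phi_1\times\cdots\times\Phi_j$ (ordered by modulus, via the real decomposition) directly from the integrality of the original $A^{-1}$ and $c_i$, rather than from block-by-block integrality; the paper's derivation is equally terse on this point.
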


\section{A final remark about infinite non-contractive IFS}
\label{S-10}
In the previous sections, we have made the restriction that an IFS consists of finitely many contractive maps.
We remark that part of our results  can be extended to certain infinite non-contractive IFS.

Let $\Phi=\{S_i\}_{i=1}^\infty$ be a family of maps on $\R^d$ of the form
$$
S_i(x)=\rho_iR_i(x)+a_i\qquad(i=1,2,\ldots),
$$
where $\rho_i>0$, $R_i$ are orthogonal $d\times d$ matrices, $a_i\in \R^d$.

Let $(X,\sigma)$ be the left shift over the alphabet $\{i:\; i\in \N\}$, and let $m$ be an ergodic measure on $X$ satisfying  $H_m(\P_\infty)<\infty$, where $\P_\infty$ denotes the partition of $X$ given by
$$
\P_\infty=\{[i]:\; i\in \N\},
$$
where $[i]=\{(x_i)_{i=1}^\infty\in X:\; x_1=i\}$. Assume that $\Phi$ is {\it $m$-contractive} in the sense that
\begin{equation}
\label{e-10.1}
\sum_{i=1}^\infty (\log \rho_i)  m([i])<0,\qquad \sum_{i=1}^\infty (\log |a_i|) m([i])<\infty.
\end{equation}
Denote $$
\lambda=-\sum_{i=1}^\infty (\log \rho_i)  m([i]).
$$

Let $X'$ denote the set of points $x=(x_i)_{i=1}^\infty\in X$ such that
$$
\lim_{n\to \infty} (1/n) \log (\rho_{x_1}\rho_{x_2}\ldots \rho_{x_n})=-\lambda, \quad \lim_{n\to \infty} (1/n) \log |a_{x_n}|=0.
$$
Then $X'$ satisfies   $\sigma^{-1}(X')=X'$. Furthermore by Birkhoff's ergodic theorem,   $$m(X')=1.$$
Define the projection map $\pi:\; X'\to \R^d$ by
$$
\pi(x)=\lim_{n\to \infty} S_{x_1}\circ S_{x_2}\circ \cdots\circ S_{x_n}(0) \qquad (x\in X').
$$
It is easily checked that $\pi$ is well defined.  Let $\mu=m\circ \pi^{-1}$ be the projection of $m$ under $\pi$.
We have the following theorem

\begin{thm}
\label{thm-10.1}
Under the above setting,  $\mu=m\circ \pi^{-1}$ is exactly dimensional and
$$\dim_H\mu=\frac{h_\pi(\sigma,m)}{\lambda},$$
where $H_\pi(\sigma,m)=H_m(\P_\infty|\sigma^{-1}\pi^{-1}\gamma)-H_m(\P_\infty|\pi^{-1}\gamma)$, $\gamma=\B(\R^d).$
\end{thm}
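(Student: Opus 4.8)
The plan is to reduce Theorem~\ref{thm-10.1} to the finitely-generated conformal case already handled by Theorem~\ref{thm-1.2} (and its relativized version Theorem~\ref{thm-5.2}), via a truncation and a pointwise comparison of balls. First I would set up the basic geometry: since each $S_i$ is a similarity with ratio $\rho_i$, for a word $u=x_1\cdots x_n$ the map $S_u$ is a similarity with ratio $\rho_u:=\rho_{x_1}\cdots\rho_{x_n}$, so there is no distortion to control and Lemma~\ref{lem-4.3}-type statements hold exactly with $c=1$: for $z=(z_j)\in X'$ one has $B^\pi(z,\rho_{z_1}r)\cap\P_\infty(z)=B^{\pi\sigma}(z,r)\cap\P_\infty(z)$ (using $S_{z_1}(\pi\sigma z)=\pi z$). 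On $X'$ the Birkhoff averages $\frac1n\log\rho_{x_1\cdots x_n}\to-\lambda$ and $\frac1n\log|a_{x_n}|\to 0$ converge, and $H_m(\P_\infty)<\infty$ makes the conditional entropy and information machinery of Section~\ref{S2} and Section~\ref{S3} applicable with $\xi=\P_\infty$ (note $\P_\infty$ is countable, so $H_m(\P_\infty)<\infty$ is exactly the hypothesis needed in Proposition~\ref{pro-2.5} and Lemma~\ref{lem-3.3}).

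\textbf{Upper bound for the local dimension.} The key step is to prove that for $m$-a.e.\ $x$,
\begin{equation*}
\lim_{r\to0}\frac{\log m^\eta_x(B^\pi(x,r))}{\log r}=\frac{h_\pi(\sigma,m,x)}{\lambda}
\end{equation*}
where here $\eta$ is the trivial partition (so $m^\eta_x=m$), exactly as in the proof of Theorem~\ref{thm-1.1} from Theorem~\ref{thm-5.2}, except that one must accommodate countably many maps and the non-contractivity. I would run the argument of Proposition~\ref{pro-5.1} and Theorem~\ref{thm-5.2} verbatim with $\overline\rho(x)=\underline\rho(x)=\rho_{x_1}$ (conformality is automatic), replacing the uniform contraction bound $c\sup\overline\rho<1$ by the $L^1$/Birkhoff statement $\frac1n\log(\rho_{x_1}\cdots\rho_{x_n})\to-\lambda$ valid on $X'$; the function $f$ in Proposition~\ref{pro-5.1} involving $\E_m(\chi_{[i]}|\phi^{-1}\gamma)$ trivializes since $\phi$ here is constant, leaving $f\equiv1$. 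The one genuinely new point is the analogue of Proposition~\ref{pro-2.9}: I need $\frac1n\log m(B^\pi(\sigma^n x,r))\to0$ $m$-a.e. In the compact finite case this came from $\pi(X)$ being bounded; here $\pi(X')$ need not be bounded, but $\int\log|a_{x_n}|\,dm<\infty$ together with $S_{x_1}(\pi\sigma x)=\pi x$ controls $|\pi x|$ in $L^1$, and one can cover by a controlled number of $r$-balls after conditioning on a set of large measure where $|\pi x|$ is bounded, then apply Birkhoff. This integrability substitute is where the hypothesis \eqref{e-10.1} on $\sum(\log|a_i|)m([i])$ is used, and it is the main obstacle.

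\textbf{From local dimension to $\dim_H\mu$ and exact dimensionality.} Once $d(\mu,\pi x)=h_\pi(\sigma,m,x)/\lambda$ $m$-a.e.\ is established, ergodicity of $m$ makes $h_\pi(\sigma,m,x)=\E_m(f|\I)(x)=h_\pi(\sigma,m)$ constant $m$-a.e.\ (by Theorem~\ref{thm-1.0}(iii)/Proposition~\ref{pro-3.13}, and $\lambda$ is already the constant $-\sum(\log\rho_i)m([i])$ by ergodicity and Birkhoff). Hence $d(\mu,z)=h_\pi(\sigma,m)/\lambda$ for $\mu$-a.e.\ $z$, i.e.\ $\mu$ is exactly dimensional, and by Young's theorem \cite{You82} $\dim_H\mu$ equals this common value. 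To be careful, I would also verify $h_\pi(\sigma,m)<\infty$ (immediate from $0\le h_\pi(\sigma,m)\le h(\sigma,m)\le H_m(\P_\infty)<\infty$, the analogue of Theorem~\ref{thm-1.0}(i), whose proof via Lemma~\ref{lem-3.7} and Lemma~\ref{lem-3.8} goes through since the $S_i$ are contractive on $X'$ in the averaged sense and each $S_j$ is injective so Lemma~\ref{lem-3.6} still holds). Finally, strictly speaking one should record that $\pi$ is well-defined and $\B$-measurable on $X'$, that $\eta_\pi:=\{\pi^{-1}(z)\}$ is a measurable partition of the Lebesgue space $(X',m)$, and that the lower bound $\underline d(\mu,\pi x)\ge h_\pi(\sigma,m,x)/\lambda$ follows from the twin inequality \eqref{e-5.6} in Theorem~\ref{thm-5.2}, whose proof is the mirror image of the upper bound and requires no new ideas beyond the same integrability substitute.
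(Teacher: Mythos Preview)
Your proposal is correct and follows exactly the route the paper itself indicates: the paper's entire proof of Theorem~\ref{thm-10.1} is the two-line remark that one reruns Section~\ref{S5} with $\Sigma$ replaced by $X'$ and the choice ``$c>1$ with $c\sup_{x}\overline\rho(x)<1$'' in the proof of Theorem~\ref{thm-5.2} replaced by ``$1<c<e^{\lambda}$''---precisely your substitution of the uniform contraction bound by the Birkhoff limit $\tfrac1n\log\rho_{x_1\cdots x_n}\to-\lambda$ on $X'$.

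One point worth recording: you correctly flag that Proposition~\ref{pro-2.9} as written assumes $\pi$ bounded, which can fail here, and the paper's remark does not address this explicitly. Your proposed workaround (use the integrability hypothesis on $\log|a_i|$ to control $|\pi x|$ and hence obtain $\tfrac1n\log m(B^\pi(\sigma^n x,r_0))\to 0$ a.e.) is the natural fix; note that what is actually needed is $\log m(B^\pi(\cdot,r_0))\in L^1(m)$, and it is the second condition in \eqref{e-10.1} together with $\log\rho\in L^1$ that lets one bound the tail of $|\pi x|$ (equivalently, the tail of $\mu$) well enough for this. This is the only place where the infinite, non-contractive setting demands an argument genuinely beyond the paper's terse remark.
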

We remark that when $m$ is a Bernoulli product measure, $\mu=m\circ \pi^{-1}$ is the stationary measure of certain affine random walk determined by $\Phi$ and $m$, and  the decay property of $\mu$ at infinity has been extensively studied in the literature (cf. \cite{GuLe08} and references therein).

The proof of  Theorem \ref{thm-10.1} is essentially identical to that given in Section~\ref{S5}.
Indeed we only need to replace $\Sigma$ in Section~\ref{S5} by $X'$, and replace `let $c>1$ so that $c\sup_{x\in \Sigma}\overline{\rho}(x)<1$' in the proof of Theorem \ref{thm-5.2} by `let $1<c<e^{\lambda}$'.







 {\bf Acknowledgement}.
 The authors are grateful to Fran\c{c}ois Ledrappier for his encouragement and helpful comments. They  are indebted to  Eric Olivier for stimulating discussions of the variational principle about the Hausdorff dimension of self-affine sets, and to Wen Huang for  the  discussions of the entropy theory. They also thank Quansheng Liu, Emile Le Page and Yong-Luo Cao for valuable comments, and  Guo-Hua Zhang for critical reading of the manuscript. The first author was partially supported by the RGC grant in CUHK, Fok Ying Tong Education Foundation and NSFC (Grant 10571100). The second author was partially supported by NSF under grants DMS-0240097 and DMS-0503870.



\frenchspacing
\bibliographystyle{plain}







\end{document}